\tikzset{every path/.style={line width=.07 cm}}
\definecolor{darkblue}{rgb}{0,0,0.6}
\newenvironment{enumeratea}{\begin{enumerate}[\upshape (a)]}{\end{enumerate}}
\numberwithin{equation}{section}
\numberwithin{figure}{section}
\numberwithin{table}{section}
\newtheorem{thm}{Theorem}[section]
\newtheorem{lem}[thm]{Lemma}
\newtheorem{cor}[thm]{Corollary}
\newtheorem{prop}[thm]{Proposition}
\newtheorem{defn}[thm]{Definition}
\newtheorem{assump}[thm]{Assumption}
\theoremstyle{definition}
\newtheorem{rem}[thm]{Remark}
\newtheorem{example}[thm]{Example}
\newcommand{\mycomment}[1]{}
\def\qed{ \hfill $\blacksquare$}
\newcommand{\cF}{\mathcal{F}}
\newcommand{\cH}{\mathcal{H}}\newcommand{\cI}{\mathcal{I}}
\newcommand{\cL}{\mathcal{L}}
\newcommand{\cQ}{\mathcal{Q}}
\newcommand{\cS}{\mathcal{S}}\newcommand{\cT}{\mathcal{T}}
\newcommand{\cW}{\mathcal{W}}
\newcommand{\vn}{\mathbf{n}}
\newcommand{\ft}{\mathfrak{t}}
\newcommand{\bP}{\mathbb{P}}
\newcommand{\sS}{\mathscr{S}}
\newcommand{\sT}{\mathscr{T}}
\newcommand{\bcdot}{\boldsymbol{\cdot}}
\DeclareMathOperator{\E}{\mathbb{E}}
\DeclareMathOperator{\argmin}{argmin}
\newcommand{\erdos}{Erd\H{o}s-R\'enyi}
\def\RR{\mathbb{R}}
\definecolor{aqua}{rgb}{0.0, 1.0, 1.0}
\definecolor{boo}{rgb}{1.0, 0.0, 1.0}
\definecolor{stred}{rgb}{1.0, 0.24, 0.67}
\DeclareMathAlphabet\mathbfcal{OMS}{cmsy}{b}{n}
\def\beq{ \begin{equation} }
\def\eeq{ \end{equation} }
 \newcommand{\NN}{\mathbb{N}}
 \newcommand{\clu}{\mathcal{U}}
 \newcommand{\clv}{\mathcal{V}}
 \newcommand{\clx}{\mathcal{X}}
 \newcommand{\clp}{\mathcal{P}}
 \newcommand{\cln}{\mathcal{N}}
 \newcommand{\cla}{\mathcal{A}}
 \newcommand{\clb}{\mathcal{B}}
  \newcommand{\cld}{\mathcal{D}}
 \newcommand{\cls}{\mathcal{S}}
\begin{document}

\title[LDP for graphon processes and  associated dynamical systems]{Large Deviations for Markovian Graphon Processes and Associated Dynamical Systems on Networks }

\date{}

\author{Shankar Bhamidi${}^1$}
\thanks{${}^1$Department of Statistics and Operations Research, University of North Carolina, Chapel Hill, NC 27599.}
\email{bhamidi@email.unc.edu}

\author{Amarjit Budhiraja${}^1$}
\email{budhiraj@email.unc.edu}

\author{Souvik Ray${}^2$}
\thanks{${}^2$Department of Mathematics, Iowa State University, Ames, IA 50011}
\email{souvikr@iastate.edu}

\subjclass[2010]{Primary: 60K35, 05C80.}
\keywords{Time-evolving network, large deviations, graphon limit theory, interacting dynamical systems.}

\begin{abstract}
We consider temporal models of rapidly evolving Markovian networks whose edge-formation and dissolution rates are determined by time-dependent spatial kernels. Equivalently, these may be viewed as Markovian networks with $O(1)$ jump rates observed over long time horizons. In this regime, paths of graphon-valued processes obtained by averaging over suitable moving time windows provide natural state descriptors. Under appropriate conditions on the jump-rate kernels, we establish laws of large numbers and large deviation principles for these window-averaged paths, both in the weak topology and in the cut metric. We also show that, without such local averaging, the rapidly oscillating graphon process does not satisfy a nontrivial path-space LDP. The resulting rate functions admit explicit and tractable representations, distinct from those arising in static random graph models and finite-horizon dynamic graph models. We further analyze the associated variational problems in several examples and apply the graphon LDP to node-valent dynamical systems driven by the evolving network.

\end{abstract}

\maketitle

\section{Introduction}

The main object of interest in this paper is a model of dynamically evolving (directed) simple random graphs and the large deviation principles of certain objects associated with this random graph process. Driven by the rapid growth of available data on real-world systems and their influence on science and society, network models have become essential tools to represent these systems and to understand the dynamics they exhibit, including processes such as epidemics and search algorithms operating within them \cite{newman2003structure,newman2010networks,MR2271734,MR3617364,van2024random,albert2002statistical}. Within this vast research discipline, \emph{dynamic} network models as well as node-level processes modulated through these time-varying network structures have become increasingly important \cite{holme2012temporal,boccaletti2023structure,bassett2017network,berner2023adaptive,li2017fundamental}. These include classical work in the context of mean-field (complete graph) interactions \cite{Kolokoltsov2010,Sznitman1991} to more general network structure, see e.g. \cite{bhamidi2019weakly,bayraktar2023graphon,bet2024weakly} and the references therein for starting points when the underlying graph is \emph{dense} and \cite{lacker2023local,lacker2021locally} as starting points for the sparse regime. Insights from these works have been applied in a range of domain sciences, from network neuroscience and the dynamics of spiking neurons \cite{BaladronFaugeras2012,FaugerasTouboulCessac2009constructive} to the study of swarming and flocking 
behaviors \cite{bertozzi1,bertozzi2}.

 The main goal of this paper is to study large deviations for a class of dense dynamic random graph models, and node-valent processes modulated by these evolving networks. Here, a graph is called dense if its average vertex degree is of the same order as the number of vertices in the system.  We shall view the directed random graphs as \emph{graphons}, see \cite{Lovasz2006, Lovasz1, Lovasz2, Lovasz3, Lovasz}  by Lov\'asz and his co-authors for detailed account of this theory.  
 
The study of large deviations for dense random graphs was initiated in an important work of \cite{Chatterjee2011} which combined ideas from classical large deviations theory and tools from combinatorics and graph theory, such as the famous \emph{Szemer\'edi's Regularity Lemma} and the graphon limit theory \cite{Lovasz}, to prove a large deviation principle for homogeneous Erd\H{o}s-R\'enyi random graphs. Following this work, there have been quite a few advances in this area,  including the extensions  to  uniform Erd\H{o}s-R\'enyi random graphs~\cite{Dembo2018}, inhomogeneous random graphs~\cite{Dhara2022, Markering2023, Dupuis2022}, uniform random graphs with given degree sequence~\cite{Dhara2022}, and node-valent processes with interactions governed by inhomogeneous random graphs\cite{Dupuis2022},  see \cite{Chatterjee2017} for a non-exhaustive list of these developments. Conceptually, in addition to the insight such large deviation principles provide into extremal behavior of the processes in question and the mathematical techniques they necessitate the development of, there are two additional reasons for studying such behavior:

\begin{enumeratea}
 \item {\bf To understand more complex systems via appropriate tilts:} Large deviation results for the classical Erd\H{o}s-R\'enyi  random graphs \cite{Chatterjee2011} have been instrumental in providing a mathematical understanding and the statistical estimation of other core random graph models such as the exponential random graphs (ERGM)~\cite{chatterjee2013estimating}, which can be expressed as appropriate tilts of the uniform (\erdos) measure.
   \item {\bf Understanding extremal events in the evolution of temporal motifs and node-valent network processes:} This includes using appropriately defined temporal network motifs to detect anomalies in networks \cite{paranjape2017motifs,kovanen2013temporal,jiang2016catching,eswaran2018spotlight}, or understanding the effect of network noise on inducing rare events on excitable systems (e.g., neuron level signals) with interactions governed by such networks \cite{bressloff2014path,avitabile2024neural}.
\end{enumeratea}

Most of the existing work on large deviations focuses on \emph{static} random graph models. In many applications, the connectivity structure changes on the same time
scale as the dynamics it supports.  Contacts in social, biological, and
technological systems may appear and disappear over time, and the temporal
ordering and persistence of these interactions can substantially alter
spreading and other network-driven dynamics relative to what is predicted by
a static aggregate graph \cite{holme2012temporal,li2017fundamental}.  This distinction
is particularly important in epidemic models in which susceptible individuals
may break and rewire potentially infectious contacts, since such adaptation
can change the nature of the epidemic phase transition
\cite{DurrettYao2022}.  In longitudinal social networks, retaining temporal
information makes it possible to study network stability, the emergence of
groups, and the relationship between individual stability and popularity
\cite{SewellChen2015}.  Analyses of evolving interaction networks constructed
from Twitter and Reddit data have likewise used their temporal structure to
separate attraction toward one's own political group from disengagement with
the opposing group \cite{ZhuEtAl2023}.  In adaptive systems, network evolution
may also be coupled to the node states; for example, in social and opinion
networks, agents' beliefs can influence their subsequent interactions, which
in turn shape the evolution of those beliefs
\cite{berner2023adaptive,GangulySpiliopoulosSussman2025}.

The first work, to our knowledge, on large deviations of a dense dynamic random graph model appeared in~\cite{Braunsteins2023}. The basic model of \cite{Braunsteins2023} for a dynamic random graph is described as follows. Starting from some initial graph, each edge switches on/off at certain pre-specified rates, independently of other edges. This dynamical model was introduced in~\cite{Zhang2017} in the context of statistical inference for longitudinal network data and a sparse version of this model was analyzed by \cite{Roberts2018} for the critical Erd\H{o}s-R\'enyi random graph focusing on the largest component size of the graph over time. The paper \cite{Braunsteins2023} establishes  large deviation results, in the dense regime, for the sample path of the  graphon process  observed until a fixed time horizon in the setting where the edge-flipping rates are space and time-homogeneous.

In the current work, we study a setting where the edge-flipping rate is both time and space dependent (i.e., may depend on the edge endpoints) and which becomes large with system size. One can regard this as a system with rapidly changing edge connections or, by time scaling, a system evolving over a long time horizon with $O(1)$ rate of edge formation and dissolution.  
In the rapidly evolving regime considered here, the graphon process exhibits persistent microscopic oscillations.  We first establish a large deviation principle for the full space--time graphon in the weak topology, with an explicit rate function obtained by integrating the Donsker--Varadhan occupation cost of the underlying two-state edge processes; see Theorem~\ref{thm:weakLDP}.  We also prove that, the graphon trajectory admits no nontrivial sample-path LDP, either in the weak topology or in the cut-metric topology, because the diverging edge-flipping rates destroy the required path-space compactness; see Propositions~\ref{prop:no-ldp-weak} and~\ref{prop:no-ldp-cut}.  Local averaging over a small time window removes these rapid oscillations and yields sample-path LDPs in both topologies, culminating in the cut-metric result of Theorem~\ref{thm:LDP-avg-path}; see also Corollary~\ref{cor:weakLDP-path}.  By appealing to standard results, we also establish large deviation principles for the graphons observed at a finite collection of fixed time instants. Interestingly, these finite-dimensional LDPs occur at a different speed from the averaged path-space LDPs and involve a fundamentally different local rate function, reflecting instantaneous fluctuations rather than long-time occupation behavior; see Proposition~\ref{prop:ldp-point-weak} and Theorem~\ref{thm:ldp-point-cut}.

These results describe a regime that is different both from static dense random graphs \cite{Chatterjee2011, Dupuis2022, Dhara2022} and from the dynamic setting of \cite{Braunsteins2023}.  In the static setting, the local cost is a Bernoulli relative entropy, while in the dynamic setting  the rate function is an action integral over space and time of the local Lagrangian for a two-state jump Markov process.  Here, the fast dynamics leads instead to a stationary occupation-measure cost of Donsker--Varadhan type.  In Section~\ref{sec:apps}, we use the resulting variational formulas to identify the most likely mechanisms for several rare events, including atypical local edge densities, prescribed terminal graphons, and symmetry-breaking transitions for subgraph counts.

As mentioned earlier, node-level processes with spatially structured interactions governed by underlying graphs have been of immense interest and arise from real-world applications. Continuum limits describing the law of large number (LLN) behavior for interacting dynamical systems on (static) convergent dense graph sequences were formulated and established in a series of works~\cite{medvedev2014nonlinear, medvedev2014W,medvedev2019continuum} using tools from graphon limit theory. Large deviation results for some such interacting dynamical systems on static inhomogeneous graphs have been studied in \cite{Dupuis2022}. In the current work, we study dynamical systems with interactions governed by a fast-changing network structure of the form described in the previous paragraph.  The law of large numbers for such interacting dynamical systems describes an averaging principle, giving a continuum space-time field as a suitable limit of the spatially interpolated state processes. Our work establishes a large deviation principle characterizing probabilities of deviation from this LLN behavior.

\subsection{Organization of the paper}
We give a precise formulation of the evolving network model and its connection to dense graph limits in  Section \ref{sec:mod}. All our main results are described next in Section \ref{obj}. These two sections also discuss related work, comment on proof approaches, and place the contributions of this paper within the current body of research. See in particular Remarks \ref{rem:2.5}-\ref{rem:2.6},  \ref{weakandcut}-\ref{rem:homldp}, \ref{rem:3.21}-\ref{rem:3.22}. The remaining sections contain proofs of the main results. A detailed description of the proof organization is given in Section \ref{sec:orgpf}.

	\section{Setting} 
    \label{sec:mod}
 We start by describing the dynamically evolving random graph model of our interest. For $n \in \mathbb{N}$ and $i,j \in [n] := \{1, \ldots , n\}$, 
let 
$$Q_1^n := [0,1/n], Q_{i}^n := ((i-1)/n, i/n], \text{ for } i \neq 1, \text{ and } \;\; Q^n_{i,j}:=Q_{i}^n \times Q_{j}^n.$$  
	We shall work with the graphon representations of the random graphs. We note that since we work with directed graphs, the graphons in our study will not necessarily be symmetric.
For a graph on $n$ vertices labeled $1, \ldots,n$, vertex $i$ of the graph will correspond to the interval $Q_i^n$ in its graphon representation. Graphons are formally defined as (Lebesgue) measurable functions on $[0,1]^2$ taking values in $[0,1]$. We denote the set of all graphons by $\cls_0$ : 	
	$$ \cls_0 := \left\{f:[0,1]^2 \to \RR \text{ measurable } \mid \, 0 \leq f \leq 1 \text{ a.e.} \right\}. $$
	For a simple directed graph $G$ on $n$ vertices, the graphon representation of $G$ is the block function in $\cls_0$ which assigns, for each $i,j \in [n]$, the value $1$ to all the points in $Q_{i,j}^n$ if there is an edge connecting vertex $i$ to vertex $j$ in $G$  and assigns the value $0$ otherwise. In this paper, we shall deal with graphon processes, indexed by time. Fix some $T \in (0,\infty)$. Hereafter, $\cW_0$ will denote the set of all graphon processes over the time interval $[0,T]$ :
    $$ \cW _0 : = \left\{\phi : [0,1]^2 \times [0,T] \to \RR \text{ measurable } \mid 0 \leq \phi \leq 1 \text{ a.e.} \right\}. $$
	In the last two displays, a.e. refers to the Lebesgue measure on $[0,1]^2$ and $[0,1]^2 \times [0,T]$, respectively. We will often use $d\lambda$ when integrating with respect to the Lebesgue measure on spaces such as $[0,1]^d$ or $[0,1]^d \times [0,T]$.  In general we will omit reference to the measure when clear from the context. All $L^p$-spaces on  $[0,1]^d$ or $[0,1]^d \times [0,T]$  in this paper are with respect to the corresponding Lebesgue measures.

	Let $\cW$ be the set of all time-dependent kernels on time interval $[0,T]$, i.e., 
	$$ \cW := \left\{\phi \, \mid \, \phi:[0,1]^2 \times [0,T] \to [0,\infty) \text{ measurable}\right\}. $$
    For any $\phi \in \cW$ and time $s \in [0,T]$, we shall use the notation $\phi_s := \phi(\cdot,\cdot,s) \in \cls$ to denote the kernel at time $s$. Here, $\cls$ is the set of all time-homogeneous kernels, i.e., 
	$$ \cls := \left\{\phi \, \mid \, \phi:[0,1]^2 \to [0,\infty) \text{ measurable}\right\}. $$
	Fix $\beta^+, \beta^{-} \in \cW$. These two kernels $\beta^{\pm}$ will control the rate of change for the dynamical model in the following sense : $\beta^+(x,y,t)=\beta^+_t(x,y)$ and $\beta^-(x,y,t)=\beta^-_t(x,y)$ dictate the rate at which the directed edge connecting vertex $x \in [0,1]$ to vertex $y \in [0,1]$ switches from being absent to present and present to absent, respectively, at time $t \in [0,T]$. To make this idea precise, we define a sequence of block kernels of the form
	\begin{equation}{\label{def:beta}}
		\beta^{n,\pm}_t(x,y) \equiv \beta^{n, \pm} (x,y,t):= \sum_{i,j=1}^n \beta^{n,\pm}_{i,j}(t) \mathbbm{1}_{Q^n_{i,j}}(x,y), \forall \; x,y \in [0,1],
	\end{equation} 
	where $\beta^{n,\pm}_{i,j} : [0,T] \to [0,\infty)$ are measurable functions. 
	We will make the following assumption which connects these block kernels to the original kernels $\beta^{\pm}$. This will be a standing assumption in this work and will not be explicitly noted in the statements of various results. We shall also write $[0,1]^2_T$ as an abbreviation for $[0,1]^2 \times [0,T]$. In general, we shall use the notation $B_T$ to denote $B \times [0,T]$ for any set $B$.
	\begin{assump}{\label{assmp:1}}
		\begin{enumerate}[(a)] 
			\item {\label{assump:1Aweak}} We have $\beta^{n,\pm}, \beta^{\pm} \in L^1 \left([0,1]^2_T\right)$ for all $n \geq 1$ and $\beta^{n,\pm}$ converges in $L^1\left([0,1]^2_T\right)$ to $\beta^{\pm}$ as $n \to \infty$.
			\item {\label{assump:1B}} There exists $c_{\beta} >0$ such that $\beta^{n,\pm}_{i,j}(t) \geq c_{\beta}$, for all $i,j \in [n], n \geq 1$ and $t \in [0,T]$. 
			\item {\label{assump:1D}}The functions $t \mapsto \beta^{\pm}_t(x,y)$ are bounded and left-continuous in $t$ for almost all $(x,y) \in [0,1]^2$.  Further, $t \mapsto \beta^{n,\pm}_{i,j}(t)$ is bounded and left-continuous for all $i,j \in [n]$ and large enough $n$.
		\end{enumerate} 
	\end{assump}
	The assumption in \ref{assump:1Aweak} ensures that for any subsequence, there is a further subsequence along which $\beta^{n,\pm}$ converges almost everywhere to $\beta^{\pm}$ on $[0,1]^2_T$ which, in particular,  as a consequence of \ref{assump:1B}, also guarantees that $\beta^{\pm} \geq c_{\beta}$, almost everywhere on $[0,1]^2_T$. We will need one of the following two stronger versions of \ref{assump:1Aweak} in some of the results.
	\begin{assump}{\label{assmp:2}}
		\begin{enumerate}[(i)] 
			\item {\label{assump:1A}} For some $\eta >0$,  we have $\beta^{n,\pm}, \beta^{\pm} \in L^{1+\eta}\left([0,1]^2_T\right)$ for all $n \geq 1$ and $\beta^{n,\pm}$ converges in $L^{1+\eta}$ to $\beta^{\pm}$ as $n \to \infty$. 
			\item {\label{assump:1AB}} We have $\sup_{t \in [0,T]} \beta_t^{\pm} \in L^1 \left( [0,1]^2 \right)$ and as $n\to \infty$,
			\begin{equation}{\label{assump:1ABeq}}
				\int_{[0,1]^2} \sup_{ t\in [0,T]} \big \rvert \beta^{n,\pm}_t(x,y)- \beta^{\pm}_t(x,y)  \big \rvert \,dx \,dy \longrightarrow 0.
			\end{equation}
		\end{enumerate}
	\end{assump}

We will shortly give a canonical example of parameter specifications that satisfies the above assumptions. First, let us describe the main model studied in this paper. 
	\begin{defn}[Markovian dynamic directed random graph]{\label{defn:rate}}
		Let $a(n) \in (0,\infty)$ such that $a(n)\to \infty$ as $n\to \infty$.
		Let $\{X^n_{i,j}(t), t \in [0,T]\}_{i,j \in [n]}$ be a collection of $n^2$ mutually independent Markov chains with state space $\mathcal{I} := \{0,1\}$, defined on the probability space $(\Omega, \mathcal{F}, \mathbb{P})$,  $X^n_{i,j}(0) = x^n_{i,j} \in \left\{0,1\right\}$ for 
		all $i,j \in [n]$, and (time-inhomogeneous) rate matrices $R^n_{i,j}$ defined as
		\begin{align*}
			R^n_{i,j}(0,0;t) & := -a(n)\beta_{i,j}^{n,+}(t), \;
			R^n_{i,j}(0,1;t) := a(n)\beta_{i,j}^{n,+}(t), \\ 
			R_{i,j}^n(1,0;t)& := a(n)\beta_{i,j}^{n,-}(t), \; 	R_{i,j}^n(1,1;t) := -a(n)\beta_{i,j}^{n,-}(t).
		\end{align*}
		The (directed) random graph at time $t \in [0,T]$, denoted by $G^n(t)$, is defined on the vertex set $[n]$ and has the edge which connects $i$ to $j$ (for $i,j \in [n]$) if and only if $X_{i,j}^n(t)=1$.
	\end{defn}
	Thus, at time $t$, if there is an edge connecting vertex $i$ to vertex $j$ then this edge gets disconnected at rate $a(n)\beta_{i,j}^{n,-}(t)$ while if there is no edge $i\leadsto j$ then a new edge is created at rate $a(n) \beta_{i,j}^{n,+}(t)$.   The focus of this paper is the study of the large deviations for certain objects associated with the process $\left\{G^n(t) : 0 \leq t \leq T\right\}$.

	\begin{example}{\label{ex:block}}
		One canonical example of  a block kernel sequence satisfying Assumptions~\ref{assmp:1} and \ref{assmp:2} corresponds to a setting where the sequence is the block approximations of the functions $\beta^{\pm}$ itself, namely, 
		$$	\beta^{n, \pm}_{i,j} := n^2 \int_{Q^n_{i,j}} \beta^{\pm} d\lambda, \;\; \forall \; \; i,j\in [n].$$
		Suppose that for some $c_{\beta} >0$, $\beta^{\pm} \geq c_{\beta}$, a.e. on $[0,1]^2_T$ and the functions $t \mapsto \beta^{\pm}_t(x,y)$ are left-continuous and bounded in $t$ for almost all $(x,y) \in [0,1]^2$. Further suppose that $\beta^{\pm} \in L^{1+\varepsilon}$ for some $\varepsilon>0$ and
		\begin{equation}{\label{assump:betatimebound}}
			\int_{[0,1]^2} \sup_{t \in [0,T]} \beta^{\pm}_t(x,y) \, dx \,dy < \infty.
		\end{equation}
		Then Assumption \ref{assmp:1} is satisfied. Indeed, 
		by the \textit{Lebesgue differentiation theorem}, $\beta^{n,\pm}$ converges almost everywhere on $[0,1]^2_T$ to $\beta^{\pm}$. Moreover,  by \textit{Jensen's inequality},
		$$ \int_0^T \int_{[0,1]^2} \left(\beta^{n,\pm}\right)^{1+\varepsilon} = \int_0^T \sum_{i,j \in [n]} \dfrac{1}{n^2} \left( n^2 \int_{Q^n_{i,j}} \beta^{\pm}\right)^{1+\varepsilon}  \leq   \int_0^T \int_{[0,1]^2} \left(\beta^{\pm}\right)^{1+\varepsilon}.$$  
		Therefore, since $\beta^{\pm} \in L^{1+\varepsilon}$, we have  that $\beta^{n,\pm}$ converges in $L^{1+\eta}$ to $\beta^{\pm}$ for any $\eta \in (0,\varepsilon).$
		Thus Assumption \ref{assmp:2}\eqref{assump:1A} and consequently Assumption \ref{assmp:1}\eqref{assump:1Aweak} holds.
		Also,
		\begin{equation}{\label{eq:unif-t-bound-ex}}
			\sup_{t \in [0,T]} \beta^{n,\pm}_{i,j}(t) = \sup_{t \in [0,T]}  n^2 \int_{Q^n_{i,j}} \beta_t^{\pm}(x,y)\,dx\,dy \leq  n^2 \int_{Q^n_{i,j}} \sup_{t \in [0,T]}  \beta_t^{\pm}(x,y)\,dx\,dy < \infty.
		\end{equation}
		Moreover, the condition in \eqref{assump:betatimebound}, along with the fact that $t \mapsto \beta^{\pm}_t(x,y)$ is left-continuous for almost all $(x,y)$, guarantees that $t \mapsto \beta^{n,\pm}_{i,j}(t)$ is left-continuous by a simple application of dominated convergence.  We, therefore, have that Assumption \ref{assmp:1}\eqref{assump:1D} holds as well. Part \ref{assump:1B} of the assumption is immediate from the assumed lower bound on $\beta^{\pm}$. 
		
		Finally, if  we further assume that $s \mapsto \beta^{\pm}(\cdot,\cdot,s)$ is uniformly Lipschitz, i.e., there exists $C \in (0,\infty)$ such that 
		\begin{equation}{\label{assump:betalip}}
			\Big \rvert \beta^{\pm}(x,y,t) - \beta^{\pm}(x,y,s) \Big \rvert \leq C|t-s|, \; \forall \; s,t \in [0,T], \mbox{ and } (x,y) \in [0,1]^2,
		\end{equation}
		then Assumption \ref{assmp:2}\eqref{assump:1AB} holds as well. Indeed, we can find a countable dense set $S =\left\{s_1,s_2,\ldots\right\} \subset [0,T]$ such that $\beta^{\pm}_s \in L^{1+\varepsilon}\left([0,1]^2 \right)$ for all $s \in S$ and therefore $\beta^{n,\pm}_s$ converges in $L^1$ to $\beta^{\pm}_s$.  Note that $\delta_m := \sup_{s \in [0,T]} 
        \inf_{1\le k\le m} |s-s_k| \to 0$ as $m \to \infty$. Hence,
		\begin{align*}
			&\int_{[0,1]^2} \sup_{ s\in [0,T]} \big \rvert \beta^{n,\pm}_s(x,y)- \beta^{\pm}_s(x,y)  \big \rvert \,dx \,dy \\
            & \leq  \int_{[0,1]^2} \max_{1\le k\le m} \big \rvert \beta^{n,\pm}_{s_k}(x,y)- \beta^{\pm}_{s_k}(x,y)  \big \rvert \,dx \,dy + 2C\delta_m \\
			& \leq \sum_{k=1}^m \int_{[0,1]^2}  \big \rvert \beta^{n,\pm}_{s_k}(x,y)- \beta^{\pm}_{s_k}(x,y)  \big \rvert \,dx \,dy + 2C\delta_m.
		\end{align*}
		Taking $n$ and then $m$ to infinity, we have that Assumption \ref{assmp:2}\eqref{assump:1AB} holds. 
		
		In the time-homogeneous case (i.e., $\beta^{\pm}(t,x,y)$ does not depend on time $t$), \eqref{assump:betalip} holds trivially and \eqref{assump:betatimebound} is equivalent to the integrability of $\beta^{\pm}$. 
	\end{example}

    \begin{rem}
    \label{rem:2.5}
As noted in the introduction, for the time and space homogeneous setting, i.e., when $\beta^{\pm}(t,x,y) \equiv \alpha^{\pm}$ for some $\alpha^{\pm}>0$, and $a(n)=1$, the dynamic random graph model of Definition \ref{defn:rate} has been studied in \cite{Braunsteins2023}. A similar dynamic random graph in the context of sparse critical Erd\H{o}s–Rényi random graphs was previously considered in \cite{Roberts2018}. Apart from the minor detail that our model is defined for simple directed graphs instead of undirected graphs as in \cite{Braunsteins2023}, our model differs from that of \cite{Braunsteins2023} in two crucial aspects. First, we allow for very general forms of space and time inhomogeneities in the jump rates, as specified in Assumptions \ref{assmp:1} and \ref{assmp:2}. Indeed, in the setting of \cite{Braunsteins2023} the stationary distribution of the Markovian dynamics  is the Erd\H{o}s-R\'enyi random graph, whereas for the model considered in the current work, the long-time behavior  is given by a very general inhomogeneous graphon (see Theorems \ref{cor2:thmlin} and \ref{thm:llnnew}). 
In particular, our setting covers dynamical
stochastic block models.
Second, in the setting we consider, the graphs change very rapidly, a feature that marks a change in the regime of our analysis compared to \cite{Braunsteins2023}. This feature of rapid change of network structure says that from an observer's perspective the natural statistics to analyze are those obtained by a suitable time window averaging. This leads us to the analysis of the asymptotic behavior of the window-averaged graphon process as described in the next section.

\end{rem}

\begin{rem}
\label{rem:2.6}
The  model we consider describes a particularly simple dynamical behavior in that the updates for different edges are independent of one another. There have been more complex models for temporal evolution studied in the literature. E.g., Markov process models for network dynamics have been proposed in~\cite{Holland1977}. A continuous-time model of network dynamics was considered in~\cite{Snijders2006StatisticalMF}, where each node updates its neighbors by optimizing an objective function based on some local neighborhood statistics. A discrete-time dynamical model on exponential random graphs was introduced in~\cite{Robins2001, Robins2005} and its statistical properties were analyzed later in~\cite{Hanneke2010}, see~\cite{Lee2020} for a recent work on a model-based clustering framework for this model. A concise discussion on generalizations of configuration model and stochastic block model to dynamic networks can be found in~\cite{Zhang2017}. All of these works focus on modeling real-life longitudinal network data with dynamical random graph models and statistical inference based on them. The focus of the current work is on the study of the large deviation behavior. Akin to the LDP results for the \erdos~random graph in \cite{Chatterjee2011} leading to the understanding of ERGMs in \cite{chatterjee2013estimating}, a natural next step is to leverage the techniques developed in this paper to analyze more complex network processes. 
\end{rem}

\section{{Main Results}}	{\label{obj}}

In this section, we present our main results along with accompanying discussions on related work. Law of large numbers and large deviation results  are the main focus of this section. Before stating the main results, we introduce a few notations. For any metric space $(S,d_S)$, we shall denote by $\mathbb{D}\left([0,T]: (S,d_S)\right)$ (resp.~ $\mathbb{C}\left([0,T]: (S,d_S)\right)$) the space of all c\`{a}dl\`{a}g (resp.~continuous) paths on the time interval $[0,T]$ taking values in the space $S$. We shall equip this space with the Skorokhod topology (resp.~topology of uniform convergence). We write $ \text{Osc}(\cdot, \cdot)$ and $ \text{Osc}^{\prime}(\cdot, \cdot)$ for the modulus of continuity and $D$-modulus of continuity, respectively. In other words, for any $f : [0,T]\to S$ and $\delta >0$, 
 $$ \text{Osc}(f, \delta; (S,d_S)) := \sup_{\substack{s,t \in [0,T] \\ |t-s|\leq \delta}} d_S(f(s),f(t)),$$
 and  
$$ \text{Osc}^{\prime}(f, \delta; (S,d_S)) := \inf_{\left\{t_i\right\}} \max_{1 \leq i \leq v} \sup_{s,t \in [t_{i-1},t_i)} d_S(f(s),f(t)),$$ 
where the infimum is taken over all $0 \leq t_0 < t_1 < \cdots < t_v =1$ satisfying the sparsity condition $\min_{1 \leq i \leq v} (t_i-t_{i-1}) > \delta$. For any $f \in \mathbb{D}\left([0,T]: (S,d_S)\right)$, the largest jump of $f$ is denoted by 
$$ \text{jump}\left(f;(S,d_S)\right) := \sup_{t \in [0,T]} d_S(f(t),f(t-)).$$
We have the following useful inequality, see~\cite[Eq. (12.9)]{billingsley1999convergence}. For any $f \in \mathbb{D}\left([0,T]: (S,d_S)\right)$ and $\delta >0$, 
 \begin{equation}{\label{eq:ineq-osc}}
 	\text{Osc}(f, \delta; (S,d_S)) \leq 2 \times \text{Osc}^{\prime}(f, \delta; (S,d_S))  + \text{jump}\left(f;(S,d_S)\right).
 \end{equation}

We begin by recalling what we mean by a large deviation principle \cite{dembo2009large,Budhirajaweakconv,den-hollander,varadhan}. We remark that some authors use the terminology {\em good rate function} for what is defined to be a rate function below.

\begin{defn}{\label{def:ldp}}
   Let $(S,d_S)$ be a metric space and $\left\{Y_n : n \geq 1\right\}$  be a sequence of $S$-valued random variables. A function $I: S \to [0, \infty]$ is called a rate function if $\{x\in S: I(x) \le r\}$ is compact for every $r<\infty$.
   Let $\left\{b(n) : n \geq 1\right\}$ be a sequence of positive real numbers and $I : S \to [0,\infty]$ be a rate function.  We say that the sequence $\left\{Y_n : n \geq 1\right\}$ follows a large deviation principle (LDP) in $S$ with speed $b(n)$ and rate function $I$ if the following holds true :
\begin{equation}\label{ldpdefined}
   \limsup_{n \to \infty} \dfrac{1}{b(n)} \log \mathbb{P} \left( Y_n \in F \right) \leq - \inf_{y \in F} I(y), \;\;  \liminf_{n \to \infty} \dfrac{1}{b(n)} \log \mathbb{P} \left( Y_n \in G \right) \geq -\inf_{y \in G} I(y), 
\end{equation} 
for any closed subset $F$ and open subset $G$ of $S$. The first and second inequality in  \eqref{ldpdefined}
 will be referred to as the LDP upper bound and the LDP lower bound, respectively. 
\end{defn}

We say the rate function is \textit{trivial} if it only takes values in $\left\{0,\infty\right\}$. The \textit{(effective) domain} of the rate function $I$ is the set where it takes finite values.

\subsection{Large deviation principle with respect to weak topology}	{\label{sec:results-weak}}
  Define $H^n \in \cW_0$ as
	\begin{align}
		H^n(x,y,t)\equiv H^n_t(x,y) := \sum_{i,j=1}^n X^n_{i,j}(t)\mathbbm{1}_{Q^n_{i,j}}(x,y), \;  \; x,y \in [0,1].\label{eq:hndefn}
	\end{align}
	In other words, $H^n_t$ is the graphon representation of the directed random graph $G^n(t)$. Let 
	$$\clb_{\sqrt{T}}\left(L^2\left([0,1]^2_T\right)\right) := \left\{ \phi \in L^2\left([0,1]^2_T\right) : \|\phi\|_2 \leq \sqrt{T}\right\},$$
	and we equip this  ball with the weak topology inherited from 
    $L^2\left([0,1]^2_T\right)$. Here and throughout we denote the norm on the $L^p$ space of functions on a Euclidean space by $\|\cdot\|_p$ and omit reference to the particular Euclidean space being considered. With the above topology, the space $\clb_{\sqrt{T}}\left(L^2\left([0,1]^2_T\right)\right)$ is a compact metrizable topological space which contains $\cW_0$ as a closed (and hence compact) subset. We shall refer to this topology as the \textit{weak topology} on $\cW_0$ and let $d_{\cW_0,\text{weak}}$ be a metric on $\cW_0$ which generates this topology. Therefore we can consider $\{H^n : n \geq 1\}$ as a sequence of $\left(\cW_0, d_{\cW_0, \text{weak}}\right)$-valued random variables. We remark that, later in the section we will consider the trajectories $t \mapsto H^n(\cdot, \cdot, t)$ in the space of graphons and regard these paths as  random variables with values in the Skorohod path space on the space of graphons. We caution the reader on the different topologies on the space $\cW_0$ and the Skorohod path space.

    Our first pair of results provides the Law of Large Numbers (LLN) and the LDP for the sequence $\left\{H^n : n \geq 1\right\}$ under the above weak topology. Towards that goal, for any $(i,j) \in [n]^2$, set 
	$$ w_{i,j}^n(t) := \beta^{n,+}_{i,j}(t)/(\beta^{n,+}_{i,j}(t)+\beta^{n,-}_{i,j}(t)),  \; \; w^n_t  := \sum_{i,j} w_{i,j}^n(t) \mathbbm{1}_{Q_{i,j}^n} = \dfrac{\beta_t^{n,+}}{\beta_t^{n,+}+\beta_t^{n,-}}, \text{ on } [0,1]^2.$$ Note that, $w^n_t$ is well-defined since $\beta^{n,\pm}_t >0$. We further define 
	\begin{equation}
    \label{eqn:w-w-st-def}
	    w := \dfrac{\beta^+}{\beta^+ + \beta^-} \in \cW_0. % \;\; w^* := \int_0^T w_s\,ds \in \cls_0.
	\end{equation} 
    The proof of the following law of large numbers result is provided in Section \ref{lln}.
		\begin{thm}{\label{cor2:thmlin}}
		$H^n$ converges almost surely to $w$ as a sequence of $\left(\cW_0, d_{\cW_0, \text{weak}}\right)$-valued random variables.
	\end{thm}
	Our next result gives a large deviation principle for $\{H^n : n \geq 1\}$.
		\begin{thm}{\label{thm:weakLDP}}
        Suppose Assumption \ref{assmp:2}\eqref{assump:1A} is satisfied. Then the sequence $\{H^n : n \geq 1\}$ satisfies a large deviation principle in $\left(\cW_0, d_{\cW_0, \text{weak}}\right)$ with speed $a(n)n^2$ and rate function $J^{(\beta^+,\,\beta^-)}$ defined as
		\begin{align*}
			J^{(\beta^+,\,\beta^-)}(\phi) :=  \int_{[0,1]^2 \times [0,T]} \cQ_1 \left(\phi_s(x,y), \beta_s^+(x,y), \beta_s^-(x,y)\right) dx\,dy\,ds,
		\end{align*}
		for all $\phi \in \cW_0$, where $\cQ_1 : [0,1] \times [0,\infty)^2 \to \mathbb{R}$ is defined as 
		$$\cQ_1(u,v^+,v^-):= \left(\sqrt{v^+(1-u)}-\sqrt{v^-u}\right)^2, \; \forall \; u \in [0,1], v^+,v^- \geq 0.$$
		%$\phi_s(\cdot,\cdot) := \phi(\cdot,\cdot,s)$, for any $\phi \in \cW_0$ and $s \in [0,T]$.
	\end{thm}

The rate function in Theorem \ref{thm:weakLDP} admits a natural interpretation in terms
of the long-time behavior of the underlying edge-flip Markov chains.
Indeed, for each space--time point $(x,y,s)$, the corresponding edge is a
two-state Markov chain with birth and death rates
$\beta_s^+(x,y)$ and $\beta_s^-(x,y)$, whose stationary occupation
probability is
$w_s(x,y)$.
The local cost given by the integrand in the above integral
measures the energetic cost of forcing this Markov chain to maintain the
atypical occupation level $\varphi_s(x,y)$ rather than its (local) equilibrium
value $w_s(x,y)$. The global rate function is simply the aggregate of
these local costs over all edge locations and times.

This should be contrasted with the static dense graph setting of~\cite {Chatterjee2011, Dupuis2022, Dhara2022} for Erd\H{o}s--R\'enyi graphs, where the local costs are Bernoulli relative entropies, 
and from the  dynamic setting of \cite{Braunsteins2023}, where the rate function is an action integral over space and time of the local Lagrangian for a two-state jump Markov process.

In our setting, due to the high edge-flip rates, the rate function is a dynamical
Dirichlet-form cost associated with changing the empirical occupation
measure of an underlying edge-flip Markov process. The square-root
structure is characteristic of Donsker--Varadhan theory for empirical
measures of reversible Markov chains.

Proof of the above theorem can be found in Section \ref{ldp}, starting in Section  \ref{sec:isrf} which shows that $J^{(\beta^+,\,\beta^-)}$ is a rate function on $\cW_0$ when equipped with the weak topology, namely it has compact sub-level sets. The proof of the large deviation upper bound is completed in Section \ref{sec:pfofldpupp}. The complementary lower bound is proved in Section \ref{sec:pflowbd}. The upper bound is proved by using weak convergence and stochastic control methods whereas the proof of the lower bound makes a more direct use of the Donsker-Varadhan variational formula and Laplace asymptotics, see \cite[Chapter 1,2]{Budhirajaweakconv} for details on these methods.

\begin{rem}{\label{rem:domain-J}}
	Since $0 \leq \cQ_1(u,v^+,v^-) \leq v^++v^-$ fo any $u \in [0,1]$ and $v^+,v^- \geq 0$, we have 
	$$ J^{(\beta^+,\, \beta^-)}(\phi) \leq \|\beta^+\|_1 + \|\beta^-\|_1 < \infty, \; \text{ for any } \phi \in \cW_0.$$
	In particular, the entire $\cW_0$ space is the domain of the rate function $J^{(\beta^+,\, \beta^-)}.$
\end{rem}

It is natural to ask if   $H^n$ satisfies a sample path large deviation principle.
More precisely, let
$$\clb_1\left(L^2\left([0,1]^2\right)\right) := \left\{ f \in L^2\left([0,1]^2 \right): \|f\|_2 \leq 1\right\},$$
and we equip this unit ball with the weak topology inherited from $L^2\left([0,1]^2\right)$, which makes this space compact.  Clearly, $\cls_0$ is contained in this ball as a closed (hence compact) subset. As before, this inherited topology on $\cS_0$ will be referred to as the \textit{weak topology} and we denote by $d_{\cS_0, \text{weak}}$  a metric on $\cS_0$ inducing it. 
We may view $H^n$ as a $\mathbb{D}\left([0,T]:\left(\mathcal{S}_0, d_{\cS_0, \text{weak}}\right)\right)$-valued random variable and we write 
$H^n_{\bcdot}$ in order to emphasize this sample path perspective of the graphon process.
Then it is natural to study a LDP for  $\left\{H^n_{\bcdot} : n \geq 1 \right\}$ in the space $\mathbb{D}\left([0,T]: \left(\mathcal{S}_0, d_{\cS_0, \text{weak}}\right)\right)$. However, as the following proposition demonstrates, such a large deviation principle cannot hold. The proof of this proposition is deferred to Section~\ref{sec:pfno-ldp-path-weak}.

\begin{prop}{\label{prop:no-ldp-weak}}
	  Suppose Assumption \ref{assmp:2}\eqref{assump:1AB} is satisfied. Then $\left\{H^n_{\bcdot} : n \geq 1\right\}$, regarded  as a sequence of $\mathbb{D}\left([0,T]:\left(\mathcal{S}_0, d_{\cS_0, \text{weak}}\right)\right)$-valued random variables does not satisfy  a LDP with a non-trivial rate function.
\end{prop}
	
The reason for such  behavior  is that the edge-flipping rates are of order $a(n)$ which approach $\infty$ as $n\to \infty$. Indeed, if one considers the elementary example of a $\{0,1\}$-valued Markov process $Z^n$ with jump-rates $a(n)$ then, a calculation using the modulus of continuity $\text{Osc}^{\prime}$ shows that such a sequence cannot be tight in $\mathbb{D}\left([0,T]: \{0,1\}\right)$ and therefore does not satisfy a LDP. Arguing this for the sample paths of graphons however requires some additional work as the high oscillatory behavior at the individual edge level may get mollified by the averaging over the edges which is inherent in the weak topology on the space of graphons.

In view of the high rate of oscillations, the natural object for a sample path LDP is the graphon process obtained from a small time-window local averaging of the original graphon process. We now introduce this process.
First note that, we may choose the metric in $d_{\cS_0, \text{weak}}$ in such a way that 
$d_{\cS_0, \text{weak}} (f_1, f_2 )\leq \| f_1 - f_2 \|_2$, for any $f_1, f_2 \in \cS_0$.
Fix $\epsilon \in (0,T)$ and define the map $\text{Av}^{\epsilon} : \mathcal{W}_0 \to \mathbb{C}\left([0,T]:\left(\mathcal{S}_0, d_{\cS_0, \text{weak}}\right)\right)$ as follows :  For $\phi \in \mathcal{W}_0$,
\begin{equation}{\label{def:avg-graphon}}
	\text{Av}^{\epsilon}(\phi)_t := \dfrac{1}{|U_{\epsilon}(t)|} \int_{U_{\epsilon}(t)} \phi_s\, ds, \; \forall \; t \in [0,T],
\end{equation} 	
where $U_{\epsilon}(t)= [t-\epsilon,t+\epsilon] \cap [0,T]$ and $|U_{\epsilon}(t)|$ is its length. In other words, $\text{Av}^{\epsilon}(\phi)_t$ is obtained by averaging over all time instants around $t$ within a gap of $\epsilon$. 
 The local-averaged path is indeed continuous:  For any $ 0 \leq r < t \leq T$, we have  
\begin{align}{\label{eq:av-lip}}
 d_{\cS_0, \text{weak}}(\text{Av}^{\epsilon}(\phi)_t, \text{Av}^{\epsilon}(\phi)_r) &\le
     \| \text{Av}^{\epsilon}(\phi)_t - \text{Av}^{\epsilon}(\phi)_r  \|_2 \nonumber \\ 
& \leq 2(t-r) \left( \dfrac{1}{\epsilon}+\dfrac{T}{\epsilon^2}\right) := (t-r)L(T,\epsilon).
\end{align}

 The $\epsilon$-window local moving average process $A^{n,\epsilon}_{\bcdot} := \text{Av}^{\epsilon}(H^n)$  is a  natural observed statistic in view of a rapidly evolving system; $A^{n,\epsilon}_t(x,y)$ is the average amount of time the edge connecting vertex $x$ to vertex $y$ is present within the time interval $U_{\epsilon}(t)$. We have the following sample path LDP for the window averaged process as an immediate corollary of Theorem~\ref{thm:weakLDP} by an application of contraction principle, details can be found in Section~\ref{sec:pfcor-weakLDP-path}.
 
	\begin{cor}{\label{cor:weakLDP-path}}
		For any $\epsilon \in (0,T)$, the sequence $A^{n,\epsilon}_{\bcdot}$ converges almost surely, as $n \to \infty$, to $w^{\epsilon}_{\bcdot} := \text{Av}^{\epsilon}(w)$ in $\mathbb{C}\left([0,T]:\left(\mathcal{S}_0, d_{\cS_0, \text{weak}}\right)\right)$. Moreover, if we assume Assumption \ref{assmp:2}\eqref{assump:1A} is satisfied, then the sequence of random variables $\{ A^{n,\epsilon}_{\bcdot}: n \geq 1\}$ satisfies a large deviation principle in $\mathbb{C}\left([0,T]:\left(\mathcal{S}_0, d_{\cS_0, \text{weak}}\right)\right)$ with speed $a(n)n^2$ and rate function $I^{(\beta^+,\,\beta^-)}_{\epsilon}$ defined as
		\begin{align*}
			I_{\epsilon}^{(\beta^+,\,\beta^-)}(\phi_{\bcdot}) :=  \inf \left\{J^{(\beta^+,\,\beta^-)}(\tilde \phi) : \text{Av}^{\epsilon}(\tilde \phi )= \phi_{\bcdot}, \; \tilde \phi \in \cW_0\right\}, \; \phi_{\bcdot} \in \mathbb{C}\left([0,T]:\left(\mathcal{S}_0, d_{\cS_0, \text{weak}}\right)\right).
		\end{align*}
	\end{cor}

		\begin{rem}
		Theorem~\ref{cor2:thmlin} shows that $H^n$ converges to $w$ almost surely with respect to the weak topology. 
		In fact the weaker property of convergence in probability can be deduced simply by observing that, since $\beta^+(1-w)=\beta^-w$ on $[0,1]^2_T$,  $J^{(\beta^+,\,\beta^-)}(w)=0$ if and only if $\phi =w$ a.e. on $[0,1]^2_T$. This also shows that $J^{(\beta^+,\,\beta^-)}$ is non-trivial.
		Similarly, using lower semi-continuity of $J^{(\beta^+,\,\beta^-)}$ (see Proposition~\ref{lem:weaksemicont2}), one can easily check that
         $I_{\epsilon}^{(\beta^+,\,\beta^-)}(\phi_{\bcdot})=0$ if and only if $\phi_{\bcdot}=w^{\epsilon}_{\bcdot}$.
	\end{rem}

	\begin{rem}{\label{rem:rate-avg-process-domain}}
		From Remark~\ref{rem:domain-J}, it is immediate that 
		$$ \text{Av}^{\epsilon}(\cW_0) = \left\{ \phi_{\bcdot} : I_{\epsilon}^{(\beta^+,\,\beta^-)}(\phi_{\bcdot})<\infty\right\} = \left\{ \phi_{\bcdot} : I_{\epsilon}^{(\beta^+,\,\beta^-)}(\phi_{\bcdot})\leq  \|\beta^+\|_1 + \|\beta^-\|_1 \right\} $$
		and hence it is compact in  the space $\mathbb{C}\left([0,T]:\left(\mathcal{S}_0, d_{\cS_0, \text{weak}}\right)\right)$, since $I_{\epsilon}^{(\beta^+,\,\beta^-)}$ has compact sub-level sets by virtue of being a rate function.
	\end{rem}

	Corollary~\ref{cor:weakLDP-path} provides a sharp contrast between the large deviation behavior of the graphons $H^n_t$  and
    that of the window-averaged graphons $\text{Av}^{\epsilon}(H^n)_t$ at a finite collection of time instants.   The following proposition  is an application of similar results in~\cite{Chatterjee2011, Dhara2022, Markering2023}, see Section~\ref{sec:pfpropldp-point-weak} for the proof. 
	
	\begin{prop}{\label{prop:ldp-point-weak}}
		Suppose Assumption~\ref{assmp:2}\eqref{assump:1AB} is satisfied. Then for any $t \in (0,T]$, the graphon $H^n_t$ converges to $w_t$ almost surely in weak topology. Moreover, for any finite $\cT \subseteq (0,T]$, the sequence $\left\{\left(H^n_t\right)_{t \in \cT} : n \geq 1\right\}$ satisfies a large deviation principle in the $|\cT|$-fold product space of $\left(\cS_0, d_{\cS_0, \text{weak}}\right)$ with speed $n^2$ and rate function $I^{(\beta^+,\,\beta^-)}_{\text{point},\cT}$ given by 
		$$ I^{(\beta^+,\,\beta^-)}_{\text{point},\cT}\left(\left(f_t\right)_{t \in \cT}\right) := \sum_{t \in \cT} \int_{[0,1]^2} \cQ_2\left(f_t(x,y), w_{t}(x,y)\right)dx\,dy, \; \forall \; f_t \in \cS_0,\; t \in \cT,$$
		where $\cQ_2 : [0,1] \times (0,1) \to \mathbb{R}$ is defined as 
		$$ \cQ_2 (u,v) := u \log \dfrac{u}{v} + (1-u)\log \dfrac{1-u}{1-v}, \; \forall \; u\in [0,1], \, v \in (0,1),$$
		with the convention that $0 \cdot \log 0 =0$.
	
	\end{prop}
	
	Proposition~\ref{prop:ldp-point-weak}  shows that the collections $\{H^n_t\}$ and $\{\text{Av}^{\epsilon}(H^n)_t\}$ satisfy a finite-dimensional LDP at different rates 
    ($n^2$ versus $a(n)n^2$) and the form of the rate functions are strikingly different.

	\subsection{Large deviation principle with respect to cut-norm topology}
	
The weak topology on $\cls_0$ is  coarse and admits few interesting continuous functions. A more suitable metric for $\cls_0$ is the \textit{cut distance} $d_{\square}$, or its equivalent metric $d_{\infty \to 1}$, which can be defined as follows : For all $f,g \in \cls_0$, 
 \begin{align*}
 	d_{\square} (f,g) &:= \sup_{S,T \subseteq [0,1]} \Big \rvert \int_{S \times T} (f(x,y)-g(x,y))dx\,dy \Big \rvert
 \end{align*}  
 and 
 $$ d_{\infty \to 1} (f,g) := \sup_{\substack{a,b \\ \|a\|_{\infty}, \|b\|_{\infty} \leq 1}} \int_{[0,1]^2} a(x)b(y)\left(f(x,y)-g(x,y)\right)dx\,dy,$$
 where the first supremum is over all measurable sets $S,T \subseteq [0,1]$ while the second is over all measurable maps $a,b:[0,1] \to [-1,1]$.
 It can be easily seen that $d_{\square} (f,g) \leq d_{\infty \to 1}(f,g) \leq 4d_{\square} (f,g)$, which implies that the two metrics $d_{\square}$ and $d_{\infty \to 1}$ generate the same topology on $\cls_0$.  
 Moreover, $d_{\infty \to 1}(f,g) \leq \|f-g\|_1 \leq \|f-g\|_2$ for all $f,g \in \cls_0$. 
 The topology on $\cls_0$ given by $d_{\square}$ (equivalently $d_{\infty \to 1}$) is stronger (finer) than the weak topology on $\cls_0$. The following result gives a stronger version of Theorem~\ref{cor2:thmlin} by giving convergence in the metric $d_{\square}$.

 \begin{thm}{\label{thm:llnnew}}
As $n \to \infty$, we have 
$$ \int_0^T d_{\square} \left(H_t^n, w_t \right)dt \stackrel{a.s.}{\longrightarrow} 0.$$
 \end{thm}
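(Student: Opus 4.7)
The plan is to decompose $\int_0^1 d_\square(H^n_t, w_t)\,dt$ via the triangle inequality into a deterministic and a stochastic piece, handle the stochastic piece by McDiarmid concentration around its mean, and estimate the mean by combining a Chernoff--union-bound discrepancy argument with a Duhamel analysis of the two-state inhomogeneous Markov chain marginals. Concretely, write
\[
\int_0^1 d_\square(H^n_t, w_t)\,dt \;\le\; Y_n + D_n, \quad
Y_n := \int_0^1 d_\square(H^n_t, w^n_t)\,dt,\;
D_n := \int_0^1 d_\square(w^n_t, w_t)\,dt.
\]
For the deterministic term, $d_\square \le \|\cdot\|_1$ gives $D_n \le \|w^n - w\|_{L^1([0,1]^3)}$; by Assumption~\ref{assmp:1}(\ref{assump:1Aweak}) every subsequence admits a further subsequence on which $\beta^{n,\pm}\to\beta^\pm$ a.e., so by Assumption~\ref{assmp:1}(\ref{assump:1B}) and continuity of $(a,b)\mapsto a/(a+b)$ on $[c_\beta,\infty)^2$, $w^n\to w$ a.e.\ along it, and the subsequence principle together with $|w^n - w|\le 1$ and bounded convergence gives $D_n \to 0$.

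For $Y_n$ I would regard it as a function of the $n^2$ mutually independent Markov trajectories $\{X^n_{i,j}(\cdot)\}_{i,j\in[n]}$. Modifying a single trajectory $X^n_{i_0,j_0}(\cdot)$ alters $H^n_t(x,y)$ only on $Q^n_{i_0,j_0}$ (Lebesgue measure $1/n^2$) and by at most $1$, so
\[
\bigl|d_\square(H^{n,(1)}_t, w^n_t) - d_\square(H^{n,(2)}_t, w^n_t)\bigr| \le d_\square\bigl(H^{n,(1)}_t, H^{n,(2)}_t\bigr) \le \tfrac{1}{n^2}
\]
for every $t$, which integrates to the bounded-difference estimate $|Y_n^{(1)} - Y_n^{(2)}| \le 1/n^2$. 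McDiarmid's inequality will then yield $\mathbb{P}(|Y_n - \mathbb{E}[Y_n]|>\epsilon) \le 2\exp(-2\epsilon^2 n^2)$, summable for each fixed $\epsilon$; Borel--Cantelli then delivers $Y_n - \mathbb{E}[Y_n] \to 0$ almost surely.

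It remains to show $\mathbb{E}[Y_n] \to 0$. Introducing the deterministic block graphon $P^n_t := \sum_{i,j}p^n_{i,j}(t)\mathbbm{1}_{Q^n_{i,j}}$ with $p^n_{i,j}(t) := \mathbb{E}[X^n_{i,j}(t)]$, the triangle inequality gives $\mathbb{E}[Y_n] \le \int_0^1 \mathbb{E}[d_\square(H^n_t, P^n_t)]\,dt + \int_0^1 d_\square(P^n_t, w^n_t)\,dt$. For the first integrand, block-constancy of $H^n_t$ and $P^n_t$ reduces the cut-norm supremum to a maximum over $4^n$ pairs of subsets of $[n]$; Hoeffding for each fixed pair (using independence of $\{X^n_{i,j}(t)\}_{i,j}$ at each $t$) plus a union bound give $\mathbb{E}[d_\square(H^n_t, P^n_t)] = O(n^{-1/2})$ uniformly in $t$, so this contribution is $O(n^{-1/2})$. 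For the second, $d_\square(P^n_t, w^n_t)\le n^{-2}\sum_{i,j}|p^n_{i,j}(t) - w^n_{i,j}(t)|$, and the marginal $p^n_{i,j}$ solves $(p^n_{i,j})'(t) = a(n)\bar\beta^n_{i,j}(t)(w^n_{i,j}(t) - p^n_{i,j}(t))$ with $\bar\beta^n_{i,j} := \beta^{n,+}_{i,j} + \beta^{n,-}_{i,j} \ge 2c_\beta$; Duhamel's formula produces
\[
|p^n_{i,j}(t) - w^n_{i,j}(t)| \le e^{-2c_\beta a(n) t} + \int_0^t a(n)\bar\beta^n_{i,j}(s)\,|w^n_{i,j}(s) - w^n_{i,j}(t)|\,e^{-2c_\beta a(n)(t-s)}\,ds,
\]
and integrating in $t$, averaging in $(i,j)$, and changing variables $u=a(n)(t-s)$ will reduce the resulting bound to $O(1/a(n))$ plus a weighted $L^1$ oscillation of $w^n$ in its time variable at scale $1/a(n)$, which vanishes as $n\to\infty$ by the left-continuity of $\beta^{n,\pm}_t$ (Assumption~\ref{assmp:1}(\ref{assump:1D})), the $L^1$ convergence of the rates, and dominated convergence.

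The main technical obstacle is this final oscillation estimate: since $w^n$ is only left-continuous in time (not of bounded variation), the small-scale time-oscillation bound requires a careful dominated convergence argument inside the Duhamel kernel, relying on $a(n)\to\infty$ to localize the kernel at scale $1/a(n)$. Everything else is a fairly standard combination of discrepancy-theoretic bounds and concentration inequalities adapted to the dynamic graphon setting.
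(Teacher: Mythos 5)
Your decomposition $\int_0^1 d_\square(H^n_t,w_t)\,dt \le Y_n + D_n$, the handling of $D_n$ by the subsequence principle, and the McDiarmid concentration for $Y_n-\mathbb{E}[Y_n]$ are all sound, and the Hoeffding--union-bound step for $\mathbb{E}[d_\square(H^n_t,P^n_t)] = O(n^{-1/2})$ is essentially the content of Lemma~\ref{es11}. So the overall skeleton parallels the paper's (which uses Lemma~\ref{es11} at each $t$ and Markov plus Borel--Cantelli rather than McDiarmid on the full integral; both work).

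The gap is in the final oscillation estimate, which is indeed the crux, and your sketch does not close it. Two issues. First, you replace the exact Duhamel kernel $e^{-(B^n_{i,j}(t)-B^n_{i,j}(s))}$ by the weaker $e^{-2c_\beta a(n)(t-s)}$. This is a valid inequality, but it destroys the identity $\int_0^t a(n)\bar\beta^n_{i,j}(s)e^{-(B^n_{i,j}(t)-B^n_{i,j}(s))}\,ds = 1-e^{-B^n_{i,j}(t)} \le 1$, which is the only thing that makes the weight $a(n)\bar\beta^n_{i,j}(s)$ inside the integral harmless. After the replacement, the $\bar\beta^n_{i,j}$ weight is uncontrolled, and after averaging over $(i,j)$ you are left with $\int_{[0,1]^2}\int_0^1\int_0^\infty \bar\beta^n_{t-u/a(n)}|w^n_{t-u/a(n)}-w^n_t|e^{-2c_\beta u}\,du\,dt\,dx\,dy$, whose naive bound is $O(\|\beta^{n,+}+\beta^{n,-}\|_1)$, not $o(1)$. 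Second, the DCT argument you invoke needs an $n$-uniform dominating function, but Assumption~\ref{assmp:1}(\ref{assump:1D}) only gives $\sup_t\beta^{n,\pm}_{i,j}(t)<\infty$ separately for each $(i,j,n)$, with no uniformity; there is also no $n$-uniform modulus of left-continuity for $w^n$. Neither ``$L^1$ convergence of the rates'' nor ``left-continuity'' as such supplies what is missing.

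The paper's Lemma~\ref{lem:es12} is organized specifically to avoid this: it never takes $w^n$ as an intermediate target. Instead it compares $\mathbb{E}H^n_t$ directly to $w_t$ and splits into (i) a term of order $\|\beta^{n,\pm}-\beta^\pm\|_1$ from replacing $\beta^{n,+}$ by $\beta^+$ inside the integrand (\eqref{eq:1244}), (ii) a kernel-difference term controlled by $c_1(x,y):=\sup_t\beta^+_t(x,y)$ --- a.e.\ finite by Assumption~\ref{assmp:1}(\ref{assump:1D}) because it is for the \emph{limiting} $\beta^+$, not $\beta^{n,+}$ --- times $\|\beta^{n,\pm}-\beta^\pm\|_1$ (\eqref{eq:1245}), and (iii) a limiting term $\int_0^t a(n)e^{-a(n)(B_t-B_s)}\beta^+_s\,ds$ that is uniformly bounded by $1$ and converges to $w_t$ a.e., so DCT applies cleanly. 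You can salvage your route by inserting $w$ as an intermediate ($|w^n_s-w^n_t|\le|w^n_s-w_s|+|w_s-w_t|+|w_t-w^n_t|$, together with $\bar\beta^n|w^n-w|\le|\beta^{n,+}-\beta^+|+|\beta^{n,-}-\beta^-|$ and the exact kernel), but at that point you have essentially re-derived Lemma~\ref{lem:es12}.
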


Proofs of Theorem~\ref{thm:llnnew} %and Theorem~\ref{thmlln} 
is given in Section \ref{lln}. Unfortunately, the metric space $\left(\cS_0, d_{\square}\right)$ is not compact, see~\cite[Example F.6]{janson}. But we have a natural equivalence relation on this space. If we set $\mathscr{S}$ to be the group of all measure preserving bijections on $[0,1]$, we can define, for any $f \in \cls_0$ and $\sigma \in \mathscr{S}$, a new function $f^{\sigma}\in \cls_0$ as $f^{\sigma}(x,y):=f(\sigma(x),\sigma(y))$ for all $x,y \in [0,1]$. The bijection $\sigma$ represents a relabeling of the vertices of the graphon $f$. For $f,g \in \cls_0$, we say $f \sim g$ if and only if $\inf_{\sigma \in \sS} d_{\square}\left(f,g^{\sigma}\right)=0$. In other words, we identify two graphons $f$ and $g$ if they are the same up to a relabeling of their vertices. This defines an equivalence relation on $\cls_0$. Let $\widehat{f}$ be the equivalence class containing $f$. Let $\widehat{\cls}_0$ be the collection of all these equivalence classes equipped with the \textit{cut metric} $\delta_{\square}$ :
$$ \delta_{\square}\left( \widehat{f},\widehat{g}\right) := \inf_{f^{\prime} \in \widehat{f},g^{\prime} \in \widehat{g}} d_{\square}(f^{\prime},g^{\prime}) = \inf_{f^{\prime} \in \widehat{f}} d_{\square}(f^{\prime},g) = \inf_{g^{\prime} \in \widehat{g}} d_{\square}(f,g^{\prime}). $$	
Under this metric, the set $\widehat{\cls_0}$ is a compact metric space. For more details on the cut metric, we refer to ~\cite{Lovasz,janson}.
 Since the canonical map $f \mapsto \widehat f$ from $\left(\cls_0, d_{\square}\right)$ to $\left(\widehat{\cls_0}, \delta_{\square} \right)$ is Lipschitz continuous, the following corollary is immediate from Theorem~\ref{thm:llnnew}. 

 \begin{cor}{\label{cor:llnnew}}
As $n \to \infty$, we have 
$$ \int_0^T \delta_{\square} \left( \widehat{H_t^n}, \widehat{w_t} \right)dt \stackrel{a.s.}{\longrightarrow} 0.$$
 \end{cor}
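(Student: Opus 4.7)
The plan is to exploit the $1$-Lipschitz property of the canonical quotient map $\pi: (\cls_0, d_{\square}) \to (\widehat{\cls_0}, \delta_{\square})$, $\pi(f) = \widehat{f}$, and then reduce to Theorem~\ref{thm:llnnew}. The Lipschitz bound is immediate from the definition of $\delta_{\square}$ as an infimum over the equivalence class: for any $f,g \in \cls_0$,
$$\delta_{\square}(\widehat{f}, \widehat{g}) \;=\; \inf_{g' \in \widehat{g}} d_{\square}(f,g') \;\leq\; d_{\square}(f,g),$$
obtained by taking $g' = g$ (which lies in $\widehat{g}$ since $\sigma = \mathrm{id}$ preserves Lebesgue measure).

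Applying this pointwise in $t$ with $f = H_t^n$ and $g = w_t$ yields $\delta_{\square}(\widehat{H_t^n}, \widehat{w_t}) \leq d_{\square}(H_t^n, w_t)$ for every $t \in [0,1]$. Both sides are nonnegative and measurable in $t$ (measurability of the right-hand side is built into the statement of Theorem~\ref{thm:llnnew}, and the left-hand side is the composition of the measurable map $t \mapsto (H_t^n, w_t)$ with the continuous function $(f,g) \mapsto \delta_{\square}(\widehat{f}, \widehat{g})$). Integrating over $t \in [0,1]$ gives the sandwich
$$0 \;\leq\; \int_0^1 \delta_{\square}\bigl(\widehat{H_t^n}, \widehat{w_t}\bigr)\,dt \;\leq\; \int_0^1 d_{\square}\bigl(H_t^n, w_t\bigr)\,dt.$$

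By Theorem~\ref{thm:llnnew}, the right-hand side converges to $0$ almost surely as $n \to \infty$, so the middle term does as well, which is exactly the claim. No substantive obstacle arises; the entire content of the corollary is the Lipschitz estimate for the quotient map together with the already-established cut-norm LLN. The only minor care needed is the measurability of $t \mapsto \delta_{\square}(\widehat{H_t^n}, \widehat{w_t})$, handled as above, and the observation that $\pi$ being $1$-Lipschitz lets us dispense with any issue about choosing an optimizing relabelling $\sigma \in \mathscr{S}$.
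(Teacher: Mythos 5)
Your proof is correct and matches the paper's approach exactly: the paper states that the corollary is immediate from Theorem~\ref{thm:llnnew} because the canonical map $f \mapsto \widehat{f}$ from $(\cls_0, d_{\square})$ to $(\widehat{\cls_0}, \delta_{\square})$ is Lipschitz continuous, which is precisely the $1$-Lipschitz estimate $\delta_{\square}(\widehat{f},\widehat{g}) \leq d_{\square}(f,g)$ you derive and integrate.
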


We next study large deviations behavior under the cut metric. We first present a LDP for the graphons at fixed time points, under the  topology induced by the cut metric,  in the following theorem. 

	\begin{thm}{\label{thm:ldp-point-cut}}
	Suppose Assumption~\ref{assmp:2}\eqref{assump:1AB} is satisfied. Then for any $t \in (0,T]$, the sequence $\widehat{H^n_t}$ converges to $\widehat{w_t}$ almost surely in the metric space $\left(\widehat{\cls_0}, \delta_{\square} \right)$. Moreover, for any finite $\cT \subseteq (0,T]$, the sequence $\left\{ \left( \widehat{H_t^n}\right)_{t \in \cT}: n \geq 1\right\}$ satisfies a large deviation principle in the $|\cT|$-fold product space of $\left(\widehat{\cls_0}, \delta_{\square} \right)$ with speed $n^2$ and rate function $\widehat I^{(\beta^+,\,\beta^-)}_{\text{point},\cT}$ given by 
	$$ \widehat I^{(\beta^+,\,\beta^-)}_{\text{point},\cT} \left( \left( \widehat{f_t}\right)_{t \in \cT}\right) := \inf \left\{ I^{(\beta^+,\,\beta^-)}_{\text{point},\cT} \left( \left( {f_t}\right)_{t \in \cT}\right)  : f_t \in \widehat{f_t}, \;\forall \; t \in \cT\right\},$$
	for any $\widehat{f_t}\in \widehat{\cS_0}, t \in \cT$.
\end{thm}

The proof of Theorem~\ref{thm:ldp-point-cut} follows from the proof of Proposition~\ref{prop:ldp-point-weak} and the block-approximation  method used in~\cite[Sections 3.2 and 3.3]{Markering2023}, and is therefore  omitted. We now consider a sample path LDP. For any sample path $\phi_{\bcdot}$ in $\cls_0$, we write $\widehat{\phi_{\bcdot}}$ for the corresponding sample path in $\widehat \cls_0$ of the equivalence classes, i.e.,  $\left(\widehat{\phi_{\bcdot}}\right)_t = \widehat{\phi_t}$ for any $t \in [0,T]$. Once again, the highly oscillatory behavior rules out a LDP without any local averaging and we have an analogue of Proposition~\ref{prop:no-ldp-weak}, given below. 

\begin{prop}{\label{prop:no-ldp-cut}}
	Suppose Assumption~\ref{assmp:2}\eqref{assump:1AB} is satisfied. Then $\left\{\widehat{H^n_{\bcdot}} : n \geq 1\right\}$, the sequence of $\mathbb{D}\left([0,T]:\left(\widehat{\mathcal{S}_0}, \delta_{\square}\right)\right)$-valued random variables,
    %, is either not tight or 
    does not satisfy  a LDP with a non-trivial rate function.
\end{prop}

The proof of Proposition~\ref{prop:no-ldp-cut} is identical to the proof of Proposition~\ref{prop:no-ldp-weak} (only change being we use Theorem~\ref{thm:ldp-point-cut} instead of Proposition~\ref{prop:ldp-point-weak}) and, therefore, omitted.  Proposition~\ref{prop:no-ldp-cut} should be compared with ~\cite[Theorem 1.4]{Braunsteins2023} which proves a LDP for the sample path process under the conditions of that paper. As discussed earlier, this different behavior is due to very high jump rates in our system captured by the condition that $a(n) \to \infty$, see Remark~\ref{rem:cutpathLDP} for a detailed discussion of this point. 
Nevertheless, by a local time-averaging one can smoothen out the oscillatory behavior and establish an LDP for the window-averaged process $\left\{\widehat{A^{n,\epsilon}_{\bcdot}} : n \geq 1\right\}$. Note that (\ref{eq:av-lip}) guarantees the continuity of the sample path for this time-averaged process with respect to the cut metric.  The next pair of results  prove the almost sure convergence and the LDP for this  process in the space $\mathbb{C}\left([0,T]:\left(\widehat{\mathcal{S}_0}, \delta_{\square}\right)\right)$, respectively.

	\begin{prop}{\label{prop:lln-avg-path}}
	For any $\epsilon \in (0,T)$, The sequence $\widehat{A^{n,\epsilon}_{\bcdot}}$ converges almost surely, as $n \to \infty$, to $\widehat{w^{\epsilon}_{\bcdot}} $ 
    in $\mathbb{C}\left([0,T]:\left(\widehat{\mathcal{S}_0},\delta_{\square}\right)\right)$. 
\end{prop}

	\begin{thm}{\label{thm:LDP-avg-path}}
 Suppose that Assumption \ref{assmp:2}\eqref{assump:1AB} is satisfied. Then, for any $\epsilon \in (0,T)$, the sequence of random variables $\{ \widehat{A^{n,\epsilon}_{\bcdot}}: n \geq 1\}$ satisfies a large deviation principle in $\mathbb{C}\left([0,T]:\left(\widehat{\mathcal{S}_0},\delta_{\square}\right)\right)$ with speed $a(n)n^2$ and rate function $\widehat I^{(\beta^+,\,\beta^-)}_{\epsilon}$ defined as
	\begin{align}{\label{def:rate-path-cut}}
	\widehat 	I_{\epsilon}^{(\beta^+,\,\beta^-)}(\widehat{\phi_{\bcdot}}) := \sup_{\eta >0} \inf \left\{ I_{\epsilon}^{(\beta^+, \, \beta^-)}(\phi^*_{\bcdot}) :  \sup_{t \in [0,T]} \delta_{\square} \left(\widehat {\phi_t}, \widehat{\phi^*_t}\right) < \eta\right\},
	\end{align}
	for $\widehat{\phi_{\bcdot}} \in \mathbb{C}\left([0,T]:\left(\widehat{\mathcal{S}_0}, \delta_{\square}\right)\right)$.
\end{thm}

Proofs of Proposition~\ref{prop:lln-avg-path} and Theorem~\ref{thm:LDP-avg-path} are deferred to Section~\ref{lln} and Section~\ref{sec:cutldp} respectively. Let us mention a few words on the proof of Theorem~\ref{thm:LDP-avg-path}. The main idea is to make use of the Dawson-G\"{a}rtner projective limit LDP~\cite{Dembo2018}. Towards that goal, we first prove LDP results for finite-dimensional projections of the sample path, given by the following proposition.

	\begin{prop}{\label{prop:LDP-avg-path-finite}}
	Suppose that Assumption \ref{assmp:2}\eqref{assump:1AB} is satisfied. Then, for any $\epsilon \in (0,T)$, and any finite $\cT \subseteq [0,T]$, the sequence of random variables $\{ \left(\widehat{A^{n,\epsilon}_{t}}\right)_{t \in \cT}: n \geq 1\}$ satisfies a large deviation principle in the $|\cT|$-fold product space of $\left(\widehat{\mathcal{S}_0},\delta_{\square}\right)$ with speed $a(n)n^2$ and rate function $\widehat I^{(\beta^+,\,\beta^-)}_{\epsilon, \cT}$ given by
	\begin{align*}
		\widehat I^{(\beta^+,\,\beta^-)}_{\epsilon,\cT} \left( \left(\widehat{f_t}\right)_{t \in \cT}\right) := \sup_{\eta >0} \inf \left\{ I_{\epsilon}^{(\beta^+, \, \beta^-)}(\phi^*_{\bcdot}) : \max_{t \in \cT} \delta_{\square} \left(\widehat {f_t}, \widehat{\phi^*_t}\right) < \eta \right\},
	\end{align*}
	for  $\widehat{f_t} \in \widehat{\cls_0}$, $t \in \cT$. 
\end{prop}

We then prove an exponential tightness result (see Definition~\ref{def:exp-tight}) for the sample path of the time-averaged process and combine it with Proposition~\ref{prop:LDP-avg-path-finite} to complete the proof of Theorem~\ref{thm:LDP-avg-path}. Details can be found in Section~\ref{sec:pf-LDP-path-cut}.

\begin{rem}{\label{weakandcut}}
    The proof of Proposition~\ref{prop:LDP-avg-path-finite} builds on the LDP with respect to the weak topology in Corollary~\ref{cor:weakLDP-path} and is inspired by  the techniques of~\cite{Chatterjee2011} (see also \cite{Dhara2022}) to lift the LDP from the weak topology to the cut-metric topology. One may wonder if it is possible to directly establish Proposition~\ref{prop:LDP-avg-path-finite} using the weak convergence methods, for example those used in~\cite{Dupuis2022} or by suitably modifying the proof of Theorem~\ref{thm:weakLDP}. One of the main hurdles in the first approach is that since \cite{Dupuis2022} considers a static setting, their basic object of interest is a sequence of (time independent) random graphons giving representations of the underlying inhomogeneous random graphs, in particular the graphons take values in $\left\{0,1\right\}$. As a consequence, the control variables take a relatively simple form and the conditional distribution of each control variable in the proof of \cite[Theorem 4.1]{Dupuis2022} can be characterized by a real-valued parameter.  This allows for a proof of the large deviation principle essentially along the lines of the proof of the LLN for their model, using standard concentration bounds. In contrast, in our setting the time-averaged graphons take values in the whole interval $[0,1]$ and due to the underlying random time evolution of the graphon one needs to consider time-dependent controls; as a result the study of the asymptotic behavior of the controlled averaged graphons in the cut-metric topology becomes much less tractable. 
    Furthermore, the second approach (namely suitably modifying the proof of Theorem~\ref{thm:weakLDP}) also seems challenging, as convergence in the space $\left(\widehat{\cls_0}, \delta_{\square}\right)$ does not easily translate to useful convergence properties in the space $\left(\cls_0, d_{\square}\right)$. Due to these difficulties, we take the approach of first proving the LDP for the weak topology and then suitably lifting it to the cut-metric topology.
    \end{rem}

    \begin{rem} \label{rem:3.17}
    	If 	$\widehat 	I_{\epsilon}^{(\beta^+,\,\beta^-)}(\widehat{\phi_{\bcdot}}) < \infty$ for some $\widehat{\phi_{\bcdot}} \in \mathbb{C}\left([0,T]:\left(\widehat{\mathcal{S}_0}, \delta_{\square}\right)\right)$, then for any $\eta>0$, there exists $\phi^{*,\eta}_{\bcdot}$ satisfying $\sup_{t \in [0,T]} \delta_{\square} \left(\widehat {\phi_t}, \widehat{\phi^{*,\eta}_t}\right) < \eta$ and $I_{\epsilon}^{(\beta^+,\,\beta^-)}(\phi^{*,\eta}_{\bcdot}) < \infty.$ By Remark~\ref{rem:rate-avg-process-domain} and~(\ref{eq:av-lip}), we have $t \mapsto \widehat{\phi_t^{*,\eta}}$ to be $L(T,\epsilon)$-Lipschitz and hence $t \mapsto \widehat{\phi_t}$ is $(L(T,\epsilon)+2\eta)$-Lipschitz. Since this is true for arbitrary $\eta >0$, we conclude that  $\widehat 	I_{\epsilon}^{(\beta^+,\,\beta^-)}(\widehat{\phi_{\bcdot}}) < \infty$ only if $t \mapsto \widehat{\phi_t}$ is $L(T,\epsilon)$-Lipschitz. This shows that domain of the candidate rate function $\widehat 	I_{\epsilon}^{(\beta^+,\,\beta^-)}$ is relatively compact in the space  $\mathbb{C}\left([0,T]:\left(\widehat{\mathcal{S}_0}, \delta_{\square}\right)\right)$. Since, by definition, the function $\widehat 	I_{\epsilon}^{(\beta^+,\,\beta^-)}$ is lower semi-continuous, we conclude that it has compact sub-level sets, hence, a valid rate function. Moreover, Remark~\ref{rem:rate-avg-process-domain} also guarantees that 
    	$$ \left\{ \widehat{\phi_{\bcdot}} : \widehat 	I_{\epsilon}^{(\beta^+,\,\beta^-)}(\widehat{\phi_{\bcdot}}) < \infty\right\} =  \left\{ \widehat{\phi_{\bcdot}} : \widehat 	I_{\epsilon}^{(\beta^+,\,\beta^-)}(\widehat{\phi_{\bcdot}}) \leq \|\beta^+\|_1 + \|\beta^-\|_1 \right\}$$
    	is compact.
    \end{rem}

    \begin{rem}\label{rem:lsc}
It can in fact be shown that, under the assumptions of Theorem \ref{thm:LDP-avg-path}, the function
$$
\mathbb{C} \left([0,T] : \left(\widehat{\cls_0},\delta_{\square}\right)\right)  \ni \widehat{\phi_{\bcdot}}  \mapsto  \inf \left\{ I_{\epsilon}^{(\beta^+, \, \beta^-)}(\phi^*_{\bcdot}) :  \widehat {\phi^*_t} = \widehat{\phi_t}, \; \forall \; t \in [0,T]\right\}
$$
is lower semi-continuous from which it follows that the rate function in \eqref{def:rate-path-cut} takes the simpler form
{\begin{align*}
	\widehat 	I_{\epsilon}^{(\beta^+,\,\beta^-)}(\widehat{\phi_{\bcdot}}) &=  
    \inf \left\{ I_{\epsilon}^{(\beta^+, \, \beta^-)}(\phi^*_{\bcdot}) : \widehat {\phi^*_t} = \widehat{\phi_t}, \; \forall \; t \in [0,T] \right\}\\
    &= \inf_{\substack{\phi^*_{\bcdot} \\ \phi^*_t \in \widehat \phi_t, \,\forall\, t\in [0,T]}}\;\;  \inf_{\substack{\tilde \phi \\ \text{Av}^{\epsilon}(\tilde \phi )= \phi^*_{\bcdot}}}
    \left\{\int_{[0,1]^2 \times [0,T]} 
    \left(\sqrt{\beta^+(1-\tilde \phi)}-\sqrt{\beta^-\tilde \phi}\right)^2\right\}
    \end{align*}}
    for all $\widehat{\phi_{\bcdot}} \in \mathbb{C}\left([0,T]:\left(\widehat{\mathcal{S}_0}, \delta_{\square}\right)\right)$.
Similarly, it also holds true that for any finite $\cT \subseteq [0,T]$
$$	\widehat I^{(\beta^+,\,\beta^-)}_{\epsilon,\cT} \left( \left(\widehat{f_t}\right)_{t \in \cT}\right) := \inf \left\{ I_{\epsilon}^{(\beta^+, \, \beta^-)}(\phi^*_{\bcdot}) :  \widehat {\phi^*_t} = \widehat{f_t}, \; \forall \; t \in \cT\right\},$$
for  $\widehat{f_t} \in \widehat{\cls_0}$, $t \in \cT$.
However, the proof of the lower semi-continuity property  is substantially involved.  It requires a non-trivial extension of the Szemer\'{e}di regularity lemma for time-evolving graphons along with several other technical ingredients.  In order to avoid making the paper longer we have chosen not to present this result here. Instead, we have pursued a less complicated argument for the special case of time-homogeneous jump rates, i.e., $\beta^{\pm}_t = \gamma^{\pm} \in \cls$ for all $t \in [0,T]$. See Section~\ref{validrate:cut-time-hom}.
%instead plan to give this result and its proof in the online supplement of this work.
\end{rem}

\begin{rem}{\label{rem:cutpathLDP}}
 As noted previously, one work that has studied large deviations for time-varying graphons is \cite{Braunsteins2023}.  This work considered edge-flipping rates to be space and time homogeneous and hence the stationary distribution is also (spatially) homogeneous (i.e., an \erdos\;graph).  The paper established a sample path LDP on a fixed time horizon. %by regarding these graphon valued paths as random variables in the space of c\`adl\`ag functions with values in the space $\widehat{\cls_0}$.
  In the setting of \cite{Braunsteins2023} the jump rates are $O(1)$ and thus the transient (fixed time-horizon) properties of Markov chains for the edge-flips determine the large deviation behavior and the speed of the LDP is $O(n^2)$ (where $n$ is the number of vertices in the graph). In contrast, in the current work, edge-flips occur at rate $a(n)$, where $a(n)\to \infty$ as $n \to \infty$ and we establish an LDP with speed $a(n)n^2$ which is governed by the long-time behavior of the edge-flip Markov chains. Furthermore, in the current work, the edge-flipping rates are allowed to take very general forms that may be inhomogeneous in space and time. Our formulation considers locally time-averaged graphon process, since, as argued in Proposition \ref{prop:no-ldp-cut}, a sample path LDP for the original graphon valued process is impossible in our setting.  
\end{rem}

\begin{rem}{\label{rem:homldp}}
    Let $F$ be a simple directed graph on the vertex set $[k]=\left\{1,\ldots,k\right\}$ and for any $h \in \mathcal{S}_0$, we define
$$ \ft(h,F) := \int_{[0,1]^k} \left( \prod_{(i,j) \in E(F)} h(x_i,x_j) \right) \prod_{i=1}^k dx_i,$$
where $E(F)$ is the edge-set of $F$. In other words, if $h$ is the graphon representation of a simple directed graph $G$, then $\ft(h,F)$, the \textit{homomorphism density} corresponding to $F$, measures the probability that an uniformly random mapping from the vertex set of $F$ into the vertex set of $G$ is a homomorphism (edge-preserving map). It is easy to see that $\ft(h_1,F)=\ft(h_2,F)$ whenever $h_1 \sim h_2$ and hence $\ft(\widehat h, F)$ is well-defined. The graphon limit theory tells us that $\delta_{\square} \left(\widehat{h_n}, \widehat h \right) \to 0$ if and only if $\ft(\widehat{h_n}, F) \to \ft(\widehat h, F)$ for all simple directed graph $F$. Using contraction principle, as a direct corollary of Theorem~\ref{thm:LDP-avg-path}, we obtain that for each $F$, $\left\{\ft(\widehat{A^{n,\epsilon}_t}, F) : 0 \leq t \leq T\right\}$ satisfies a LDP in $\mathbb{C}\left([0,T] : \mathbb{R}\right)$ with speed $a(n) n^2$. Furthermore, a similar argument as for Proposition~\ref{prop:no-ldp-weak} and Proposition~\ref{prop:no-ldp-cut} shows that a path LDP cannot hold for the original process $\left\{\ft(\widehat{H^n_t}, F) : 0 \leq t \leq T\right\}$ due to its high oscillatory behavior. 
Finally, we remark that it would be of interest to establish a
a path LDP for the the locally averaged process of the homomorphism densities, i.e.,  for the process
	\begin{equation}{\label{eq:def-h-process}}
		 \left\{ T^{n,\epsilon,F}_t := \dfrac{1}{|U_{\epsilon}(t)|} \int_{U_{\epsilon}(t)} \ft(\widehat{H_s^n},F)\,ds : 0 \leq t \leq T\right\}
	\end{equation}
	in the space $\mathbb{C}\left([0,T] : \mathbb{R}\right)$. Such a result is not immediate from the results in the current paper. A weak convergence approach with a suitable weak topology similar to the proof of Theorem~\ref{thm:weakLDP} might be useful, but we leave this study for future work. LDP for one particular  case of \eqref{eq:def-h-process} does follow from our results, namely when $F$ is  a single directed edge. In this case, $\ft(\cdot, F)$ gives the edge density and $h \mapsto \ft(h,F)$ being linear we have $\ft(\widehat{A^{n,\epsilon}_t}, F) = T^{n,\epsilon,F}_t $, for all $t$.
\end{rem}

\begin{rem}
	Note that all the law of large numbers and large deviation results presented in this paper require no assumptions on the initial graph configuration and are invariant under the choice of the initial configuration. This is  a consequence of the fast evolving nature of the dynamics which ensure that the process decouples from its initial state within a very short time period. 
	\end{rem}

\subsection{Applications to rare event asymptotics}\label{sec:apps}
This section is devoted to applications of the large deviation principles derived earlier to a few concrete situations where we analyze the corresponding variational problem and deduce more details about the rate of exponential decay and optimal trajectories. The examples we shall consider are inspired by those discussed in~\cite[Section 7]{Dupuis2022} and \cite[Section 2]{Braunsteins2023}. Throughout this section we restrict ourselves to the case of  time-homogeneous jump rates, i.e., $\beta^{\pm}_t =\gamma^{\pm} \in \cS$ for all $t \in [0,T]$.

\subsubsection{Large deviations for local edge densities}
\label{sec:lded}  The focus here will be on the edge densities of the locally time-averaged graphons. In particular, we look at  the process 
$\left\{E^{n,\epsilon}_t := T_t^{n,\epsilon,F}\right\}$ (defined in~(\ref{eq:def-h-process})), where $F$ is just a single edge : 
$$ E^{n,\epsilon}_t = \dfrac{1}{|U_{\epsilon}(t)|} \int_{U_{\epsilon}(t)} \ft(H_s^n,F)\,ds = \dfrac{1}{|U_{\epsilon}(t)|} \int_{U_{\epsilon}(t)} \int_{[0,1]^2} H^n_s(x,y)\,dx\,dy\,ds, \; t \in [0,T].$$
Note that, $\ft(H_s^n,F)$ gives the edge density (total number of edges normalized by $n^2$) at time $t$ and hence  $E^{n,\epsilon}_t$ is the average edge density in 
an $\epsilon$-window around $t$. As observed earlier, when $f$ is a single edge, $E^{n,\epsilon}_t = \ft(\widehat{A^{n,\epsilon}_t},F) = \ft(A^{n,\epsilon}_t,F)$ and the map $\cS_0 \ni h \mapsto \ft(h,F) \in [0,1]$ is continuous with respect to the weak topology and hence we can deduce an LDP for the sequence of random variables $\{ E^{n,\epsilon}_{\bcdot}: n \geq 1\}$  in $\mathbb{C}\left([0,T]: \mathbb{R}\right)$  from Corollary~\ref{cor:weakLDP-path}. In particular, we note that $E_{\bcdot}^{n,\epsilon}$ converges almost surely to the constant function $p^*$ over $[0,T]$, where 
$$ p^* = \int_{[0,1]^2} \dfrac{\gamma^+}{\gamma^++\gamma^-} \in (0,1).$$

The rare event that we shall focus is that the number of edges is unusually large/small at least once during $[0,T]$. For example, fix some closed set $\cT \subseteq [0,T]$ and $\delta \in (0,1-p^*)$. Consider the following atypical event : $ \sup_{t \in \cT} E^{n,\epsilon}_t \geq (p^*+\delta)$. We are interested to understand what are the likely trajectories for the time-averaged graphon when conditioned on this rare event. The next  result answers the question.

\begin{thm}{\label{thm:likely-path-cond-edge}}
	Fix $\delta \in (0, 1- p^*), \epsilon \in (0,T)$. and closed $\cT \subseteq [0,T]$. Let 
	$$ \widehat \cH^{\epsilon}_{\delta, \cT} := \argmin \left\{ \widehat I^{(\beta^+,\, \beta^-)}_{\epsilon}\left(\widehat{\phi_{\bcdot}}\right)  :  \sup_{t \in \cT} \ft \left( \widehat{\phi_t}, F\right) \geq p^*+\delta \right\}. $$
	Then, $ \widehat \cH^{\epsilon}_{\delta, \cT} $ is non-empty and compact in $\mathbb{C}\left([0,T]:\left(\widehat{\mathcal{S}_0}, \delta_{\square}\right)\right)$. Moreover, for any $\varepsilon >0$ and large enough $n$,  
	$$  \log \mathbb{P} \left( \inf_{\widehat{\phi_{\bcdot}} \in \widehat \cH^{\epsilon}_{\delta, \cT}} \sup_{t \in [0,T]} \delta_{\square} \left( \widehat{A^{n,\epsilon}_t},\, \widehat{\phi_t}\right) \geq \varepsilon \,\Bigg \rvert  \sup_{t \in \cT} E_t^{n,\epsilon} \geq p^*+\delta\right) \leq e^{-Ca(n)n^2},$$
	for some positive constant $C$ which depends only on $\gamma^{\pm}, \cT,\epsilon, \varepsilon$ and $\delta$, but not $n$.
\end{thm}

 The next two results provide an exact characterization for $\widehat \cH^{\epsilon}_{\delta, \cT}$ through the solutions of a related variational problem.

\begin{prop}{\label{prop:edge-max-eq}}
Fix $\delta \in (0, 1- p^*), \epsilon \in (0,T)$. Consider the following variational problem :
	\begin{equation}{\label{eq:edge-var-prob}}
		\text{Minimize } \int_{[0,1]^2_T} \cQ_1 \left(\phi, \gamma^+,\gamma^-\right) \text{ over } \phi \in \cW_0 \text{ s.t. } \dfrac{1}{|U_{\epsilon}(t)|} \int_{[0,1]^2 \times U_{\epsilon}(t)} \phi \geq p^*+\delta.
	\end{equation} 
There is an unique (upto a set of measure zero) optimizer $\phi^{*,t,\delta}$ which is given by
%	the collection of all the optimal solutions of the variational problem in~(\ref{eq:edge-var-prob}) are given by $\left\{ \phi^{*,t,\delta} : t \in \cT^*\right\}$ where
	$$ \phi^{*,t,\delta}_s := \begin{cases}
		\dfrac{1}{2} + \dfrac{\gamma^+ - \gamma^-+\lambda_{\delta}}{2\sqrt{(\gamma^+-\gamma^-+\lambda_{\delta})^2 + 4 \gamma^+\gamma^-}}=:f^*_{\delta}, & \text{ if } s \in U_{\epsilon}(t),\\
		\dfrac{\gamma^+}{\gamma^++\gamma^-} = w^*, & \text{ if } s \notin U_{\epsilon}(t).
	\end{cases} $$
	where $\lambda_{\delta} \in \mathbb{R}$ is the unique solution to 
	\begin{equation}{\label{eq:def-lambda}}
		\int_{[0,1]^2} \left( 	\dfrac{1}{2} + \dfrac{\gamma^+ - \gamma^-+\lambda_{\delta}}{2\sqrt{(\gamma^+-\gamma^-+\lambda_{\delta})^2 + 4 \gamma^+\gamma^-}}\right) = p^*+\delta.
	\end{equation}
	Moreover, $\lambda_{\delta}$ is strictly increasing and continuous in $\delta$. The optimal value of the objective function in~(\ref{eq:edge-var-prob}) is given by 
	\begin{equation}{\label{eq:formula-obj}}
|U_{\epsilon}(t)|	\int_{[0,1]^2} \cQ_1 \left( f^*_{\delta}, \gamma^+, \gamma^-\right),
	\end{equation}
	which is also continuous in $\delta.$
\end{prop}

\begin{prop}{\label{prop:edge-max-eq2}}
		Consider the set-up of Theorem~\ref{thm:likely-path-cond-edge}.  Then 
	$$ \widehat \cH^{\epsilon}_{\delta, \cT}  = \left\{\widehat{\text{Av}^{\epsilon}(\phi)} : \phi = \phi^{*,t,\delta} \text{ for some } t \in \cT^* \right\},$$
	where $\cT^* := \argmin \left\{|U_{\epsilon}(t)| : t \in \cT\right\}$. 
\end{prop}

Proofs of the above results are deferred to Section~\ref{sec:applications}. Proposition~\ref{prop:edge-max-eq} should be compared to~\cite[Lemma 7.2]{Dupuis2022} which provides similar characterization for the graphon with optimal value for the rate function satisfying atypically large edge density in the context of stationary graphs. {When $\gamma^+$ and $\gamma^-$ are constant, the minimizer $f^*_{\delta}$ is a constant graphon, so conditioned on the rare event, the evolving graph is locally in time asymptotically \erdos. While this mirrors the optimizer in the static model of \cite{Dupuis2022}, the large deviation cost is fundamentally different: the static model is governed by the Bernoulli relative entropy $\cQ_2$, whereas the present dynamical setting is governed by the Donsker--Varadhan functional $\cQ_1$, namely the Dirichlet-form cost of maintaining an atypical occupation measure for the edge Markov chain.}

The above results  show that conditioned on the event that at least once during the pre-specified time set $\cT$ the locally time-averaged edge density of the graph touches atypically large value $p^*+\delta$, the time-averaged sample path is close to  $\widehat{\text{Av}^{\epsilon}\left(\phi^{*,t,\delta}\right)}$, for some $t \in \cT^*$. The sample path trajectory of the original  graph process is close to $\phi^{*,t,\delta}$, which, as we can see from Proposition~\ref{prop:edge-max-eq}, takes the equilibrium value $w^*$ outside $U_{\epsilon}(t)$ (incurring no cost) and takes the graph value $f^*_{\delta}$ in $U_{\epsilon}(t)$ which helps it achieve local edge density of $p^*+\delta$ at $t$. Clearly, it costs the least to maintain this extremal graph value and achieve this large edge density at time instants with smallest neighborhood size, i.e., for time instants in $\cT^*$.  If $\cT =[0,T]$, then $\cT^*=\left\{0,T\right\}$. In other words, the likely trajectory will either take the extremal value $f^*_{\delta}$ for small time $\epsilon$ at the start and then fall back to equilibrium at $w^*$ or keep being at $w^*$ for the whole time except for the last $\epsilon$ time unit where it should be $f_{\delta}^*$. An interesting point to note is that although the sample path $H^n_{\bcdot}$ is a c\`adl\`ag path, the most likely trajectories (conditioned on the event discussed here) are discontinuous, which again is a consequence of the highly oscillatory sample path behavior as $n$ becomes large. One can also prove simlar results for the event that during $\cT$, the local edge density has been usually low at some time instant, i.e., $\left(\inf_{t \in \cT} E^{n,\epsilon}_t \leq p^*-\delta \right)$ for some $\delta \in (0,p^*)$.

\subsubsection{Most likely path to a given destination}
This application is analogue to the example described in~\cite[Section 2.2]{Braunsteins2023}. Suppose we condition to end the time-averaged process $\widehat{A^{n,\epsilon}_{\bcdot}}$ near a fixed graphon $\widehat f$ at time $T$. A natural question is what are the most likely trajectories for the time-averaged graphon process and whether that is unique or not. The following theorem answers that question.

\begin{thm}{\label{thm:likely-path-end}}
	Fix $\epsilon \in (0,T)$ and $\widehat f \in \widehat{\cS_0}$. Let 
	$$ \widehat \cL^{\epsilon}(\widehat{f}) := \argmin \left\{ \widehat I^{(\beta^+,\, \beta^-)}_{\epsilon}\left(\widehat{\phi_{\bcdot}}\right)  :  \widehat{\phi_T} = \widehat f \right\}. $$
	Then, $ \widehat \cH^{\epsilon}_{\delta, \cT} $ is non-empty and compact in $\mathbb{C}\left([0,T]:\left(\widehat{\mathcal{S}_0}, \delta_{\square}\right)\right)$. Moreover, for any $\varepsilon >0$, small enough $\delta>0$, and large enough $n$,  
	$$  \log \mathbb{P} \left( \inf_{\widehat{\phi_{\bcdot}} \in \widehat \cL^{\epsilon} (\widehat f)}\sup_{t \in [0,T]} \delta_{\square} \left( \widehat{A^{n,\epsilon}_t},\, \widehat{\phi_t}\right) \geq \varepsilon \,\Bigg \rvert  \delta_{\square} \left(\widehat{A^{n,\epsilon}_T}, \widehat f\right) \leq \delta \right) \leq e^{-Ca(n)n^2},$$
	for some positive constant $C$ which depends only on $\gamma^{\pm},,\epsilon, \varepsilon$ and $\widehat f$.
\end{thm}

The proof of Theorem~\ref{thm:likely-path-end} is identical to that of Theorem~\ref{thm:likely-path-cond-edge}, hence omitted. The characterization of the set of optimal paths $\widehat \cL^{\epsilon}(\widehat{f})$ is trickier compared to $\widehat \cH^{\epsilon}_{\delta,\cT}$ since the rate function is getting minimized over the set $(\widehat{\phi_T}=\widehat f)$ which possesses no convexity structure similar to that of $\widehat \cH^{\epsilon}_{\delta,\cT}$. Nevertheless, inspired by~\cite[Theorem 2.8]{Braunsteins2023}, if we restrict our attention to the case of jump rates with block structure, we can analyze $\widehat \cL^{\epsilon}(\widehat{f})$.

\begin{prop}{\label{prop:most-likely-path}}
	Fix $\epsilon \in (0,T)$ and $\widehat f \in \widehat{\cS_0}$. Suppose $\gamma^{\pm}$ has a block structure and so does $f$, for some $f \in \widehat f$. In other words, there exists $0=a_0 < \ldots < a_k =1$ such that 
	$$ \gamma^{\pm}(x,y) = \gamma^{\pm}_{ij}, \; f(x,y) = f_{ij}, \; \forall (x,y) \in [a_{i-1}, a_i) \times [a_{j-1},a_j)$$for all $i,j \in [k]$.  Let $\cS_0^* \subseteq \cS_0$ be defined as 
	$$ \cS^*_0(\widehat f) := \argmin \left\{ \int_{[0,1]^2} \cQ_1 \left( g, \gamma^+, \gamma^-\right) : g \in \widehat f \right\},$$
	and for any $g \in \cS_0$, we define $\phi^{(g)} \in \cW_0$ as follows :
	 $$ \phi^{(g)}_t = \begin{cases}
	 		g, &\text{if } t \in  U_{\epsilon}(T) = [T-\epsilon,T], \\
	 	w^*, & \text{otherwise } 
	 \end{cases}$$
	Then $\cS^*_0(\widehat f)$ is non-empty and 
	$$ \widehat \cL^{\epsilon}(\widehat{f}) = \left\{\widehat{\text{Av}^{\epsilon}(\phi)} : \phi = \phi^{(g)} \text{ for some } g \in \cS^*_0(\widehat f) \right\}.$$
	Moreover, if $\gamma^{\pm},f $ are constant over $[0,1]^2$, then $\widehat \cL^{\epsilon}(\widehat{f})$ is a singleton set. 
\end{prop}

In other words, conditioned on the event that locally time-averaged graphon at time $T$ is close to $\widehat{f}$, the time-averaged sample path is close to  $\widehat{\text{Av}^{\epsilon}\left(\phi^{(g)}\right)}$, for some $g \in \cS_0^*(\widehat f)$. The sample path trajectory of the original  graph process is close to $\phi^{(g)}_{\bcdot}$, which takes the equilibrium value $w^*$ before time $T-\epsilon$ (thus incurring no cost) and takes some graph value $g \in \cS_0^*(\widehat f)$ on $[T-\epsilon, T]$ to minimize the cost while having local time-averaged graph at time $T$ to be $g \in \widehat f$. The proof of Proposition~\ref{prop:most-likely-path}  uses the same arguments as in~\cite[Theorem 2.8, Corollary 2.9]{Braunsteins2023} and hence omitted.

\subsubsection{Phase transition for the terminal observation}
For this subsection, we specialize to the case where the jump rates are constant, i.e., $\beta^{\pm} \equiv \lambda^{\pm} \in [c_{\beta}, \infty)$. In this case, the stationary state of the process is \erdos\;graph with parameter $p^*=\lambda^+/(\lambda^++\lambda^-)$. Consider an atypically large edge density $r \in (p^*,1)$. Condition on the event that the homomorphism density corresponding to a $d$-regular graph $F$ for the observed time-averaged graphon $\widehat{A^{n,\epsilon}_T}$ at time $T$ is at least $r^d$, it is natural to ask whether $\widehat{A^{n,\epsilon}_T}$ is close (in cut metric) to the constant graphon $\mathbf{r}$. The following result is the analogue of ~\cite[Theorem 2.1]{Braunsteins2023} in our set-up. 

\begin{thm}{\label{phase-trans}}
	Fix $\epsilon \in (0,T), r \in (p^*,1)$ and a finite $d$-regular graph $F$ with $|E(F)|$ many edges, with $d>1$.
	\begin{enumerate}[(a)]
		\item If the point $(r^d, \cQ_1(r,\lambda^+,\lambda^-))$ lies on the convex minorant of the function $u \mapsto  \cQ_1(u^{1/d},\lambda^+,\lambda^-)$ on $[0,1]$, then 
		$$ \lim_{n \to \infty} \dfrac{1}{a(n)n^2} \log \mathbb{P} \left( \ft(\widehat{A^{n,\epsilon}_T}, F) \geq r^{|E(F)|} \right) = - \epsilon  \cQ_1(r,\lambda^+,\lambda^-),$$
		and for every $\varepsilon$, there exists $C_1 >0 $ such that
		$$ \mathbb{P} \left( \delta_{\square} \left( \widehat{A^{n,\epsilon}_T}, \widehat{\mathbf{r}}\right) \geq \varepsilon \Big \rvert \ft(\widehat{A^{n,\epsilon}_T}, F) \geq r^{|E(F)|} \right) \leq e^{-C_1a(n)n^2}, \; \forall \; n \geq 1.$$ 
		\item If the point $(r^d, \cQ_1(r,\lambda^+,\lambda^-))$ does not lie on the convex minorant of the function $u \mapsto  \cQ_1(u^{1/d},\lambda^+,\lambda^-)$ on $[0,1]$, then 
		$$ \lim_{n \to \infty} \dfrac{1}{a(n)n^2} \log \mathbb{P} \left( \ft(\widehat{A^{n,\epsilon}_T}, F) \geq r^{|E(F)|} \right) > - \epsilon  \cQ_1(r,\lambda^+,\lambda^-),$$
		and there exists $\varepsilon, C_2 >0 $ such that
		$$ \mathbb{P} \left( \delta_{\square} \left( \widehat{A^{n,\epsilon}_T}, \widehat{\mathbf{r}}\right) \leq \varepsilon \Big \rvert \ft(\widehat{A^{n,\epsilon}_T}, F) \geq r^{|E(F)|} \right) \leq e^{-C_2a(n)n^2}, \; \forall \; n \geq 1.$$
	\end{enumerate}
	Here $\widehat{\mathbf{r}}$ is the equivalence class for the constant graphon $\mathbf{r}$.
\end{thm}

Similar to~\cite{Braunsteins2023}, Theorem~\ref{phase-trans} exhibits a phase transition in the behavior of $\widehat{A^{n,\epsilon}_T}$ as $r$ varies : We have the \textit{Symmetric (S)} phase where the graph is close in cut-metric to the constant graphon $\widehat{\mathbf{r}}$ and the \textit{Symmetry Breaking (SB)} phase where it is away from any uniform graphon. Note a small change in interpretation in Theorem~\ref{phase-trans} from ~\cite[Theorem 2.1]{Braunsteins2023}. Since we are focusing on the time-averaged graphon $\widehat{A^{n,\epsilon}_T}$ which is a weighted graph, it is more appropriate to consider whether this graph is close to constant graphon in cut metric or not, rather than to the corresponding \erdos\;graph; although the two are asymptotically equivalent since  large \erdos\;graphs are close in cut metric to a constant graphon. 
The decay rate here differs from that in \cite{Braunsteins2023}, which, as noted below Theorem \ref{thm:weakLDP}, is given as an action integral over space and time of the local Lagrangian for a two-state jump Markov process.
The key to the proof of Theorem~\ref{phase-trans} is the observation that $\widehat{A^{n,\epsilon}_T}$ satisfies LDP in the space $\left(\widehat{\cS_0}, \delta_{\square}\right)$ with speed $a(n)n^2$ and rate function at $\widehat f$ given by 
$$ \inf_{f \in \widehat f} \epsilon \int_{[0,1]^2} \cQ_1 (f, \lambda^+,\lambda^-), \; \forall \; \widehat f \in \widehat{\cS_0}.$$
This follows from proposition~\ref{prop:LDP-whole-avg-time-hom} with a easy time change argument. The rest of the proof is identical to that of ~\cite[Theorem 2.1]{Braunsteins2023} and, therefore, omitted.

\begin{rem}
	According to Theorem~\ref{phase-trans}, whether we are in S phase or in SB depends on the convex minorant of the function  $u \mapsto \cQ_1(u^{1/d},\lambda^+,\lambda^-)$. Different combination of degree parameter $d$ and rates $\lambda^{\pm}$ produce strikingly different behavior : The map $u \mapsto \cQ_1(u^{1/d},\lambda^+,\lambda^-)$ is either convex on $[0,1]$ or strictly concave on some $(u_1,u_2) \subset (0,1)$ and convex outside of it. Making use of bi-tangents to construct convex minorants, we can write down the following characterization. %A key parameter in this discussion will be $\gamma := (\lambda^+-\lambda^-)/\sqrt{\lambda^+\lambda^-}$. Note that, the parameter $\gamma$ and the stationary edge density $p^*$ is related by the formula 
%	$$ \gamma = \dfrac{2p^*-1}{\sqrt{p^*(1-p^*)}}.$$

\textit{For any $p^* \in (0,1)$, there exists a critical degree parameter $d_{c}(p^*) \in \mathbb{N}$ such that for $1 < d \leq d_{c}$ we are in the S phase for any $r \in (p^*,1)$, whereas for $d > d_c$ there exists an interval $(r_1,r_2)$ with $p^* < r_1 < r_2 <1$ such that we are in the S phase for $r \in (p^*,r_1] \cup [r_2, 1)$ and in the SB phase for $r \in (r_1,r_2)$. In other words, we observe no phase transition for $1 < d \leq d_c$ and a re-entrant phase transition ($\text{S}\to \text{SB} \to \text{S}$) when $d > d_c$. The critical degree is given by 
	$$ d_c(p^*) := \max \left\{ d \in \mathbb{N}  : \dfrac{1}{2(d-1)} \geq \sup_{x \in [0,1]} (1-x)\left(2x-1-\dfrac{(2p^*-1)\sqrt{x(1-x)}}{\sqrt{p^*(1-p^*)}} \right)\right\}.$$}
For example, $d_c(1/2) = 5$.The formula for the critical dimension above also implies existence of a critical stationary edge density parameter $p_c \in (0,1/2)$ such that for $p^* \geq p_c$ we have $d_c(p^*) \geq 2$ and hence for small $d$ we observe no phase transition and for large $d$ we get a re-entrant phase transition; whereas for $p^*<p_c$ we get $d_c(p^*)=1$ and hence we observe re-entrant phase transition for any $d>1$. This critical $p_c$ is given by the unique solution of the following equation :
$$ \sup_{x \in [0,1]} (1-x)\left(2x-1-\dfrac{(2p-1)\sqrt{x(1-x)}}{\sqrt{p(1-p)}} \right) = \dfrac{1}{2}.$$
The  characterization can be proved using Theorem~\ref{phase-trans}, followed by a straightforward calculus exercise, hence omitted. These observations match quite nicely with the graphical information presented in~\cite[Figure 2,3]{Braunsteins2023} in the context of $d=2$. Since we are in the regime of rapid evolution, we should compare our observations to those in~\cite{Braunsteins2023} for large planning horizon $T$. The authors there also noted same re-entrant phase transition for small $p^*$, but no phase transition for large $p^*$. Similar properties for static \erdos\, random graphs were established in~\cite{Lubetzsky2015}.
\end{rem}

\subsection{Interacting particle systems defined on the random graphs}
In this section we consider an interacting particle system for which the interactions are governed by the random graph that is evolving with time according to the dynamics described before. While there is extensive literature on such systems, for the specific class of models considered in this paper—particularly those motivated by neuronal dynamics and incorporating techniques from dense graph limits—see, for example, \cite{medvedev2014nonlinear, medvedev2019continuum, chiba2019mean, Dupuis2022} for a more in-depth discussion of their motivation and relevance. For the simplicity of exposition, we consider the state space of the particles to be $\RR$, but the proofs easily extend to the case of $\RR^d$.    Each vertex $i \in [n]$ is viewed as a particle whose state evolves according to the following differential equation. 
\begin{equation}{\label{model:interact1}}
	\dfrac{\partial}{\partial t} u^{i,n}(t) = F_i^n\left(u^n_t,t \right) + \dfrac{1}{n} \sum_{j=1}^n X_{i,j}^n(t) D_{i,j}^n(u^n_t,t), \;\;	u_i^n(0) = z_i^n,\; t \in [0,T].
\end{equation}
Here, $u^{i,n} : [0,T] \to \mathbb{R}$ describes  the state (position) trajectory of the particle $i$ and $u^n : [0,T] \times [0,1] \to \mathbb{R}$ is the random field describing the path profiles for all the particles, defined as follows:
$$ u^n(t,x) := \sum_{i \in [n]} u^{i,n}(t) \mathbbm{1}_{Q_i^n}(x), \;\; u^n_t(\cdot) := u^n(t,\cdot)\; \; \forall \; (t,x) \in [0,T] \times [0,1].$$ The functions $F_i^n$ and and $D_{i,j}^n$ are maps from 
$L^2\left([0,1]\right) \times [0,T]$ to $\mathbb{R}$, the first 
describing the intrinsic dynamics for particle $i$ the second modeling the degree of pairwise interactions between the particles $i$ and $j$. 
The connectivity of the pairwise interaction graph at time $t$ is given by the graph $G^n(t)$ defined on the vertex set $[n]$ which has the edge connecting $i$ to $j$ (for $i,j \in [n]$) if and only if $X_{i,j}^n(t)=1$.
Finally, $z_i^n$ denotes the initial state of the particle $i$. This model allows the intrinsic dynamics and interactions between the particles to depend not only on their current positions, but also on the particles themselves. 

As $n\to \infty$, the system of equations in \eqref{model:interact1} formally leads to continuum equations of the form 
\begin{align}
	\dfrac{\partial}{\partial t} v(t,x) &= F\left(v_t,t,x \right) + \int_0^1 W\left(x,y,t\right) D(v_t,t,x,y)\;dy, \; \;  \; (t,x) \in [0,T] \times [0,1],\label{model:interact2}\\
	v(0,x) &= z(x), \; \;  \; x \in [0,1], \label{model:interact3}
\end{align}
where 
$$ F : L^2\left([0,1]\right) \times [0,T] \times [0,1] \to \mathbb{R}, \; \; D : L^2\left([0,1]\right) \times [0,T] \times [0,1]^2 \to \mathbb{R},$$
represent the intrinsic dynamics and interactions of the system respectively, whereas $W \in \cW_0$ is a graphon process representing the asymptotic connectivity of the system. $z\in L^2[0,1]$ is the initial state of the system. As before,  we write $v_t(\cdot) := v(t,\cdot)$, for $t \in [0,T]$. We now introduce conditions that will ensure the well-posedness of the equations in \eqref{model:interact1} and \eqref{model:interact2}.

\begin{assump}{\label{assump:FD}}
	We say that $(F,D)$ satisfies the regularity conditions with parameter $L=L(F,D) \in (0,\infty)$, if the following hold.
	\begin{enumerate}[(i)]
		\item For any $t \in [0,T], (x,y) \in [0,1]^2$ and $\psi \in L^2\left([0,1]\right)$, we have 
		$$ |F(\psi,t,x)|, |D(\psi,t,x,y)| \leq L.$$
		\item For any $t \in [0,T], (x,y) \in [0,1]^2$ and $\psi,\psi^{\prime} \in L^2\left([0,1]\right)$, we have 
		$$ |F\left(\psi,t,x\right) - F\left(\psi^{\prime},t,x\right)| \leq L \|\psi - \psi^{\prime}\|_2,$$
		and 
		$$ |D\left(\psi,t,x,y\right) - D\left(\psi^{\prime},t,x,y\right)| \leq L \|\psi - \psi^{\prime}\|_2.$$
		\end{enumerate}
\end{assump}

We now introduce the space in which we consider solutions to \eqref{model:interact2}. For any $K \in (0,\infty)$, let $L^2\left([0,1],K\right)$ denote the closed ball of radius $K$ in $L^2\left([0,1]\right)$ and  $L_w^2\left([0,1],K\right)$ be the same $L^2\left([0,1],K\right)$ space but equipped with the weak topology. Clearly $L_w^2\left([0,1],K\right)$ is a compact space. Let $C\left([0,T];L_w^2\left([0,1],K\right)\right)$ be the set of all measurable functions $v :[0,T]\times [0,1]\to \mathbb{R}$, with $v_t \in L^2_w\left([0,1],K\right)$ for all $t$ and such that the map $[0,T] \ni t \mapsto v_t \in L^2_w\left([0,1],K\right)$ is continuous.
We equip $C\left([0,T];L_w^2\left([0,1],K\right)\right)$ with the norm 
$$ \big \lVert  v \big \rVert_{C\left([0,T];L_w^2\left([0,1],K\right)\right)} := \sup_{t \in [0,T]} \| u_t \|_{w,K}.$$
Here $\| \cdot \|_{w,K}$ is a norm on $L^2 \left( [0,1],K\right)$ which induces the weak topology on  it. Without loss of generality, we take 
$$ \| \psi \|_{w,K} := \sum_{k \geq 1} 2^{-k} \bigg \rvert \int_{[0,1]} \psi \varphi_k \bigg \rvert,$$
where $\left\{ \varphi_k : k \geq 1\right\}$ is an orthonormal basis of $L^2\left([0,1]\right)$. Note that, $\| \psi \|_{w,K} \leq \|\psi\|_2$ for any $\psi \in L^2\left([0,1],K\right)$. The following result is an immediate consequence of the Arzel\`a--Ascoli theorem for metric-space-valued functions
(e.g. \cite[Theorem~7.2]{billingsley1999convergence}).
\begin{prop}{\label{compact:lipschitz}}
	Fix any $K, M \in (0,\infty)$ and let $C_M\left([0,T];L_w^2\left([0,1],K\right)\right)$ be the subset of the space $C\left([0,T];L_w^2\left([0,1],K\right)\right)$ containing those $v$ which satisfies $ \|v_t-v_s\|_2 \leq M|t-s|$, for all $s,t \in [0,T]$. Then $C_M\left([0,T];L_w^2\left([0,1],K\right)\right)$ is a compact metric space.
\end{prop}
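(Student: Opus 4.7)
The plan is a straightforward Arzel\`a--Ascoli argument, with a short closedness check at the end. The ambient space $C\left([0,1];L_w^2\left([0,1],K\right)\right)$ is a complete metric space under the sup-norm built from $\|\cdot\|_{w,K}$: $L_w^2\left([0,1],K\right)$ is a compact metric space (it is the closed $L^2$-ball endowed with the weak topology, which is metrized by $\|\cdot\|_{w,K}$), so the space of continuous paths into it inherits completeness in the usual way. Thus it suffices to show that $C_M\left([0,1];L_w^2\left([0,1],K\right)\right)$ is a closed and relatively compact subset.

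For relative compactness I would verify the two hypotheses of Arzel\`a--Ascoli. Pointwise relative compactness is immediate: for each $t\in[0,1]$ and each $v$ in $C_M\left([0,1];L_w^2\left([0,1],K\right)\right)$, the value $v_t$ lies in the fixed compact set $L_w^2\left([0,1],K\right)$. Equicontinuity in the $\|\cdot\|_{w,K}$-metric also comes for free from the Lipschitz condition, since the inequality $\|\psi\|_{w,K}\le \|\psi\|_2$ noted just before the proposition gives
\begin{equation*}
\|v_t-v_s\|_{w,K} \;\le\; \|v_t-v_s\|_2 \;\le\; M|t-s|
\end{equation*}
uniformly over $v\in C_M\left([0,1];L_w^2\left([0,1],K\right)\right)$. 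Arzel\`a--Ascoli then yields that every sequence in $C_M\left([0,1];L_w^2\left([0,1],K\right)\right)$ has a subsequence converging in $C\left([0,1];L_w^2\left([0,1],K\right)\right)$.

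It remains to check that the limit still belongs to $C_M\left([0,1];L_w^2\left([0,1],K\right)\right)$. Suppose $v^n \to v$ in $C\left([0,1];L_w^2\left([0,1],K\right)\right)$; then for each fixed $t$, $v^n_t \to v_t$ in $L_w^2\left([0,1],K\right)$, i.e.\ weakly in $L^2[0,1]$. The $L^2$ norm is lower semicontinuous with respect to the weak topology, so for any $s,t\in[0,1]$,
\begin{equation*}
\|v_t - v_s\|_2 \;\le\; \liminf_{n\to\infty} \|v^n_t - v^n_s\|_2 \;\le\; M|t-s|,
\end{equation*}
which shows that $v$ satisfies the Lipschitz condition and so lies in $C_M\left([0,1];L_w^2\left([0,1],K\right)\right)$. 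The only mildly subtle point here is checking that the Lipschitz constraint is preserved by weak limits, but this is exactly the standard lower semicontinuity of the norm; there is no genuine obstacle in the proof.
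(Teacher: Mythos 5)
The paper omits this proof, calling it standard, and your proposal supplies exactly the expected argument: Arzel\`a--Ascoli in the compact target $L_w^2([0,1],K)$, with equicontinuity furnished by the Lipschitz bound via $\|\cdot\|_{w,K}\le\|\cdot\|_2$, followed by closedness via weak lower semicontinuity of the $L^2$-norm. The reasoning is correct and complete, and there is nothing further to compare against since the paper gives no proof of its own.
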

The next result gives unique solvability of \eqref{model:interact2}-\eqref{model:interact3}.
\begin{prop}{\label{prop:unique}}
For any $z \in L^2\left([0,1],K\right)$ and $(F,D)$ satisfying the regularity conditions stated in Assumption~\ref{assump:FD}, equation \eqref{model:interact2}, with initial condition given by \eqref{model:interact3}, has an unique solution in $C\left([0,T];L_w^2\left([0,1], K+2LT\right)\right)$. Moreover, the unique solution lies in $C_{2L}\left([0,T];L_w^2\left([0,1], K+2LT\right)\right)$. 
\end{prop}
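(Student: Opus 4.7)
The plan is to recast \eqref{model:interact2}--\eqref{model:interact3} as the integral equation
\begin{equation}
v(t,x) = z(x) + \int_0^t F(v_s,s,x)\,ds + \int_0^t\!\!\int_0^1 W(x,y,s)D(v_s,s,x,y)\,dy\,ds, \label{prop:integral}
\end{equation}
and then solve it by a standard Banach fixed point argument in the Banach space $X := C([0,1]; L^2[0,1])$, endowed with the weighted norm $\|v\|_\lambda := \sup_{t\in[0,1]} e^{-\lambda t}\|v_t\|_2$ for a sufficiently large $\lambda$. Define $\Phi : X \to X$ via the right-hand side of \eqref{prop:integral}. The key a priori estimates follow directly from Assumption~\ref{assump:FD} and the fact that $0 \le W \le 1$: for any $v \in X$ and any $0 \le s < t \le 1$,
\begin{equation}
\|\Phi(v)_t\|_2 \le \|z\|_2 + \int_0^t \bigl(\|F(v_u,u,\cdot)\|_2 + \|\textstyle\int_0^1 W(\cdot,y,u)D(v_u,u,\cdot,y)\,dy\|_2\bigr)\,du \le K + 2Lt,
\end{equation}
and, by the same boundedness,
\begin{equation}
\|\Phi(v)_t - \Phi(v)_s\|_2 \le 2L|t-s|. \label{prop:lip}
\end{equation}
Thus if $z \in L^2([0,1],K)$, then $\Phi$ maps $X$ into the subset $\{v : \|v_t\|_2 \le K + 2L, \ \|v_t - v_s\|_2 \le 2L|t-s|\}$.

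Next, I would verify the contraction property. For $v,v' \in X$, the Lipschitz assumptions on $F$ and $D$ in their first ($L^2$) argument give
\begin{equation}
\|\Phi(v)_t - \Phi(v')_t\|_2 \le 2L \int_0^t \|v_s - v'_s\|_2\,ds,
\end{equation}
and multiplying by $e^{-\lambda t}$ and taking the supremum yields $\|\Phi(v) - \Phi(v')\|_\lambda \le (2L/\lambda) \|v - v'\|_\lambda$. Choosing $\lambda > 2L$ makes $\Phi$ a strict contraction on $X$, and hence it has a unique fixed point $v^* \in X$ by Banach's theorem. Combining this with the a priori estimates above (which are preserved under Picard iteration starting, e.g., from $v^{(0)} \equiv z$), the fixed point automatically satisfies $\|v^*_t\|_2 \le K+2L$ and $\|v^*_t - v^*_s\|_2 \le 2L|t-s|$, so $v^*$ lies in $C_{2L}([0,1]; L^2_w([0,1], K+2L))$ since strong $L^2$-continuity implies weak $L^2$-continuity.

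Finally, for uniqueness in the potentially larger class $C([0,1]; L^2_w([0,1], K+2L))$ (where continuity is only in the weak topology), I would argue that any solution $v$ in this class must in fact be strongly Lipschitz in time: indeed, since $v_s \in L^2([0,1], K+2L)$ for each $s$, the integrals in \eqref{prop:integral} are well-defined, and the same bounds used to derive \eqref{prop:lip} show $\|v_t - v_s\|_2 \le 2L|t-s|$. Thus any such $v$ lies in $X$, and a Gr\"onwall inequality applied to two solutions $v, v'$ via
\begin{equation}
\|v_t - v'_t\|_2 \le 2L\int_0^t \|v_s - v'_s\|_2\,ds
\end{equation}
yields $v \equiv v'$. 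The main (mild) obstacle is precisely this interplay between strong and weak topologies: one must check that although the space in which solutions are sought is defined via weak continuity, the integral equation forces strong $L^2$-regularity, so that the strong-topology Picard argument captures \emph{all} solutions.
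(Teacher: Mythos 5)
Your argument is correct and reaches the same conclusion, but by a slightly different (and arguably cleaner) route in the existence step. The paper also proves well-posedness via Picard iteration on the integral equation, but it restricts to a short horizon $[0,\varepsilon]$ with $\varepsilon = (1/4L)\wedge 1$ to make the iteration contractive in the plain sup norm, and then (implicitly) relies on the standard extension argument to reach $[0,1]$; the uniqueness and a priori Lipschitz bounds in the weak-topology class are established exactly as you do, via pointwise bounds and Gr\"onwall. You instead use the Bielecki weighted norm $\|v\|_\lambda = \sup_t e^{-\lambda t}\|v_t\|_2$ with $\lambda > 2L$ to get a strict contraction on all of $[0,1]$ in one shot, which sidesteps the patching-together step entirely. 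This is a well-known alternative and is fully correct here since the weighted norm is equivalent to the sup norm, so $X$ remains a Banach space, and the Lipschitz/boundedness constraints define a closed $\Phi$-invariant subset so the fixed point inherits them. Your final observation — that the integral equation forces any weak-topology-continuous solution to be strongly $L^2$-Lipschitz, so that uniqueness in the a priori larger class $C([0,1]; L^2_w([0,1],K+2L))$ reduces to uniqueness in $X$ — is precisely the point the paper's Gr\"onwall argument exploits, and you identify it correctly as the only subtlety.
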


The proof of the above proposition is given in Section~\ref{appn}. Proposition~\ref{prop:unique} guarantees that \eqref{model:interact2} is uniquely solvable for any initial state $z \in L^2\left([0,1]\right)$. To connect \eqref{model:interact2} to the system of equations in \eqref{model:interact1}, we define for all $\psi \in L^2\left([0,1]\right)$ and $(t,x,y) \in [0,T] \times [0,1]^2$, 
\begin{equation}{\label{def:FDn}}
	F^n(\psi,t,x) := \sum_{i=1}^n F_i^n(\psi,t) \mathbbm{1}_{Q_i^n}(x), \;D^n(\psi,t,x,y) := \sum_{i,j=1}^n D_{i,j}^n(\psi,t)\mathbbm{1}_{Q_{i,j}^n}(x,y),
\end{equation}  
and 
$$z^n(x) := \sum_{i=1}^n z_i^n\mathbbm{1}_{Q_i^n}(x).$$
Note that for each $n$, $z^n \in L^2\left([0,1]\right)$.
The system of equations in \eqref{model:interact1} can be rewritten as 
\begin{align}
	\dfrac{\partial}{\partial t} u^n(t,x) &= F^n\left(u^n_t,t,x \right) + \int_0^1 H^n\left(x,y,t\right) D^n(u^n_t,t,x,y)\;dy, \; \;  \; (t,x) \in [0,T] \times [0,1],\label{model:interact4}\\
	u^n(0,x) &= z^n(x), \; \;  \; x \in [0,1]. \label{model:interact5}
\end{align}
Since $H^n \in \cW_0$ for each $n$, we have from Proposition~\ref{prop:unique}, that if $(F^n, D^n)$ satisfy 
 the regularity conditions in Assumption~\ref{assump:FD}, then there exists an unique solution $u^n$ to \eqref{model:interact4}-\eqref{model:interact5} (equivalently, for the system \eqref{model:interact1}).

 We will need the following assumption for the next two results.

\begin{assump}{\label{assump:FDconv}} 
	Suppose that $(F^n,D^n)$ and $(F,D)$ satisfy Assumption~\ref{assump:FD} with parameter $L \in (0,\infty)$. Moreover, suppose that
\begin{enumerate}[(i)]
		\item \label{assump:FDconv1} For any $K \in (0,\infty)$, as $n \to \infty$, we have 
		$$ d_1(F^n,F; K) :=  \int_0^T \int_0^1 \sup_{\psi \in L^2\left([0,1],K\right)} \big \rvert F^n(\psi,t,x) - F(\psi,t,x)\big \rvert\, dx\,dt \longrightarrow 0,$$  
		$$ d_2(D^n,D; K) :=  \int_0^T \int_{[0,1]^2} \sup_{\psi \in L^2\left([0,1],K\right)}  \big \rvert D^n(\psi,t,x,y) - D(\psi,t,x,y)\big \rvert\, dx\,dy\,dt \longrightarrow 0.$$  
		\item  \label{assump:FDconv2}   If $\psi_n \in L^2\left([0,1]\right)$ converges weakly to $\psi \in L^2\left([0,1]\right)$, then 
		\begin{equation*}
			F(\psi_n,t,x) \longrightarrow F(\psi,t,x), \; \text{a.e.}[t,x], \;\;\;
			D(\psi_n,t,x,y) \longrightarrow D(\psi,t,x,y), \; \text{a.e.}[t,x,y].
		\end{equation*}
	\end{enumerate}
\end{assump}

The following theorem establishes an LDP for the sequence $\left\{u^n : n \geq 1\right\}$. Note that, we allow the initial data to be random (but independent of the graph process).
\begin{thm}{\label{thm:interactldp}}
	Suppose that Assumption~\ref{assump:FDconv} holds and the sequence $u^n$  is given by \eqref{model:interact4}, with initial conditions given by \eqref{model:interact5}, where $z^n$ is independent of $\left\{H^n_t : t \in [0,T]\right\}$, for every $n$. Suppose, for some $K<\infty$,
    $z^n \in L^2\left([0,1],K\right)$ for each $n$ and $\left\{z^n : n \geq 1\right\}$ satisfies an LDP in $L_w^2\left([0,1],K\right)$ with speed $a(n)n^2$ and rate function $I_{\text{initial}} : L_w^2\left([0,1],K\right) \to [0,\infty]$. Moreover suppose that  Assumption~\ref{assmp:2}\eqref{assump:1A} is satisfied. Then $\left\{u^n : n \geq 1\right\}$, as a sequence of random variables in $C_{2L}\left([0,T]; L_w^2\left([0,1],K+2LT\right)\right)$, satisfies an LDP with speed $a(n)n^2$ and rate function $I_{\text{solution}} : C_{2L}\left([0,T]; L_w^2\left([0,1],K+2LT\right)\right) \to [0,\infty]$, defined as follows :
	$$ I_{\text{solution}}(v) := \inf_{\left(\phi,z\right) \in \mathcal{C}(v)} \left[ J^{(\beta^+, \, \beta^-)}\left(\phi \right) + I_{\text{initial}}(z)\right], $$
	where $J^{(\beta^+, \, \beta^-)}$ is as in Theorem \ref{thm:weakLDP} and $\mathcal{C}(v)$ is the set of all $(\phi,z) \in \cW_0 \times L_w^2\left([0,1],K\right)$ such that $v$ solves the equation:
	\begin{align}
		\dfrac{\partial}{\partial t} v(t,x) &= F\left(v_t,t,x \right) + \int_0^1 \phi\left(x,y,t\right) D(v_t,t,x,y)\;dy, \; \;  (t,x) \in [0,T] \times [0,1],\label{model:interact6}\\
		v(0,x) &= z(x), \; \;  x \in [0,1]. \label{model:interact7}
	\end{align}
\end{thm}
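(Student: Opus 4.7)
The plan is to derive the LDP for $\{u^n\}$ by applying the contraction principle to a deterministic solution map and then upgrading via exponential equivalence. First, since $z^n$ is independent of $H^n$ and both sequences satisfy LDPs at speed $a(n)n^2$ (the latter via Theorem~\ref{thm:weakLDP}), the joint sequence $\{(H^n, z^n)\}$ satisfies an LDP in the product space $\cW_0 \times L_w^2([0,1], K)$ with rate function $(\phi, z) \mapsto J^{(\beta^+,\beta^-)}(\phi) + I_{\text{initial}}(z)$. By Proposition~\ref{prop:unique}, I define the deterministic solution map $\Psi : \cW_0 \times L_w^2([0,1], K) \to C_{2L}([0,1]; L_w^2([0,1], K+2L))$ sending $(\phi, z)$ to the unique solution of \eqref{model:interact6}--\eqref{model:interact7}. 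Once continuity of $\Psi$ is established, the asserted form of $I_{\text{solution}}$ will follow immediately from the contraction principle because $\mathcal{C}(v) = \Psi^{-1}(\{v\})$.

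The main analytical obstacle is to prove continuity of $\Psi$ with respect to the product weak topology on the domain. Given $(\phi^n, z^n) \to (\phi, z)$, set $v^n := \Psi(\phi^n, z^n)$; by Proposition~\ref{compact:lipschitz} the sequence $\{v^n\}$ lies in a compact set, so every limit point can be extracted along a subsequence $v^{n_k} \to \bar v$. Passing to the limit in the integral form of \eqref{model:interact6} and using Assumption~\ref{assump:FDconv}(\ref{assump:FDconv2}), I obtain $F(v^{n_k}_s, s, x) \to F(\bar v_s, s, x)$ and $D(v^{n_k}_s, s, x, y) \to D(\bar v_s, s, x, y)$ almost everywhere, and since $|F|,|D| \leq L$, dominated convergence upgrades these to strong $L^2$ convergence. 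The delicate term is the interaction integral $\int_0^1 \phi^{n_k}(x, y, s) D(v^{n_k}_s, s, x, y)\, dy$, which combines weak $L^2$ convergence of $\phi^{n_k}$ in $(x,y,s)$ with strong $L^2$ convergence of $D(v^{n_k}_s, \cdot)$; a standard weak--strong product argument, together with the uniform Lipschitz-in-time control built into $C_{2L}$, allows me to pass to the limit and identify $\bar v$ as a solution of \eqref{model:interact6}--\eqref{model:interact7} driven by $(\phi, z)$. Uniqueness from Proposition~\ref{prop:unique} then forces $\bar v = \Psi(\phi, z)$, finishing continuity.

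Finally, to pass from $\tilde u^n := \Psi(H^n, z^n)$ to $u^n$, I subtract the integral forms of \eqref{model:interact4} and \eqref{model:interact6} with driver $H^n$ and initial data $z^n$. Splitting $F^n(u^n_s,\cdot) - F(\tilde u^n_s,\cdot)$ into an approximation term bounded pointwise by $\sup_{\psi \in L^2([0,1],K+2L)} |F^n(\psi,s,x) - F(\psi,s,x)|$ and a Lipschitz term of order $L\|u^n_s - \tilde u^n_s\|_2$, and doing the same for $D^n - D$ while using $|H^n| \leq 1$, Assumption~\ref{assump:FDconv}(\ref{assump:FDconv1}) (after noting that bounded functions converging in $L^1$ also converge in $L^2$) together with a Gronwall argument yields $\sup_{t \in [0,1]} \|u^n_t - \tilde u^n_t\|_2 \to 0$ deterministically. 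Since $\|\cdot\|_{w,K+2L} \leq \|\cdot\|_2$, this is convergence in $C_{2L}([0,1]; L_w^2([0,1], K+2L))$. The contraction principle applied to the continuous map $\Psi$ and the joint LDP from the first step delivers the LDP for $\tilde u^n$ with rate $I_{\text{solution}}$, and the deterministic vanishing of $u^n - \tilde u^n$ makes $\{u^n\}$ exponentially equivalent to $\{\tilde u^n\}$, yielding the same LDP with the same rate function.
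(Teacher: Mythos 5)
Your proof is correct and arrives at the same conclusion, but it reorganizes the argument compared to the paper. The paper proves (Proposition~\ref{prop:continter}) that the map $(W,z,F^*,D^*) \mapsto v$ sending the data to the unique solution of the interaction equation is continuous on $\cW_0 \times L^2_w([0,1],K) \times \mathcal{F} \times \mathcal{D}$, where $\mathcal{F}$ and $\mathcal{D}$ are equipped with the $d_1$ and $d_2$ metrics, and then observes that the deterministic sequences $\{F^n\}$ and $\{D^n\}$ satisfy trivial LDPs concentrated at $F$ and $D$. A single application of the contraction principle to the four-tuple $(H^n, z^n, F^n, D^n)$ then yields the LDP for $u^n$ directly, with no separate comparison step. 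You instead fix $(F,D)$, prove continuity of the two-argument map $\Psi(\phi,z)$ (using the same weak--strong product argument as in Proposition~\ref{prop:continter}, specialized to constant coefficients), obtain the LDP for $\tilde u^n = \Psi(H^n, z^n)$ via contraction, and then close the gap between $u^n$ and $\tilde u^n$ with a Gr\"onwall estimate that uses Assumption~\ref{assump:FDconv}(\ref{assump:FDconv1}), boundedness of $F^n, D^n, H^n$, and the $L$-Lipschitz property of $F$ and $D$. Both routes are legitimate; the paper's version avoids the explicit Gr\"onwall comparison at the cost of introducing the auxiliary metric spaces $\mathcal{F}$ and $\mathcal{D}$ and their trivial LDPs, whereas yours works with a simpler fixed solution map but requires an extra (deterministic) exponential-equivalence step. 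One small point worth tightening: when you upgrade the $L^1$ control from $d_1(F^n,F;K+2L)$ to an $L^2$ estimate via boundedness, you should make explicit that this gives $\int_0^1 \lVert F^n(u^n_s,s,\cdot) - F(u^n_s,s,\cdot)\rVert_2\,ds \to 0$ (and analogously for $D^n - D$), which is what the Gr\"onwall inequality actually needs.
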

The proof of Theorem~\ref{thm:interactldp}  is given in Section~\ref{appn}. 
\begin{rem}
\label{rem:3.21}
In \cite[Theorem 4.3]{Dupuis2022}, an analogous large deviation principle for dynamical systems, in which interactions between particles are governed by a {\em static} random graph, has been studied. Our work allows for a general class of time-varying random graphs with jump rates that can be inhomogeneous in both time and space. In addition, Theorem \ref{thm:interactldp}
     allows the intrinsic dynamics and interactions between the particles to depend not only on their current positions (as was the case in the model analyzed in \cite{Dupuis2022}), but also the particles themselves, the path profile of all the particles at that time and the time itself. We note, however, that our LDP is given in a weaker topology, in that we consider $L_w^2\left([0,1],K\right)$ instead of $L^2[0,1]$. This shortcoming is a consequence of the fact that we  only established an LDP for the locally time-averaged graphon process rather than the original graphon sample path in the  cut-metric topology (see also Remark~\ref{rem:cutpathLDP}).
\end{rem}

\begin{rem}
\label{rem:3.22}
In this work we considered the large deviation behavior of dynamical systems with interactions governed by certain types of dense dynamic random networks.  Stochastic dynamical systems with interactions given by certain sparse random network models were studied in \cite{bhamidi2019weakly} (see Example 2.2 therein) and the law of large numbers for the empirical measures of the states of the stochastic dynamical systems was established. It would be interesting to see if the methods from the current work can be used to prove large deviation results from the LLN behavior characterized in \cite{bhamidi2019weakly}.
\end{rem}

\begin{example}
	One basic example of $(F,D)$ that satisfies Assumption~\ref{assump:FD} and the condition~\ref{assump:FDconv2} in Assumption~\ref{assump:FDconv}, is the following. Let $\kappa_i \in L^2\left([0,1]^2\right)$, $i=1,2,3$ be such that 
    $$ \sup_{x \in [0,1]} \| \kappa_i(\cdot,x)\|_2 < \infty.$$ 
	Let $D_0:  \RR\times \RR \times [0,T]  \to \RR$ and $F_0:  \RR \times [0,T]  \to \RR$ be such that for some $C=C(D_0,F_0) \in (0,\infty)$
	$$ |D_0(x,y,t) - D_0(x',y',t)| + |F_0(x,t)- F_0(x',t)| \leq C(D_0, F_0)  (|x-x'| + |y-y'|),$$
	and 
	$$ |D_0(x,y,t)|+ |F_0(x,t)| \leq C(D_0, F_0),$$
	for any $x,x^{\prime},y,y^{\prime} \in \RR$ and $t \in [0,T]$. 
	For $\psi \in L^2\left([0,1]\right)$, define $\psi\ast \kappa_i: [0,1] \to \RR$ as
	$$\psi \ast \kappa_i(x) = \int_0^1 \psi(x') \kappa_i(x',x) dx', \; x \in [0,1].$$
	Note that $\psi*\kappa_i(x)$ is finite for all $x$ and $\psi*\kappa_i \in L^2\left([0,1]\right)$. Moreover, 
    $$ \|\psi*\kappa_i(x) - \psi^{\prime}*\kappa_i(x)\|_{\infty} \leq \| \psi-\psi^{\prime}\|_2 \left( \sup_{x \in [0,1]} \|\kappa_i(\cdot,x) \|_2\right).$$
    for any $\psi,\psi^{\prime} \in L^2\left([0,1]\right)$. Finally, if $\psi_n$ converges weakly to $\psi$ in $L^2\left([0,1]\right)$, then $\psi^n*\kappa_i$ converges pointwise to $\psi*\kappa_i$. Define for all $x,y \in [0,1], \psi \in L^2\left([0,1]\right), t\in [0,T],$ 
	$$D(\psi, x,y,t) = D_0(\psi * \kappa_1(x), \psi\ast \kappa_2(y),t), \;\; F(\psi,x,t) = F_0(\psi\ast \kappa_3(x), t).$$
	Then it follows that $D$ and $F$ satisfy Assumption~\ref{assump:FD} and the condition~\ref{assump:FDconv2} in Assumption~\ref{assump:FDconv}.
\end{example}

\begin{example}
	A simple example of  a sequence $\left\{(F^n,D^n) : n \geq 1\right\}$ of the form~\eqref{def:FDn} and satisfying \ref{assump:FDconv1} in Assumption~\ref{assump:FDconv} is as follows. Assume that $(F,D)$ satisfies Assumption~\ref{assump:FD}. Moreover, assume that for any $t \in [0,T], (x,y), (x^{\prime},y^{\prime}) \in [0,1]^2$ and $\psi \in L^2\left([0,1]\right)$, we have 
	$$ |F\left(\psi,t,x\right) - F\left(\psi,t,x^{\prime}\right)| \leq C |x-x^{\prime}|$$
    and 
    $$ |D\left(\psi,t,x,y\right) - D\left(\psi,t,x^{\prime},y^{\prime}\right)| \leq C \left(|x-x^{\prime}| + |y-y^{\prime}|\right),$$
	for some $C \in (0,\infty)$. For any $\psi \in L^2\left([0,1]\right)$ and $t \in [0,T]$, define, 
	$$ F_i^n(\psi,t) := n\int_{Q_i^n} F(\psi,t,x)\,dx,\;\; D_{i,j}^n(\psi,t) := n^2 \int_{Q_{i,j}^n} D(\psi,t,x,y)\, dx\,dy,$$
	and then set $F^n$ and $D^n$ according to~(\ref{def:FDn}). In other words, $(F^n,D^n)$ is the block approximation of $(F,D)$. It is immediate that $(F^n,D^n)$ satisfies Assumption~\ref{assump:FD}. Moreover, 
	\begin{align*}
		\Big \rvert F^n(\psi,t,x) - F(\psi,t,x)\Big \rvert &= \sum_{i=1}^n  \Big \rvert F_i^n(\psi,t) - F(\psi,t,x)\Big \rvert \mathbbm{1}_{Q_i^n}(x) \\
		&\leq \sum_{i=1}^n  n\int_{Q_i^n} \Big \rvert F(\psi,t,x^{\prime}) - F(\psi,t,x)\Big \rvert\, dx^{\prime} \mathbbm{1}_{Q_i^n}(x) \\
		& \leq Cn\sum_{i=1}^n  \int_{Q_i^n} \big \rvert x^{\prime} - x\big \rvert\, dx^{\prime} \mathbbm{1}_{Q_i^n}(x) \leq C/n,
	\end{align*}
	for any $\psi,t$. Thus $d_1(F^n,F;K) \to 0$ for any $K\in(0,\infty)$. Similar estimates also show that $d_2(D^n,D;K) \to 0$.
\end{example}

\subsection{Organization of Proofs}
\label{sec:orgpf}
 The remaining paper is organized as follows. In Section \ref{lln} we prove the main LLN results: Theorems \ref{cor2:thmlin}, \ref{thm:llnnew}, and Proposition 
 \ref{prop:lln-avg-path}. Section \ref{ldp} contains the proofs of 
 Propositions \ref{prop:no-ldp-weak}, \ref{prop:ldp-point-weak}, Corollary~\ref{cor:weakLDP-path}, and Theorem \ref{thm:weakLDP}    giving results on  large deviation principles in the weak topology. Specifically, the proof of the latter result
 is completed in Sections \ref{sec:isrf}--\ref{sec:pflowbd}.  Section \ref{sec:cutldp} lifts the LDP in the weak topology to an LDP in the cut metric. This section contains proofs of Theorem~\ref{thm:LDP-avg-path} and Proposition~\ref{prop:LDP-avg-path-finite}.
Section \ref{sec:applications} contains the proofs of the results on explicit applications of the LDP results to some rare events given in Section \ref{sec:apps}.
  Proof of the LDP for the dynamical systems stated in Theorem \ref{thm:interactldp} is given in Section \ref{appn}.  Finally, Section \ref{sec:aux} collects some auxiliary results. A more detailed organization of each section is provided at the beginning of that section.

\section{Law of Large Numbers}{\label{lln}}
   In this section we prove Theorem~\ref{cor2:thmlin}, Theorem~\ref{thm:llnnew} and Proposition~\ref{prop:lln-avg-path}. We start with the following technical lemmas. The first is a standard concentration estimate; we provide a proof for completeness.
	
	\begin{lem}{\label{es11}}
		Let $\left\{B_{i,j} : i,j \in [n]\right\}$ are independent $[0,1]$-valued random variables and let 
		$$ T^n(x,y) := \sum_{i,j=1}^n B_{i,j} \mathbbm{1}_{Q_{i,j}^n}(x,y), \;\; \E T^n := \sum_{i,j=1}^n \E(B_{i,j}) \mathbbm{1}_{Q_{i,j}^n}(x,y), \;\; x,y \in [0,1].$$
		Then, for any $\delta >0$, we have 
		$$ \mathbb{P} \left[ d_{\infty \to 1} \left(T^n, \E T^n \right) \geq \delta \right] \leq 2^{2n} \exp\left(-n^2\delta^2/2\right).$$
	\end{lem}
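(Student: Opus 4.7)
The plan is to reduce the continuous supremum defining $d_{\infty\to 1}$ to a discrete supremum by exploiting both the bilinearity of the inner integral and the step-function nature of $T^n - \mathbb{E} T^n$, then apply a standard Azuma/Hoeffding-type concentration estimate to each fixed test pair, and finish with a union bound.

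First I would observe that the difference $T^n - \mathbb{E} T^n = \sum_{i,j}(B_{i,j} - \mathbb{E} B_{i,j})\mathbf{1}_{Q_{i,j}^n}$ is a block function. Hence for any measurable $a,b:[0,1]\to[-1,1]$,
\begin{equation*}
\int_{[0,1]^2} a(x)b(y)\bigl(T^n - \mathbb{E} T^n\bigr)\,dx\,dy \;=\; \frac{1}{n^2}\sum_{i,j=1}^n \bigl(B_{i,j}-\mathbb{E} B_{i,j}\bigr) \tilde a_i \tilde b_j,
\end{equation*}
where $\tilde a_i := n\int_{Q_i^n} a$ and $\tilde b_j := n\int_{Q_j^n} b$ both lie in $[-1,1]$. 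The right-hand side is bilinear in $(\tilde a,\tilde b) \in [-1,1]^n\times[-1,1]^n$, so by fixing one vector and optimizing over the other (an operation that is linear on a product of intervals), the supremum is attained at a vertex, i.e.\ with $\tilde a_i,\tilde b_j\in\{-1,+1\}$. Therefore
\begin{equation*}
d_{\infty\to 1}\bigl(T^n,\mathbb{E} T^n\bigr) = \frac{1}{n^2} \sup_{a,b\in\{-1,+1\}^n} \sum_{i,j=1}^n \bigl(B_{i,j}-\mathbb{E} B_{i,j}\bigr)\,a_i b_j,
\end{equation*}
and the supremum on the right is taken over a finite set of cardinality $2^{2n}$.

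Next, for each fixed pair $(a,b)\in\{-1,+1\}^n\times\{-1,+1\}^n$, the summands $(B_{i,j}-\mathbb{E} B_{i,j})\,a_i b_j$ are independent, mean-zero, and bounded in absolute value by $1$ since $B_{i,j}\in[0,1]$. The Azuma--Hoeffding inequality therefore yields
\begin{equation*}
\mathbb{P}\left[\sum_{i,j=1}^n (B_{i,j}-\mathbb{E} B_{i,j})\,a_i b_j \;\geq\; n^2\delta\right] \;\leq\; \exp\!\left(-\frac{(n^2\delta)^2}{2n^2}\right) \;=\; \exp\!\bigl(-n^2\delta^2/2\bigr).
\end{equation*}
Note that thanks to the freedom of flipping signs of $a$ or $b$, we do not need an absolute value in the event above, which saves a factor of $2$ and lets us recover the precise prefactor $2^{2n}$ in the statement.

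Finally a union bound over the $2^{2n}$ admissible pairs $(a,b)$ combines these two steps: taking the supremum through the probability gives
\begin{equation*}
\mathbb{P}\bigl[d_{\infty\to 1}(T^n,\mathbb{E} T^n)\geq \delta\bigr] \;\leq\; 2^{2n}\exp\!\bigl(-n^2\delta^2/2\bigr),
\end{equation*}
which is the desired inequality. There is no real obstacle here; the only conceptual step is the reduction from the continuous $\{a,b\}$ supremum to the discrete $\{-1,+1\}^n$ supremum, and that is just bilinearity plus the observation that $T^n - \mathbb{E} T^n$ is constant on each block $Q_{i,j}^n$.
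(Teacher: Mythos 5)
Your proof is correct and follows essentially the same route as the paper: reduce the supremum over measurable $a,b:[0,1]\to[-1,1]$ to a supremum over $\{-1,+1\}^n\times\{-1,+1\}^n$ by block-averaging and bilinearity, apply a sub-Gaussian/Hoeffding tail bound for each fixed sign pair, and finish with a union bound over the $2^{2n}$ pairs. The only difference is that you spell out the reduction to the discrete supremum, which the paper states as ``immediate from the definition.''
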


	\begin{proof}
			From the definition of the cut metric $d_{\infty \to 1}$, it is immediate that 
		$$ d_{\infty \to 1}\left(T^n,\E T^n \right) = \sup_{a,b \in  \left\{-1,1\right\}^n} \frac{1}{n^2} \sum_{i,j=1}^n a_ib_j \left(B_{i,j} - \E B_{i,j}\right).$$
		For any $a,b \in \left\{-1,1\right\}^n$ and $i,j \in [n]^2$, the random variable $a_ib_j\left(B_{i,j}-\E B_{i,j}\right)$ lies in $[-1,1]$ and hence it is sub-Gaussian with variance factor $1$, see \cite[Chapter 2.3]{boucheron} for definition and properties of sub-Gaussian variables. The collection $\left\{B_{i,j} : i,j \in [n]\right\}$ being independent, we have $\sum_{i,j} a_ib_j\left(B_{i,j}-\E B_{i,j}\right)$ to be sub-Gaussian with variance factor $n^2$. Hence,
		$$ \mathbb{P} \left( \dfrac{1}{n^2} \sum_{i,j} a_ib_j\left(B_{i,j}-\E B_{i,j}\right) \geq \delta \right) \leq \exp\left(-n^2\delta^2/2\right),$$
		and therefore, union bound implies
		\begin{equation*}
			\mathbb{P} \left(d_{\infty \to 1}\left(T^n,\E T^n \right) \geq \delta\right) \leq 2^{2n}\exp\left(-n^2\delta^2/2\right).
		\end{equation*} 
        The result follows.
	\end{proof}
	The next lemma gives the convergence of the expected value of the graphon process.
	\begin{lem}{\label{lem:es12}}
	Define $\mu^n \in \cW_0$ as
		$$ \mu^n_t := \E H_t^n  = \sum_{i,j=1}^n \E X_{i,j}^n (t) \, \mathbbm{1}_{Q_{i,j}^n} := \sum_{i,j=1}^n P_{i,j}^n(t) \, \mathbbm{1}_{Q_{i,j}^n},\;\; t \in [0,T],$$
		where $P_{i,j}^n(t) := \E X_{i,j}^n(t).$ 
        Then, as $n\to \infty$, $\mu^n$ converges  to $w$ in $L^2\left([0,1]^2_T\right)$. 
	\end{lem}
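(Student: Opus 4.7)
The plan is to upgrade weak to strong $L^2$-convergence via a norm-convergence argument. Since $\mu^n, w \in [0,1]$ a.e.\ on the finite measure space $[0,1]^3$, it suffices to prove (a) $\mu^n \rightharpoonup w$ weakly in $L^2[0,1]^3$ and (b) $\|\mu^n\|_2 \to \|w\|_2$; these together yield $\|\mu^n - w\|_2^2 = \|\mu^n\|_2^2 - 2\langle \mu^n, w\rangle + \|w\|_2^2 \to 0$. Both will be extracted from the ODE satisfied by $\mu^n$: since the edges $\{X^n_{i,j}\}$ are independent two-state CTMCs, $(P^n_{i,j})'(t) = a(n)\beta^{n,+}_{i,j}(t) - a(n)(\beta^{n,+}_{i,j}(t)+\beta^{n,-}_{i,j}(t))P^n_{i,j}(t)$, which in block-kernel form reads, distributionally in $t$,
\[
\partial_t \mu^n = a(n)\bigl[\beta^{n,+} - (\beta^{n,+}+\beta^{n,-})\mu^n\bigr] \text{ on } [0,1]^3.
\]

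For (a), test the ODE against an arbitrary smooth $\psi$ on $[0,1]^3$, integrate by parts in $t$, and divide by $a(n)$. The integrated time-derivative yields boundary terms together with $\int \partial_t \psi\, \mu^n\, d\lambda$, both $O(1)$ because $|\mu^n|\le 1$, so $\int \psi\, \beta^{n,+}\, d\lambda - \int \psi\, (\beta^{n,+}+\beta^{n,-})\,\mu^n\, d\lambda \longrightarrow 0$. By Banach--Alaoglu, any subsequence of $\{\mu^n\}$ has a further subsequence converging weak-$*$ in $L^\infty$ to some $\mu^*$; combining this with the $L^1$-convergence $\beta^{n,\pm}\to \beta^\pm$ from Assumption~\ref{assmp:1}(\ref{assump:1Aweak}), I pass to the limit to obtain $\int \psi [\beta^+-(\beta^++\beta^-)\mu^*]\,d\lambda=0$ for every smooth $\psi$, forcing $\mu^*=\beta^+/(\beta^++\beta^-)=w$ a.e.\ (using the strict lower bound $\beta^++\beta^-\ge 2c_\beta>0$ from Assumption~\ref{assmp:1}(\ref{assump:1B})). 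Uniqueness of the weak-$*$ limit gives $\mu^n\rightharpoonup w$ weak-$*$ in $L^\infty$, and hence weakly in $L^2$ since $L^2\subset L^1$ on a finite measure domain.

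For (b), multiply the ODE by $\mu^n$, use $\mu^n\partial_t\mu^n = \tfrac12\partial_t(\mu^n)^2$, test against a smooth $\psi$, integrate by parts in $t$, and divide by $a(n)$; this yields
\[
\int \psi\, \beta^{n,+}\mu^n\, d\lambda - \int \psi\, (\beta^{n,+}+\beta^{n,-})(\mu^n)^2\, d\lambda \longrightarrow 0.
\]
Extract a weak-$*$ limit $g\in L^\infty$ of $(\mu^n)^2$ along a subsequence; step (a) gives $\int\psi\,\beta^{n,+}\mu^n\to\int\psi\,\beta^+ w$, and $L^1$-convergence of $\beta^{n,\pm}$ combined with weak-$*$ convergence $(\mu^n)^2\rightharpoonup g$ gives $\int\psi(\beta^{n,+}+\beta^{n,-})(\mu^n)^2 \to \int\psi(\beta^++\beta^-)g$. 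The elementary identity $\beta^+ w=(\beta^++\beta^-)w^2$ (immediate from $w=\beta^+/(\beta^++\beta^-)$) together with the lower bound $\beta^++\beta^-\ge 2c_\beta>0$ then forces $g=w^2$ a.e., so $\|\mu^n\|_2^2=\int(\mu^n)^2\,d\lambda\to\int w^2\,d\lambda=\|w\|_2^2$, completing step (b). The main technical point I foresee is justifying the integration by parts in $t$ under only the left-continuity hypothesis of Assumption~\ref{assmp:1}(\ref{assump:1D}), but this is classical: $P^n_{i,j}$ is absolutely continuous in $t$ as the solution of a linear ODE with bounded measurable coefficients, so the standard integration by parts applies without extra regularity.
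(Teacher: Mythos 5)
Your proof is correct, and it takes a genuinely different route from the paper's.

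The paper solves the linear ODE for $P^n_{i,j}$ explicitly via variation of constants, substitutes $u = a(n)(t-s)$ to expose the singular scaling, and shows that for a.e.\ $(x,y)$ one has $\int_0^1 |\mu^n_s(x,y) - w_s(x,y)|\,ds \to 0$ by a combination of Lebesgue differentiation and dominated convergence; this is where the left-continuity and a.e.\ time-boundedness of $\beta^{\pm}$ in Assumption~\ref{assmp:1}(\ref{assump:1D}) are used in an essential way. Your argument avoids the explicit solution formula entirely. You treat the relation $\partial_t \mu^n = a(n)[\beta^{n,+} - (\beta^{n,+}+\beta^{n,-})\mu^n]$ as a fast-relaxation equation: testing and dividing by $a(n)$ kills the time-derivative term (since $|\mu^n|\le 1$) and pins down any weak-$*$ subsequential limit as $w$; then the standard device of multiplying by $\mu^n$ and repeating the argument identifies the weak-$*$ limit of $(\mu^n)^2$ as $w^2$, upgrading weak to strong $L^2$-convergence via norm convergence. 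The decompositions $\int\psi\,\beta^{n,\pm}\mu^n = \int\psi\,\beta^{\pm}\mu^n + O(\|\beta^{n,\pm}-\beta^{\pm}\|_1)$ (and the analogue with $(\mu^n)^2$) are exactly the right way to reconcile $L^1$-convergence of $\beta^{n,\pm}$ with weak-$*$ convergence of the bounded sequences, and the fundamental lemma of calculus of variations closes both steps. One minor bonus of your route: it uses Assumption~\ref{assmp:1}(\ref{assump:1D}) only through boundedness of $t\mapsto\beta^{n,\pm}_{i,j}(t)$ (to get absolute continuity of $P^n_{i,j}$ and validity of the Kolmogorov equation), not through the left-continuity that the paper's Lebesgue-differentiation step requires. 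The paper's proof is more explicit and gives the stronger a.e.-in-$(x,y)$ statement along a subsequence; yours is cleaner and more structural.
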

	
		\begin{proof}
        By a standard subsequential argument it suffices to show that every subsequence of $\{n\}$ has a further subsequence along which $\mu^n$ converges to $w$ in $L^2\left([0,1]^2_T\right)$. Now fix a subsequence and using Assumption \ref{assmp:1}\eqref{assump:1Aweak} choose a further subsequence (denoted again as $\{n\}$) along which
        \begin{equation} \label{eq:1135} \int_0^T |\beta_s^{n,\pm}(x,y) - \beta_s^{\pm}(x,y)| ds \to 0 \mbox{ for a.e. } (x,y) \in [0,1]^2.\end{equation}
Since $\mu^n=\mathbb E H^n$ and $w$ both take values in $[0,1]$, we have
\[
|\mu^n_s(x,y)-w_s(x,y)|^2
\leq
|\mu^n_s(x,y)-w_s(x,y)|,
\]
and therefore, using dominated convergence applied over $(x,y)\in[0,1]^2$, we see that it suffices to show that along the above subsequence
        \begin{equation}\label{eq:1248}
        \int_0^T |\mu^n_s(x,y) - w_s(x,y)| ds \to 0 \mbox{ for a.e. } (x,y) \in [0,1]^2.
        \end{equation}

        Kolmogorov equation for the Markov process $\{X_{i,j}^n(t) : t \in [0,T]\}$ says that
			\begin{align*}
				P_{i,j}^n(t) &= P_{i,j}^n(0) + a(n)\int_{0}^t \beta^{n,+}_{i,j}(s)(1- P_{i,j}^n(s))\, ds - a(n)\int_{0}^t \beta^{n,-}_{i,j}(s) P_{i,j}^n(s)\, ds. 
			\end{align*}
            Equivalently,
			\begin{equation}{\label{boundP}}
				P_{ij}^n(t) = \exp(-a(n)B^{n}_{i,j}(t)) \left[ \int_0^t a(n)\beta_{i,j}^{n,+}(s)\exp\left(a(n)B^n_{i,j}(s)\right)ds + P_{i,j}^n(0)\right],
			\end{equation} 
			where 
			$$ B_{i,j}^n(t) := \int_{0}^t \left(\beta_{i,j}^{n,+}(s) + \beta_{i,j}^{n,-}(s)\right) ds,
            $$
            Let, for $t \in [0,T]$,
            $$B^n_t := \sum_{i,j=1}^n B^{n}_{i,j}(t)\mathbbm{1}_{Q_{i,j}^n} = \int_0^t \left( \beta^{n,+}_s+\beta^{n,-}_s\right)ds, \; \; 
             B_t:= \int_0^t \left( \beta^{+}_s+\beta^{-}_s\right)ds.
            $$ 
			Then we can rewrite \eqref{boundP} as 
			\begin{align}{\label{boundP1}}
				\E H_t^n = \exp \left( -a(n)B^n_t\right) \left[ \int_0^t a(n)\beta^{n,+}_s\exp\left(a(n)B^n_s\right)ds +  H_0^n\right].
			\end{align}
			From Assumption \ref{assmp:1}\eqref{assump:1B}, we have $B_t^n \geq c_{\beta}t$ and hence $\exp \left( -a(n)B^n_t\right) H_0^n$ converges to $0$, for all $(x,y,t) \in [0,1]^2 \times [0,T]$, as $n \to \infty$. The remaining term of \eqref{boundP1} can be written as 
			\begin{align}\label{eq:1136}
				&\exp \left( -a(n)B^n_t\right) \int_0^t a(n)\beta^{n,+}_s\exp\left(a(n)B^n_s\right)ds \nonumber \\
                &= \int_0^{\infty} \beta^{n,+}_{t-u/a(n)} \exp \left[ -a(n)\left(B_t^n - B_{t-u/a(n)}^n\right) \right]du, 
			\end{align}
			where $\beta^{n,\pm}_u, \beta^{\pm}_u, B^n_u$ and $B_u$ are defined to be the zero function on $[0,1]^2$ whenever $u <0$. 
            
           Using Assumption \ref{assmp:1}\eqref{assump:1B}, for a.e. $(x,y)$,
            \begin{align}
            &\int_0^T \int_0^t a(n) |\beta^{n,+}_s - \beta^{+}_s| \exp\{-a(n)(B^n_t - B^n_s)\} ds dt \nonumber \\
            &\le \int_0^T \int_0^t a(n) |\beta^{n,+}_s - \beta^{+}_s| \exp\{-a(n)c_{\beta}(t-s)\} ds dt\nonumber\\
            &= \int_0^T |\beta^{n,+}_s - \beta^{+}_s|\int_s^T a(n)  \exp\{-a(n)c_{\beta}(t-s)\} dt ds\nonumber\\
            & \le \frac{1}{c_{\beta}} 
            \int_0^T |\beta^{n,+}_s - \beta^{+}_s| ds \to 0, \mbox{ as } n \to \infty,  \label{eq:1244}
            \end{align}
            where in the last line we have used \eqref{eq:1135}.  
            
            Also, using a similar calculation as \eqref{eq:1136}, and denoting $\sup_{t\in[0,T]} \beta^+_t(x,y) = c_1(x,y)$ (which is finite a.e. from Assumption \ref{assmp:1}\eqref{assump:1D}),
            \begin{align}
&\int_0^T a(n)\left| \int_0^t [\exp\{-a(n)(B^n_t-B^n_s)\} - \exp\{-a(n)(B_t-B_s)\}]\beta^+_s \,ds\right| dt \nonumber \\
&=\int_0^T \left| \int_0^{ta(n)} \beta^+_{t-u/a(n)} \left[e^{-a(n)\left(B^n_t-B^n_{t-u/a(n)}\right)} - e^{-a(n)\left(B_t-B_{t-u/a(n)}\right)}\right] du \right| dt\nonumber\\
& \le c_1(x,y) \int_0^T \int_0^{ta(n)} e^{-c_{\beta} u} a(n) \int_{t-u/a(n)}^t \left(|\beta^{n, +}_s - \beta^{+}_s| + |\beta^{n, -}_s - \beta^{-}_s|\right) ds du  dt\nonumber\\
& \leq c_1(x,y) \int_0^T \int_0^{\infty} \int_{s}^{T \wedge (s+u/a(n))} a(n)e^{-c_{\beta}u}\, \left(|\beta^{n, +}_s - \beta^{+}_s| + |\beta^{n, -}_s - \beta^{-}_s|\right) dt du  ds\nonumber\\
& \le c_1(x,y) \int_0^T \int_0^{\infty}  ue^{-c_{\beta}u}\, \left(|\beta^{n, +}_s - \beta^{+}_s| + |\beta^{n, -}_s - \beta^{-}_s|\right) du  ds\nonumber\\
&=c_1(x,y) \left(\int_0^{\infty} ue^{-c_{\beta} u} du\right) \int_0^T \left(|\beta^{n, +}_s - \beta^{+}_s| + |\beta^{n, -}_s - \beta^{-}_s|\right) ds \to 0, \mbox{ as } n \to \infty, \label{eq:1245}
\end{align}
for a.e. $(x,y)$, where in the last line we have once more used \eqref{eq:1135}.

Finally,
\begin{align*}
\int_0^t a(n) \exp\{-a(n) (B_t-B_s) \} \beta^+_s ds &= \int_0^{\infty} \beta^{+}_{t-u/a(n)} \exp \left[ -a(n)\left(B_t - B_{t-u/a(n)}\right) \right]du.
\end{align*}
%By \textit{Lebesgue differentiation theorem}, for all $(x,y) \in [0,1]^2$,
By Assumption \ref{assmp:1}\eqref{assump:1D}, we have the following a.e. $(x,y) \in [0,1]^2$ : For all $t \in (0,T]$, $\beta^{+}_{t-u/a(n)} \to \beta^{+}_{t}$, as $n \to \infty$ and 
$$ a(n) \left(B_t - B_{t-u/a(n)}\right) \longrightarrow u\beta^{+}_t + u\beta^-_t. $$
Also, $\beta^{+}_{t-u/a(n)} \exp \left[ -a(n)\left(B_t - B_{t-u/a(n)}\right) \right] \le c_1(x,y) \exp\{-c_{\beta}u\}$. Thus, by the dominated convergence theorem
\begin{equation}{\label{eq:mu-t-es1}}
	\int_0^t a(n) \exp\{-a(n) (B_t-B_s) \} \beta^+_s ds \to w_t, \forall \; t \in (0,T], \text{ a.e. } (x,y) \in [0,1]^2.
\end{equation}
Moreover, for any $n \geq 1$,
\begin{align*}
    \int_0^t a(n) \exp\{-a(n) (B_t-B_s) \} \beta^+_s ds & \leq \int_0^t a(n)\beta_s^+ \exp \left(-a(n) \int_s^t \beta_u^+ du \right) ds \\
    &= 1 - \exp \left( -a(n) \int_0^t \beta_u^+du \right) \leq 1.
\end{align*}   
Another application of the dominated convergence theorem now shows that, as $n\to \infty$,
$$\int_0^T \left|\int_0^t a(n) \exp\{-a(n) (B_t-B_s) \} \beta^+_s ds - w_t\right| dt \to 0 \mbox{ for a.e. } (x,y) \in [0,1]^2.$$
Combining the above with \eqref{eq:1244} and \eqref{eq:1245} we now have that \eqref{eq:1248} holds which completes the proof of the lemma.
\end{proof}
			
Combining the last two lemmas we can now complete the proof of 	Theorem~\ref{thm:llnnew}.	

\begin{proof}[Proof of Theorem~\ref{thm:llnnew}]
Recall $\mu_t^n$ from Lemma~\ref{lem:es12}.
\begin{align*}
 \int_0^1 d_{\square} \left(\mu^n_t, w_t \right) \, dt &\leq \int_0^1 \| \mu^n_t - w_t \|_1  \, dt =  \| \mu^n - w \|_1 \leq  \| \mu^n - w \|_2.
\end{align*}
From Lemma~\ref{lem:es12}, the last term in the above display converges to $0$. It is, therefore, enough to show that 
\begin{equation}{\label{ts:new1}}
 \int_{0}^{1} d_{\infty \to 1} \left(H^{n}_t,\,\mu^{n}_t\right)\, dt \stackrel{a.s.}{\longrightarrow} 0,
\end{equation}
as $n \to \infty$. Since the metric $d_{\infty \to 1}$ takes value between $0$ and $1$ on $\cS_0$, we have the following for any $\delta >0$.
$$ \int_{0}^{1} d_{\infty \to 1} \left(H^{n}_t,\,\mu^{n}_t\right)\, dt \leq \int_{0}^{1} \mathbbm{1}_{\left(d_{\infty \to 1} \left(H^{n}_t,\,\mu^{n}_t\right)\geq  \delta \right)} dt + \delta,$$
whereas, 
            \begin{align}
				 \mathbb{P} \left( 	\int_{0}^{1} \mathbbm{1}_{\left(d_{\infty \to 1} \left(H^{n}_t,\,\mu^{n}_t\right)\geq  \delta \right)} dt > \delta \right) 
				& \leq \dfrac{1}{\delta} \int_{0}^1 \mathbb{P} \left(d_{\infty \to 1} \left(H^{n}_t,\,\mu^n_t\right) \geq \delta \right)\, dt.\label{es1}
			\end{align} 
			Applying Lemma~\ref{es11} in \eqref{es1}, we obtain that for any $t \in [0,1]$, 
            $$\mathbb{P} \left(d_{\infty \to 1} \left(H^{n}_t,\,\mu^n_t\right) \geq \delta \right) \leq 2^{2n} \exp\left(-n^2\delta^2/2 \right)$$ and hence,
            \begin{align*}
            \sum_{n \geq 1} \mathbb{P} \left( 	\int_{0}^{1} d_{\infty \to 1} \left(H^{n}_t,\,\mu^{n}_t\right) dt \geq 2\delta \right) &\leq \sum_{n \geq 1} \mathbb{P} \left( 	\int_{0}^{1} \mathbbm{1}_{\left(d_{\infty \to 1} \left(H^{n}_t,\,\mu^{n}_t\right) \geq  \delta \right)} dt \geq \delta \right)  \\
            &\leq \dfrac{1}{\delta}\sum_{n \geq 1} 2^{2n}\exp\left(-n^2\delta^2/2\right) < \infty.
				\end{align*}
				This shows \eqref{ts:new1}, which completes the proof. 
            \end{proof}
	
	\begin{proof}[Proof of Proposition~\ref{prop:lln-avg-path}]
    We start by observing that 
	\begin{align*}
	&	\sup_{t \in [0,T]} d_{\infty \to 1} \left( A^{n, \epsilon}_t, w^{\epsilon}_t\right) \\
		&\leq \sup_{t \in [0,T]} \dfrac{1}{|U_{\epsilon}(t)|}\sup_{\substack{a,b \\ \|a\|_{\infty}, \|b\|_{\infty} \leq 1}} \int_{[0,1]^2} a(x)b(y) \left[ \int_{U_{\epsilon}(t)} \left(H^n_s(x,y)\, ds - w_s(x,y) \right)ds  \right]\, dx \, dy \\
		& \leq  \sup_{t \in [0,T]} \dfrac{1}{|U_{\epsilon}(t)|}  \int_{U_{\epsilon}(t)} \left[\sup_{\substack{a,b \\ \|a\|_{\infty}, \|b\|_{\infty} \leq 1}} \int_{[0,1]^2} a(x)b(y) (H_s^n(x,y)-w_s^n(x,y))\, dx \, dy \right] ds \\
		&=   \sup_{t \in [0,T]} \dfrac{1}{|U_{\epsilon}(t)|}  \int_{U_{\epsilon}(t)}  d_{\infty \to 1} \left(H^n_s, w^n_s \right) ds \leq \dfrac{1}{\epsilon} \int_0^1 d_{\infty \to 1} \left(H^n_s, w^n_s \right) ds.
	\end{align*}
        The last term in the above display converges almost surely to $0$ by Theorem~\ref{thm:llnnew}. This completes the proof.
			\end{proof}
			
			\begin{proof}[Proof of Theorem~\ref{cor2:thmlin}]
				By Lemma~\ref{es11}, for any $\delta>0$, 
				$$ \sum_{n \geq 1} \mathbb{P} \left(d_{\infty \to 1} \left(H^{n}_s,\,\E H^n_s\right) \geq \delta\right) \leq \sum_{n \geq 1} 2^{2n}\exp\left(-n^2\delta^2/2\right).$$
			Therefore, we have $d_{\infty \to 1}\left(H_s^n, \E H_s^n \right)$ converges to $0$ almost surely for any $s \in [0,T]$. This shows that for any bounded  $a,b[0,1]\to \mathbb{R}$ and $\psi:[0,T] \to \mathbb{R}$, we have, 
				$$ \int_{[0,1]^2_T} \psi(s)a(x)b(y)\left(H^n(x,y,s)-\E H^n(x,y,s)\right)\,dx\,dy\, ds \stackrel{a.s.}{\longrightarrow} 0.
                $$
            By Lemma~\ref{lem:es12}, we also have $\E H^n  \longrightarrow w$ in $L^1\left([0,1]^2_T\right)$ and therefore,
            for $\psi, a, b$ as above
				$$ \int_{[0,1]^2_T} \psi(s)a(x)b(y)\left(\E H^n(x,y,s)-w(x,y,s)\right)\,dx\,dy\,ds {\longrightarrow} 0.$$
				The result follows on combining the two displays.
			\end{proof}

\section{LDP under weak topology}\label{ldp}
	 In this section, we present the proofs of all the results related to large deviation behavior with respect to the weak topology stated in Section~\ref{sec:results-weak}, namely Theorem~\ref{thm:weakLDP}, Proposition~\ref{prop:no-ldp-weak}, Corollary~\ref{cor:weakLDP-path}, and Proposition~\ref{prop:ldp-point-weak}. 
     We begin in Sections \ref{sec:pfcor-weakLDP-path}--\ref{sec:pfno-ldp-path-weak} with the proof of Corollary~\ref{cor:weakLDP-path}, followed by Proposition~\ref{prop:ldp-point-weak} and  Proposition~\ref{prop:no-ldp-weak}. The proof of Theorem~\ref{thm:weakLDP} is completed in Sections \ref{sec:isrf}--\ref{sec:pflowbd}. The proof employs weak convergence methods (see~\cite{Budhirajaweakconv}), with the upper bound proof given in Section \ref{sec:pfofldpupp} and the lower bound shown in Section \ref{sec:pflowbd}. The fact that $J^{(\beta^+, \beta^-)}$ is a rate function is proved in Section \ref{sec:isrf}. Section \ref{sec:varrep} presents the key variational representation which is used in the proofs of both the upper and the lower bound. Sections \ref{sec:char} and \ref{sec:eval} provide some intermediate results used in the proofs of the upper bound and lower bound, respectively.

	\subsection{Proof of Corollary~\ref{cor:weakLDP-path}}\label{sec:pfcor-weakLDP-path}
	By contraction principle, Theorem~\ref{cor2:thmlin} and Theorem~\ref{thm:weakLDP}, it is enough to establish that the map $\text{Av}^{\epsilon} : \left(\mathcal{W}_0, d_{\cW_0, \text{weak}}\right) \to \mathbb{C} \left([0,T] : \left(\mathcal{S}_0, d_{\cS_0, \text{weak}}\right)\right)$ is continuous. Consider any sequence $\phi^n \in \cW_0$ converging to $\phi \in \cW_0$ in weak topology and sequence of time instants $t_n \in [0,T]$ converging to $t \in [0,T]$. It is enough to show that $\text{Av}^{\epsilon}(\phi^n)_{t_n}$ converges weakly to $\text{Av}^{\epsilon}(\phi)_t$. In light of the Lipschitz property in~\eqref{eq:av-lip}, we only need to establish that $\text{Av}^{\epsilon}(\phi^n)_{t}$ converges weakly to $\text{Av}^{\epsilon}(\phi)_t$. Towards that goal, fix any $\psi \in L^2 \left([0,1]^2 \right)$. Then, 
	\begin{align*}
		\int_{[0,1]^2} \psi \text{Av}^{\epsilon}(\phi^n)_{t} &= \dfrac{1}{|U_{\epsilon}(t)|} \int_{[0,1]^2_T} \psi(x,y) \mathbbm{1}_{U_{\epsilon}(t)}(s)  \phi^n(x,y,s)\,dx\,dy\,ds \\
		& \longrightarrow \dfrac{1}{|U_{\epsilon}(t)|} \int_{[0,1]^2_T} \psi(x,y) \mathbbm{1}_{U_{\epsilon}(t)}(s)  \phi(x,y,s)\,dx\,dy\,ds = 	\int_{[0,1]^2} \psi \text{Av}^{\epsilon}(\phi)_{t},
	\end{align*}
	where the convergence follows from the fact that $\phi^n$ converges weakly to $\phi$. This completes the proof.
	\qed
	
\subsection{Proof of Proposition~\ref{prop:ldp-point-weak}} {\label{sec:pfpropldp-point-weak}}

Recall that $\mu_t^n := \E H^n_t$ for any $t \in [0,T]$. If we prove that $\| \mu_t^n - w_t\|_1 \to 0$ as $n \to \infty$ for any $t \in (0,T]$, then it will prove the almost sure convergence in the statement of the proposition by virtue of Lemma~\ref{es11}. Moreover, if we can establish that both $\log w_t$ and $\log (1-w_t)$ are integrable on $[0,1]^2$ and $\| \log \mu_t^n - \log w_t\|_1, \| \log (1-\mu_t^n) - \log (1-w_t)\|_1 \to 0$, then we can conclude the LDP for the sequence $\left\{H_t^n : n \geq 1\right\}$ with respect to the weak topology. This follows from the arguments and references in~\cite[Section 3.3]{Markering2023}. Finally one can conclude the joint LDP,  as stated in Proposition \ref{prop:ldp-point-weak}, by exploiting the Markov property of the graphon process and combining the marginal and conditional LDPs using \cite[Theorem 5]{Biggins2004}. In particular, similar to~(\ref{boundP1}), one can show that 
$$ 	\mu^n_{t|s} := \E \left(H_t^n \mid  H_s^n\right)  = e^{-a(n)(B^n_t-B_s^n)} \left[ \int_s^t a(n)\beta^{n,+}_u e^{a(n)(B^n_u-B^n_s)}du +  H_s^n\right].$$
The formula above can now be used to establish the LDP for the conditional distribution of $H^n_t$ given $H_s^n$ (using arguments identical to the marginal LDP summarized earlier) and it turns out to be the exact same LDP as the unconditional distribution of $H^n_t$.  \cite[Theorem 5]{Biggins2004} is then applied to get the joint LDP of $(H_t^n,H_s^n)$. This technique can be extended to get any finite-dimensional joint LDP.

Returning to complete the proof of marginal LDP, fix $t \in (0,T]$. We start by proving $\| \mu_t^n - w_t\|_1 \to 0$ as $n \to \infty$. {Consider an arbitrary subsequence; it is enough to prove a.e. convergence, along a further subsequence, of $\mu_t^n$ to $w_t$ on $[0,1]^2$.} The technique follows closely to the proof of Lemma~\ref{lem:es12}. In particular, the estimates in~(\ref{eq:1244}) and~(\ref{eq:1245}) can be modified to the following : For a.e. $(x,y) \in [0,1]^2$, 
$$ \int_0^t a(n) |\beta^{n,+}_s - \beta^{+}_s| \exp\{-a(n)(B^n_t - B^n_s)\} ds \leq \dfrac{1}{c_{\beta}} \sup_{s \in [0,T]} |\beta_s^{n,+} - \beta_s^+|$$
and 
\begin{align*}
	&a(n)\left| \int_0^t [\exp\{-a(n)(B^n_t-B^n_s)\} - \exp\{-a(n)(B_t-B_s)\}]\beta^+_s \,ds\right| \\
	& \leq \left(\sup_{s \in [0,T]} \beta_s^+ \right)  \left(\int_0^{\infty} ue^{-c_{\beta} u} du\right) \sup_{s \in [0,T]}  \left(|\beta^{n, +}_s - \beta^{+}_s| + |\beta^{n, -}_s - \beta^{-}_s|\right).
\end{align*}
These two estimates, along with~(\ref{boundP1}) and~(\ref{eq:mu-t-es1}) and  Assumption~\ref{assmp:2}\eqref{assump:1AB} prove the {a.e. convergence of $\mu_t^n$ to $w_t$ along a further subsequence of the chosen subsequence}.

Now we prove that $\log w_t$ is integrable and $\| \log \mu_t^n - \log w_t\|_1 \to 0$. The proof of the other two assertions are exactly same. Note that, $|\log w_t| \leq |\log \beta_t^+| + |\log (\beta_t^++\beta_t^-)|$. Since, $\beta_t^{\pm} \geq c_{\beta} >0$ and are integrable, the integrability of $\log w_t$ follows. It is now enough to prove uniform integrability of $\left\{\log \mu_t^n : n \geq 1 \right\}.$ Recall the notation of $B_t^n$ from the proof of Lemma~\ref{lem:es12}. From~(\ref{boundP1}), we have
\begin{align*}
	0 \leq - \log \mu_t^n &\leq -\log \int_0^t a(n) \beta_s^{n,+}\exp \left[ -a(n)\left(B_t^n - B_s^n\right)\right]\,ds \\
	& \leq - \log a(n) c_{\beta} \int_0^t \exp \left[ - a(n) (t-s) \sup_{u \in [0,T]} \left(\beta_u^{n,+}+\beta_u^{n,-}\right)\right]\,ds \\
	& = - \log \dfrac{c_{\beta} \left[1- \exp\left(-a(n)t \sup_{u \in [0,T]} \left(\beta_u^{n,+}+\beta_u^{n,-}\right)\right)\right]}{\sup_{u \in [0,T]} \left(\beta_u^{n,+}+\beta_u^{n,-}\right)} \\
	& \leq - \log c_{\beta} \left[1-e^{-2a(n)tc_{\beta}}\right] + \log_+ \sup_{u \in [0,T]} \left(\beta_u^{n,+}+\beta_u^{n,-}\right),
\end{align*}
where $\log_+ (x) := \log (x) \vee 0.$ The uniform integrability of the last expression above  follows from Assumption~\ref{assmp:2}\eqref{assump:1AB}, since
$$ \limsup_{n \to \infty} \int_{[0,1]^2} \sup_{s \in [0,T]} \beta_s^{n,\pm} \leq \int_{[0,1]^2} \sup_{s \in [0,T]} \beta_s^{\pm} +  \limsup_{n \to \infty} \int_{[0,1]^2} \sup_{s \in [0,T]} \big \lvert \beta_s^{n,\pm} - \beta_s^{\pm} \big \rvert < \infty. $$
\qed

\subsection{Non-existence of sample-path LDP in weak topology}{\label{sec:pfno-ldp-path-weak}}
In this section, we shall prove Proposition~\ref{prop:no-ldp-weak} using Proposition~\ref{prop:ldp-point-weak}. Towards that goal, we recall the concept of exponential tightness.

\begin{defn}{\label{def:exp-tight}}
	Let $\left\{b(n) : n \geq 1 \right\}$ be a sequence of positive real numbers. A sequence of $(S,d_S)$-valued random variables $\left\{X_n : n \geq 1\right\}$ is said to be exponentially tight with speed $b(n)$ if for any $\alpha \in (0,\infty)$, there exists a compact set $K_{\alpha} \subseteq S$ such that 
	$$ \limsup_{n \to \infty}  \dfrac{1}{b(n)} \log \mathbb{P} \left( X_n \notin K_{\alpha}\right) \leq - \alpha.$$
\end{defn}

The following two results about exponential tightness will be crucial in our current proof and later in other arguments. 

\begin{thm}{\cite[Lemma 3.5]{feng2006large}}{\label{thm:ldp-to-exp-tight}}
	Suppose a sequence of $(S,d_S)$-valued random variables $\left\{X_n : n \geq 1\right\}$ satisfies an LDP with speed sequence $\left\{b(n) : n \geq 1 \right\}$ and rate function $I$. Then $\left\{X_n : n \geq 1\right\}$ is exponentially tight with speed $b(n)$.
\end{thm}

\begin{thm}{\cite[Theorem 4.1]{feng2006large}}{\label{thm:exp-tight-D-space}}
	Let $\left\{(X^n(t))_{t \in [0,T]} : n \geq 1\right\}$ be a sequence of $\mathbb{D} \left([0,T]:(S,d_S)\right)$-valued random variables. Suppose $\mathcal{T} \subseteq [0,T]$ is a dense subset such that $\left\{X^n(t) : n \geq 1\right\}$ is exponentially tight in the space $(S,d_S)$ with speed sequence  $\left\{b(n) : n \geq 1 \right\}$ for every $t \in \mathcal{T}$. Then $\left\{(X^n(t))_{t \in [0,T]} : n \geq 1\right\}$ is exponentially tight with speed $b(n)$ in the space $\mathbb{D} \left([0,T]:(S,d_S)\right)$ if and only if for every $\varepsilon >0$, 
	$$ \lim_{\delta \downarrow 0} \limsup_{n \to \infty} \dfrac{1}{b(n)}\log \mathbb{P} \left[ \text{Osc}^{\prime}\left(X^n, \delta; (S,d_S)\right) > \varepsilon\right] = - \infty.$$
	In the above statement, if the $\mathbb{D}$-space is replaced by $\mathbb{C}$-space, then $\text{Osc}^{\prime}$ can be replaced by $\text{Osc}$.
\end{thm}

Before proceeding to the proof of Proposition~\ref{prop:no-ldp-weak}, we make an observation regarding the rate function in Proposition~\ref{prop:ldp-point-weak}. Note that, $0 \leq \cQ_2(u,v) \leq - \log v(1-v)$, for any $u \in [0,1]$ and $v \in (0,1)$. Thus, for any $f \in \cS_0$ and $t \in (0,T]$, we have 
\begin{align*}
	\int_{[0,1]^2} \cQ_2(f,w_t) \leq -\int_{[0,1]^2} \log w_t(1-w_t) \leq -2 \log c_{\beta} + 2\int_{[0,1]^2} \log (\beta_t^++\beta_t^-),
\end{align*}
which yields the following uniform bound by Assumption~\ref{assmp:2}\eqref{assump:1AB} : 
\begin{equation}{\label{eq:I-point-unif-ub}}
	 I^{\beta^+,\beta^-}_{\text{point},\cT} \leq -2|\cT| \log c_{\beta} + 2 |\cT| \int_{[0,1]^2} \sup_{s \in [0,T]} (\beta_s^++\beta_s^-) < \infty, \; \forall \; \text{finite }\cT \subseteq (0,T].
\end{equation}

We shall also need the  following elementary lemma which establishes the uniqueness for the choice of speed sequence (upto a scaling). We include a proof in  Section~\ref{sec:aux} for completeness.

\begin{lem}{\label{lem:unique-speed}}
	Suppose $\left\{Y_n : n \geq 1\right\}$, a sequence of $S$-valued random variables, satisfies LDP in $S$ with speed sequences  $b(n)$ and $b^*(n)$ and the corresponding rate functions are given by $I$ and $I^*$. Suppose $b(n)/b^*(n) \to \infty$ as $n \to \infty$ and
 that $I^{-1}\left(\left\{0\right\}\right)$ is a singleton. Then  $I^*(x)=\infty$ whenever $I(x) >0$, namely $I^*$ is a trivial rate function.
\end{lem}

\begin{proof}[Proof of Proposition~\ref{prop:no-ldp-weak}]
	We prove by contradiction. Suppose $\left\{H^n_{\bcdot} : n \geq 1\right\}$  satisfies an LDP in the space $\mathbb{D} \left( [0,T] : \left(\cS_0, d_{\cS_0, \text{weak}}\right)\right)$ with speed $b(n)$ and non-trivial rate function $I$. First we establish that $b(n) /n^2 \to 0$ as $n \to \infty$. By Theorem~\ref{thm:ldp-to-exp-tight}, we have 
	$\left\{H^n_{\bcdot} : n \geq 1\right\}$ is exponentially tight with speed $b(n)$. On the otherhand, since $\left(\cS_0, d_{\cS_0, \text{weak}}\right)$ is compact, we can conclude that for any $t \in (0,T]$, $\left\{H_t^n : n \geq 1\right\}$ is exponentially tight (for any speed sequence). Thus, Theorem~\ref{thm:exp-tight-D-space} yields that for every $\varepsilon >0$, 
	\begin{equation}{\label{eq:no-ldp-1}}
		 \lim_{\delta \downarrow 0} \limsup_{n \to \infty} \dfrac{1}{b(n)}\log \mathbb{P} \left[ \text{Osc}^{\prime}\left(H^n_{\bcdot}, \delta; (\cS_0,d_{\cS_0, \text{weak}})\right) > \varepsilon\right] = - \infty.
	\end{equation}
Since the graph updates at most one edge at any time instant almost surely, we have 
\begin{equation}{\label{eq:jump-es}}
	\text{jump}\left(H^n_{\bcdot}; (\cS_0,d_{\cS_0, \text{weak}})\right) \leq \sup_{t \in (0,T]} \| H^n(t) - H^n(t-)\|_2 \leq \dfrac{1}{n},
\end{equation}
and hence for any $0 < t \leq  T-\delta$, 
\begin{align*}
	\text{Osc}^{\prime}\left(H^n_{\bcdot}, \delta; (\cS_0,d_{\cS_0, \text{weak}})\right)  &\geq \dfrac{1}{2}\text{Osc}\left(H^n_{\bcdot}, \delta; (\cS_0,d_{\cS_0, \text{weak}})\right)  - \dfrac{1}{2n} \\
	& \geq \dfrac{1}{2} d_{\cS_0, \text{weak}} (H_{t+\delta}^n, H_t^n) - \dfrac{1}{2n}.
\end{align*}
Equation~(\ref{eq:no-ldp-1}) then yields
$$  \lim_{\delta \downarrow 0} \limsup_{n \to \infty} \dfrac{1}{b(n)}\log \mathbb{P} \left[ d_{\cS_0, \text{weak}}(H_{t+\delta}^n, H_t^n) > \varepsilon\right] = - \infty, \; \forall \; \varepsilon >0, $$
whereas, by Proposition~\ref{prop:ldp-point-weak} and~(\ref{eq:I-point-unif-ub}), for small enough $\varepsilon >0$ and any $\delta >0$, 
$$  \liminf_{n \to \infty} \dfrac{1}{n^2}\log \mathbb{P} \left[ d_{\cS_0, \text{weak}}(H_{t+\delta}^n, H_t^n) > \varepsilon\right] > -\infty.$$
This shows that $b(n)/n^2 \to 0$ as $n \to \infty$.

Since the $\text{jump}$ function is continuous in Skorokhod topology, we have, by~(\ref{eq:jump-es}), that $I(\phi_{\bcdot}) =\infty$ whenever $\phi_{\bcdot}$ does not have continuous path. Therefore, by the \textit{extended contraction principle} (see~\cite[Lemma 3.11]{feng2006large}), we conclude that for any $t \in (0,T]$ , the sequence $\left\{H_t^n : n \geq 1\right\}$ satisfies an LDP in $\left(\cS_0, d_{\cS_0, \text{weak}}\right)$ with speed $b(n)$ and rate function $I_t$ given by $I_t(f)=\inf \left\{I(\phi_{\bcdot}) : \phi_t=f\right\}$.  From Proposition \ref{prop:ldp-point-weak}, we already have an LDP for this sequence with speed $n^2$ and non-trivial rate function $I^{\beta^+,\beta^-}_{\text{point},t}$ which vanishes only at $w_t$. By Lemma~\ref{lem:unique-speed}, this guarantees that $I_t(f)=\infty$ whenever $f \neq w_t$. Hence, $I(\phi_{\bcdot})=\infty$ if $\phi_t \neq w_t$ for some $t \in (0,T]$. In other words, $I(\phi_{\bcdot})=\infty$, possibly except for $\phi_{\bcdot}=w_{\bcdot}$; i.e., $I$ is trivial, a contradiction.
\end{proof}

	\subsection{Compactness of level sets}\label{sec:isrf}
	In this section we argue that $J^{(\beta^+,\,\beta^-)}$ is indeed a rate function on $\left(\mathcal{W}_0, d_{\cW_0, \text{weak}}\right)$, namely it has compact sub-level sets. Since $\cW_0$ is compact, we simply need to show that the function $\phi \mapsto 	J^{(\beta^+,\,\beta^-)}(\phi) $ is lower semi-continuous with respect to the weak topology on $\cW_0$. The following lemma will be useful. When $L^1$ is replaced by $L^2$ in the statement of the lemma, the result is immediate from the definition of the weak topology. The fact that we can relax the space to be $L^1$ is a consequence of the fact that the functions in $\cW_0$ take values in $[0,1]$.
	
		\begin{lem}{\label{lem:weaksemicont1}}
		Fix $\phi^* \in L^1\left([0,1]^2_T\right)$. The map $\phi \mapsto \int_{[0,1]^2_T} \phi\phi^*$ is continuous on the space  $\left(\mathcal{W}_0, d_{\cW_0, \text{weak}}\right)$.
	\end{lem}
	\begin{proof}
		For any $\phi\in \cW_0$, we have $\int_{[0,1]^2_T} |\phi\phi^*| \leq \|\phi^*\|_1$ and hence the map is well defined. It is enough to prove continuity for only non-negative $\phi^*$. Take $\left\{\phi_n : n \geq 1\right\} \subseteq \cW_0$ converging to $\phi \in\cW_0$ in weak topology as $n \to \infty$. For any $M \in (0,\infty)$, 
		$$ \liminf_{n \to \infty} \int_{[0,1]^2_T} \phi^*\phi_n \geq \liminf_{n \to \infty} \int_{[0,1]^2_T} (\phi^* \wedge M)\phi_n =  \int_{[0,1]^2_T} (\phi^* \wedge M)\phi.$$
		Taking $M \uparrow \infty$, we conclude that $ \liminf_{n \to \infty} \int_{[0,1]^2_T} \phi^*\phi_n \geq  \int_{[0,1]^2_T} \phi^*\phi.$ Since, the sequence $\left\{(1-\phi_n) : n \geq 1\right\}$ also converges to $1-\phi$ with respect to weak topology in $\cW_0$, we have $ \liminf_{n \to \infty} \int_{[0,1]^2_T} \phi^*(1-\phi_n) \geq  \int_{[0,1]^2_T} \phi^*(1-\phi),$ which can be re-written as $ \limsup_{n \to \infty} \int_{[0,1]^2_T} \phi^*\phi_n \leq  \int_{[0,1]^2_T} \phi^*\phi.$ This completes the proof.
	\end{proof}
	
		\begin{prop}{\label{lem:weaksemicont2}}
		The function $J^{(\beta^+,\,\beta^-)} : \left(\mathcal{W}_0, d_{\cW_0, \text{weak}}\right) \to [0,\infty]$ is lower semi-continuous.
	\end{prop}
	
	\begin{proof}
		Since $\beta^{\pm} \in L^1\left([0,1]^2_T\right)$, we have $	J^{(\beta^+,\,\beta^-)}(\phi) < \infty$ for any $\phi \in \cW_0$. Note that, for any $a,b \geq 0$, we have $\inf_{c \in (0,\infty)} (ca+b/c) = 2\sqrt{ab}$ and for any $\varepsilon \in (0,1)$,
		\begin{equation}{\label{es}}
			\inf_{c \in [\varepsilon,1/\varepsilon]} (ca+b/c) \leq \inf_{c \in (0,\infty)} (ca+b/c) + \varepsilon (a \vee b).
		\end{equation}
		Also, suppressing  $(x,y)\in [0,1]^2$ from the notation, 
		\begin{align*}
			\left(\sqrt{\beta_s^+(1-\phi_s)} - \sqrt{\beta_s^-\phi_s}\right)^2
			&= \beta_s^+(1-\phi_s) + \beta_s^-\phi_s-2\sqrt{\beta_s^+\beta_s^-\phi_s(1-\phi_s)} \\
			&= \beta_s^+(1-\phi_s) + \beta_s^-\phi_s-\inf_{c\in (0,\infty)} \left( c\beta_s^+(1-\phi_s) + \dfrac{\beta_s^-\phi_s}{c}\right)\\
			&= \sup_{c \in (0,\infty)} \left[ (1-c)\beta_s^+(1-\phi_s)+(1-1/c)\beta_s^-\phi_s\right].
		\end{align*}
		Therefore,
		\begin{align*}
			J^{(\beta^+,\,\beta^-)}(\phi) &= \int_{[0,1]^2_T} \left( \sqrt{\beta^+(1-\phi)}-\sqrt{\beta^-\phi}\right)^2 \\
			&= \int_{[0,1]^2_T} \sup_{c \in (0,\infty)} \left[ (1-c)\beta^+(1-\phi)+(1-1/c)\beta^-\phi\right] \\
			& \geq \sup_{\substack{g \text{ measurable}\\ g:[0,1]^2_T \to (0,\infty)}} \int_{[0,1]^2_T} \left[ (1-g)\beta^+(1-\phi)+(1-1/g)\beta^-\phi\right] =:	\tilde J^{(\beta^+,\,\beta^-)}(\phi),
		\end{align*}
		where the last supremum is taken over all measurable functions $g :[0,1]^2_T \to (0,\infty).$ On the otherhand, for any $\varepsilon \in (0,1)$, 
		\begin{align*}
			&J^{(\beta^+,\,\beta^-)}(\phi) \\
            &= \int_{[0,1]^2_T} \sup_{c \in (0,\infty)} \left[ (1-c)\beta^+(1-\phi)+(1-1/c)\beta^-\phi\right] \\
			& \leq \int_{[0,1]^2_T} \sup_{c \in [\varepsilon,1/\varepsilon]} \left[ (1-c)\beta^+(1-\phi)+(1-1/c)\beta^-\phi\right] + \varepsilon \int_{[0,1]^2_T} \left[\beta^+(1-\phi) \vee \beta^-\phi \right] \\
			&= \sup_{\substack{g \text{ measurable}\\ g:[0,1]^2_T \to [\varepsilon,1/\varepsilon]}} \int_{[0,1]^2_T} \left[ (1-g)\beta^+(1-\phi)+(1-1/g)\beta^-\phi\right] +\varepsilon \int_{[0,1]^2_T} \left[\beta^+(1-\phi) \vee \beta^-\phi \right] \\
			& \leq \sup_{\substack{g \text{ measurable}\\ g:[0,1]^2_T \to [\varepsilon,1/\varepsilon]}} \int_{[0,1]^2_T} \left[ (1-g)\beta^+(1-\phi)+(1-1/g)\beta^-\phi\right]+ \varepsilon \int_{[0,1]^2_T} \left[\beta^++ \beta^-\right]\\
			&=: 	J_{\varepsilon}^{(\beta^+,\,\beta^-)}(\phi) + \varepsilon \int_{[0,1]^2_T} \left[\beta^++ \beta^-\right] \leq 	\tilde J^{(\beta^+,\,\beta^-)}(\phi) + \varepsilon \int_{[0,1]^2_T} \left[\beta^++ \beta^-\right],
		\end{align*}
		where the third equality above follows from the observation that the infimum in the expression $\inf_{c \in [\varepsilon,1/\varepsilon]} (ca+b/c)$ is always attained at $c=(\sqrt{b/a} \wedge 1/\varepsilon) \vee \varepsilon$ for any $a,b \geq 0$. Taking $\varepsilon \downarrow 0$, we conclude that $	J^{(\beta^+,\,\beta^-)}(\phi) =	\tilde J^{(\beta^+,\,\beta^-)}(\phi) =\lim_{\varepsilon \downarrow 0} 	J_{\varepsilon}^{(\beta^+,\,\beta^-)}(\phi) =\sup_{\varepsilon >0} 	J_{\varepsilon}^{(\beta^+,\,\beta^-)}(\phi) $. In other words,
		$$ J^{(\beta^+,\,\beta^-)}(\phi) = \sup_{\varepsilon >0}   \sup_{\substack{g \text{ measurable}\\ g:[0,1]^2_T \to [\varepsilon,1/\varepsilon]}} \int_{[0,1]^2_T} \left[ (1-g)\beta^+(1-\phi)+(1-1/g)\beta^-\phi\right].$$
		From Lemma \ref{lem:weaksemicont1}, for any measurable $g:[0,1]^2_T \to [\varepsilon,1/\varepsilon]$ , the map $$\phi \mapsto \int_{[0,1]^2_T}  \left[ (1-g)\beta^+(1-\phi)+(1-1/g)\beta^-\phi\right]$$
        is continuous on $\cW_0$ with respect to weak topology. Thus $J^{(\beta^+,\,\beta^-)}$ is lower semi-continuous by virtue of being pointwise supremum of a family of continuous functions. 
	\end{proof}

	\subsection{Variational representation}\label{sec:varrep}
	One can give a representation for the evolution of the Markov processes $\{X^n_{i,j}(t), t \in [0,1]\}_{i,j \in [n]}$ using suitable Poisson random measures (PRM) as follows.    
    
	For $i,j \in [n]$ and $t \in [0,T]$
	\begin{align}
		X^n_{i,j}(t) &= x^n_{i,j}+ \int_{[0,t]\times [0,\infty)} (1-X^n_{i,j}(s-)) \mathbbm{1}_{[0, a(n)\beta^{n,+}_{i,j}(s)]}(r) N_{i,j}^{n,+}(ds\, dr)  \nonumber \\ 
		&\hspace{1 in}- \int_{[0,t]\times [0,\infty)} X^n_{i,j}(s-) \mathbbm{1}_{[0, a(n)\beta^{n,-}_{i,j}(s)]}(r) N_{i,j}^{n,-}(ds\, dr),  \label{evolution}
	\end{align}
	where $\left\{N_{i,j}^{n,\pm} : i,j \in [n]\right\}$ is a collection of $2n^2$ mutually independent PRM on $[0,T]\times [0,\infty)$, with intensity measure being the Lebesgue measure, on some probability space $(\Omega, \cF, P)$. Let $\mathscr{F}^n_t$  denote the $\sigma$-algebra generated by the collection 
    $$\left\{N_{i,j}^{n,\pm}(A) : A \subseteq [0,t]\times [0,\infty), \; i,j \in [n]\right\}.$$ 
	Let $\cla^n(C)$ be the collection of all $\Phi^n \equiv (\varphi^{n, \pm}_{i,j})_{i,j \in [n]}$, where each $\varphi^{n, \pm}_{i,j}$ is a $\{\mathscr{F}^n_t\}$-predictable map from $\Omega \times [0,T] \to [0, \infty)$ that is bounded above and below, away from $0$, uniformly in $(\omega,t) \in \Omega\times [0,T]$.
	Define $\ell: [0,\infty) \to [0,\infty)$ as
	\begin{align*}
		\ell(x) := x\log x - x +1, \; x \ge 0.
	\end{align*}

	Then, using the variational representation for functionals of PRM from \cite{BDM2} (see e.g. \cite[Theorem 8.12]{Budhirajaweakconv}), for any continuous and bounded $\Psi : \cW_0  \to [0,\infty)$ with respect to the weak topology, we have
		\begin{align}
			&-\frac{1}{a(n)n^2} \log \E\left[\exp\left(-a(n)n^2\Psi \left(H^n\right)\right)\right] \label{eq:varrep1}\\ 
			&= \inf_{\Phi^n \in \cla^n}\E\Big[ \Psi\left(\bar{H}^n\right)
			+ \frac{1}{n^2}\sum_{i,j=1}^n \int_0^T\left(\ell(\varphi^{n, +}_{i,j}(s)) \beta^{n,+}_{i,j}(s)
			+\ell(\varphi^{n, -}_{i,j}(s))\beta^{n,-}_{i,j}(s)\right) ds
			\Big], \nonumber 
		\end{align}
	where $\bar H^n$ is a $\cW_0$ valued random variable defined as
	\begin{align}\label{eq:barhn}
	\bar{H}^n_t := \sum_{i,j=1}^n \bar{X}^n_{i,j}(t)\mathbbm{1}_{Q^n_{i,j}},
	\end{align}
	and
	\begin{align}
		\bar{X}^n_{i,j}(t) &= \bar{X}^n_{i,j}(0)+ \int_{[0,t]\times [0,\infty)} (1-\bar{X}^n_{i,j}(s-)) \mathbbm{1}_{[0, a(n)\varphi^{n,+}_{i,j}(s)\beta^{n,+}_{i,j}(s)]}(r) N_{i,j}^{n,+}(ds\, dr)\nonumber \\ 
		&\hspace{1 in}- \int_{[0,t]\times [0,\infty)} \bar{X}^n_{i,j}(s-) \mathbbm{1}_{[0, a(n)\varphi^{n,-}_{i,j}(s)\beta^{n,-}_{i,j}(s)]}(r) N_{i,j}^{n,-}(ds\, dr). \label{eq:evol}
	\end{align}
	
	Fix $\delta \in (0,1)$ and let $\Phi^n_{\delta}$ be $\delta$-optimal for the right side in \eqref{eq:varrep1}. We omit $\delta$ from the notation for the sake of brevity.
	Then
	\begin{align}
		\|\Psi\|_{\infty}+\delta &\geq -\frac{1}{a(n)n^2} \log \E\left[\exp\left(-a(n)n^2\Psi\left(H^n\right)\right)\right] +\delta\nonumber \\ 
		&\ge \E\left[ \Psi \left(\bar{H}^n\right)
		+ \frac{1}{n^2}\sum_{i,j=1}^n \int_0^T\left(\ell(\varphi^{n, +}_{i,j}(s)) \beta^{n,+}_{i,j}(s)
		+\ell(\varphi^{n, -}_{i,j}(s))\beta^{n,-}_{i,j}(s)\right) ds
		\right]. \label{delopt}
	\end{align}
	Using Assumption \ref{assmp:1}\eqref{assump:1B}, we have 
	\begin{align}\label{eq:costbd}
		\E \left[  \frac{1}{n^2}\sum_{i,j=1}^n \int_0^T\left(\ell(\varphi^{n, +}_{i,j}(s)) 
		+\ell(\varphi^{n, -}_{i,j}(s)) \right) ds
		\right] \le (2\|\Psi\|_{\infty}+\delta)c_{\beta}^{-1} < \infty.
	\end{align}
	Define $\clv := [0,\infty)^2 \times \left\{0,1\right\}  \times [0,1]^2 \times [0,T]$ (equipped with the product topology) and define $\clp(\clv)$ valued random variable (i.e., a random measure on $\clv$) $\Lambda^n$ as follows : 
	\begin{align*}
		\Lambda^n(dv^+\, dv^-\, du\,  dx\, dy\,ds)
		&:= 
		 \sum_{i,j=1}^n \delta_{(\varphi^{n, +}_{i,j}(s), \varphi^{n, -}_{i,j}(s), \bar X^n_{i,j}(s)}(dv^+\, dv^-\, du)\mathbbm{1}_{Q_{i,j}^n}(x,y) dx\, dy\,ds.
	\end{align*}
    Henceforth, we will abbreviate $(dv^+\, dv^-\, du\,  dx\, dy\,ds) := d\theta$. Then, with $\bar{\Lambda}^n= \E \Lambda^n$,
	\begin{equation}{\label{tight}}
		\int_{\clv} \left(\ell(v^+)+\ell(v^-)\right) \bar{\Lambda}^n(d\theta) = \E \int_{\clv} \left(\ell(v^+)+\ell(v^-)\right)\, \Lambda^n(d\theta) \leq  (2\|\Psi\|_{\infty}+\delta)c_{\beta}^{-1} < \infty,
	\end{equation} 
	It is easy to see that the function $\psi : \clv \to [0,\infty)$, defined as $\psi(v^+,v^-,u,x,y,s)=\ell(v^+)+\ell(v^-),$ for all $(v^+,v^-,u,x,y,s) \in \clv$, is a tightness function, i.e., it has pre-compact level sets. It then follows, (cf.  \cite[Lemma 2.9, Theorem 2.11]{Budhirajaweakconv}),  that $\left\{\Lambda^n : n \geq 1\right\}$ is a tight sequence of random variables in $\clp(\clv)$.

	Furthermore, the right side of \eqref{delopt} can be re-written as
	\begin{align}
		& \E\Big[ \Psi\Big(\bar{H}^n\Big)
		+ \int_{\clv}\left(\ell(v^+) \beta^{n,+}_s(x,y)
		+\ell(v^{-}) \beta^{n,-}_s(x,y)\right) \Lambda^n(d\theta) \label{eq:deloptmz}
		\Big],
	\end{align}
	and hence
	\begin{equation}{\label{tightbound2}}
		\sup_{n \geq 1} \E  \int_{\clv}\left(\ell(v^+) \beta^{n,+}_s(x,y)
		+\ell(v^{-}) \beta^{n,-}_s(x,y)\right) \Lambda^n(d\theta) \leq 2\|\Psi \|_{\infty} + \delta< \infty.
	\end{equation}
    
	\subsection{Characterization of the weak limit points}\label{sec:char}
    Let $\{(\bar H^n, \Lambda^n): n\ge 1\}$ be as in Section \ref{sec:varrep} associated with the $\delta$-optimal $\Phi^n_{\delta}$ introduced there.
	Since, $\left\{\Lambda^n : n \geq 1\right\}$ is tight,
    and $\cW_0$ is compact in weak topology, so is $\left\{\left(\bar{H}^n,\Lambda^n\right) : n \geq 1\right\}$.  The latter is viewed as a sequence of random variables in $\cW_0 \times \clp(\clv)$ which we endow with the product topology of the weak topology inherited from $L^2\left([0,1]^2_T\right)$ for $\cW_0$ and the topology of weak convergence for $\clp(\clv)$. Let $\left(\bar H,\Lambda \right)$ be a weak limit point of $\{\left(\bar{H}^n,\Lambda^n\right), n \ge 1\}$. We now observe some properties of $(\bar H, \Lambda)$.
	\begin{enumerate}[(a)]
		\item For any $n \geq 1$, the marginal distribution $\Lambda^n_{(4,5,6)}$ is the Lebesgue measure on $[0,1]^2_T$. Hence, almost surely,  $\Lambda_{(4,5,6)}(dx\, dy\,ds) = dx\, dy\,ds$ on $[0,1]^2 \times [0,T]$. 
		
		\item Disintegrating $\Lambda$ as
		$$\Lambda(d\theta) = \tilde \Lambda_{x,y,s} (dv^+\, dv^-\, du)\,dx\,dy\,ds = \widehat \Lambda_{x,y,s,u}(dv^+\, dv^-) \Lambda^*_{x,y,s}(du) dx \, dy\,ds,$$
		we have that, almost surely,
		\begin{equation}\label{eq:charh}
			\bar H(x,y,s) = \int_{ \left\{0,1\right\}} u \Lambda^*_{x,y,s}(du), \; \mbox{ for a.e.}[x,y,s].
		\end{equation}
		To see \eqref{eq:charh}, 
		assume without loss of generality (by appealing to Skorokhod representation theorem) that $(\bar H^n, \Lambda^n) \to (\bar H, \Lambda)$ almost surely. 
		Then for any $f :[0,1]^2 \times [0,T] \to \RR$ continuous, we have
		$$\int_{[0,1]^2_T} f(x,y,s) \bar H^n(x,y,s) dx\, dy\,ds \stackrel{a.s.}{\longrightarrow} \int_{[0,1]^2_T} f(x,y,s) \bar H(x,y,s) dx\, dy\,ds.$$
		Also,
		\begin{align*}
			\int_{\clv} f(x,y,s) u \Lambda^n(d\theta) &\stackrel{a.s.}{\longrightarrow} \int_{\clv} f(x,y,s) u \Lambda(d\theta) \\
            &= \int_{[0,1]^2_T} f(x,y,s) \left(\int_{\left\{0,1\right\}} u \Lambda^*_{x,y,s}(du)\right) dx \, dy\,ds.
		\end{align*}
		Note that
        \begin{align*}
            \int_{[0,1]^2_T} f(x,y,s) \bar H^n(x,y,s) dx\, dy\,ds &= \int_{0}^T \sum_{i,j=1}^n \int_{Q_{i,j}^n}  f(x,y,s)\bar{X}_{i,j}^n(s)\, dx\,dy\,ds \\
            &=\int_{\clv}f(x,y,s) u \Lambda^n(d\theta).
        \end{align*}
		Combining the last three displays we have \eqref{eq:charh}.
		
		\item The condition in~\eqref{tightbound2} implies that 
		\begin{equation}{\label{tightbound}}
		\E  \int_{\clv}\ell(v^{\pm}) \beta_s^{\pm}(x,y)
			 \Lambda(d\theta) \leq
           \liminf_{n \to \infty}\E \int_{\clv}  \ell(v^{\pm})\beta_s^{n,\pm}(x,y)\, \Lambda^n(d\theta) \le 2\|\Psi\|_{\infty} + \delta.
		\end{equation}
        Proof of the above statement involves standard approximation argument and is deferred to Section~\ref{sec:sub7.4}.
		\item Now rewrite \eqref{eq:evol} as
		\begin{align}
			\bar X^n_{i,j}(t) &= a(n)\int_0^t (1-\bar X^n_{i,j}(s))\beta^{n,+}_{i,j}(s) \varphi^{n, +}_{i,j}(s) ds \label{eq:contevol-cm} \\
			& \hspace{1 in} - a(n)\int_0^t \bar X^n_{i,j}(s) \beta^{n,-}_{i,j}(s) \varphi^{n, -}_{i,j}(s) ds +
			L^n_{i,j}(t), \nonumber
		\end{align}
		where
		\begin{align*}
			L^n_{i,j}(t) &= x_{i,j}^n+ \int_{[0,t]\times (0,\infty)} (1-\bar X^n_{i,j}(s-)) \mathbbm{1}_{[0, a(n) \beta^{n,+}_{i,j}(s)
				\varphi^{n, +}_{i,j}(s)]}(r) \tilde N_{i,j}^{n,+}(ds\, dr)\nonumber\\ 
			&\hspace{1 in}- \int_{[0,t]\times (0,\infty)} \bar X^n_{i,j}(s-) \mathbbm{1}_{[0, a(n) \beta^{n,-}_{i,j}(s)\varphi^{n, -}_{i,j}(s)]}(r) \tilde N_{i,j}^{n,-}(ds\, dr)
		\end{align*}
		is a martingale defined using compensated  PRM 
		$$\tilde N_{i,j}^{n,\pm}(ds\, dr) = N_{i,j}^{n,\pm}(ds\, dr) - ds\, dr.$$
		Note that, for any bounded continuous $g :[0,T] \to \mathbb{R}$ and $t \in [0,T]$,
		\begin{align*}
			&\operatorname{Var}\left(\int_0^t g(s)\,dL_{i,j}^n(s)\right) \\
            & = \E  \int_{[0,t]\times (0,\infty)} \left(g(s)\right)^2(1-\bar X^n_{i,j}(s-))^2 \mathbbm{1}_{[0, a(n) \beta^{n,+}_{i,j}(s)	\varphi^{n, +}_{i,j}(s)]}(r) ds\,dr \\
			& \hspace{0.5 in} + \E  \int_{[0,t]\times (0,\infty)} \left(g(s)\right)^2(\bar X^n_{i,j}(s-))^2 \mathbbm{1}_{[0, a(n) \beta^{n,-}_{i,j}(s)	\varphi^{n, -}_{i,j}(s)]}(r) ds\,dr \\
			&  \leq a(n) \|g\|^2_{\infty} \E \int_{0}^t \left[\beta^{n,+}_{i,j}(s)\varphi^{n,+}_{i,j}(s) + \beta^{n,-}_{i,j}(s)\varphi^{n,-}_{i,j}(s)\right]ds \\
			&  \leq a(n) \|g\|^2_{\infty} \E \int_{0}^t \left[ \left(\ell( \varphi^{n,+}_{i,j}(s))+2\right)\beta^{n,+}_{i,j}(s) + \left(\ell( \varphi^{n,-}_{i,j}(s))+2\right)\beta^{n,-}_{i,j}(s) \right]\,ds,
		\end{align*}
		where we have used that $x \leq \ell(x)+2$ for any $x \geq 0$. 
		Using \eqref{eq:contevol-cm}, for all continuous $f:[0,1]^2 \to \RR$, $\psi \in \mathcal{C}^1([0,T])$ and $t \in [0,T]$, 
		\begin{align}
			&\int_{[0,1]^2} \sum_{i,j=1}^n f(x, y)\left(\psi(t)\bar X^n_{i,j}(t) - \psi(0)\bar X_{i,j}^n(0)\right)\mathbbm{1}_{Q_{i,j}^n}(x,y)dx\, dy \label{eq:contevol-cmp} \\ 
			&	 =\int_{[0,1]^2} \sum_{i,j=1}^n f(x, y) \int_{0}^t \left[\psi(s)d\bar X^n_{i,j}(s) + \psi^{\prime}(s)\bar X_{i,j}^n(s)\,ds \right] \mathbbm{1}_{Q_{i,j}^n}(x,y)dx\, dy \nonumber\\ 
			& =
			a(n)\int_{0}^t \int_{[0,1]^2} \sum_{i,j=1}^n f(x, y) \psi(s)(1-\bar X^n_{i,j}(s))\beta^{n,+}_{i,j}(s) \varphi^{n, +}_{i,j}(s) \mathbbm{1}_{Q_{i,j}^n}(x,y) dx\, dy\, ds \nonumber
			\\ 
			&\hspace{0.2 in} - a(n)\int_0^t \int_{[0,1]^2} \sum_{i,j=1}^n f(x, y) \psi(s)\bar X^n_{i,j}(s) \beta^{n,-}_{i,j}(s) \varphi^{n, -}_{i,j}(s)\mathbbm{1}_{Q_{i,j}^n}(x,y) dx\, dy\, ds \nonumber \\
			&\hspace{0.2 in} + \int_{[0,1]^2} \int_0^t \sum_{i,j=1}^n f(x,y) \psi^{\prime}(s)\bar X^n_{i,j}(s) \mathbbm{1}_{Q_{i,j}^n}(x,y)\,dx\,dy\,ds +
			R^n(t), \nonumber 
		\end{align}
		where
		$$R^n(t) =   \int_{[0,1]^2} \sum_{i,j=1}^n f(x, y)\left[\int_0^t \psi(s)\,dL^n_{i,j}(s)\right] \mathbbm{1}_{Q_{i,j}^n}(x,y) dx\, dy = \dfrac{1}{n^2}\sum_{i,j=1}^n f^n_{i,j} L^{n,\psi}_{i,j}(t),$$
		and
		$$ f^n_{i,j} := n^2\int_{Q_{i,j}^n} f(x,y)\, dx\,dy,\;\; L^{n,\psi}_{i,j}(t) := \int_0^t \psi(s)\,dL^n_{i,j}(s).$$
		Note that $|f^n_{i,j}| \leq \|f\|_{\infty}$.
		Also, $\E R^n(t) =0$, and
		\begin{align*}
			&\operatorname{Var} \left( R^n(t)\right)   
			 = \dfrac{1}{n^4} \sum_{i,j=1}^n (f_{i,j}^n)^2\operatorname{Var}\left(L^{n,\psi}_{i,j}(t)\right) \\
			& \leq \dfrac{a(n)\|f \psi \|_{\infty}^2}{n^4}\sum_{i,j=1}^n \E \int_{0}^T \left[ \left(\ell( \varphi^{n,+}_{i,j}(s))+2\right)\beta^{n,+}_{i,j}(s) + \left(\ell( \varphi^{n,-}_{i,j}(s))+2\right)\beta^{n,-}_{i,j}(s) \right]ds \\
			& \le \frac{a(n)}{n^2}\|f\|_{\infty}^2 \|\psi\|^2_{\infty} \left[2\|\Psi\|_{\infty}+\delta + 2 \int_{0}^T \int_{[0,1]^2} (\beta_s^{n,+}(x,y) + \beta_s^{n,-}(x,y))\,dx\,dy\,ds\right],
		\end{align*}
		where the last inequality follows from \eqref{delopt}. Using Assumption \ref{assmp:2}\eqref{assump:1A}, we now conclude that $ R^n(t)/a(n) \stackrel{p}{\longrightarrow} 0$. 
		Also,
		\begin{align*}
			&\int_0^t \int_{[0,1]^2} \sum_{i,j=1}^n f(x, y)\psi(s) \left[(1-\bar X^n_{i,j}(s))\beta^{n,+}_{i,j}(s)\varphi^{n, +}_{i,j}(s) -\bar X^n_{i,j}(s) \beta^{n,-}_{i,j}(s) \varphi^{n, -}_{i,j}(s)\right] \\
            &  \hspace{4 in}  \mathbbm{1}_{Q_{i,j}^n}(x,y)\, dx\, dy\, ds \\  
			&=\int_0^t \int_{[0,1]^2} \sum_{i,j=1}^n f(x, y)\psi(s) \left[(1-\bar X^n_{i,j}(s))\beta_s^{n,+}(x,y)\varphi^{n, +}_{i,j}(s) \right]\mathbbm{1}_{Q_{i,j}^n}(x,y)\, dx\, dy\, ds \\
            & \hspace{0.5 in} - \int_0^t \int_{[0,1]^2} \sum_{i,j=1}^n f(x, y)\psi(s) \left[ \bar X^n_{i,j}(s) \beta^{n,-}_s(x,y) \varphi^{n, -}_{i,j}(s)\right]\mathbbm{1}_{Q_{i,j}^n}(x,y)\, dx\, dy\, ds \\
			&=\int_{[0,\infty)^2 \times \left\{0,1\right\} \times [0,1]^2 \times [0,t]} f(x,y)\psi(s)[(1-u)\beta_s^{n,+}(x,y)v^+ - u\beta_s^{n,-}(x,y)v^-] \Lambda^n(d\theta).
		\end{align*}
		Since the absolute value of the left hand side and the third term in the right hand side of~\eqref{eq:contevol-cmp} are bounded above by $2\|f\|_{\infty}\|\psi\|_{\infty}$ and $\|f\|_{\infty}\|\psi^{\prime}\|_{\infty}$ respectively, we have, by dividing by $a(n)$ in \eqref{eq:contevol-cmp} and sending $n\to \infty$, that
		\begin{equation}{\label{eq:conv}}
			\int_{\clv} f(x,y)\psi(s)\left[(1-u)\beta_s^{n,+}(x,y)v^+ - u\beta_s^{n,-}(x,y)v^-\right] \Lambda^n(d\theta) \to 0,
		\end{equation}
        in probability.
		We claim that this implies 
		\begin{equation}{\label{eq:conv2}}
			\int_{\clv} f(x,y)\psi(s)\left[(1-u)\beta_s^{+}(x,y)v^+ - u\beta_s^{-}(x,y)v^-\right] \Lambda(d\theta) = 0, \;\; \text{almost surely}.
		\end{equation}
		The proof of \eqref{eq:conv2} involves standard approximation arguments and is deferred to Section~\ref{sec:sub7.5}. Recalling the definition of $\tilde \Lambda_{x,y,s}$, and since $f$ and $\psi$ are arbitrary,   we now have that, a.s., for a.e.  $(x,y,s) \in [0,1]^2 \times [0,T]$, 
		\begin{equation}\label{eq:stst}
			\int_{[0,\infty)^2 \times \left\{0,1\right\}} \left[ (1-u)\beta_s^+(x,y)v^+ - u\beta_s^-(x,y)v^-\right] \tilde \Lambda_{x,y,s}(dv^+\,dv^-\,du)= 0.
		\end{equation}
	\end{enumerate}

	\subsection{Proof of the LDP upper bound}\label{sec:pfofldpupp}
	Now we complete the proof of the upper bound of the LDP in Theorem \ref{thm:weakLDP}. We begin by giving a more convenient representation for the rate function.

   Denote by $\clp^*$ the collection of all $\lambda \in \clp(\clv)$ such that $\lambda_{(4,5,6)}(dx\, dy\,ds) = dx\, dy\,ds$. 
	For $\phi \in \cW_0$, let $\clp^*(\phi)$ denote the collection of all $\lambda \in \clp^*$ such that, with
	\begin{equation}\label{eq:disinla}
		\lambda(dv^+\, dv^-\, du\,  dx\, dy\,ds) = \tilde \lambda_{x,y,s}(dv^+\, dv^-\, du) \, dx \, dy\,ds,
	\end{equation}
	we have the following : 
	\begin{equation}\label{eq:char1}
		\phi(x,y,s) = \int_{[0,\infty)^2 \times \left\{0,1\right\}} u \tilde \lambda_{x,y,s}(dv^+\, dv^-\, du), \; \; \text{a.e.}[x,y,s],
	\end{equation}
	and
	\begin{equation}\label{eq:char2}
		\int_{[0,\infty)^2 \times \left\{0,1\right\}} \left[ (1-u)\beta_s^+(x,y)v^+ - u\beta_s^-(x,y)v^-\right] \tilde \lambda_{x,y,s}(dv^+\,dv^-\,du)= 0,\;\; \text{a.e.}[x,y,s].
	\end{equation}
	From \eqref{eq:charh} and  \eqref{eq:stst}, for any weak limit point $(\bar H, \Lambda)$ of the sequence $\left\{\left(\bar H^n, \Lambda^n\right) : n \geq 1\right\}$, we have that $\Lambda \in \clp^*(\bar H)$ almost surely.

    Define $J_1^{(\beta^+,\,\beta^-)} : \cW_0 \to [0,\infty]$  as follows :
	\begin{equation}{\label{def:I1}}
		J_1^{(\beta^+,\,\beta^-)}(\phi) := \inf_{\lambda \in \clp^*(\phi)} \int_{\clv} [\ell(v^+)\beta_s^+(x,y)  +\ell(v^-) \beta_s^{-}(x,y)] \lambda(d\theta), \; \forall \; \phi \in \cW_0.
	\end{equation}
    It will be shown in Lemma \ref{lem:alleq} that $J_1^{(\beta^+,\,\beta^-)}= J^{(\beta^+,\,\beta^-)}$.
    In view of this result, for the upper bound, it suffices
	to show that (cf. \cite[Proposition 1.10(a)]{Budhirajaweakconv}) 
	$$ \limsup_{n \to \infty} \dfrac{1}{a(n)n^2}\log \E \left[ \exp\left(-a(n)n^2 \Psi(H^n)\right)\right] \leq - \inf_{\phi \in \cW_0} \left(\Psi(\phi) + J_1^{(\beta^+,\,\beta^-)}(\phi)\right).$$
Recall from \eqref{delopt}, \eqref{eq:deloptmz} that 
	\begin{align}
		&-\dfrac{1}{a(n)n^2}\log \E \left[ \exp\left(-a(n)n^2\Psi(H^n)\right)\right]+\delta \nonumber\\
		&\hspace{1 in} \ge \E\Big[ \Psi\Big(\bar H^n\Big)
		+ \int_{\clv}\left(\ell(v^+) \beta_s^{n,+}(x,y)
		+\ell(v^{-}) \beta_s^{n,-}(x,y)\right) \Lambda^n(d\theta) \Big].  \label{eq:deloptmz1}
	\end{align}
	Also recall  that the sequence $\{(\bar H^n, \Lambda^n) : n \geq 1\}$ is tight. Consider any subsequence along which $\left\{(\bar H^n, \Lambda^n) : n \geq 1 \right\}$ converges (weakly) and denote the limit by $(\bar H, \Lambda)$. Using Assumption \ref{assmp:2}\eqref{assump:1A}, we can further assume (without loss of generality) that $\beta^{n,\pm}$ converges almost everywhere to $\beta^{\pm}$ on $[0,1]^2_T$ along this subsequence.  Henceforth relabel the subsequence as $\{n\}$. From the boundedness and continuity of $\Psi$, the inequality in \eqref{tightbound}, and the observation that $\Lambda \in \clp^*\left(\bar H \right)$, we have
	\begin{align*}
		&\liminf_{n\to \infty} \E \left[\Psi\Big(\bar H^n\Big)+ \int_{\clv}\left(\ell(v^+) \beta_s^{n,+}(x,y)
		+\ell(v^{-}) \beta_s^{n,-}(x,y)\right) \Lambda^n(d\theta)\right]\\ 
		& \hspace{1 in} \ge \E \left[\Psi \Big(\bar H\Big)+\int_{\clv}\left(\ell(v^+) \beta_s^+(x,y)
		+\ell(v^{-}) \beta_s^-(x,y)\right) \Lambda(d\theta)\right]\\ 
		& \hspace{ 1in}  \ge \E \left[\Psi\Big(\bar H \Big)+ J_1^{(\beta^+,\,\beta^-)}(\bar H)\right] \ge \inf_{\phi \in \cW_0}[\Psi(\phi) + J_1^{(\beta^+,\,\beta^-)}(\phi)].
	\end{align*}
	Since the convergent subsequence is arbitrary, this, along with \eqref{eq:deloptmz1}, proves that 
	$$ \liminf_{n \to \infty} -\dfrac{1}{a(n)n^2}\log \E \left[ \exp\left(-a(n)n^2\Psi(H^n)\right)\right]+\delta \geq \inf_{\phi \in \cW_0}[\Psi(\phi) + J_1^{(\beta^+,\,\beta^-)}(\phi)].$$
	Since, $\delta >0$ is arbitrary, this completes the proof of the upper bound.

	\subsection{Equivalent representations for the rate function}\label{sec:eval} We now proceed to establish that $J_1^{(\beta^+,\,\beta^-)}$ equals $J^{(\beta^+,\,\beta^-)}$. 
	Towards that goal, we now give yet another representation for the rate function. Let 
	$$\clu := \{\alpha = (\alpha^+, \alpha^-): \alpha^{\pm} \in \cW \},$$
	where, similar to $\beta^{\pm}$, we define $\alpha^{\pm}_s(\cdot, \cdot) := \alpha^{\pm}(\cdot, \cdot, \, s)$ on $[0,1]^2$ for $s \in [0,T]$.
	For any $\phi \in \cW_0$,
	\begin{align*}
		\clu(\phi) :=  \{\alpha=(\alpha^+,\alpha^-) \in \clu \,:\, \alpha^+_s\beta^+_s(1-\phi_s)=\alpha^-_s\beta^-_s\phi_s, \; \text{a.e.}[x,y,s] \}.
	\end{align*}
	Define, $J_2^{(\beta^+,\,\beta^-)}: \cW_0  \to [0,\infty]$ as follows : For all $\phi \in \cW_0$, 
	\begin{equation}
		J_2^{(\beta^+,\,\beta^-)}(\phi) :=  \inf_{\alpha \in \clu(\phi)} \int_{[0,1]^2_T} \left[ \ell(\alpha^+)\beta^+(1-\phi)+\ell(\alpha^-)\beta^-\phi \right].
	\end{equation}
    The following lemma, which was used in the proof of the upper bound in Section \ref{sec:pfofldpupp}, shows that $J^{(\beta^+,\,\beta^-)}$,$J_1^{(\beta^+,\,\beta^-)}$, and $J_2^{(\beta^+,\,\beta^-)}$ are the same.
	\begin{lem} \label{lem:alleq} For all $\phi \in \cW_0$, $J^{(\beta^+,\,\beta^-)}(\phi)= J_1^{(\beta^+,\,\beta^-)}(\phi) = J_2^{(\beta^+,\,\beta^-)}(\phi)$.
	\end{lem}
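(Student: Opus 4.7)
The plan is to prove the chain $J_1^{(\beta^+,\beta^-)} = J_2^{(\beta^+,\beta^-)} = J^{(\beta^+,\beta^-)}$ by first showing $J_1 = J_2$ via disintegration plus Jensen's inequality, and then showing $J_2 = J$ by an explicit pointwise minimization in the one-parameter family of admissible controls.

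I will begin with the easier identity $J_2 = J$, which is pointwise in $(x,y,s)$. Set $a := \beta^+_s(x,y)(1-\phi_s(x,y))$ and $b := \beta^-_s(x,y)\phi_s(x,y)$. On the set where $a,b>0$ the constraint $\alpha \in \clu(\phi)$ reads $a\alpha^+ = b\alpha^-$, so every admissible pair is of the form $\alpha^+ = r\sqrt{b/a}$, $\alpha^- = r\sqrt{a/b}$ for some $r \geq 0$. A direct computation using $\ell(x)=x\log x - x + 1$ gives
\begin{equation*}
a\,\ell(\alpha^+) + b\,\ell(\alpha^-) \;=\; 2\sqrt{ab}\,\ell(r) + (\sqrt{a}-\sqrt{b})^{2},
\end{equation*}
which, since $\ell \geq 0$ with unique minimum $\ell(1)=0$, is minimized at $r=1$ and equals the integrand of $J^{(\beta^+,\beta^-)}$. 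Because $\beta^{\pm}\ge c_\beta$, the degenerate cases reduce to $\phi_s \in \{0,1\}$ on which the constraint forces the appropriate $\alpha^\pm$ to vanish, and $\ell(0)=1$ produces exactly the matching integrand of $J$. Integrating over $[0,1]^{3}$ yields $J_2 = J$.

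For $J_1 \leq J_2$ I will construct, given $\alpha \in \clu(\phi)$, a specific $\lambda \in \cP^*(\phi)$. Its disintegration $\tilde\lambda_{x,y,s}$ places mass $1-\phi_s(x,y)$ at the point $(v^+,v^-,u) = (\alpha^+_s(x,y),\,1,\,0)$ and mass $\phi_s(x,y)$ at $(1,\alpha^-_s(x,y),1)$. Then \eqref{eq:char1} holds by inspection and \eqref{eq:char2} is exactly the constraint defining $\clu(\phi)$. Since $\ell(1)=0$, the cost integrand collapses to $\beta^+_s(1-\phi_s)\ell(\alpha^+_s) + \beta^-_s\phi_s\,\ell(\alpha^-_s)$, giving $J_1(\phi) \le J_2(\phi)$. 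Conversely, for $J_1 \geq J_2$, given $\lambda \in \cP^*(\phi)$ I define
\begin{equation*}
\alpha^+_s(x,y) := \frac{\int (1-u)v^+\, \tilde\lambda_{x,y,s}}{1-\phi_s(x,y)}, \qquad \alpha^-_s(x,y) := \frac{\int u\,v^- \,\tilde\lambda_{x,y,s}}{\phi_s(x,y)},
\end{equation*}
(setting the values to zero on $\{\phi_s = 1\}$, resp. $\{\phi_s=0\}$), which are measurable. The balance condition \eqref{eq:char2} rewrites precisely as $\beta^+_s(1-\phi_s)\alpha^+_s = \beta^-_s\phi_s\alpha^-_s$, so $\alpha \in \clu(\phi)$. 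Using $\ell\ge 0$ I drop the $u=1$ part of the $v^+$-integral to get $\int \ell(v^+)\beta^+_s\,\lambda \ge \int \beta^+_s \int (1-u)\ell(v^+)\,\tilde\lambda_{x,y,s}$, and Jensen's inequality applied to the probability measure $(1-u)\tilde\lambda_{x,y,s}/(1-\phi_s)$ gives the lower bound $\int \beta^+_s(1-\phi_s)\ell(\alpha^+_s)$. The same argument on the $v^-$ side yields $J_1(\phi) \ge J_2(\phi)$, completing the chain.

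The main technical subtlety is the consistent handling of the degenerate sets $\{\phi_s=0\}$ and $\{\phi_s=1\}$, where the ratios defining $\alpha^{\pm}$ become $0/0$; the point is that on these sets the balance \eqref{eq:char2} forces the corresponding integrals in the numerator to vanish, so any measurable extension (e.g.\ zero) gives a valid $\alpha \in \clu(\phi)$ without altering the value of either infimum. Measurability of $\alpha^{\pm}$ from the disintegration of $\lambda$ follows from standard measurable selection, and all identities above are to be interpreted almost everywhere in $(x,y,s)$, consistently with the a.e.\ form of the constraints \eqref{eq:char1}--\eqref{eq:char2}.
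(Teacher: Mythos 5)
Your proof is correct and follows the same three-pronged structure as the paper's: the Dirac-mass construction for $J_1\le J_2$, the conditional-mean/Jensen argument for $J_1\ge J_2$, and a pointwise minimization over the one-parameter constraint family for $J_2 = J$. The only notable difference is that your normalized parametrization $\alpha^+ = r\sqrt{b/a}$, $\alpha^- = r\sqrt{a/b}$ exposes the clean algebraic identity $a\,\ell(\alpha^+) + b\,\ell(\alpha^-) = 2\sqrt{ab}\,\ell(r) + (\sqrt{a}-\sqrt{b})^2$, which makes the pointwise minimum immediate without the calculus step the paper performs in computing $\zeta(\mathfrak{h},b^+,b^-)$; you should also note (as the paper does) that the finiteness of $\alpha^{\pm}$ a.e.\ in the $J_1\ge J_2$ direction follows from restricting to $\lambda$ with finite cost, since otherwise the inequality is trivial.
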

	\begin{proof}
		Fix $\phi \in \cW_0$. We first argue that $J_1^{(\beta^+,\,\beta^-)}(\phi) \le J_2^{(\beta^+,\,\beta^-)}(\phi)$. Fix $\delta>0$ and 
		let $\alpha=(\alpha^+,\alpha^-) \in \clu(\phi)$ be $\delta$-optimal for $J_2^{(\beta^+,\,\beta^-)}(\phi)$, i.e.,
		$$
	\int_{[0,1]^2_T} \left[ \ell(\alpha^+)\beta^+(1-\phi)+\ell(\alpha^-)\beta^-\phi \right]
		\le J_2^{(\beta^+,\,\beta^-)}(\phi) + \delta .
		$$
		Define $\lambda \in \clp^*$ as
		\begin{align*}
			\lambda(dv^+\, dv^-\, du\,  dx\, dy\,ds)
			& = \delta_{1}(dv^+)\delta_{\alpha_s^-(x,y)}(dv^-) \delta_1(du) \phi_s(x,y)\, dx \,dy\,ds \\
			& \hspace{0.5 in} + \delta_{\alpha_s^+(x,y)}(dv^+)\delta_{1}(dv^-) \delta_0(du) (1-\phi_s(x,y)) dx\, dy\,ds.\end{align*}
		Then, disintegrating $\lambda$ as in \eqref{eq:disinla}, almost everywhere on $[0,1]^2_T$, 
		\begin{align*}
			&\int_{[0,\infty)^2 \times \left\{0,1\right\}} u \tilde{\lambda}_{x,y,s}\left(dv^+\,dv^-\,du\right) \\
            &= 	\phi_s(x,y)\int_{[0,\infty)^2 \times \left\{0,1\right\}} u  	\delta_{1}(dv^+)\delta_{\alpha_s^-(x,y)}(dv^-) \delta_1(du) \\
			& \hspace{1 in} + (1-\phi_s(x,y))\int_{[0,\infty)^2 \times \left\{0,1\right\}} u  \delta_{\alpha_s^+(x,y)}(dv^+)\delta_{1}(dv^-) \delta_0(du) \\
			& = \phi_s(x,y),
		\end{align*}
		and 
		\begin{align*}
			&\int_{[0,\infty)^2 \times \left\{0,1\right\}} \left[(1-u)\beta_s^+(x,y)v^+-u\beta_s^-(x,y)v^-\right] \tilde \lambda_{x,y,s}(dv^+\,dv^-\, du)\\ 
			&\hspace{0.5 in} =  \beta_s^+(x,y)\phi_s(x,y)\int_{[0,\infty)^2 \times \left\{0,1\right\}} (1-u)v^+ 	\delta_{1}(dv^+)\delta_{\alpha_s^-(x,y)}(dv^-) \delta_1(du) \\
			& \hspace{0.8 in} + \beta_s^+(x,y)(1-\phi_s(x,y))\int_{[0,\infty)^2 \times \left\{0,1\right\}} (1-u)v^+ 	\delta_{\alpha_s^+(x,y)}(dv^+)\delta_{1}(dv^-) \delta_0(du) \\
			&\hspace{0.8 in} -  \beta_s^-(x,y)\phi_s(x,y)\int_{[0,\infty)^2 \times \left\{0,1\right\}} uv^-	\delta_{1}(dv^+)\delta_{\alpha_s^-(x,y)}(dv^-) \delta_1(du) \\
			& \hspace{0.8 in} - \beta_s^-(x,y)(1-\phi_s(x,y))\int_{[0,\infty)^2 \times \left\{0,1\right\}} uv^- 	\delta_{\alpha_s^+(x,y)}(dv^+)\delta_{1}(dv^-) \delta_0(du) \\
			& \hspace{0.5 in} = \beta_s^+(x,y)(1-\phi_s(x,y))\alpha_s^+(x,y)-\beta_s^-(x,y)\phi_s(x,y)\alpha_s^-(x,y) =0,
		\end{align*}
		where the last equality follows from the fact that $\alpha \in \clu(\phi)$. This shows that $\lambda \in \clp^*(\phi)$. Consequently, since $\ell(1)=0$, we have
		\begin{align*}
			J_1^{(\beta^+,\,\beta^-)}(\phi)  &\le \int_{\clv} \left[\ell(v^+)\beta_s^+(x,y)  + \ell(v^-)\beta_s^-(x,y)\right] \lambda(d\theta) \\
            & = \int_{[0,1]^2_T} \beta^+ (1-\phi)\ell(\alpha^+) + \int_{[0,1]^2_T} \beta^-\phi\ell(\alpha^-) 
			\le J_2^{(\beta^+,\,\beta^-)}(\phi) + \delta.
		\end{align*}
		Since $\delta>0$ is arbitrary, the inequality $J_1^{(\beta^+,\,\beta^-)}(\phi) \le J_2^{(\beta^+,\,\beta^-)}(\phi)$ follows.
		
		Now we show the reverse inequality $J_2^{(\beta^+,\,\beta^-)}(\phi) \le J_1^{(\beta^+,\,\beta^-)}(\phi)$. We consider the case when $J_1^{(\beta^+,\,\beta^-)}(\phi)< \infty$, because otherwise the inequality is trivial. Fix $\lambda \in \clp^*(\phi)$ that is 
		$\delta$-optimal for $J_1^{(\beta^+,\,\beta^-)}(\phi)$, i.e.,
		$$\int_{\clv} \left[\ell(v^+)  \beta_s^+(x,y) + \ell(v^-) \beta_s^{-}(x,y)\right] \lambda(d\theta) \le J_1^{(\beta^+,\,\beta^-)}(\phi) +\delta.$$
		Now disintegrate $\lambda$ as
		\begin{align*}
			\lambda(dv^+\, dv^-\, du\,  dx\, dy\,ds) &=\tilde \lambda_{x,y,s}\left(dv^+\,dv^-\,du\right)\,dx\,dy\,ds \\
			&= \widehat \lambda_{x,y,s,u}(dv^+\, dv^-) \lambda^*_{x,y,s}(du)\,dx\, dy\,ds\\ &= \widehat \lambda_{x,y,s,0}(dv^+\, dv^-)\lambda^*_{x,y,s}(0)\delta_0(du)\,dx\, dy\,ds \\
            & \hspace{0.5 in} +
			\widehat \lambda_{x,y,s,1}(dv^+\, dv^-)\lambda^*_{x,y,s}(1)\delta_1(du)\,dx\, dy\,ds.
		\end{align*}
	Define, for all $(x,y) \in [0,1]^2$ and $s \in [0,T]$, $\phi(x,y,s) \equiv \phi_s(x,y) := \lambda^*_{x,y,s}(1) = 1- \lambda^*_{x,y,s}(0)$ and 
		\begin{align*}
			&\alpha_s^+(x,y) := \mathbbm{1}_{(\phi_s(x,y)<1)}\int_{[0,\infty)^2} v^+\widehat \lambda_{x,y,s,0}(dv^+\, dv^-),\\
			& \alpha_s^-(x,y) := \mathbbm{1}_{(\phi_s(x,y)>0)}\int_{[0,\infty)^2} v^-\widehat \lambda_{x,y,s,1}(dv^+\, dv^-),
		\end{align*}
		with the convention that $0\cdot \infty=0$.
		We claim that $\alpha = (\alpha^+, \alpha^-) \in \clu(\phi)$.  
		Indeed, using $\beta^+,\beta^- \geq c_{\beta}>0$, we get
		\begin{align*}
			\dfrac{1}{c_{\beta}} \left(J_1^{(\beta^+,\,\beta^-)}(\phi) +\delta \right) + 4 & \geq \int_{\clv} \left[ (\ell(v^+)+2)+(\ell(v^-)+2)\right]\lambda(d\theta) \\
            & \geq \int_{\clv} \left(v^++v^-\right)\, \lambda(d\theta) \\
			& \geq  \int_{[0,1]^2_T} \left[ (1-\phi_s(x,y))\int_{[0,\infty)^2} v^+\,\widehat \lambda_{x,y,s,0}\left(dv^+\, dv^-\right)\right]\,dx\,dy \,ds\\
			& \hspace{0.2 in} + \int_{[0,1]^2_T} \left[ \phi_s(x,y) \int_{[0,\infty)^2} v^-\,\widehat \lambda_{x,y,s,1}\left(dv^+\, dv^-\right)\right]\,dx\,dy\,ds \\
			& = \int_{[0,1]^2_T} (1-\phi)\alpha^+ + \int_{[0,1]^2_T} \phi\alpha^-.
		\end{align*}
		Using $J_1^{(\beta^+,\,\beta^-)}(\phi)< \infty$, this shows that $\alpha^+ < \infty$ almost everywhere on $(\phi<1)$. By definition, $\alpha^+\equiv 0$ on $(\phi=1)$ and hence we have $\alpha^+ <\infty$ almost everywhere. The same conclusion holds true for $\alpha^-$ and thus we have $\alpha=(\alpha^+,\alpha^-) \in \clu$.
		Moreover, since $\lambda \in \clp^*(\phi)$, we have a.e.$[x,y,s]$,
		\begin{align*}
		0	&= \int_{[0,\infty)^2 \times \left\{0,1\right\}} \left[ (1-u)\beta_s^+(x,y)v^+ - u\beta_s^-(x,y)v^-\right] \tilde \lambda_{x,y,s}(dv^+\,dv^-\,du) \\
		& = (1-\phi_s(x,y)) \beta_s^+(x,y)\int_{[0,\infty)^2} v^+\,\widehat \lambda_{x,y,s,0}\left(dv^+\, dv^-\right) \\
        & \hspace{1 in} - \phi_s(x,y) \beta_s^-(x,y)\int_{[0,\infty)^2} v^-\,\widehat \lambda_{x,y,s,1}\left(dv^+\, dv^-\right) \\
		&= (1-\phi_s(x,y)) \alpha_s^+(x,y) \beta_s^+(x,y) - \phi_s(x,y)\alpha^-_s(x,y) \beta_s^-(x,y),
		\end{align*}
	and thus $\alpha \in \clu(\phi)$.

			Next, by convexity of $\ell$, we see that
			$$\mathbbm{1}_{(\phi_s(x,y)<1)}\ell(\alpha_s^{+}(x,y)) \le 
			\mathbbm{1}_{(\phi_s(x,y)<1)} \int_{[0,\infty)^2} \ell(v^{+})\widehat \lambda_{x,y,s,0}(dv^+\, dv^-), $$
			$$\mathbbm{1}_{(\phi_s(x,y)>0)}\ell(\alpha_s^{-}(x,y)) \le 
			\mathbbm{1}_{(\phi_s(x,y)>0)} \int_{[0,\infty)^2} \ell(v^{-})\widehat \lambda_{x,y,s,1}(dv^+\, dv^-),
			$$
			almost everywhere. It then follows that
			\begin{align*}
				&J_2^{(\beta^+,\,\beta^-)}(\phi) \\
                &\le \int_{[0,1]^2_T} \left[\beta_s^+(x,y)(1-\phi_s(x,y)) \ell(\alpha_s^+(x,y)) + \beta_s^-(x,y)\phi_s(x,y)\ell(\alpha_s^-(x,y))\right]dx\, dy\,ds \\ 
				& \leq \int_{[0,1]^2_T} \beta_s^+(x,y)(1-\phi_s(x,y)) \int_{[0,\infty)^2} \ell(v^{+})\widehat \lambda_{x,y,s,0}(dv^+\, dv^-)\,dx\,dy\,ds \\ 
				& \hspace{1 in} + \int_{[0,1]^2_T} \beta_s^-(x,y)\phi_s(x,y) \int_{[0,\infty)^2} \ell(v^{-})\widehat \lambda_{x,y,s,1}(dv^+\, dv^-)\,dx\,dy\,ds \\
				& = \int_{[0,1]^2_T} \beta_s^+(x,y) \lambda^*_{x,y,s}(0) \left(\int_{[0,\infty)^2} \ell(v^{+})\widehat \lambda_{x,y,s,0}(dv^+\, dv^-)\right)\,dx\,dy\,ds \\ 
				& \hspace{1 in} + \int_{[0,1]^2_T} \beta_s^-(x,y)  \lambda^*_{x,y,s}(1)\left(\int_{[0,\infty)^2} \ell(v^{-})\widehat \lambda_{x,y,s,1}(dv^+\, dv^-)\right)\,dx\,dy\,ds \\
				& = \int_{\clv} \beta_s^+(x,y)\ell(v^+) \widehat \lambda_{x,y,s,0}(dv^+\,dv^-)\lambda^*_{x,y,s}(0)\delta_0(du)\,dx\, dy\,ds \\
				&\hspace{1 in} + \int_{\clv}\beta_s^-(x,y)\ell(v^-) \widehat \lambda_{x,y,s,1}(dv^+\,dv^-)\lambda^*_{x,y,s}(1)\delta_1(du)\,dx\, dy\,ds \\ 
				&\le \int_{\clv} \left[\beta_s^+(x,y) \ell(v^+) + \beta_s^-(x,y)\ell(v^-)\right] \lambda(d\theta) \le J_1^{(\beta^+,\,\beta^-)}(\phi) +\delta.
			\end{align*}
			Since $\delta > 0$ is arbitrary, we conclude $J_2^{(\beta^+,\,\beta^-)}(\phi)\leq J_1^{(\beta^+,\,\beta^-)}(\phi)$. To complete the proof we now prove that $J_2^{(\beta^+,\,\beta^-)}(\phi) = J^{(\beta^+,\,\beta^-)}(\phi)$. In order to show this we first compute, for a given   $\mathfrak{h} \in [0,1]$ and $b^+,b^- \in (0,\infty)$,
			\begin{align*}
				\zeta(\mathfrak{h},b^+,b^-)&:= \inf \left\{\ell(a^+)b^+(1-\mathfrak{h}) + \ell(a^-)b^-\mathfrak{h} \, : \, a^{\pm} \in [0, \infty), a^+b^+(1-\mathfrak{h})  = a^-b^-\mathfrak{h} \right\}.
			\end{align*}
			Note first that when $\mathfrak{h}=0$, we have $\zeta(\mathfrak{h},b^+,b^-) = b^+$ and when $\mathfrak{h}= 1$, we get
			$\zeta(\mathfrak{h},b^+,b^-) = b^-$. In both cases, the infimum is attained (though not uniquely) at $(a^+,a^-)=(0,0)$.  Now assume that $\mathfrak{h} \in (0,1)$.
			Then
			\begin{align}{\label{zetainf}}
				\zeta(\mathfrak{h},b^+,b^-)&= \inf_{c\geq 0} \left[\ell\left(\frac{cb^-}{1-\mathfrak{h}}\right)b^+(1-\mathfrak{h}) + \ell\left(\frac{cb^+}{\mathfrak{h}}\right)b^-\mathfrak{h}\right] =: \inf_{c > 0} \zeta_0(\mathfrak{h},b^+,b^-,c),
			\end{align}    
			where the infimum can be restricted to only $c>0$ since the map $c \mapsto \zeta_0(\mathfrak{h},b^+,b^-,c)$ is continuous on $[0,\infty)$. Note that $\ell$ is strictly convex on $[0,\infty)$ and continuously differentiable on $(0,\infty)$. Moreover,
			$$\dfrac{\partial}{\partial c}\zeta_0(\mathfrak{h},b^+,b^-, c) = b^+b^- \log \left(\frac{cb^-}{1-\mathfrak{h}}\right) + b^+b^-\log \left(\frac{cb^+}{\mathfrak{h}}\right).$$
			Setting $\dfrac{\partial}{\partial c}\zeta_0(\mathfrak{h},b^+,b^- c) =0$ and by strict convexity of $\zeta_0$ in $c$, we conclude that the unique infimum in \eqref{zetainf} is attained at
			$$ c= \left(\frac{\mathfrak{h}(1-\mathfrak{h})}{b^+b^-}\right)^{1/2} \in (0,\infty).$$
			Substituting this value of $ c$ in the definition of $\zeta_0(\mathfrak{h},b^+,b^-, c)$, we see that
			\begin{align}
				\zeta(\mathfrak{h},b^+,b^-) &= 
				\dfrac{1}{2}\sqrt{b^+b^-\mathfrak{h}(1-\mathfrak{h})}
				\log \left(\frac{\mathfrak{h}b^-}{(1-\mathfrak{h})b^+}\right) - 	\sqrt{b^+b^-\mathfrak{h}(1-\mathfrak{h})} + b^+(1-\mathfrak{h})\nonumber \\ 
				&\hspace{0.4 in} + 	\dfrac{1}{2}\sqrt{b^+b^-\mathfrak{h}(1-\mathfrak{h})}
				\log \left(\frac{(1-\mathfrak{h})b^+}{\mathfrak{h}b^-}\right)
				-	\sqrt{b^+b^-\mathfrak{h}(1-\mathfrak{h})} + b^-\mathfrak{h} \nonumber \\ 
				&= \left(\sqrt{b^+(1-\mathfrak{h})} - \sqrt{b^-\mathfrak{h}}\right)^2. \label{zeta}
			\end{align}
			The optimal value of $(a^+,a^-)$ giving the infimum in the definition of $\zeta$ can easily seen to be 
			$$(a^+,a^-)= \left(\sqrt{\frac{\mathfrak{h}b^-}{(1-\mathfrak{h})b^+}}\,,\, \sqrt{\frac{(1-\mathfrak{h})b^+}{\mathfrak{h}b^-}}\right).$$
			Note that the value of $ \zeta(\mathfrak{h},b^+,b^-)$ computed in \eqref{zeta} agrees with the case when $\mathfrak{h}\in \left\{0,1\right\}$. Therefore,
			\begin{align*}
			&	J_2^{(\beta^+,\,\beta^-)}(\phi) \\
			&=  \inf \left\{ \int_{[0,1]^2_T} \left[\ell(\alpha^+)\beta^+(1-\phi)+\ell(\alpha^-)\beta^-\phi\right] : \alpha^{\pm} \geq 0, \alpha^+\beta^+(1-\phi)=\alpha^-\beta^-\phi \text{ a.e.}\right\}\\
				& =\int_{[0,1]^2_T} \zeta\left(\phi_s(x,y),\beta_s^+(x,y),\beta_s^-(x,y)\right)\,dx\,dy\,ds \\
				&=  \int_{[0,1]^2_T} \left( \sqrt{\beta^+(1-\phi)}-\sqrt{\beta^-\phi}\right)^2 = J^{(\beta^+,\,\beta^-)}(\phi),
			\end{align*}
			where the second equality is true since the infimum in the definition of $\zeta(\mathfrak{h},b^+,b^-)$ is always attained at some point whose value depends on $(\mathfrak{h},b^+,b^-)$ in a measurable way. This completes the proof. 
			\end{proof}

		\subsection{Proof of the LDP lower bound}\label{sec:pflowbd}
		We now give the proof of the LDP lower bound in Theorem \ref{thm:weakLDP}. 
		The approach is as follows : Let $\mu_n$ be the Borel probability measure on $\cW_0$ induced by the random variable $H^n$. 
		Fix a $\phi \in \cW_0$ such that  $\phi$ is continuous on $[0,1]^2 \times [0,T]$ and $\delta \leq \phi \leq 1-\delta$, for some $\delta \in (0,1/2)$. Suppose, for any such $\phi$ we can define another sequence of Borel probability measures $\left\{\nu_{n,\phi} : n \geq 1\right\}$ on $\cW_0$ satisfying the following two conditions.
		\begin{enumerate}
			\item \label{condition:1} For any $r>0$,
			$$ \nu_{n,\phi} \left( \cW_0 \setminus B_{\cW_0,\text{weak}}(\phi,r)  \right) \longrightarrow 0, \;\text{ as } n \to \infty,$$
			where $B_{\cW_0, \text{weak}}(\phi,r) :=  \left\{\phi^{\prime} \in \cW_0 \, :  d_{\cW_0, \text{weak}}(\phi,\phi^{\prime}) < r\right\}$, the open ball of radius $r$ around $\phi$.
			\item \label{condition:2} Let $\mathcal{R} \left(\nu_{n,\phi} || \mu_n\right)$ denotes the relative entropy between the probability measures $\nu_{n,\phi}$ and $\mu_n$. Then 
			$$ \limsup_{n \to \infty} \dfrac{1}{n^2a(n)}\mathcal{R} \left(\nu_{n,\phi} || \mu_n\right) \leq J^{(\beta^+,\,\beta^-)}(\phi).$$
		\end{enumerate}
        Then we can complete the proof of the lower bound as follows.
		 Fix a bounded continuous (with respect to the weak topology) $\Psi : \cW_0 \to \mathbb{R}$. Then for $\phi$ as above, applying the variational formula for relative entropy (see \cite[Proposition 2.2]{Budhirajaweakconv}), 
		\begin{align*}
			- \log \E \exp\left[ -a(n)n^2 \Psi(H^n)\right] & = - \log \int_{\cW_0} \exp\left[ - a(n)n^2\Psi(\phi^{\prime})\right] d\mu_n(\phi^{\prime}) \\
			&\leq \mathcal{R} \left(\nu_{n,\phi} || \mu_n\right) + n^2a(n)\int_{\cW_0} \Psi(\phi^{\prime})\, d\nu_{n,\phi}(\phi^{\prime}).
			\end{align*}
		By continuity of $\Psi$ and property~(\ref{condition:1}), we have 
		$$ \int_{\cW_0} \Psi(\phi^{\prime})\, d\nu_{n,\phi}(\phi^{\prime}) \longrightarrow \Psi(\phi), \; \text{ as } n \to \infty,$$
		and hence 
			\begin{align}
			\limsup_{n \to \infty} - \dfrac{1}{n^2a(n)} \log \E \exp\left[ -a(n)n^2 \Psi(H^n)\right] &\leq  \limsup_{n \to \infty} \dfrac{1}{n^2a(n)}\mathcal{R} \left(\nu_{n,\phi} || \mu_n\right) + \Psi(\phi) \nonumber\\
			& \leq  J^{(\beta^+,\,\beta^-)}(\phi) + \Psi(\phi).	\label{eq:953}		
            \end{align}
	We now argue that the above inequality  in fact holds for any $\phi \in \cW_0$.
Fix $\phi \in \cW_0$. Recall that, from Assumption \ref{assmp:2}\eqref{assump:1A}, $\beta^{\pm} \in L^{1+\eta}\left([0,1]^2_T\right)$ for some $\eta>0$. Without loss of generality we take $\eta \in (0, 1/2)$. Set $p = 1+1/\eta, q = 1+\eta$. Fix $\delta \in (0,1/2)$ and let $\tilde \phi :[0,1]^2_T \to [0,1]$ be continuous such that 
	$$\|\phi - \tilde\phi\|_p := \left( \int |\phi - \tilde \phi \,|^p \right)^{1/p} \leq \delta.$$ 
	Set $ \tilde \phi^{(\delta)} := \left( \tilde \phi \wedge (1-\delta) \right) \vee \delta.$
		Clearly, $|\tilde \phi- \tilde \phi^{(\delta)}| \leq \delta$, and hence  $\| \phi -\tilde \phi^{(\delta)}\|_p \leq (T^{1/p}+1)\delta \leq 2(T \wedge 1)\delta$. Moreover, since the function $x \mapsto x(1-x)$ is $1$-Lipschitz on $[0,1]$ and has a maximum value of $\delta(1-\delta)$ on $[\delta, 1-\delta]$, 
		$$ \Big \rvert \sqrt{\phi(1-\phi)}-\sqrt{\tilde \phi^{(\delta)}(1-\tilde \phi^{(\delta)})} \Big \rvert = \dfrac{|\phi(1-\phi)-\tilde \phi^{(\delta)}(1-\tilde \phi^{(\delta)})| }{\sqrt{\phi(1-\phi)}+\sqrt{\tilde \phi^{(\delta)}(1-\tilde \phi^{(\delta)})}} \leq \dfrac{|\phi - \tilde \phi^{(\delta)}|}{\sqrt{\delta(1-\delta)}},$$
		and thus 
		$$ \Big \rvert \!\Big \rvert \sqrt{\phi(1-\phi)}-\sqrt{\tilde \phi^{(\delta)}(1-\tilde \phi^{(\delta)})}  \Big \rvert \! \Big \rvert_p \leq \dfrac{2(T \wedge 1)\sqrt{\delta}}{\sqrt{1-\delta}} \leq 4(T \wedge 1) \sqrt{\delta}. $$
		This yields,
		\begin{align*}
	&	\int_{[0,1]^2_T}\Bigg \rvert  \left( \sqrt{(1-\phi)\beta^+} - \sqrt{\phi \beta^-}\right)^2  -  \left( \sqrt{(1- \tilde \phi^{(\delta)})\beta^+} - \sqrt{\tilde \phi^{(\delta)} \beta^-}\right)^2  \Bigg \rvert \\
	& \leq  	\int_{[0,1]^2_T} \left( \beta^+ + \beta^- \right) \big \rvert \phi - \tilde \phi^{(\delta)} \big \rvert +  	\int_{[0,1]^2_T} (\beta^++\beta^-)\; \bigg \rvert \sqrt{\phi(1-\phi)}-\sqrt{\tilde \phi^{(\delta)}(1-\tilde \phi^{(\delta)})} \bigg \rvert \\
	&   \leq   \|\beta^{+}+\beta^{-}\|_{q}\| \phi -\tilde \phi^{(\delta)}\|_p +   \|\beta^{+}+\beta^{-}\|_{q} \Big \rvert \!\Big \rvert \sqrt{\phi(1-\phi)}-\sqrt{\tilde \phi^{(\delta)}(1-\tilde \phi^{(\delta)})}  \Big \rvert \! \Big \rvert_p \\
	& \leq 6(T \wedge 1)\;\|\beta^{+}+\beta^{-}\|_{q} \sqrt{\delta},
		\end{align*} 
			where the first inequality uses the bound $2\sqrt{xy} \le x+y$ for $x,y \ge 0$.
    
	Hence, since $\tilde \phi^{(\delta)}$ is continuous and takes values in  $[\delta, (1-\delta)]$, using \eqref{eq:953}, and recalling that $p\ge 3$, we  conclude that for any $\phi \in \cW_0$ and $\delta \in (0,1/2)$, 
		\begin{align*}
				&\limsup_{n \to \infty} - \dfrac{1}{n^2a(n)} \log \E \exp\left[ -a(n)n^2 \Psi(H^n)\right] \\
				&\leq  J^{(\beta^+,\,\beta^-)}(\phi)+ \Psi(\phi) + 6(T \wedge 1)\;\|\beta^{+}+\beta^{-}\|_{q} \sqrt{\delta} + \sup_{\phi^* : \| \phi - \phi^* \|_3 \leq 2 \delta} |\Psi(\phi)-\Psi(\phi^*)|.
		\end{align*}
		Taking $\delta \downarrow 0$, recalling that $\Psi$ is continuous (with respect to the weak topology and hence with respect to the $L^3$-metric) and $\beta^{\pm} \in L^{q}\left([0,1]^2_T\right)$, we have the large deviation lower bound (cf.  \cite[Proposition 1.10(a)]{Budhirajaweakconv}).

Thus it only remains now  to construct $\left\{\nu_{n,\phi} : n \geq 1\right\}$ satisfying properties ~(\ref{condition:1}) and~(\ref{condition:2}) for any $\phi \in \cW_0$ which is continuous and takes values in $[\delta, (1-\delta)]$ for some $\delta \in (0, 1/2)$.

Towards that goal, recall the representation for the evolution of  $\{X^n_{i,j}(t), t \in [0,1]\}_{i,j \in [n]}$ given in \eqref{evolution}. It is easy to see that one can give the following distributionally equivalent representation. 
		\begin{align}
			X_{i,j}^n(t) 
            &=x_{i,j}^n + \int_{[0,t]\times [0,\infty) \times \left\{0,1\right\}} (-1)^{X_{i,j}^n(s-)}\mathbbm{1}_{(X_{i,j}^n(s-)=u)} \label{pois2} \\
            & \hspace{1 in} \mathbbm{1}_{[0,a(n)\left((1-u)\beta^{n,+}_{i,j}(s)+u\right)\left(u\beta^{n,-}_{i,j}(s)+1-u\right)]}(r) N_{i,j}^n(ds\,dr\,du),   \nonumber 
		\end{align}
		where $\left\{N_{i,j}^n : i,j \in [n]\right\}$ is a collection of $n^2$ many independent Poisson random measures on $[0,T]\times [0,\infty) \times \left\{0,1\right\}$ with intensity measure being $\operatorname{Leb}\left([0,T]\right) \otimes \operatorname{Leb}\left([0,\infty)\right) \otimes \operatorname{Count}\left(\left\{0,1\right\}\right)$. Here $\operatorname{Count}\left(\left\{0,1\right\}\right)$ denotes the counting measure on $\left\{0,1\right\}$. Set $\clx:=[0,T]\times \left\{0,1\right\}$ and $\lambda:= \operatorname{Leb}\left([0,T]\right) \otimes \operatorname{Count}\left(\left\{0,1\right\}\right)$. The expression in \eqref{pois2} can be further simplified to 
		\begin{align}
			X_{i,j}^n(t)=x_{i,j}^n + \int_{[0,t]\times \left\{0,1\right\}} \left(-1\right)^{X_{i,j}^n(s-)} \mathbbm{1}_{(X_{i,j}^n(s-)=u)} \mathfrak{N}^n_{i,j}(ds\,du) \label{pois3},
		\end{align}
		where $\left\{\mathfrak{N}_{i,j}^n : i,j \in [n]\right\}$ is a collection of $n^2$ many independent Poisson random measures on $\clx$ with intensity measure $\lambda_{i,j}^n$. Here $\lambda_{i,j}^n$ is a measure with Radon-Nikodym derivative with respect to $\lambda$  given as follows : For any $i,j \in [n], s \in [0,T], u \in \left\{0,1\right\},$ 
		$$ \dfrac{d \lambda_{i,j}^n}{d \lambda} (s,u) = a(n)\left((1-u)\beta^{n,+}_{i,j}(s)+u\right)\left(u\beta^{n,-}_{i,j}(s)+1-u\right) := U_{i,j}^n (s,u).$$ 
		Recall $H^n$ from \eqref{eq:hndefn}
		and that $\mu_n$ is the probability measure induced by $H^n$. Now fix $\phi \in \cW_0$ which is continuous and takes values in $[\delta, (1-\delta)]$ for some $\delta \in (0, 1/2)$.
		 Set,
		$$ \alpha^+ := \sqrt{\dfrac{\phi\beta^-}{(1-\phi)\beta^+}}, \; \alpha^- := \sqrt{\dfrac{(1-\phi)\beta^+}{\phi\beta^-}},$$
		with $0/0:= 0$ and for any $i,j \in [n]$, define
		$$   \alpha_{i,j}^{n,+}(s) := \sqrt{\dfrac{\phi^n_{i,j}(s)\beta^{n,-}_{i,j}(s)}{\left(1-\phi^n_{i,j}(s)\right)\beta^{n,+}_{i,j}(s)}}, \;  \alpha_{i,j}^{n,-}(s) := \sqrt{\dfrac{\left(1-\phi^n_{i,j}(s)\right)\beta^{n,+}_{i,j}(s)}{\phi^n_{i,j}(s)\beta^{n,-}_{i,j}(s)}},$$
        and 
        $$\alpha_s^{n,\pm} := \sum_{i,j=1}^n  \alpha_{i,j}^{n,\pm}(s) \mathbbm{1}_{Q_{i,j}^n},$$
		where 
		$$ \phi_{i,j}^n(s) := n^2 \int_{Q_{i,j}^n} \phi_s, \; \; \phi^n_s := \sum_{i,j=1}^n \phi^n_{i,j}(s)\mathbbm{1}_{Q_{i,j}^n}, \;\; \phi^n(\cdot,\cdot,s) := \phi^n_s(\cdot,\cdot), \; \forall \; s \in [0,T].$$
		The  above quantities are  well-defined since $\delta \leq \phi, \phi^n \leq 1-\delta$ and $\beta^{n,\pm} \geq c_{\beta}$ almost everywhere. Clearly, $\alpha:=(\alpha^+,\alpha^-) \in \clu(\phi)$ and $\alpha^+\beta^+/(\alpha^+\beta^++\alpha^-\beta^-) = \phi$.  
        Since $\phi$ is continuous, we have  $\phi^n \stackrel{L^{\infty}}{\longrightarrow} \phi$. 
        Now set, 
		$$  \upsilon_{i,j}^{n,\pm}(s) := \alpha^{n,\pm}_{i,j}(s)\beta^{n,\pm}_{i,j}(s), \;\; \upsilon^{n,\pm}_{s} :=  \sum_{i,j=1}^n \upsilon^{n,\pm}_{i,j}(s) \mathbbm{1}_{Q_{i,j}^n}, \; \upsilon^{n,\pm}(\cdot,\cdot,s):=\upsilon^{n,\pm}_s(\cdot,\cdot).$$
		Since  $\sqrt{\beta^{n,+}\beta^{n,-}}$ converges in $L^1$ to $\sqrt{\beta^+\beta^-}$ from Assumption~\ref{assmp:2}\eqref{assump:1A} and
         $\phi^n/(1-\phi^n)$ converges in $L^{\infty}$ to $\phi/(1-\phi)$ (which follows since $\phi^n \stackrel{L^{\infty}}{\longrightarrow}\phi$ and $\delta \leq \phi,\phi^n \leq 1-\delta$), we have
		\begin{align*}
		\upsilon^{n,+} = \sqrt{\dfrac{\phi^n \beta^{n,+}\beta^{n,-}}{(1-\phi^n)}} \stackrel{L^1}{\longrightarrow}  \sqrt{\dfrac{\phi \beta^{+}\beta^{-}}{(1-\phi)}} =: \upsilon^+,
		\end{align*}
		 The same is true for $\upsilon^{n,-}$ as well, i.e.,
         $$ \upsilon^{n,-} = \sqrt{\dfrac{(1-\phi^n) \beta^{n,+}\beta^{n,-}}{\phi^n}} \stackrel{L^1}{\longrightarrow}  \sqrt{\dfrac{(1-\phi) \beta^{+}\beta^{-}}{\phi}} =: \upsilon^-.$$
         Also, $ \upsilon_{i,j}^{n,\pm}(s) \geq c_{\beta}\sqrt{\delta/(1-\delta)} >0,$ for all $i,j \in [n]$ and $s \in [0,T]$, and
        $$ \sup_{t \in [0,T]} \upsilon^{\pm}_t \le \dfrac{\sqrt{1-\delta}}{2\sqrt{\delta}} \sup_{t \in [0,T]} \left(\beta_t^+ + \beta_t^- \right), \; \sup_{t \in [0,T]} \upsilon^{n,\pm}_{i,j}(t) \le \dfrac{\sqrt{1-\delta}}{2\sqrt{\delta}} \sup_{t \in [0,T]} \left(\beta_{i,j}^{n,+}(t) + \beta_{i,j}^{n,-}(t) \right). $$ 
        Continuity and boundedness of $\phi$ implies that $t \mapsto \phi_{i,j}^n(t)$ is continuous. Since, $t \mapsto \beta_{i,j}^{n,\pm}(t)$ is left-continuous (by Assumption~\ref{assmp:1}\eqref{assump:1D}), the same is true for $t \mapsto \upsilon^{n,\pm}_{i,j}(t)$.  Finally, since $\phi$ is continuous and $\beta^{\pm}$ is left-continuous in the time variable, the latter property also holds for $\upsilon^{\pm}$. All these observations together imply that Assumption~\ref{assmp:1} is satisfied with $(\beta^{n, \pm}, \beta^{\pm})$ replaced with $(\upsilon^{n, \pm}, \upsilon^{\pm})$. We will now apply Theorem \ref{cor2:thmlin} for $H^n$ replaced by $\bar H^n$, where the latter is defined as in \eqref{eq:barhn} with $\bar X^n_{i,j}(t)$, for any $i,j \in [n]$ and $t \geq 0$, defined as
		\begin{align}
			\bar X_{i,j}^n(t)&= x_{i,j}^n + \int_{[0,t]\times [0,\infty) \times \left\{0,1\right\}} (-1)^{\bar X_{i,j}^n(s-)}\mathbbm{1}_{(\bar X_{i,j}^n(s-)=u)} \label{pois4} \\
            & \hspace{1 in}\mathbbm{1}_{\left[0,a(n)\left((1-u)\upsilon^{n,+}_{i,j}(s)+u\right)\left(u \upsilon^{n,-}_{i,j}(s)+1-u\right)\right]}(r)
			  N_{i,j}^n(ds\,dr\,du). \nonumber
		\end{align}
        By Theorem~\ref{cor2:thmlin}, we now have that 
		\begin{equation}\label{eq:1101}
        \bar H^n \stackrel{a.s.}{\longrightarrow}  \dfrac{\upsilon^+}{\upsilon^+ + \upsilon^-} =  \phi,\end{equation}
		with respect to the weak topology. 
        
        Let $\nu_{n,\phi}$ be the probability measure induced by $\bar H^n$.
       Then, from \eqref{eq:1101}, we have that  $\nu_{n,\phi}\left( \cW_0 \setminus B_{\cW_0, \text{weak}}(\phi,r) \right)\longrightarrow 0$ as $n \to \infty$ for any $r>0$. 
        This shows that property \eqref{condition:1}, stated at the beginning of this section, holds with the above choice of $\nu_{n,\phi}$.
        
		We now verify the second property for $\{\nu_{n,\phi}\}$. For this we will write a simpler form of \eqref{pois4} similar to \eqref{pois3}. Define for all $i,j \in [n], s\in[0,T], u \in \left\{0,1\right\}$, 
		\begin{align}
			\bar U_{i,j}^n(s,u) &:= a(n)\left((1-u)\upsilon^{n,+}_{i,j}(s)+u\right) \mathbbm{1}_{\left(\bar X_{i,j}^n(s-)=u=0\right)} \label{barudef} \\
            &\quad + a(n)\left(u\upsilon^{n,-}_{i,j}(s)+1-u\right) \mathbbm{1}_{\left(\bar X_{i,j}^n(s-)=u=1\right)}   + U_{i,j}^n(s,u)  \mathbbm{1}_{\left(\bar X_{i,j}^n(s-)\neq u\right)}. \nonumber
		\end{align} 
		One can check that for any $(s,r,u) \in [0,T]\times [0,\infty) \times \left\{0,1\right\}$, we have 
		\begin{align*}
			&(-1)^{\bar X_{i,j}^n(s-)}\mathbbm{1}_{(\bar X_{i,j}^n(s-)=u)} \mathbbm{1}_{\left[0,a(n)\left((1-u)\upsilon^{n,+}_{i,j}(s)+u\right)\left(u\upsilon^{n,-}_{i,j}(s)+1-u\right)\right]}(r) \\ &\hspace{ 1in }= (-1)^{\bar X_{i,j}^n(s-)}\mathbbm{1}_{(\bar X_{i,j}^n(s-)=u)} \mathbbm{1}_{\left[0,\bar U_{i,j}^n(s,u)\right]}(r),
		\end{align*}  
		and hence 
		\begin{align}
			\bar X_{i,j}^n(t) & =  x_{i,j}^n + \int_{[0,t]\times [0,\infty)\times \left\{0,1\right\}} (-1)^{\bar X_{i,j}^n(s-)}\mathbbm{1}_{(\bar X_{i,j}^n(s-)=u)} \mathbbm{1}_{\left[0,\bar U_{i,j}^n(s,u)\right]}(r)\, N_{i,j}^n(ds\,dr\,du), \nonumber \\
			& = x_{i,j}^n + \int_{[0,t]\times \left\{0,1\right\}} \left(-1\right)^{\bar X_{i,j}^n(s-)} \mathbbm{1}_{(\bar X_{i,j}^n(s-)=u)} \bar{\mathfrak{N}}^n_{i,j}(ds\,du), \nonumber
		\end{align}
		where $\left\{\bar{\mathfrak{N}}_{i,j}^n : i,j \in [n]\right\}$ is a collection of $n^2$ many independent Poisson random measures on $[0,T]\times \left\{0,1\right\}$ with intensity measure $\bar \lambda_{i,j}^n$ with 
		$$ \dfrac{d \bar \lambda_{i,j}^n}{d \lambda} (s,u) = \bar U_{i,j}^n (s,u), \;\; \forall \; s \in [0,T], u \in \left\{0,1\right\}.$$

       Thus
		\begin{multline*}
			\mathcal{R} \left(\nu_{n,\phi}\|\mu_n\right) \leq \mathcal{R} \left( \mathcal{L}\left( \left\{\bar{\mathfrak{N}}^n_{i,j} : i,j \in [n]\right\}\right)\,\| \,\mathcal{L}\left( \left\{\mathfrak{N}^n_{i,j} : i,j \in [n]\right\}\right)\right) \\
			 = \sum_{i,j=1}^n \mathcal{R} \left( \mathcal{L}\left(\bar{\mathfrak{N}}^n_{i,j}\right)\, \| \, \mathcal{L}\left(\mathfrak{N}^n_{i,j}\right)\right) =\sum_{i,j=1}^n \E \int_{[0,T]\times \left\{0,1\right\}} \ell\left(\dfrac{\bar U^n_{i,j}(s,u)}{U^n_{i,j}(s,u)}\right)\, \lambda_{i,j}^n(ds\,du)  \\
			=\sum_{i,j=1}^n \E \int_{[0,T]\times \left\{0,1\right\}} \ell\left(\dfrac{\bar U^n_{i,j}(s,u)}{U^n_{i,j}(s,u)}\right)\, U_{i,j}^n(s,u)\,\lambda(ds\,du). 
            \end{multline*}
            Consequently, 
            \begin{align}
            &\mathcal{R} \left(\nu_{n,\phi}\|\mu_n\right) \nonumber \\
            &\leq \sum_{i,j=1}^n \E \left[ \int_{[0,T]} \ell\left(\dfrac{\bar U^n_{i,j}(s,0)}{U^n_{i,j}(s,0)}\right)\, U_{i,j}^n(s,0)\,ds + \int_{[0,T]} \ell\left(\dfrac{\bar U^n_{i,j}(s,1)}{U^n_{i,j}(s,1)}\right)\, U_{i,j}^n(s,1)\,ds\right] \nonumber \\	
			&= a(n)\sum_{i,j=1}^n \E  \int_{[0,T]}  \left[\ell\left(\alpha^{n,+}_{i,j}(s)\right)\, \mathbbm{1}_{\left(\bar X_{i,j}^n(s-)=0\right)}\beta_{i,j}^{n,+}(s) \right]\,ds \nonumber \\
            & \hspace{0.5 in} + a(n)\sum_{i,j=1}^n \E  \int_{[0,T]}  \left[\ell\left( \alpha^{n,-}_{i,j}(s)\right)\, \mathbbm{1}_{\left(\bar X_{i,j}^n(s-)=1\right)}\beta_{i,j}^{n,-}(s)\right]\,ds  \nonumber \\
			&= a(n)\sum_{i,j=1}^n \int_{[0,T]} \left[  (1-\bar P_{i,j}^n(s))\ell\left( \alpha_{i,j}^{n,+}(s)\right) \beta_{i,j}^{n,+}(s) + \bar P_{i,j}^n(s)\ell\left( \alpha_{i,j}^{n,-}(s)\right) \beta_{i,j}^{n,-}(s)\right]\,ds \nonumber   \\
			&= n^2a(n) \int_{[0,1]^2_T} \left[ (1-\bar \mu^n) \ell\left(\alpha^{n,+}\right)\beta^{n,+} + \bar \mu^n \ell\left(\alpha^{n,-}\right)\beta^{n,-} \right],\label{entropybound1}
		\end{align} 
		where $\bar P^n_{i,j}(s) = \E \bar X_{i,j}^n(s)$ and $\bar \mu^n_s = \sum_{i,j=1}^n \bar P_{i,j}^n(s)\mathbbm{1}_{Q_{i,j}^n}$. 
        If we apply Lemma~\ref{lem:es12} with $(X^n, H^n)$ replaced with $(\bar X^n, \bar H^n)$ and recall that Assumption~\ref{assmp:1} is satisfied in the latter setting (i.e., with $(\beta^{n, \pm}, \beta^{\pm})$ replaced with $(\upsilon^{n, \pm}, \upsilon^{\pm})$), we can conclude that  $\bar \mu^n$ converges in measure to $\phi$. Also, by Assumption~\ref{assmp:1}, $\beta^{n,\pm}$ converges in measure to $\beta^{\pm}$ and therefore $\alpha^{n,\pm}$ also converges in measure to $\alpha^{\pm}$ (here we have also used $\phi^n \stackrel{L^{\infty}}{\longrightarrow} \phi$, $\delta \le \phi \le 1-\delta$ and $\beta^{\pm} \ge c_{\beta}$).
        Hence the integrand in \eqref{entropybound1} converges in measure to 
		\begin{equation}
        (1-\phi) \ell\left( \alpha^{+}\right) \beta^+ + \phi \ell \left( \alpha^-\right) \beta^- = \left( \sqrt{(1-\phi)\beta^+} - \sqrt{\phi \beta^-}\right)^2.\label{eq:1211}
        \end{equation}
	We claim that  this convergence also happens in $L^1$. We present the argument for the first integrand in \eqref{entropybound1}. The argument for the second integrand in \eqref{entropybound1} is exactly the same. Recall that, $0 \leq \bar \mu^n \leq 1$ for all $n \geq 1$. It is, therefore, enough to show that the sequence of random variables $\left\{\ell\left(\alpha^{n,+}\right)\beta^{n,+} : n \geq 1\right\}$ is uniformly integrable. From Assumption \ref{assmp:2}\eqref{assump:1A}, we have $\left\{(\beta^{n,\pm})^{1+2\varepsilon} : n \geq 1\right\}$ is uniformly integrable for some $ \varepsilon \in (0,1/2)$. Moreover, for some constants $c_i$ independent of $(t,x,y)$ and $n$,
    \begin{multline*}
\ell( \alpha^{n,+}) \beta^{n,+} \le \left(\ell\left( c_1 \frac{(\beta^{n,-})^{1/2}}{(\beta^{n,+})^{1/2}}\right) +1\right) \beta^{n,+} \le
c_2\left(1+ \frac{(\beta^{n,-})^{\varepsilon + 1/2}}{(\beta^{n,+})^{\varepsilon + 1/2}}\right) \beta^{n,+}\\
\le c_3 (\beta^{n,+} + (\beta^{n,-})^{\varepsilon + 1/2}(\beta^{n,+})^{1/2 -\varepsilon}) \le c_4(1+ \beta^{n,+} +  (\beta^{n,-})^{2\varepsilon + 1}).
\end{multline*}
The claimed uniform integrability is now immediate.

Taking limit superior in \eqref{entropybound1}, and recalling \eqref{eq:1211}, we now have 
	\begin{equation*}
		 \limsup_{n \to \infty} \dfrac{1}{n^2a(n)} 	\mathcal{R} \left(\nu_{n,\phi}\|\mu_n\right)  \leq  \int_{[0,1]^2_T} \left( \sqrt{(1-\phi)\beta^+} - \sqrt{\phi \beta^-}\right)^2.
	\end{equation*}
    This completes the proof that with our choice of $\nu_{n,\phi}$ property \eqref{condition:2}, stated at the beginning of this section, holds as well and concludes the proof of the lower bound.

		\section{LDP under the cut-norm topology}\label{sec:cutldp}
	In this section we will prove Theorem~\ref{thm:LDP-avg-path} and Proposition~\ref{prop:LDP-avg-path-finite}. Throughout the section Assumption \ref{assmp:2} will be in force and will not be explicitly noted in the statement of various results.

    We start with the proof of Proposition~\ref{prop:LDP-avg-path-finite}. We shall only prove the one-dimensional version, i.e., the $|\cT|=1$ case. The multi-dimensional case follows from exactly the same arguments. To ease the notational burden, we shall prove the LDP for the time-average graphon over the whole time-period $[0,T]$, as given in the following proposition. The general version can be obtained by a time rescaling.

    \begin{prop}{\label{prop:cutLDP-avg-whole}}
    	Define the time-averaged graphon 
    	$$ M^n := \dfrac{1}{T}\int_0^T H^n_s\,ds.$$
    	Then, the sequence $\{\widehat{M^n} : n \geq 1\}$ satisfies a large deviation principle in $\left(\widehat{\cls_0}, \delta_{\square}\right)$ with speed $a(n)n^2$ and rate function $\widehat I^{(\beta^+,\;\beta^-)}_M$ defined as
    	$$ \widehat I^{(\beta^+,\;\beta^-)}_M \left(\widehat f\right) := \sup_{\eta >0} \;\; \inf \left\{I_M^{(\beta^+,\,\beta^-)}(g) : \delta_{\square}\left(\widehat g, \widehat f\right) < \eta \right\},  \;\; \forall \; \widehat f\in \widehat{\cls_0}.$$
    	where
    	\begin{align*}
    		I^{(\beta^+,\;\beta^-)}_M(g) := \inf \left\{ J^{(\beta^+,\,\beta^-)} (\phi) : g=\dfrac{1}{T}\int_0^T \phi_s\,ds \right\}, \;\;  \forall \; g \in \cls_0.
    	\end{align*}
    \end{prop}

    We begin by observing that, by definition, $\widehat I^{(\beta^+,\;\beta^-)}_M$ is lower semi-continuous, as it is the lower semi-continuous envelope of the function $\widehat f \mapsto \inf_{f \in \widehat f} I^{(\beta^+,\;\beta^-)}_M(f)$ 
    on $\widehat{\cls_0}$. Since $\widehat{\cls_0}$ is compact, this shows that $\widehat I^{(\beta^+,\;\beta^-)}_M$ is a rate function.  Rest of the proof is organized as follows. In Section \ref{sec-lowbdcut} we prove the  lower bound for the LDP in Proposition~\ref{prop:cutLDP-avg-whole}.  The corresponding upper bound is proved in Section \ref{sec-ldpuppcut}. The two sections together complete the proof of Proposition~\ref{prop:cutLDP-avg-whole}, and hence Proposition~\ref{prop:LDP-avg-path-finite}.

	\subsection{Proof of the lower bound} \label{sec-lowbdcut} We now prove the lower bound in the LDP stated in Proposition~\ref{prop:cutLDP-avg-whole}. Take any bounded continuous (with respect to the topology induced by the metric $d_{\square}$) function $G : \cls_0 \to \mathbb{R}$. Define $\Psi: \cW_0 \to \mathbb{R}$ as 
	$$\Psi(\phi) := G\left(\dfrac{1}{T}\int_0^T \phi_s ds\right).$$
    If $\phi$ is continuous on $[0,1]^2 \times [0,T]$ with $\delta \le \phi \le 1-\delta$ for some $\delta \in (0,1/2)$, then, exactly as in Section \ref{sec:pflowbd},  with $\nu_{n,\phi}$ defined  in that section,
    \begin{align}
			\limsup_{n \to \infty} - \dfrac{1}{n^2a(n)} \log \E \exp\left[ -a(n)n^2 G(M^n)\right] &\leq  \limsup_{n \to \infty} \dfrac{1}{n^2a(n)}\mathcal{R} \left(\nu_{n,\phi} || \mu_n\right) + \Psi(\phi) \nonumber\\
			& \leq  J^{(\beta^+,\,\beta^-)}(\phi) + \Psi(\phi).	\label{eq:953n}		
            \end{align}
            Here the first inequality is obtained by observing that under the probability measure $\nu_{n,\phi}$, the random variable $M^n$ converges to $(\int_0^T \phi_s\,ds)/T$ with respect to the metric $d_{\square}$ and hence $G(M^n)$ converges to $\Psi(\phi)$ almost surely. This follows from applying the bound derived during the proof of Proposition~\ref{prop:lln-avg-path}, with $(\beta^{n, \pm}, \beta^{\pm})$ replaced with $(\upsilon^{n, \pm}, \upsilon^{\pm})$.
            Now exactly the same argument as in Section \ref{sec:pflowbd} shows that for any $\phi \in \cW_0$
    \begin{align*}
			\limsup_{n \to \infty} - \dfrac{1}{n^2a(n)} \log \E \exp\left[ -a(n)n^2 G(M^n)\right] 
			& \leq  J^{(\beta^+,\,\beta^-)}(\phi) + \Psi(\phi).		
            \end{align*}
            Here we use the fact that convergence of $\phi^n \to \phi$ in $L^2\left([0,1]^2_T\right)$ implies convergence of $\int_0^T \phi^n_s ds \to \int_0^T \phi_s ds$ in $d_{\square}$.
Taking infimum over all $\phi \in \cW_0$ such that $T^{-1}\int_0^T \phi_s\,ds=f$ for some fixed $f \in \cls_0$ in the above display, we have
	$$ 	\limsup_{n \to \infty} - \dfrac{1}{n^2a(n)} \log \E \exp\left[ -a(n)n^2 G(M^n)\right] \leq  I^{(\beta^+,\;\beta^-)}_M(f) + G(f).$$
    It then follows (cf. \cite[Theorem 1.8]{Budhirajaweakconv}), 
	\begin{equation}{\label{cutldp_lower2}}
		\liminf_{n \to \infty}  \dfrac{1}{n^2a(n)} \log \bP \left( M^n \in B_{\square}\left(f,r\right) \right) \geq  -I^{(\beta^+,\;\beta^-)}_M(f), \;\; \mbox{ for all }  f \in \cls_0, \, r >0,
	\end{equation}	
	where $B_{\square}\left(f,r\right)  := \left\{g \in \cls_0 \; : \; d_{\square}(f,g) <r\right\}$. We  introduce the notation
	$$ \widehat{B}_{\square}\left(\widehat g, r\right) := \left\{ \widehat h \, : \; \delta_{\square} \left( \widehat g, \widehat h\right) < r\right\}, \; \; \mbox{for all } \; \widehat{g} \in \widehat{\cls_0},\; r > 0. $$
	Now fix any $\widehat f \in \widehat{\cls_0}$ and $r \in (0,\infty)$. For any $\eta \in (0,r/2)$ and $\widehat g \in B_{\square}\left(\widehat f,\eta\right)$, we have from \eqref{cutldp_lower2}, that
	\begin{align*}
			\liminf_{n \to \infty}  \dfrac{1}{n^2a(n)} \log \bP \left( \widehat{M^n} \in \widehat B_{\square}\left(\widehat f,r\right) \right) &\geq 	\liminf_{n \to \infty}  \dfrac{1}{n^2a(n)} \log \bP \left( \widehat{M^n} \in \widehat B_{\square}\left(\widehat g,r/2\right) \right) \\
			& \geq 	\liminf_{n \to \infty}  \dfrac{1}{n^2a(n)} \log \bP \left( M^n \in B_{\square}\left(g,r/2\right) \right) \\
            & \geq -I^{(\beta^+,\;\beta^-)}_M(g),
	\end{align*}
	for all $g \in \widehat g$. Therefore,
    \begin{align*}
        \liminf_{n \to \infty}  \dfrac{1}{n^2a(n)} \log \bP \left( \widehat{M^n} \in \widehat B_{\square}\left(\widehat f,r\right) \right) &\geq - \sup_{\eta >0} \;\; \inf_{\widehat g \in B_{\square}\left(\widehat f,\eta\right)} \;\; \inf_{g \in \widehat g}\; I^{(\beta^+,\;\beta^-)}_M(g) \\
        &= - \widehat I^{(\beta^+,\;\beta^-)}_M \left(\widehat f\right).
    \end{align*}
	This concludes the proof of the LDP lower bound in Proposition~\ref{prop:cutLDP-avg-whole}.

	\subsection{Proof of the upper bound}\label{sec-ldpuppcut}
	Theorem~\ref{thm:weakLDP} and contraction principle implies that the sequence $\left\{M^n : n \geq 1\right\}$ satisfies an LDP on the space $\left(\cS_0, d_{\cS_0, \text{weak}}\right)$ with speed $a(n)n^2$ and rate function $I^{(\beta^+,\;\beta^-)}_M$. Our basic approach now is to lift this LDP upper bound for $M^n$ with respect to the weak topology to the LDP upper bound for $\widehat{M^n}$ with respect to the cut-metric topology. This idea was first introduced in~\cite{Chatterjee2011}, and later developed in other contexts in \cite{Dhara2022},\cite{Markering2023} and \cite{Braunsteins2023}. The main obstacle in such a lifting argument arises from the relabeling of the vertices of the graphs with respect to some measure-preserving bijection, since the collection $\sS$ has few nice structural properties to work with. To overcome this, we first show that we can replace the functions $\beta^{n,\pm}$ with functions with an appropriate block structure by incurring only a small cost in probability under the log-scale. Next we show that with jump-rate functions having such nice block structure we can restrict attention to a finite subset of $\sS$ as the set of possible relabeling. This allows us to  apply the weak topology LDP upper bound mentioned before.
	
	\subsubsection{Approximation by block rate functions} 
	For any $\alpha^+, \alpha^- \in \cW$, 	let $\{X^{n,\alpha}_{i,j}(t), t \in [0,T]\}_{i,j \in [n]}$ be a collection of $n^2$ mutually independent Markov chains with state space $\mathcal{I} := \{0,1\}$, $X^{n,\alpha}_{i,j}(0) = x^n_{i,j} \in \left\{0,1\right\}$ for 
	all $i,j \in [n]$, and (time-inhomogeneous) rate matrices $R^{n,\alpha}_{i,j}(\cdot,\cdot\,;t)$ defined as
	\begin{align*}
		R^{n,\alpha}_{i,j}(0,0;t) & := -a(n)\alpha^{+}\left(i/n,j/n,t\right), \;
		R^{n,\alpha}_{i,j}(0,1;t) := a(n)\alpha^{+}\left(i/n,j/n,t\right), \\ 
		R_{i,j}^{n,\alpha}(1,0;t)& := a(n)\alpha^{-}\left(i/n,j/n,t\right), \; 	R_{i,j}^{n,\alpha}(1,1;t) := -a(n)\alpha^{-}\left(i/n,j/n,t\right).
	\end{align*}
	The (directed) random graph at time $t \in [0,T]$, denoted by $G^{n,\alpha}(t)$, is defined on the vertex set $[n]$ and has the edge which connects $i$ to $j$ (for $i,j \in [n]$) if and only if $X_{i,j}^{n,\alpha}(t)=1$. We define 
	$$ M^{n,\alpha} := \dfrac{1}{T}\int_0^T H^{n,\alpha}_s\,ds,$$
	where $H^{n,\alpha}_s$ is the graphon representation of $G^{n,\alpha}(s)$.

	\begin{lem}{\label{lem:fixedgraphon}}
		Suppose $\alpha_1^{\pm},\alpha_2^{\pm} \in \cW$ are such that, for some $\varepsilon >0$,
			\begin{equation}{\label{upper:cond}}
			\dfrac{1}{n^2} \sum_{i,j=1}^n \sup_{s \in [0,T]} \Big \rvert \alpha_1^{\pm}(i/n,j/n,s) - \alpha_2^{\pm}(i/n,j/n,s) \Big \rvert \leq  \varepsilon.
		\end{equation}
		 Fix $\eta >0$ and set $\alpha_3^{\pm} := \alpha_2^{\pm} + \eta$. Then for any $n \geq 1, h \in \cls_0$ and $\delta >0$, we have 
		$$ \dfrac{1}{a(n)n^2} \log  \mathbb{P} \left( M^{n,\alpha_1} \in \bar B_{\square}\left(h,\delta \right) \right) \leq \dfrac{1}{a(n)n^2} \log \mathbb{P} \left( M^{n,\alpha_3} \in \bar B_{\square}\left(h,\delta+4\varepsilon/\eta \right) \right) + 4T\eta.$$
		Here, $\bar B_{\square}(f,r) := \left\{g \in \cls_0\; : \; d_{\square}(f,g) \leq r\right\}$, for  $f \in \cls_0$ and $r>0$.
	\end{lem}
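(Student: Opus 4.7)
The plan is to reduce the comparison between $M^{n,\alpha_1}$ and $M^{n,\alpha_3}$ to two steps: a Radon--Nikodym change of measure on a ``good'' subset of vertex pairs, and a pathwise coupling on the complementary ``bad'' subset. The threshold $\eta$ naturally separates these two regimes.

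Introduce the set of good pairs
\begin{align*}
G_n := \bigl\{(i,j) \in [n]^2 : \sup_{s \in [0,1]} |\alpha_1^{\pm}(i/n, j/n, s) - \alpha_2^{\pm}(i/n, j/n, s)| \leq \eta \bigr\}
\end{align*}
and its complement $G_n^c$. Markov's inequality applied to hypothesis \eqref{upper:cond} (once for each sign $\pm$) and a union bound give $|G_n^c|/n^2 \leq 2\varepsilon/\eta$. Crucially, on good pairs one has $\alpha_1^{\pm} \leq \alpha_2^{\pm} + \eta = \alpha_3^{\pm}$ and $\alpha_3^{\pm} - \alpha_1^{\pm} \leq 2\eta$ pointwise in $s$. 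Define an intermediate rate kernel $\tilde\alpha$ that equals $\alpha_3$ on good blocks and $\alpha_1$ on bad blocks.

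\emph{Step 1 (change of measure).} Since $\mathbb{P}_{\alpha_1}$ and $\mathbb{P}_{\tilde\alpha}$ differ only on good pairs and the edge dynamics are mutually independent under both laws, the Radon--Nikodym derivative $Z = d\mathbb{P}_{\alpha_1}/d\mathbb{P}_{\tilde\alpha}$ factorises as $Z = \prod_{(i,j) \in G_n} Z_{ij}$. For each $(i,j) \in G_n$, the Girsanov-type identity for continuous-time Markov chains yields
\begin{align*}
\log Z_{ij} = \sum_k \log\!\bigl(\alpha_1^{s_k}/\tilde\alpha^{s_k}\bigr) + a(n)\!\int_0^1 \!\!\bigl[(\tilde\alpha^+ - \alpha_1^+)(1-X_{ij}(s)) + (\tilde\alpha^- - \alpha_1^-) X_{ij}(s)\bigr] ds,
\end{align*}
where the sum runs over the jump times of $X_{ij}$. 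The jump sum is nonpositive because $\alpha_1 \leq \tilde\alpha$ on good pairs, and can be dropped in an upper bound. Because $X_{ij}(s) \in \{0,1\}$, at each time $s$ the integrand equals either $\tilde\alpha^+ - \alpha_1^+$ or $\tilde\alpha^- - \alpha_1^-$, each bounded by $2\eta$; hence $\log Z_{ij} \leq 2\eta a(n)$ pointwise. Summing over $|G_n| \leq n^2$ good pairs gives $Z \leq e^{2\eta a(n) n^2} \leq e^{4\eta a(n) n^2}$, whence $\mathbb{P}_{\alpha_1}(B) \leq e^{4\eta a(n) n^2} \mathbb{P}_{\tilde\alpha}(B)$ for every measurable event $B$.

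\emph{Step 2 (coupling).} Realise the $\tilde\alpha$- and $\alpha_3$-driven processes on a common probability space using shared Poisson random measures and shared initial data. Since the two kernels agree on good pairs, the coupled paths are identical there, so $M^{n,\tilde\alpha}$ and $M^{n,\alpha_3}$ coincide on every good block. Their difference is supported on the union of bad blocks, whose total Lebesgue measure in $[0,1]^2$ is $|G_n^c|/n^2 \leq 2\varepsilon/\eta$, and on each such block the difference lies in $[-1,1]$. Therefore, for any measurable $S,T \subseteq [0,1]$,
\begin{align*}
\Bigl|\int_{S \times T}\bigl(M^{n,\tilde\alpha} - M^{n,\alpha_3}\bigr)\Bigr| \leq |G_n^c|/n^2 \leq 2\varepsilon/\eta \leq 4\varepsilon/\eta,
\end{align*}
so under the coupling $\{M^{n,\tilde\alpha} \in \bar B_\square(h,\delta)\} \subseteq \{M^{n,\alpha_3} \in \bar B_\square(h, \delta + 4\varepsilon/\eta)\}$. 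Chaining with Step 1 yields
\begin{align*}
\mathbb{P}(M^{n,\alpha_1} \in \bar B_\square(h,\delta)) \leq e^{4\eta a(n) n^2}\, \mathbb{P}(M^{n,\alpha_3} \in \bar B_\square(h, \delta + 4\varepsilon/\eta)),
\end{align*}
and the lemma follows upon taking $\log$ and dividing by $a(n) n^2$.

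The principal obstacle is obtaining a \emph{pointwise} upper bound on the Radon--Nikodym derivative in Step 1 rather than just an expectation bound; a priori the jump-time sum $\sum_k \log(\alpha_1^{s_k}/\tilde\alpha^{s_k})$ is a random martingale-type quantity that need not be pointwise bounded. The decisive observation is that the one-sided hypothesis $\alpha_1 \leq \tilde\alpha$ enforced on good pairs by our construction renders each log-ratio nonpositive, so the whole sum can be discarded when bounding from above, leaving only the deterministic drift to control via the crude $L^\infty$ gap $\alpha_3 - \alpha_1 \leq 2\eta$.
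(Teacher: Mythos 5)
Your proof is correct, and it uses the same two core ingredients as the paper --- the Markov-inequality split into good and bad vertex pairs, and a Girsanov-type change of measure on the good pairs --- but it packages them differently. The paper introduces a restricted cut-distance $d_{\square,R}$ that ignores bad blocks, transfers $\bar B_{\square}$-events to $\bar B_{\square,R}$-events at a cost of $2\varepsilon/\eta$ on each side, and only then does Girsanov on good pairs (which is legitimate precisely because $\bar B_{\square,R}$-events are measurable with respect to the good-pair paths). You instead introduce the hybrid rate kernel $\tilde\alpha$ and split into a change of measure $\alpha_1 \rightsquigarrow \tilde\alpha$ (which yields a pointwise Radon--Nikodym bound valid for \emph{all} events, since the RN derivative depends only on good-pair paths) followed by a pathwise coupling $\tilde\alpha \rightsquigarrow \alpha_3$ via shared driving Poisson random measures. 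This avoids the auxiliary metric $d_{\square,R}$ altogether and arguably makes the role of the bad blocks more transparent. A second, smaller difference: you compute a single Markov-chain RN derivative per pair $(i,j)$ --- in which only the active jump rate (up or down, depending on the current state) contributes to the drift --- and obtain $\log Z_{ij} \leq 2\eta a(n)$, whereas the paper factors through two separate Poisson-process RN derivatives (one each for the $+$ and $-$ jump streams) and gets the weaker per-pair bound $4\eta a(n)$; both comfortably fit inside the stated $e^{4\eta a(n)n^2}$. In short: same proof in substance, slightly cleaner accounting.
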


	\begin{proof}
		Let 
		$$ A^{\pm} := \left\{(i,j) \in [n]^2 : \sup_{s \in [0,T]} \Big \rvert \alpha_1^{\pm}(i/n,j/n,s) - \alpha_2^{\pm}(i/n,j/n,s) \Big \rvert > \eta\right\},$$
		and $A:= A^+ \cup A^-$. By \eqref{upper:cond}, we have
		\begin{align*}
			\operatorname{card}(A) \leq 	\operatorname{card}\left(A^+ \right) + 	\operatorname{card}\left(A^-\right) & \leq \dfrac{1}{\eta} \sum_{i,j=1}^n \sup_{s \in [0,T]} \Big \rvert \alpha_1^{+}(i/n,j/n,s) - \alpha_2^{+}(i/n,j/n,s) \Big \rvert \\
			& \hspace{0.5 cm} +\dfrac{1}{\eta} \sum_{i,j=1}^n \sup_{s \in [0,T]} \Big \rvert \alpha_1^{-}(i/n,j/n,s) - \alpha_2^{-}(i/n,j/n,s) \\
            &\leq \dfrac{2\varepsilon n^2}{\eta}.
		\end{align*}
		For $f, g \in \cls_0$ and $ R := \bigcup_{(i,j) \in A} Q^n_{i,j} \subseteq [0,1]^2$, let
		$$ d_{\square,R}(f,g) :=  \sup_{S,T \subseteq [0,1]} \Big \rvert \int_{(S \times T)\cap R^c} (f(x,y)-g(x,y))\, dx\,dy \Big \rvert, $$ 
		 It is easy to see that for any $S, T \subseteq [0,1]$, 
		\begin{align*}
		 \Big \rvert \int_{(S \times T)\cap R^c} (f(x,y)-g(x,y))\, dx\,dy - \int_{S \times T} (f(x,y)-g(x,y))\, dx\,dy\Big \rvert 
        & \leq \int_R |f(x,y)-g(x,y)|\,dx\,dy \\
		 & \leq |R| = \dfrac{\operatorname{card}(A)}{n^2} \leq 2\varepsilon/\eta,
		\end{align*}
		and thus 
		\begin{equation}{\label{es2}}
			\big \rvert d_{\square}(f,g) - d_{\square,R}(f,g) \big \rvert \leq 2\varepsilon/\eta, \;\;\mbox{ for all } f,g \in \cls_0.
		\end{equation} 
		This yields that 
		\begin{equation}{\label{es3}}
			\mathbb{P} \left( M^{n,\alpha_1} \in \bar B_{\square}\left(h,\delta\right)\right) \leq \mathbb{P} \left( M^{n,\alpha_1} \in \bar B_{\square,R}\left(h,\delta+2\varepsilon/\eta\right)\right),
		\end{equation}
		where 
		$$  \bar B_{\square,R}(f,r) := \left\{g \in \cls_0 \; : \; d_{\square,R} \left(f,g\right) \leq r\right\}, \; \forall\; f \in \cls_0, \; r >0.$$
	Similarly, 
		\begin{equation}{\label{es4}}
		\mathbb{P} \left( M^{n,\alpha_3} \in \bar B_{\square,R}\left(h,\delta+2\varepsilon/\eta\right)\right) \leq \mathbb{P} \left( M^{n,\alpha_3} \in \bar B_{\square}\left(h,\delta+4\varepsilon/\eta\right)\right).
	\end{equation}
	
    We will now compare the probability on the right side of \eqref{es3} with that on the left side of \eqref{es4}. We will use the  the representation in \eqref{evolution} with $X^n$ replaced with $X^{n, \alpha_u}$, $u=1,3$ :
			\begin{align}{\label{pois5}}
			X_{i,j}^{n,\alpha_u}(t) &= x_{i,j}^n + \int_{[0,t]}  (1-X_{i,j}^{n,\alpha_u}(s-))\,  \cln_{i,j}^{n,u,+}(ds)  -\int_{[0,t]} X_{i,j}^{n,\alpha_u}(s-) \, \cln_{i,j}^{n,u,-}(ds), 
		\end{align}
	for all $t \geq 0$, where $\left\{\cln_{i,j}^{n, u, \pm} : (i,j) \in [n]^2\right\}$ is a collection of independent Poisson processes on $[0,T]$ with intensity at time $s$  given by $\lambda_{i,j}^{n,u,\pm}(s)$, defined as follows :  
	$$  \lambda_{i,j}^{n,u,\pm}(s) = a(n)\alpha_u^{\pm}\left(i/n,j/n,s\right).$$
    Note that $\cln_{i,j}^{n,u,\pm}$ are random variables with values 
    in the space $\cld_0$ of c\`adl\`ag non-decreasing piecewise constant functions, starting from $0$, jump size $1$, and equipped with the usual Skorokhod topology. Denote the corresponding probability laws as $\Gamma_{i,j}^{n,u,\pm}$. Then by Girsanov's theorem for point processes (see e.g. \cite[Theorem 8.15]{Budhirajaweakconv}), for
    $\Gamma_{i,j}^{n, 3, \pm}$ a.s. $\vn$,
	\begin{equation}{\label{beforees5}}
		\frac{d \Gamma_{i,j}^{n, 1, \pm} }{d \Gamma_{i,j}^{n, 3, \pm} }( \vn ) = \exp \left[ \int_0^T \log \left( \frac{\lambda_{i,j}^{n,1,\pm}(s)}{\lambda_{i,j}^{n,3,\pm}(s)}\right)\, 
		\vn(ds) - \int_0^T \left( \lambda_{i,j}^{n,1,\pm}(s) - \lambda_{i,j}^{n,3,\pm}(s)\right)\,ds\right].
	\end{equation}
    Note that, for any $(i,j) \in A^c$, 
	\begin{equation}
    \lambda_{i,j}^{n,3,\pm}(s) -2 a(n)\eta \leq \lambda_{i,j}^{n,1,\pm}(s)  \leq \lambda_{i,j}^{n,3,\pm}(s),
    \end{equation}
	and therefore for such $(i,j)$ the first term in the exponent of the RHS in~\eqref{beforees5} is non-positive. Consequently, for $(i,j) \in A^c$, 
		\begin{equation}
		\frac{d \Gamma_{i,j}^{n, 1, \pm} }{d \Gamma_{i,j}^{n, 3, \pm} }( \vn ) \leq \exp\left( 2Ta(n)\eta\right), \; \Gamma_{i,j}^{n, 3, \pm} \mbox{ a.s. }
	\end{equation}
	Note that $d_{\square,R}(f,g)$ only depends on $f$ and $g$ through their restrictions on $R^c$. Therefore, the event  $\left( M^{n,\alpha_u} \in \bar B_{\square,R}\left(h,\delta+2\varepsilon/\eta\right)\right)$ is a measurable function of $\left\{\cln_{i,j}^{n, u, \pm} : (i,j) \in A^c\right\}$. Since, 
	\begin{align*}
        \prod_{(i,j) \in A^c} \frac{d \Gamma_{i,j}^{n, 1, +} }
        {d \Gamma_{i,j}^{n, 3, +}} \prod_{(i,j) \in A^c} \frac{d \Gamma_{i,j}^{n, 1, -}}{d \Gamma_{i,j}^{n, 3, -} } \leq \exp \left( 4T\operatorname{card}(A^c)a(n)\eta\right)
		 \leq \exp \left( 4Tn^2a(n)\eta\right),
	\end{align*} 
	we  conclude that 
	\begin{equation}{\label{es5}}
		 \mathbb{P} \left( M^{n,\alpha_1} \in \bar B_{\square,R}\left(h,\delta+2\varepsilon/\eta \right) \right) \leq  \exp \left( 4Tn^2a(n)\eta\right) \mathbb{P} \left( M^{n,\alpha_3} \in \bar B_{\square,R}\left(h,\delta+2\varepsilon/\eta \right) \right).
	\end{equation}
	Combining \eqref{es3}, \eqref{es4} and \eqref{es5} completes the proof.
\end{proof}
	
	\begin{lem}{\label{lem:approxblock}}
        For any $\varepsilon >0$ we can find $N_1=N_1(\varepsilon) \in \mathbb{N}$ such that for any $k \geq N_1$, there exists $N_2=N_2(k,\varepsilon) \in \mathbb{N}$ satisfying the following :
		 $$ \dfrac{1}{a(n)n^2} \log  \mathbb{P} \left( M^n \in \bar B_{\square}\left(h,\delta \right) \right) \leq \dfrac{1}{a(n)n^2} \log \mathbb{P} \left( M^{n,\beta^k + \sqrt{\varepsilon}} \in \bar B_{\square}\left(h,\delta+4\sqrt{\varepsilon} \right) \right) + 4T\sqrt{\varepsilon},$$
		 for any $n \geq N_2, h \in \cls_0, \delta >0$. 
	\end{lem}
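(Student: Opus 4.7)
The plan is to apply Lemma~\ref{lem:fixedgraphon} with $\alpha_1^\pm := \beta^{n,\pm}$, $\alpha_2^\pm := \beta^{k,\pm}$, and $\eta := \sqrt{\varepsilon}$, where $\beta^{k,\pm}$ is the $k$-th block kernel in the approximating sequence from Assumption~\ref{assmp:1}. Because $\beta^{n,\pm}$ takes the value $\beta^{n,\pm}_{i,j}(s)$ on $Q_{i,j}^n$ and $(i/n,j/n) \in Q_{i,j}^n$, the Markov chain driven by $\alpha_1 = \beta^n$ in the setup of Lemma~\ref{lem:fixedgraphon} coincides with the chain of Definition~\ref{defn:rate}, so $M^n = M^{n,\beta^n}$. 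With this identification and the algebra $4\eta = 4\sqrt{\varepsilon} = 4\varepsilon/\eta$, Lemma~\ref{lem:fixedgraphon} delivers exactly the stated estimate, provided its hypothesis~\eqref{upper:cond} can be verified for $n$ sufficiently large. Thus the real work is to check \eqref{upper:cond}.

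To this end I would rewrite the left-hand side of \eqref{upper:cond}, using the fact that $\beta^{n,\pm}$ is constant on each $Q_{i,j}^n$, as
\[
\int_{[0,1]^2} \sup_{s \in [0,1]} \bigl| \beta^{n,\pm}_s(x,y) - \widetilde{\beta}^{k,\pm}_{n,s}(x,y) \bigr|\, dx\, dy,
\]
where $\widetilde{\beta}^{k,\pm}_{n,s}(x,y) := \beta^{k,\pm}(i/n,j/n,s)$ on $Q_{i,j}^n$. By the triangle inequality this is bounded by $A_n + B_k + C_{k,n}$, where $A_n := \int \sup_s |\beta^{n,\pm}_s - \beta^\pm_s|$, $B_k := \int \sup_s |\beta^\pm_s - \beta^{k,\pm}_s|$, and $C_{k,n} := \int \sup_s |\beta^{k,\pm}_s - \widetilde{\beta}^{k,\pm}_{n,s}|$. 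Assumption~\ref{assmp:2}(\ref{assump:1AB}) yields $N_1(\varepsilon)$ such that $A_n, B_k \le \varepsilon/4$ whenever $k, n \ge N_1$.

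The third term is the key geometric step. The functions $\beta^{k,\pm}_s$ and $\widetilde{\beta}^{k,\pm}_{n,s}$ agree on every $n$-block $Q_{i,j}^n$ that lies inside a single $k$-block $Q_{a,b}^k$; they can differ only on the ``straddling'' $n$-blocks crossing one of the $2(k-1)$ interior $k$-grid lines, whose union has Lebesgue measure at most $2(k-1)/n$. Since Assumption~\ref{assmp:1}(\ref{assump:1D}) makes $C_k := \max_{(a,b) \in [k]^2} \sup_{s} |\beta^{k,\pm}_{a,b}(s)|$ finite for each fixed $k$, this forces $C_{k,n} \le 2(k-1)C_k/n$. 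Taking $N_2(k,\varepsilon) := N_1 \vee \lceil 4k C_k/\varepsilon \rceil$ ensures $C_{k,n} \le \varepsilon/2$ for $n \ge N_2$, so the full sum is at most $\varepsilon$ and Lemma~\ref{lem:fixedgraphon} applies.

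The principal obstacle is the $k$-dependence of $C_k$ in the bound $C_{k,n} \le 2(k-1)C_k/n$: this precludes any estimate uniform in $k$. Fortunately the iterated order of quantifiers in Lemma~\ref{lem:approxblock} (first $\varepsilon$, then $k \ge N_1$, then $n \ge N_2(k,\varepsilon)$) is tailored precisely to this situation, since $k$ is fixed before $n$ grows and $C_k$ becomes an absolute constant at that stage.
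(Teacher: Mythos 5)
Your proposal is correct and follows essentially the same approach as the paper: both apply Lemma~\ref{lem:fixedgraphon} with $\alpha_1=\beta^n$, $\alpha_2=\beta^k$, $\eta=\sqrt{\varepsilon}$, and both verify hypothesis~\eqref{upper:cond} by combining the $L^1$-in-space, uniform-in-time convergence from Assumption~\ref{assmp:2}(\ref{assump:1AB}) with the geometric observation that the $n$-blocks straddling $k$-grid lines have total measure $O(k/n)$, with the uniform bound $C_k$ on the fixed block kernel $\beta^{k,\pm}$ controlling the contribution from that small region. The only cosmetic difference is that you route the triangle inequality through the auxiliary function $\widetilde{\beta}^{k,\pm}_{n,s}$ and the limit $\beta^{\pm}$, whereas the paper compares the grid sum directly to $\int\sup_s|\beta^{n,\pm}_s-\beta^{k,\pm}_s|$; the content is the same.
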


	\begin{proof}
		By Assumption \ref{assmp:1}\eqref{assump:1D} and Assumption \ref{assmp:2}\eqref{assump:1AB},
        we can get $N_1 = N_1(\varepsilon) \in \mathbb{N}$ such that for all $k \geq N_1$, we have 
		\begin{equation}{\label{es6}}
			\int_{[0,1]^2} \sup_{s \in [0,T]} \big \rvert \beta_s^{k,\pm}(x,y) - \beta_s^{\pm}(x,y) \big \rvert\,dx\,dy \leq \varepsilon/4, \;\; \sup_{i,j \in [k]} \sup_{s \in [0,T]} \beta^{k,\pm}_{i,j}(s) < \infty, 
		\end{equation}
		and thus for any $n\geq k \geq N_1$, 
			\begin{equation}{\label{es7}}
			\int_{[0,1]^2} \sup_{s \in [0,T]} \big \rvert \beta_s^{n,\pm}(x,y) - \beta_s^{k,\pm}(x,y) \big \rvert\,dx\,dy \leq \varepsilon/2.
		\end{equation}
		Let 
		$$A := \left\{ i \in [n] \; : \; Q_i^n \subseteq Q_u^k \text{ for some } u \in [k]  \right\}.$$
		In other words, 
		$$i \notin A \Rightarrow \exists \; u \in [k] \text{ such that } \dfrac{i-1}{n} < \dfrac{u}{k} < \dfrac{i}{n}.$$
		  It is easy to see that for any $u \in [k]$, there can be at most one $i \in [n]$ satisfying the afore-mentioned inequalities and thus $\operatorname{card}(A^c)\leq k$. Take $i,j \in A$ and get $u,v \in [k]$ such that $Q_i^n \subseteq Q_u^k, Q_j^n \subseteq Q_v^k$. For any $(x,y) \in Q_{i,j}^n \subseteq Q_{u,v}^k$, we have 
		  $$ \beta_s^{n,\pm}(x,y) = \beta_{i,j}^{n,\pm}(s) = \beta^{n,\pm}\left(i/n,j/n,s\right), \; \; \beta_s^{k,\pm}(x,y) = \beta_{u,v}^{k,\pm}(s)= \beta^{k,\pm}\left(i/n,j/n,s\right),$$
		  where the last equality follows from the observation that $(i/n,j/n) \in Q_{u,v}^k$. On the other hand, for any $i,j \in [n]$ and $(x,y) \in Q_{i,j}^n$, we have 
		  \begin{align*}
		  &\Bigg \rvert 	\big \rvert \beta_s^{n,\pm}(x,y) - \beta_s^{k,\pm}(x,y) \big \rvert - \big \rvert \beta^{n,\pm}\left(i/n,j/n,s\right) - \beta^{k,\pm}\left(i/n,j/n,s\right) \big \rvert \Bigg \rvert \\
		  & \hspace{2 in}  \leq \big \rvert  \beta_s^{k,\pm}(x,y) - \beta^{k,\pm}\left(i/n,j/n,s\right) \big \rvert \leq \sup_{u,v \in [k]} \beta^{k,\pm}_{u,v}(s).
		  \end{align*}
		  Therefore,
		  \begin{align*}
		  	\varepsilon/2 &\geq 	\int_{[0,1]^2} \sup_{s \in [0,T]} \big \rvert \beta_s^{n,\pm}(x,y) - \beta_s^{k,\pm}(x,y) \big \rvert\,dx\,dy \\
		  	&= \sum_{i,j \in A} 	\int_{Q_{i,j}^n} \sup_{s \in [0,T]} \big \rvert \beta_s^{n,\pm}(x,y) - \beta_s^{k,\pm}(x,y) \big \rvert\,dx\,dy \\
		  	& \hspace{0.1 in}+ \sum_{\substack{i \in A^c \\ \text{ or } j \in A^c}} 	\int_{Q_{i,j}^n} \sup_{s \in [0,T]} \big \rvert \beta_s^{n,\pm}(x,y) - \beta_s^{k,\pm}(x,y) \big \rvert\,dx\,dy \\
		  	& \geq \dfrac{1}{n^2}\sum_{i,j \in A}  \sup_{s \in [0,T]} \big \rvert \beta^{n,\pm}(i/n,j/n,s) - \beta^{k,\pm}(i/n,j/n,s) \big \rvert \\
		  	& \hspace{0.1 in} + \dfrac{1}{n^2}\sum_{\substack{i \in A^c \\ \text{ or } j \in A^c}}  \sup_{s \in [0,T]} \big \rvert \beta^{n,\pm}(i/n,j/n,s) - \beta^{k,\pm}(i/n,j/n,s) \big \rvert -  \dfrac{1}{n^2}\sum_{\substack{i \in A^c \\ \text{ or } j \in A^c}}  \sup_{\substack{u,v \in [k]\\s \in [0,T]}} \beta^{k,\pm}_{u,v}(s) \\
		  	& \geq \dfrac{1}{n^2}\sum_{i,j \in [n]}  \sup_{s \in [0,T]} \big \rvert \beta^{n,\pm}(i/n,j/n,s) - \beta^{k,\pm}(i/n,j/n,s) \big \rvert - \dfrac{2n\operatorname{card}(A^c)}{n^2} \sup_{\substack{u,v \in [k]\\s \in [0,T]}} \beta^{k,\pm}_{u,v}(s) \\
		  	& \geq \dfrac{1}{n^2}\sum_{i,j \in [n]}  \sup_{s \in [0,T]} \big \rvert \beta^{n,\pm}(i/n,j/n,s) - \beta^{k,\pm}(i/n,j/n,s) \big \rvert - \dfrac{2k}{n} \sup_{\substack{u,v \in [k]\\s \in [0,T]}} \beta^{k,\pm}_{u,v}(s).
		  \end{align*}
		   Using \eqref{es6} we can find $N_2 = N_2(k,\varepsilon) \in \mathbb{N}$, such that 
		  $$ \dfrac{2k}{N_2} \sup_{\substack{u,v \in [k]\\s \in [0,T]}} \beta^{k,\pm}_{u,v}(s) \leq \varepsilon/2.$$
		  Then for any $k \geq N_1(\varepsilon)$ and for any $n \geq N_2(k,\varepsilon)$, we have 
		  $$ \dfrac{1}{n^2}\sum_{i,j \in [n]}  \sup_{s \in [0,T]} \big \rvert \beta^{n,\pm}(i/n,j/n,s) - \beta^{k,\pm}(i/n,j/n,s) \big \rvert \leq \varepsilon.$$
		  Applying Lemma~\ref{lem:fixedgraphon} with $\eta= \sqrt{\varepsilon}$, $\alpha_1 = \beta^n$, and $\alpha_2 = \beta^k$,  and observing that $M^{n} \stackrel{d}{=} M^{n,\beta^n}$, we conclude: For any $h \in \cls_0$, $\delta>0$, and $n \ge N_2(k, \varepsilon)$,
		  $$ \dfrac{1}{a(n)n^2} \log  \mathbb{P} \left( M^n \in \bar B_{\square}\left(h,\delta \right) \right) \leq \dfrac{1}{a(n)n^2} \log \mathbb{P} \left( M^{n,\beta^k + \sqrt{\varepsilon}} \in \bar B_{\square}\left(h,\delta+4\sqrt{\varepsilon} \right) \right) + 4T\sqrt{\varepsilon}.$$
          The result follows.
	\end{proof}

\subsubsection{Approximation by block functions}
	
	We start with a formal definition of block rate functions and block graphons. Fix $k\geq 1$. Let $\cW^{(k)}$ denote the set of all $\phi \in \cW$ which can be expressed in the following form :
	$$ \phi(x,y,s) = \sum_{u,v\in[k]} \phi_{u,v}(s)\, \mathbbm{1}_{Q_{u,v}^k}(x,y), \; \forall \, (x,y,s) \in [0,1]^2 \times [0,T],$$
	where $\phi_{u,v} : [0,T]\to [0,\infty)$ are measurable functions for all $u,v \in [k]$. Similarly,  $\cls^{(k)}$ will denote the set of all $f \in \cls$ which can be expressed in the following form :
	$$ f(x,y) = \sum_{u,v\in[k]} f_{u,v}\, \mathbbm{1}_{Q_{u,v}^k}(x,y), \; \forall \, (x,y) \in [0,1]^2,$$
	where $f_{u,v} \in [0,\infty)$ for all $u,v \in [k]$. Set $\cW_0^{(k)} := \cW_0 \cap \cW^{(k)}$ and $\cls_0^{(k)} := \cls_0 \cap \cls^{(k)}$. Let $\sS_k$ be the set of all permutations on $[k]$. For $\sigma \in \sS_k$, let $\tilde \sigma \in \sS$ be its canonical representation as a measure preserving bijection. In other words, 
	$$ \tilde \sigma (0) =0, \; \; \tilde\sigma \left( \frac{u}{k}\right) = \frac{\sigma(u)}{k}, \;  u \in [k],$$
	with $\tilde \sigma$ being linear on each interval $((u-1)/k,u/k]$ with slope $1$. The following lemma shows that there exists a finite subset $\sT \subseteq \sS$ such that for all $n$ large enough the distribution of $\left(M^n\right)^{\tilde \sigma}$, for any $\sigma \in \sS_n$ can be approximated by $\left(M^n\right)^{\tau}$ for some $\tau \in \sT$.
	
	\begin{lem}{\label{lem:approxmeasure}}
		Fix $\alpha^+, \alpha^- \in \cW^{(k)}$ and $f \in \cls_0^{(k)}$ for some $k \geq 1$. Then for any $\varepsilon >0$, there exists $N_3 = N_3(k,\varepsilon) \in \NN$ and a finite subset $\sT=\sT(k,\varepsilon) \subseteq \sS$ such that for any $n \geq N_3$ and $\sigma_n \in \sS_n$, there exists $\tau \in \sT$ satisfying
		$$ \bP \left[ \left(M^{n,\alpha}\right)^{\tilde \sigma_n} \in \bar B_{\square}\left(f, \varepsilon \right) \right] \leq \bP \left[ \left(M^{n,\alpha}\right)^{\tau} \in \bar B_{\square}\left(f, 2\varepsilon \right) \right]. $$
	\end{lem}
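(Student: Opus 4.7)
The plan is to exploit two invariances of the probability $\mathbb{P}[(M^{n,\alpha})^{\tilde\sigma_n}\in \bar B_\square(f,\varepsilon)]$ under $k$-block-preserving modifications of $\sigma_n$, and then discretize the remaining freedom to a finite list indexed by rational doubly stochastic matrices. Set $B_u := \{i\in[n] : i/n \in Q^k_u\}$ and $\chi_\sigma(i) := v$ if $\sigma_n(i)\in B_v$. Because $\alpha\in\cW^{(k)}$, the joint law of the block values $\{M^{n,\alpha}_{i,j}\}_{i,j}$ is exchangeable within each $k$-block of indices; hence for any $k$-block-preserving $\tau_0\in\sS_n$ (i.e., $\tau_0(B_u)=B_u$) one has $\chi_{\tau_0\sigma_n}=\chi_{\sigma_n}$, and therefore $(M^{n,\alpha})^{\widetilde{\tau_0\sigma_n}}\stackrel{d}{=}(M^{n,\alpha})^{\tilde\sigma_n}$. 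For right-composition by a $k$-block-preserving $\rho\in\sS_n$, $(M^{n,\alpha})^{\widetilde{\sigma_n\rho}} = ((M^{n,\alpha})^{\tilde\sigma_n})^{\tilde\rho}$; since $f\in\cls_0^{(k)}$ satisfies $f^{\tilde\rho}=f$ and $d_\square$ is invariant under simultaneous application of the same measure-preserving bijection to both arguments, membership in $\bar B_\square(f,r)$ is preserved. Consequently the target probability depends on $\sigma_n$ only through the doubly stochastic pattern matrix $P^{\sigma_n}_{u,v}:=k|\sigma_n(B_u)\cap B_v|/n\in \cD_k$.

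Next, cover the compact Birkhoff polytope $\cD_k$ by finitely many $\ell^\infty$-balls of radius $1/L$, where $L=L(k,\varepsilon)$ is fixed below, with rational centers $Q^{(1)},\ldots,Q^{(m)}$ of common denominator $L$. By Birkhoff's theorem the integer matrix $LQ^{(\ell)}$ decomposes into $L$ permutation matrices on $[k]$, which assemble into a permutation $\pi_\ell\in\sS_{kL}$ on $kL$ sites (viewed as $k$ super-blocks of size $L$) with $|\pi_\ell(B^{(k)}_u)\cap B^{(k)}_v|=LQ^{(\ell)}_{u,v}$. Set $\sT:=\{\tilde\pi_1,\ldots,\tilde\pi_m\}$; each element is piecewise affine with break-points at multiples of $1/(kL)$, and $\sT$ is finite and independent of $n$. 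Given $\sigma_n\in\sS_n$, pick $\ell$ with $\|P^{\sigma_n}-Q^{(\ell)}\|_\infty\le 1/L$, take $N_3=N_3(k,\varepsilon)$ so that $kL\mid n$ for all $n\ge N_3$ (padding handles the residual case), and refine each $B_u$ into $L$ contiguous sub-blocks $B^{(s)}_u$ of size $m:=n/(kL)$. Using the two-sided invariance above, select a canonical representative $\sigma^*_n$ in the orbit of $\sigma_n$ whose sub-block action agrees with the $n$-level refinement of $\pi_\ell$ on all but $O(k^2)$ sub-blocks: the sub-block counts of $\sigma^*_n$ are free subject to the $k$-level marginals $P^{\sigma_n}$, which differ from those of $\pi_\ell$ (i.e., $Q^{(\ell)}$) by at most $1/L$ per entry (equivalently by at most $1$ per entry after rescaling to a sub-block count), so a bipartite matching argument yields $\sigma^*_n$ agreeing with $\pi_\ell$ on all sub-blocks outside an $O(k^2)$ exceptional set; within each matched sub-block take $\sigma^*_n$ order-preserving, so that $\tilde\sigma^*_n=\tilde\pi_\ell$ there.

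Consequently, the deterministic set $D:=\{x\in[0,1] : \tilde\sigma^*_n(x)\text{ and }\tilde\pi_\ell(x)\text{ lie in distinct $n$-blocks}\}$ has Lebesgue measure $|D|\le 2k^2/(kL)=2k/L$. Under the trivial coupling (same realization of $M^{n,\alpha}$), $(M^{n,\alpha})^{\tilde\sigma^*_n}$ and $(M^{n,\alpha})^{\tilde\pi_\ell}$ agree outside $D\times[0,1]\cup[0,1]\times D$; since $M^{n,\alpha}$ is $[0,1]$-valued,
\[d_\square\big((M^{n,\alpha})^{\tilde\sigma^*_n},(M^{n,\alpha})^{\tilde\pi_\ell}\big)\le \big\|(M^{n,\alpha})^{\tilde\sigma^*_n}-(M^{n,\alpha})^{\tilde\pi_\ell}\big\|_1\le 2|D|\le 4k/L\le \varepsilon\]
on choosing $L\ge 4k/\varepsilon$, and this bound is deterministic (the disagreement locus depends only on the permutations, not on the random $M^{n,\alpha}$). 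Combined with the invariance and the triangle inequality for $d_\square$, this yields $\mathbb{P}[(M^{n,\alpha})^{\tilde\sigma_n}\in \bar B_\square(f,\varepsilon)] = \mathbb{P}[(M^{n,\alpha})^{\tilde\sigma^*_n}\in \bar B_\square(f,\varepsilon)] \le \mathbb{P}[(M^{n,\alpha})^{\tilde\pi_\ell}\in \bar B_\square(f,2\varepsilon)]$, with $\tau=\tilde\pi_\ell\in\sT$. The main technical obstacle is the canonical-form matching step: within the two-sided orbit of $\sigma_n$ one must realize an assignment of sub-blocks that agrees with $\pi_\ell$ outside $O(k^2)$ exceptional sub-blocks, reducing to a bipartite matching whose row and column degree sequences differ from those of $\pi_\ell$ by at most $\pm 1$ per entry; this is resolved by a Birkhoff / Hall-type argument.
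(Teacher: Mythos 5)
The paper itself does not give a proof of this lemma; it is cited as a ``straightforward adaptation'' of \cite{Dhara2022}*{Lemma 3.3} and the details are omitted, so there is no paper argument to compare against directly. Your reconstruction has the expected overall shape (reduce to the two-sided orbit under $k$-block-preserving permutations, record the orbit-invariant as a $k\times k$ doubly stochastic matrix, discretize the Birkhoff polytope and realize rational centers via Birkhoff--von Neumann as permutations on a $kL$ grid, and then finish by a deterministic coupling/$L^1$ bound). The right-invariance step and the $L^1$--$d_\square$ coupling bound are fine.

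However, there is a genuine gap in the left-invariance claim, which is the load-bearing step. You assert that $(M^{n,\alpha})^{\widetilde{\tau_0\sigma_n}}\stackrel{d}{=}(M^{n,\alpha})^{\tilde\sigma_n}$ for $k$-block-preserving $\tau_0\in\sS_n$ because ``the joint law of the block values is exchangeable within each $k$-block of indices.'' This requires that $\{X^{n,\alpha}_{\tau_0(i),\tau_0(j)}(\cdot)\}_{i,j}$ and $\{X^{n,\alpha}_{i,j}(\cdot)\}_{i,j}$ have the same joint law. The jump rates do agree, since $\alpha\in\cW^{(k)}$ and $\tau_0$ fixes the $k$-blocks; but the initial conditions $X^{n,\alpha}_{i,j}(0)=x^n_{i,j}$ are arbitrary deterministic $\{0,1\}$ values carried over from Definition~\ref{defn:rate}, and in general $x^n_{\tau_0(i),\tau_0(j)}\neq x^n_{i,j}$. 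So the claimed exchangeability fails. In the static Dhara--Sen setting (independent Bernoulli edges whose parameters depend only on $k$-block) there is no initial condition and the exchangeability is exact; here the adaptation is precisely \emph{not} straightforward at this point. Without left-invariance the two-sided orbit reduction, and hence the passage from $n!$ choices of $\sigma_n$ to a finite $\sT$, collapses. The obvious repairs are (a) to assume (or arrange, e.g.\ $x^n_{i,j}\equiv 0$) that the initial configuration is invariant under $k$-block-preserving relabelings, or (b) to add a coupling argument showing the initial condition is forgotten at rate $a(n)c_\alpha$ with LDP-scale concentration, which then introduces an exponentially small \emph{additive} error into the stated inequality and would have to be tracked through Section~\ref{sec-ldpuppcut}. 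Neither is done. Secondarily, the ``bipartite matching'' step producing $\sigma^*_n$ with $O(k^2)$ exceptional sub-blocks is asserted rather than proved; the counting (marginal mismatches of at most one sub-block per $(u,v)$ entry) is plausible but needs to be carried out, as does the stipulation that $\sigma^*_n$ be made order-preserving within each matched sub-block so that $\tilde\sigma^*_n$ and $\tilde\pi_\ell$ agree not merely as set-maps but modulo $n$-blocks.
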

	
	The proof of Lemma~\ref{lem:approxmeasure} is a straightforward adaption of the proof of~\cite[Lemma 3.3]{Dhara2022} and is therefore omitted. 
     	Fix $K \in \mathbb{N}$. Let $\Lambda_{K}$ be the subset $\cls_0$ satisfying the following conditions : We have $f \in \Lambda_K$ if and only if there exists $\varepsilon \in [0,1/2]$ and $\left\{p_{i,j} : 0 \leq i,j \leq K\right\} \subseteq [0,1]$ such that
	\begin{equation}{\label{totalform}}
		f = \sum_{i,j=0}^K p_{i,j} \mathbbm{1}_{L_i \times L_j},
	\end{equation} 
	where $L_0 := [0,\varepsilon]$ and $L_i = (\varepsilon+(i-1)(1-\varepsilon)/K, \varepsilon+i(1-\varepsilon)/K]$, for all $1 \leq i \leq K$.   
	For any $1 \leq m \leq M < \infty$, we set 
	$$ \Lambda_{m,M} := \bigcup_{m \leq K \leq M} \Lambda_{K}.$$
	The proof of the following standard result is omitted.
	\begin{lem}{\label{totalbound}}
		For any $K \in \mathbb{N}$, the set $\Lambda_{K}$ is totally bounded in $L^2\left([0,1]^2\right)$ (with respect to the $L^2$-norm).
	\end{lem}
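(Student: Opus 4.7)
\textbf{Proof proposal for Lemma \ref{totalbound}.}  The plan is to exhibit $\Lambda_{K}$ as the continuous image of a compact parameter space in $L^2[0,1]^2$.  Concretely, define the map
$$
\Phi_K : [0,1/2] \times [0,1]^{(K+1)^2} \longrightarrow L^2[0,1]^2, \qquad \Phi_K\bigl(\varepsilon, (p_{i,j})_{0\le i,j\le K}\bigr) := \sum_{i,j=0}^K p_{i,j}\, \mathbbm{1}_{L_i(\varepsilon)\times L_j(\varepsilon)},
$$
where $L_0(\varepsilon) := [0,\varepsilon]$ and $L_i(\varepsilon) := (\varepsilon + (i-1)(1-\varepsilon)/K,\, \varepsilon + i(1-\varepsilon)/K]$ for $1\le i \le K$.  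By the very definition of $\Lambda_K$, we have $\Lambda_K = \Phi_K\bigl([0,1/2]\times [0,1]^{(K+1)^2}\bigr)$.  Since the domain is compact and $L^2[0,1]^2$ is a metric space, once we establish that $\Phi_K$ is continuous, its image will be compact, hence totally bounded, completing the proof.

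For the continuity of $\Phi_K$, suppose $(\varepsilon_n, (p^n_{i,j})) \to (\varepsilon_0, (p^0_{i,j}))$.  For each $i \in \{0,1,\ldots,K\}$, the symmetric difference of the intervals $L_i(\varepsilon_n)$ and $L_i(\varepsilon_0)$ has Lebesgue measure at most $2|\varepsilon_n - \varepsilon_0|$ (the endpoints depend Lipschitz continuously on $\varepsilon$), so $\mathbbm{1}_{L_i(\varepsilon_n)} \to \mathbbm{1}_{L_i(\varepsilon_0)}$ in $L^2[0,1]$.  A routine product argument then gives $\mathbbm{1}_{L_i(\varepsilon_n)\times L_j(\varepsilon_n)} \to \mathbbm{1}_{L_i(\varepsilon_0)\times L_j(\varepsilon_0)}$ in $L^2[0,1]^2$.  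Combined with $p^n_{i,j}\to p^0_{i,j}$ and the uniform bound $\|\mathbbm{1}_{L_i(\varepsilon)\times L_j(\varepsilon)}\|_2 \le 1$, the triangle inequality yields
$$
\bigl\|\Phi_K(\varepsilon_n,(p^n_{i,j})) - \Phi_K(\varepsilon_0,(p^0_{i,j}))\bigr\|_2 \longrightarrow 0,
$$
as $n \to \infty$, establishing continuity.

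There is no real obstacle here; the one mildly subtle point is verifying that varying $\varepsilon$ produces an $L^2$-continuous family of indicator functions, but this is immediate from the Lipschitz dependence of the endpoints of $L_i(\varepsilon)$ on $\varepsilon$ (which controls the measure of the symmetric difference).  If a more quantitative statement is desired, the same argument produces an explicit finite $\eta$-net of $\Lambda_K$ of size $O\bigl((1/\eta)^{(K+1)^2 + 1}\bigr)$ by taking a $\delta$-net for $\varepsilon$ in $[0,1/2]$ and an independent $\delta$-net for each $p_{i,j}$ in $[0,1]$ with $\delta$ chosen appropriately in terms of $\eta$ and $K$.
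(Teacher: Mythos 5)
Your proof is correct. The paper omits the proof of this lemma (calling it a "standard result"), so there is nothing in the text to compare against, but your approach is clean and even gives a slightly stronger conclusion: by exhibiting $\Lambda_K$ as the continuous image of the compact parameter space $[0,1/2]\times[0,1]^{(K+1)^2}$, you obtain that $\Lambda_K$ is compact in $L^2[0,1]^2$, which in particular implies total boundedness. The key verifications—Lipschitz dependence of the endpoints of $L_i(\varepsilon)$ on $\varepsilon$, hence $L^2$-convergence of the indicators $\mathbbm{1}_{L_i(\varepsilon_n)\times L_j(\varepsilon_n)}$ via control of the symmetric difference, and then the finite triangle-inequality sum—are all sound. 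The alternative direct route, which you sketch at the end, is to build an explicit finite $\eta$-net by discretizing $\varepsilon$ and each $p_{i,j}$; this avoids the continuous-image-of-compact abstraction and produces quantitative covering numbers, but for the purposes of the paper (where only total boundedness is used, to pass to a finite subcover in the LDP upper bound argument) either version suffices.
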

    
		The following lemma is an analogue of~\cite[Theorem 3.1]{Chatterjee2011}, which considers $\{0,1\}$ valued graphons, for the setting of graphons with values in $[0,1]$.
		\begin{lem}{\label{sze2}}
			Fix $\varepsilon \in (0,1)$ and $m \geq 1$. Then there exists $M=M(\varepsilon,m) \in \mathbb{N}$ such that for any $f \in \cls_0^{(n)}$ with $n \geq M$, there exists a permutation $\pi \in \sS_n$ such that 
			$$ \inf_{g \in \Lambda_{m,M}} d_{\square} \left( f^{\tilde \pi} , g\right) \leq \varepsilon.$$ 
		\end{lem}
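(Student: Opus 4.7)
\medskip\noindent
\textbf{Proof proposal.} The plan is to invoke a regularity-lemma style approximation for weighted graphs (with edge-weights in $[0,1]$) applied to $f$, viewed as an $n$-vertex weighted graph, in order to produce an equitable partition whose associated step function approximates $f$ in cut norm; then to use a permutation $\pi \in \sS_n$ to rearrange the parts so that they become the intervals $L_0, L_1, \ldots, L_K$ appearing in the definition of $\Lambda_K$.

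Concretely, I would proceed as follows. First, apply the weak (Frieze--Kannan) regularity lemma with accuracy $\varepsilon/4$ to the kernel $f \in \cls_0^{(n)}$: this produces a partition $[n] = U_1 \sqcup \cdots \sqcup U_K$ with $K \le K_0(\varepsilon)$ depending only on $\varepsilon$, such that the step function $\tilde f$ obtained by averaging $f$ over each $U_i \times U_j$ satisfies $d_\square(f,\tilde f) \le \varepsilon/4$. Second, if $K < m$, refine each part into equal sub-parts until $K \ge m$; this does not increase $d_\square(f,\tilde f)$ because averaging $f$ over a finer partition can only decrease the cut-norm distance. Third, since the $|U_i|$ may differ slightly from one another due to rounding, move at most $K$ vertices into a new exceptional part $U_0$ so that $|U_1| = \cdots = |U_K|$; for $n$ sufficiently large the resulting fractional size $\varepsilon^* := |U_0|/n$ is at most $1/2$, and the corresponding step function changes by at most an additional $O(K/n) \le \varepsilon/4$ in cut norm.

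Finally, let $\pi \in \sS_n$ be any permutation placing the vertices of $U_0$ first, followed by those of $U_1, \ldots, U_K$ in order. Under the associated measure-preserving bijection $\tilde\pi$, the image of $\bigcup_{j \in U_i} Q_j^n$ is exactly $L_i$, with $L_0 = [0,\varepsilon^*]$ and $L_i$ of length $(1-\varepsilon^*)/K$ for $1 \le i \le K$, matching the structure required in the definition of $\Lambda_K$. The step function $g$ obtained by averaging $f^{\tilde\pi}$ over each $L_i \times L_j$ therefore lies in $\Lambda_K \subseteq \Lambda_{m,M}$, where $M := \max(K_0(\varepsilon),m)$ and $n \ge M$ is chosen large enough to make the rounding step valid. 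By the invariance of $d_\square$ under measure-preserving bijections, the total cut-norm error from the two approximation steps is bounded by $\varepsilon/4 + \varepsilon/4 < \varepsilon$, which yields the claim.

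The main technical ingredient is the weak regularity lemma for weighted (rather than $\{0,1\}$-valued) graphs; this is a standard extension of the classical result, following the same energy-increment / martingale argument as in the simple-graph case, see for example Lov\'asz, \emph{Large Networks and Graph Limits}, Chapter 9. The remaining steps---subdivision to force $K \ge m$, rounding to enforce equal block sizes, and the explicit construction of $\pi$---are elementary combinatorial bookkeeping, and the dependence of $M$ on $(\varepsilon,m)$ comes entirely from $K_0(\varepsilon)$ and the threshold $n$ required to absorb the rounding into the $\varepsilon$ budget.
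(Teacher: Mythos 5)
Your approach takes a genuinely different route from the paper's. The paper never invokes a weak regularity lemma for weighted kernels: it reduces the $[0,1]$-valued case to the $\{0,1\}$-valued case already established in \cite{Chatterjee2011}*{Theorem 3.1}, by the probabilistic method. Given $f$ with block values $f_{i,j}$, it draws independent Bernoulli$(f_{i,j})$ variables $X_{i,j}$, applies the concentration bound of Lemma~\ref{es11} to conclude that with positive probability the $\{0,1\}$-valued step function $H := \sum X_{i,j}\mathbbm{1}_{Q_{i,j}^n}$ satisfies $d_\square(H,f)\le\varepsilon/2$, and then invokes the $\{0,1\}$-valued result for $H$ and triangle-inequalities back. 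Your direct weak-regularity argument is more self-contained and sidesteps the symmetric-versus-directed caveat the paper briefly has to address, while the paper's route is more economical given that Lemma~\ref{es11} and the Chatterjee--Varadhan theorem are already available.

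Two imprecisions should be repaired. First, refining a partition does not leave the cut-norm distance to the averaged step function unchanged: if $\mathcal{Q}$ refines $\mathcal{P}$ then $d_\square(W,W_{\mathcal{Q}})\le 2\,d_\square(W,W_{\mathcal{P}})$ (since $W_{\mathcal{Q}}-W_{\mathcal{P}}=(W-W_{\mathcal{P}})_{\mathcal{Q}}$ and block-averaging is a contraction in cut norm), so you should budget a factor of two or start from a finer accuracy than $\varepsilon/4$. Second, your rounding step moving ``at most $K$ vertices'' into $U_0$ tacitly assumes the weak-regular partition is already near-equitable; the plain Frieze--Kannan statement does not supply this, and a badly unbalanced partition would require moving a constant fraction of vertices. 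You should invoke the equitable version of the weak regularity lemma (standard, and stated for kernels in Lov\'asz's book) so that the parts differ in size by at most one, giving $|U_0|=O_\varepsilon(1)\ll n/2$ for $n$ large. Finally, after subdividing to force at least $m$ parts, the part-count can exceed $K_0(\varepsilon)$, so $M$ should be taken at least $\max(K_0(\varepsilon),2m)$ rather than $\max(K_0(\varepsilon),m)$. With these adjustments your argument is a valid alternative to the paper's.
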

		
		\begin{proof}
			The statement of the above lemma is true for $\left\{0,1\right\}$-valued graphons by~\cite[Theorem 3.1]{Chatterjee2011}. In particular, there exists $M_1=M_1(\varepsilon,m) \in \mathbb{N}$ such that for any $h \in \cls_0^{(n)}$ with $n \geq M_1$ and $h$ taking values in $\left\{0,1\right\}$ only, there exists a permutation $\pi \in \sS_n$ such that 
			$$ \inf_{g \in \Lambda_{m,M_1}} d_{\square} \left( h^{\tilde \pi} , g\right) \leq \varepsilon/2.$$
			Indeed, \cite[Theorem 3.1]{Chatterjee2011} only proves this result for the case when $h$ is symmetric, but the general case is a straightforward adaptation. Let $M=M(\varepsilon,m) \in \NN$ be such that $M \geq M_1$ and $2^{2n}\exp(-n^2\varepsilon^2/4) <1$ for all $n \geq M$.
			
			Take any $f \in \cls_0^{(n)}$ where $n \geq M$. It can be expressed as 
			$$ f = \sum_{i,j \in [n]} f_{i,j} \mathbbm{1}_{Q_{i,j}^n},$$
			for some $f_{i,j} \in [0,1]$. Generate $X_{i,j} \stackrel{\text{ind}}{\sim} Ber(f_{i,j})$ and set 
			$$ H := \sum_{i,j \in [n]} X_{i,j} \mathbbm{1}_{Q_{i,j}^n}.$$ By Lemma~\ref{es11}, we can conclude that 
			$$  \mathbb{P} \left( d_{\square}\left(H,f\right) > \varepsilon/2 \right) \leq \mathbb{P} \left( d_{\infty \to 1}\left(H,f\right) > \varepsilon/2 \right) \leq 2^{2n}\exp\left(-n^2\varepsilon^2/4\right) < 1.$$
			Hence, there exists $h \in \cls_0^{(n)}$ taking values in $\left\{0,1\right\}$ such that $d_{\square}\left(f,h\right) \leq \varepsilon/2$. Since, $n \geq M \geq M_1$, we can get a permutation $\pi \in \sS_n$ such that 
			$$ \inf_{g \in \Lambda_{m,M_1}} d_{\square} \left( h^{\tilde \pi} , g\right) \leq \varepsilon/2.$$ Therefore,
			\begin{align*}
				\inf_{g \in \Lambda_{m,M}} d_{\square} \left( f^{\tilde \pi} , g\right) \leq 	\inf_{g \in \Lambda_{m,M_1}} d_{\square} \left( f^{\tilde \pi} , g\right) & \leq 	d_{\square}  \left( f^{\tilde \pi},h^{\tilde \pi}  \right) +  \inf_{g \in \Lambda_{m,M_1}} d_{\square} \left( h^{\tilde \pi} , g\right) \\
                & \leq d_{\square}\left(f,h\right) + \varepsilon/2 \leq \varepsilon.
			\end{align*}
			 	\end{proof}
			
			\begin{lem}{\label{lem:approx}}
				For any $f \in \cls_0$ and $\varepsilon >0$, there exists $k=k(f,\varepsilon)\in \NN$ such that $d_{\square}\left(\cls_0^{(n)},f \right) \leq \varepsilon$, for all $n \geq k$. Here, $d_{\square}\left(\cT,f\right) := \inf_{g \in \cT} d_{\square}(f,g)$
				for any $\cT \subseteq \cls_0$.
			\end{lem}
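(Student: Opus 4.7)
The plan is to use the natural block averaging of $f$ at scale $1/n$ and invoke the bound $d_{\square}(\cdot,\cdot) \leq \|\cdot - \cdot\|_1$ together with a standard $L^1$-approximation result.

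First, for each $n \in \NN$, I would define
$$f^{(n)} := \sum_{i,j=1}^n f^n_{i,j}\, \mathbbm{1}_{Q_{i,j}^n}, \qquad f^n_{i,j} := n^2 \int_{Q_{i,j}^n} f\, d\lambda.$$
Since $0 \le f \le 1$ a.e., Jensen's inequality (or just monotonicity of the integral) gives $f^n_{i,j} \in [0,1]$, so $f^{(n)} \in \cls_0^{(n)} \subseteq \cls_0$. Thus $d_{\square}(\cls_0^{(n)}, f) \le d_{\square}(f^{(n)}, f)$, and it suffices to show $d_{\square}(f^{(n)}, f) \to 0$.

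Next, I would use the elementary bound $d_{\square}(g_1, g_2) \le \|g_1 - g_2\|_1$ (already noted in the paper after the definition of $d_{\infty \to 1}$). This reduces the problem to showing
$$\|f^{(n)} - f\|_1 = \int_{[0,1]^2} |f^{(n)} - f|\, d\lambda \longrightarrow 0 \;\; \text{as } n\to\infty.$$
This is a standard consequence of the Lebesgue differentiation theorem: for $f \in L^1[0,1]^2$, the block averages $f^{(n)}$ converge pointwise a.e.\ to $f$, and since $|f^{(n)}| \le 1$ and $|f| \le 1$, the dominated convergence theorem yields $L^1$-convergence. Alternatively, one can argue by density: for any $\delta>0$, choose a continuous $\tilde f$ with $\|f - \tilde f\|_1 < \delta$; uniform continuity of $\tilde f$ gives $\|\tilde f^{(n)} - \tilde f\|_1 \to 0$; and the averaging operator is a contraction in $L^1$, so $\|f^{(n)} - \tilde f^{(n)}\|_1 \le \|f - \tilde f\|_1 < \delta$, which suffices.

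Given $\varepsilon>0$, choose $k=k(f,\varepsilon)$ so that $\|f^{(n)} - f\|_1 \le \varepsilon$ for all $n \ge k$; then $d_{\square}(\cls_0^{(n)}, f) \le d_{\square}(f^{(n)}, f) \le \|f^{(n)} - f\|_1 \le \varepsilon$, giving the claim. There is no real obstacle here — the argument is completely standard — the only small care needed is to verify that the block averages land in $[0,1]$ so that $f^{(n)}$ genuinely lies in $\cls_0^{(n)}$ and not merely in $\cls^{(n)}$.
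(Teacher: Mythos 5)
Your proposal is correct and follows essentially the same route as the paper: both define the block average $f^{(n)}$ of $f$ at scale $1/n$, observe it lies in $\cls_0^{(n)}$, and use Lebesgue differentiation together with boundedness to get convergence (the paper passes through $L^2$ rather than $L^1$, but this is immaterial since $d_\square \le \|\cdot\|_1 \le \|\cdot\|_2$ on $\cls_0$). Your remark that one must check the block averages land in $[0,1]$ is a good point and is implicit in the paper's assertion $f_n \in \cls_0^{(n)}$.
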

			\begin{proof}
			For  $n \geq 1$, set 
			$$ f_n := \sum_{i,j \in [n]} f_{i,j}^n \mathbbm{1}_{Q_{i,j}^n} \in \cls_0^{(n)}, \; \; f_{i,j}^n := n^2 \int_{Q_{i,j}^n} f,  \; \mbox{ for } i,j \in [n].$$
			Then $f_n$ converges to $f$ almost everywhere on $[0,1]^2$ and hence in $L^2$. This implies that $d_{\square}(f_n,f) \longrightarrow 0$ as $n \to \infty$. Since, $d_{\square}\left(\cls_0^{(n)},f \right) \leq d_{\square}(f_n,f) $, this completes the proof.
			\end{proof}

	\subsubsection{Completing the proof of the upper bound in Proposition~\ref{prop:cutLDP-avg-whole}}
    Since $\widehat{\cls_0}$ is compact, by an elementary covering argument, it suffices to show that
	\begin{equation}{\label{ldp-upcut:ts}}
    \lim_{\eta \longrightarrow 0} \limsup_{n \to \infty} \dfrac{1}{n^2a(n)} \log \mathbb{P} \left(\widehat{M^n} \in \widehat{B}_{\square}\left(\widehat{h}, \eta \right) \right) \leq - \widehat I^{(\beta^+,\;\beta^-)}_M\left(\widehat{h} \right),
    \end{equation}
	for any $h \in \cls_0$. Let, for $g \in \cls_0$, $B^*(g, \eta) := \left\{f \in \cls_0 \; : \; \widehat{f} \in  \widehat{B}_{\square}\left(\widehat{g},\varepsilon\right)\right\}.$ It is enough to show that 
	$$ \lim_{\eta \longrightarrow 0} \limsup_{n \to \infty} \dfrac{1}{n^2a(n)} \log \mathbb{P} \left(M^n \in B^*(h,\eta) \right) \leq - \widehat I^{(\beta^+,\;\beta^-)}_M\left(\widehat{h} \right).$$
	Fix $\varepsilon>0$. From Lemma~\ref{sze2} we can find $R=R(\varepsilon) \in \mathbb{N}$ such that for $n \geq R$,  
	$$ \inf_{\pi \in \sS_n} \inf_{g \in \Lambda_{1,R}} d_{\square}\left(\left(M^n\right)^{\tilde \pi},g\right) < \varepsilon.$$ 
	Since the set $B^*(h,\eta)$ is invariant under the action of measure-preserving transformations, we have
	\begin{align*}
		\mathbb{P} \left( M^n \in B^*(h,\eta)\right) &\leq \sum_{\pi \in \mathscr{S}_n} \mathbb{P} \left( M^n \in B^*(h,\eta), \left(M^n\right)^{\tilde \pi}  \in B_{\square}(\Lambda_{1,R},\varepsilon)\right) \\
		& = \sum_{\pi \in \mathscr{S}_n} \mathbb{P} \left( \left(M^n\right)^{\tilde \pi} \in B^*(h,\eta) \cap B_{\square}(\Lambda_{1,R},\varepsilon)\right) \\
		& \leq n! \sum_{K=1}^R \sup_{\pi \in \mathscr{S}_n} \mathbb{P} \left(\left(M^n\right)^{\tilde \pi}\in B^*(h,\eta) \cap B_{\square}(\Lambda_{K},\varepsilon)\right),
	\end{align*}  
	where $B_{\square}\left(\cT,r\right) := \left\{f \in \cls_0 : d_{\square}(\cT,f) < r\right\}$ for any $\cT \subseteq \cls_0$.
	Since, $\log n! /n^2a(n) \to 0$ as $n \to \infty$, it is enough to show that for any $K \geq 1$, 
	$$ \lim_{\eta \longrightarrow 0} \limsup_{n \to \infty} \dfrac{1}{n^2a(n)} \log \sup_{\pi \in \mathscr{S}_n}\mathbb{P} \left(\left(M^n\right)^{\tilde \pi} \in B^*(h,\eta) \cap B_{\square}(\Lambda_{K},\varepsilon)\right) \leq - \widehat I^{(\beta^+,\;\beta^-)}_M\left(\widehat{h} \right).$$
	From Lemma~\ref{totalbound}, $\Lambda_K$ is totally bounded in $L^2$-metric, and consequently the same holds for the cut metric. Thus it suffices to show that for any $g \in \cls_0$, 
	\begin{equation}{\label{eq2}}
		\lim_{\eta \longrightarrow 0} \limsup_{n \to \infty} \dfrac{1}{n^2a(n)} \log \sup_{\pi \in \mathscr{S}_n}\mathbb{P} \left(\left(M^n\right)^{\tilde \pi} \in B^*(h,\eta) \cap \bar{B}_{\square}(g,2\varepsilon)\right) \leq - \widehat I^{(\beta^+,\;\beta^-)}_M\left(\widehat{h} \right).
	\end{equation}
	With $N_1(\cdot)$ as in Lemma~\ref{lem:approxblock}, fix any $k \geq N_1(\varepsilon^2)$. Note that, from this lemma, we can find a $N_2 = N_2(k, \varepsilon^2)$ such that the inequality in the statement of the lemma holds, with $\varepsilon$ replaced by $\varepsilon^2$, for all $n \ge N_2$, $h \in \cls_0$, and $\delta>0$. Next, to prove  \eqref{eq2},  it suffices, in view of Lemma~\ref{lem:approx},  to show that 
	for any $k \geq 1$ and $g \in \cls_0^{(k)}$, 
	\begin{equation}{\label{eq2new}}
		\lim_{\eta \longrightarrow 0} \limsup_{n \to \infty} \dfrac{1}{n^2a(n)} \log \sup_{\pi \in \mathscr{S}_n}\mathbb{P} \left(\left(M^n\right)^{\tilde \pi} \in B^*(h,\eta) \cap \bar{B}_{\square}(g,3\varepsilon)\right) \leq - \widehat I^{(\beta^+,\;\beta^-)}_M\left(\widehat{h} \right).
	\end{equation} 
	Without loss of generality we can assume that $B^*(h,\eta) \cap \bar{B}(g,3\varepsilon) \neq \emptyset$. In that case, there exists $f \in \cls_0$ with $d_{\square}(f,g) \leq 3 \varepsilon$ and $\delta_{\square}\left(\widehat{f},\widehat{h}\right) < \eta$ and hence $\delta_{\square} \left(\widehat{g},\widehat{h}\right)< 3\varepsilon + \eta$. Assume, without loss of generality, that $\eta < \varepsilon$.  Then $g \in B^*(h,4\varepsilon)$ and consequently we can find $h_1 \in \widehat{h}$ (depending on $h,g,\varepsilon$) such that $d_{\square}(g,h_1) < 4 \varepsilon$ and hence \begin{equation}\label{eq:906}\bar{B}_{\square}(g,14\varepsilon) \subseteq B_{\square}(h_1,18 \varepsilon) \subseteq B^*(h,18\varepsilon).\end{equation}

	Now, apply Lemma~\ref{lem:approxmeasure} with $\alpha = \beta^k +\varepsilon$, $f$ replaced by $g$, and $\varepsilon$ replaced by $7\varepsilon$. Then, we can obtain a finite $\sT=\sT(k,7\varepsilon) \subseteq \sS$ such that the statement in the lemma holds with the above replacements. Thus, for any $\eta \in (0,\varepsilon)$, we have 
	\begin{align*}
	&	\limsup_{n \to \infty} \dfrac{1}{n^2a(n)} \log \sup_{\pi \in \mathscr{S}_n}\mathbb{P} \left(\left(M^n\right)^{\tilde \pi}  \in B^*(h,\eta) \cap \bar{B}_{\square}(g,3\varepsilon)\right) \\
		& \hspace{1 in} \leq \limsup_{n \to \infty} \dfrac{1}{n^2a(n)} \log \sup_{\pi \in \mathscr{S}_n}\mathbb{P} \left(\left(M^n\right)^{\tilde \pi}  \in  \bar{B}_{\square}(g,3\varepsilon)\right) \\
		& \hspace{1 in} \leq \limsup_{n \to \infty} \dfrac{1}{n^2a(n)} \log \sup_{\pi \in \mathscr{S}_n}\mathbb{P} \left(\left(M^{n,\beta^k+\varepsilon}\right)^{\tilde \pi}  \in  \bar{B}_{\square}(g,7\varepsilon)\right) + 4T\varepsilon \\
		& \hspace{1 in} \leq \limsup_{n \to \infty} \dfrac{1}{n^2a(n)} \log \sup_{\tau \in \sT}\mathbb{P} \left(\left(M^{n,\beta^k+\varepsilon}\right)^{\tau}  \in  \bar{B}_{\square}(g,14\varepsilon)\right) + 4T\varepsilon,
	\end{align*}
	where the second and third inequality are obtained by applying Lemma~\ref{lem:approxblock} and Lemma~\ref{lem:approxmeasure} respectively, as described above.
    
    It is easy to see that the set $\bar{B}_{\square}(g,14\varepsilon)$ is also closed with respect to the weak topology on $\cls_0$. Moreover, for every $\sigma \in \sS$, the map $\cls_0 \ni f \mapsto f^{\sigma} \in \cls_0$ is continuous (with respect to the weak topology). Therefore, applying the LDP result for the sequence $M^n$ with $\beta$ replaced by $\beta^k +\varepsilon$, we have for any $\tau \in \sT$, 
	$$ \limsup_{n \to \infty} \dfrac{1}{n^2a(n)} \log \mathbb{P} \left(\left(M^{n,\beta^k+\varepsilon}\right)^{\tau}  \in  \bar{B}_{\square}(g,14\varepsilon)\right) \leq -\inf_{f : f^{\tau} \in \bar{B}_{\square}(g,14\varepsilon)} I_M^{(\beta^{k,+}+\varepsilon,\;\beta^{k,-}+\varepsilon)}(f).$$ 
	Here we need to establish that $\beta^k+\varepsilon$ satisfies
    Assumption \ref{assmp:1} and Assumption~\ref{assmp:2}\eqref{assump:1A}. However, that is a routine argument along the lines of the proof of Lemma~\ref{lem:approxblock} and is therefore omitted. Since, $\sT$ is finite, apply Lemma~\ref{lem:uniform}   to conclude that for any $\eta \in (0,\varepsilon)$ and $k \geq N_1(\varepsilon^2)$, 
	\begin{align*}
		&	\limsup_{n \to \infty} \dfrac{1}{n^2a(n)} \log \sup_{\pi \in \mathscr{S}_n}\mathbb{P} \left(\left(M^n\right)^{\tilde \pi}  \in B^*(h,\eta) \cap \bar{B}_{\square}(g,3\varepsilon)\right) \\
		& \hspace{1 in} \leq -\inf_{\tau \in \sT} \;\;  \inf_{f : f^{\tau} \in \bar{B}_{\square}(g,14\varepsilon)} I_M^{(\beta^{k,+}+\varepsilon,\;\beta^{k,-}+\varepsilon)}(f) + 4T \varepsilon \\
		& \hspace{1 in} \leq -\inf_{\sigma \in \sS} \;\;  \inf_{f : f^{\sigma} \in \bar{B}_{\square}(g,14\varepsilon)} I_M^{(\beta^{k,+}+\varepsilon,\;\beta^{k,-}+\varepsilon)}(f) + 4T \varepsilon \\
		& \hspace{1 in} \leq -\inf_{\sigma \in \sS} \;\; \inf_{f : f^{\sigma} \in \bar{B}_{\square}(g,14\varepsilon)} I_M^{(\beta^+,\;\beta^-)}(f) + \delta(k,\varepsilon)+4T \varepsilon \\
		& \hspace{1 in} \leq -\inf_{\sigma \in \sS}\;\;  \inf_{f : f^{\sigma} \in B^*(h,18\varepsilon)} I_M^{(\beta^+,\;\beta^-)}(f) + \delta(k,\varepsilon)+4T \varepsilon \\
			& \hspace{1 in} = - \inf_{\widehat f \;: \; \widehat f \in \widehat B_{\square} \left(\widehat h,18\varepsilon\right)} \;\; \inf_{f \in \widehat f}\;I_M^{(\beta^+,\;\beta^-)}(f) + \delta(k,\varepsilon)+4T \varepsilon,
	\end{align*}
	where 
	\begin{align*}
		\delta(k,\varepsilon) &:= \| \beta^{k,+}+\varepsilon - \beta^+\|_1 + \| \beta^{k,-}+\varepsilon - \beta^-\|_1 \\
        & \hspace{1 in} + 2 \|\sqrt{(\beta^{k,+}+\varepsilon)(\beta^{k,-}+\varepsilon)} - \sqrt{\beta^+\beta^-}\|_1.
	\end{align*}
	By Assumption \ref{assmp:1}\eqref{assump:1Aweak}, we have $\beta^{k,\pm} \stackrel{L^1}{\longrightarrow} \beta^{\pm}$ as $k \to \infty$ and hence 
	\begin{align*}
		\lim_{k \to \infty} \delta(k,\varepsilon)& =   2T\varepsilon +  2\|\sqrt{(\beta^++\varepsilon)(\beta^{-}+\varepsilon)} - \sqrt{\beta^+\beta^-}\|_1  \leq 2T\varepsilon + \dfrac{2\varepsilon}{c_{\beta}} \| \beta^++\beta^-\|_1   + \dfrac{2T\varepsilon^2}{c_{\beta}}.
	\end{align*}
    The last expression converges to $0$ as $\varepsilon \downarrow 0$.
	Therefore, 
	\begin{align*}
		 &\lim_{\eta \to 0} \limsup_{n \to \infty} \dfrac{1}{n^2a(n)} \log \sup_{\pi \in \mathscr{S}_n}\mathbb{P} \left(\left(M^n\right)^{\tilde \pi}  \in B^*(h,\eta) \cap \bar{B}_{\square}(g,3\varepsilon)\right) \\
         & \hspace{2 in} \leq - \lim_{\varepsilon \downarrow 0} \inf_{\widehat f \in \widehat B_{\square} \left(\widehat h,18\varepsilon\right)} \;\; \inf_{f \in \widehat f}\;I_M^{(\beta^+,\;\beta^-)}(f) = - \widehat I_M^{(\beta^+,\;\beta^-)}\left( \widehat h \right).
		 \end{align*}
		 This completes the proof of \eqref{eq2new} and therefore of the upper bound. This completes the proof of Proposition~\ref{prop:cutLDP-avg-whole}.

		 \subsection{Proof of Theorem~\ref{thm:LDP-avg-path}}{\label{sec:pf-LDP-path-cut}} In this section, we shall use the technique of projective limit LDP and complete the proof of Theorem~\ref{thm:LDP-avg-path} building upon Proposition~\ref{prop:LDP-avg-path-finite}. Our main tool will be the the results given in~\cite[Theorem 4.28, 4.30]{feng2006large}. The first order of business is to prove exponential tightness of the sequence $\left\{\widehat{A^{n,\epsilon}_{\bcdot}} : n \geq 1\right\}$ in the space $\mathbb{C} \left([0,T] : \left( \widehat{\cS_0}, \delta_{\square}\right)\right)$. By Theorem~\ref{thm:ldp-to-exp-tight}, Theorem~\ref{thm:exp-tight-D-space} and Proposition~\ref{prop:LDP-avg-path-finite}, the exponential tightness of 
		 $\left\{\widehat{A^{n,\epsilon}_{\bcdot}} : n \geq 1\right\}$ in $\mathbb{C} \left([0,T] : \left( \widehat{\cS_0}, \delta_{\square}\right)\right)$ with speed $a(n)n^2$ is equivalent to the following : For any $\eta >0$, 
		 \begin{equation}{\label{eq:exp-to-show}}
		 	 \lim_{\delta \downarrow 0} \limsup_{n \to \infty} \dfrac{1}{a(n)n^2}\log \mathbb{P} \left[ \text{Osc}\left(\widehat{A^{n,\epsilon}_{\bcdot}}, \delta;\left( \widehat{\cS_0}, \delta_{\square}\right)\right) > \eta\right] = - \infty.
		 \end{equation}
		 From~(\ref{eq:av-lip}), we see that 
		 $$ \text{Osc}\left(\widehat{A^{n,\epsilon}_{\bcdot}}, \delta;\left( \widehat{\cS_0}, \delta_{\square}\right)\right) \leq 2\delta \left(\dfrac{1}{\epsilon} + \dfrac{T}{\epsilon^2}\right)=\delta L(T,\epsilon), \; \text{a.s.}, \; \forall \; \delta >0,$$
		 and, therefore, (\ref{eq:exp-to-show}) follows immediately. We can now apply \cite[Theorem 4.30]{feng2006large} to conclude that the sequence $\left\{\widehat{A^{n,\epsilon}_{\bcdot}} : n \geq 1\right\}$ satisfies an LDP in the space $\mathbb{C} \left([0,T] : \left( \widehat{\cS_0}, \delta_{\square}\right)\right)$ with speed $a(n)n^2$ and rate function given by 
		 $$ \widehat{\cI}^{(\beta^+,\,\beta^-)}_{\epsilon} \left(\widehat{\phi_{\bcdot}}\right) =  \sup_{\cT}  	\widehat I^{(\beta^+,\,\beta^-)}_{\epsilon,\cT} \left(\left(\widehat{\phi_t}\right)_{t \in \cT}\right), $$
		 for any $\widehat{\phi_{\bcdot}} \in \mathbb{C} \left([0,T] : \left( \widehat{\cS_0}, \delta_{\square}\right)\right)$, where the supremum is taken over all finite subsets of $[0,T]$. All that remains to show is $ \widehat{I}^{(\beta^+,\,\beta^-)}_{\epsilon} = \widehat{\cI}^{(\beta^+,\,\beta^-)}_{\epsilon}$.

		The implication $\widehat{I}^{(\beta^+,\,\beta^-)}_{\epsilon} \geq  \widehat{\cI}^{(\beta^+,\,\beta^-)}_{\epsilon}$ is immediate from the definitions. Thus, it is enough to show that $\widehat{I}^{(\beta^+,\,\beta^-)}_{\epsilon} (\widehat{\phi_{\bcdot}}) \leq  \widehat{\cI}^{(\beta^+,\,\beta^-)}_{\epsilon}(\widehat{\phi_{\bcdot}}),$ for any $\widehat{\phi_{\bcdot}} \in \mathbb{C}\left([0,T]:\left(\widehat{\mathcal{S}_0}, \delta_{\square}\right)\right)$. 
        It is enough to show that for any $\gamma >0$, 
		 \begin{align*}
		 &	\inf \left\{ 	I^{(\beta^+,\,\beta^-)}_{\epsilon}(\phi^*_{\bcdot}) : \sup_{t \in [0,T]} \delta_{\square} \left( \widehat{\phi^*_t}, \widehat{\phi_t} \right) < \gamma \right\}  \leq \sup_{\cT} \sup_{\eta >0} 	\inf \left\{ 	I^{(\beta^+,\,\beta^-)}_{\epsilon}(\phi^*_{\bcdot}) : \max_{t \in \cT} \delta_{\square} \left( \widehat{\phi^*_t}, \widehat{\phi_t} \right) < \eta\right\}, \label{eq:eq-rate-ts}
		 \end{align*}
		 assuming the right hand side is finite. 
		 
		 Argument as in Remark \ref{rem:3.17}  shows that  $t \mapsto \widehat{\phi_t}$ is $L(T,\epsilon)$-Lipschitz. Next we fix small enough $\delta, \kappa >0$ such that $2\delta L(T,\epsilon) +  \kappa < \gamma$ and a finite set $\cT^{\prime}=\left\{0 = t_0 < \cdots < t_k = T\right\}$ such that $\max_{i=1}^k (t_i-t_{i-1}) < \delta$. We can get $\phi^{\prime}_{\bcdot}$ such that  $\max_{t \in \cT^{\prime}} \delta_{\square} \left( \widehat{\phi^{\prime}_t}, \widehat{\phi_t} \right) < \kappa$ and 
		 $$ 	I^{(\beta^+,\,\beta^-)}_{\epsilon}(\phi^\prime_{\bcdot}) \leq \inf \left\{ 	I^{(\beta^+,\,\beta^-)}_{\epsilon}(\phi^*_{\bcdot}) : \max_{t \in \cT^\prime} \delta_{\square} \left( \widehat{\phi^*_t}, \widehat{\phi_t} \right) < \kappa \right\} + \delta < \infty.$$
		 Arguing again as in Remark~\ref{rem:3.17} shows  that $t \mapsto \widehat{\phi^{\prime}_t}$ is $L(T,\epsilon)$-Lipschitz. For any $t \in [0,T]$, we can find $r \in \cT^\prime$ such that $|t-r|\leq \delta$ and hence
		 $$ \delta_{\square}\left(\widehat{\phi^\prime_t}, \widehat{\phi_t}\right)  \leq \delta_{\square}\left(\widehat{\phi^\prime_t}, \widehat{\phi^\prime_r}\right)  + \delta_{\square}\left(\widehat{\phi^\prime_r}, \widehat{\phi_r}\right)  + \delta_{\square}\left(\widehat{\phi_r}, \widehat{\phi_t}\right)  \leq 2\delta L(T,\epsilon) + \kappa.$$
		 This implies that $\sup_{t \in [0,T]} \delta_{\square} \left( \widehat{\phi^\prime_t}, \widehat{\phi_t} \right) < \gamma$ and, therefore, 
		  \begin{multline*}
		 		\inf \left\{ 	I^{(\beta^+,\,\beta^-)}_{\epsilon}(\phi^*_{\bcdot}) : \sup_{t \in [0,T]} \delta_{\square} \left( \widehat{\phi^*_t}, \widehat{\phi_t} \right) < \gamma \right\} 
		 	 \leq 	I^{(\beta^+,\,\beta^-)}_{\epsilon}(\phi^\prime_{\bcdot})\\
		 	 \leq \inf \left\{ 	I^{(\beta^+,\,\beta^-)}_{\epsilon}(\phi^*_{\bcdot}) : \max_{t \in \cT^\prime} \delta_{\square} \left( \widehat{\phi^*_t}, \widehat{\phi_t} \right) < \kappa \right\} + \delta  \\
		 	 \leq \sup_{\cT} \sup_{\eta >0} 	\inf \left\{ 	I^{(\beta^+,\,\beta^-)}_{\epsilon}(\phi^*_{\bcdot}) : \max_{t \in \cT} \delta_{\square} \left( \widehat{\phi^*_t}, \widehat{\phi_t} \right) < \eta\right\} + \delta.
		 \end{multline*}
		 	\qed

	\subsection{Special case : Time-homogeneous jump rates}{\label{validrate:cut-time-hom}}
	
	In this section, we shall consider the special case of time-homogeneous jump rates, i.e., $\beta^{\pm}_s = \gamma^{\pm} \in \cls$ for all $s \in [0,T]$. To simplify the presentation, we  only consider the LDP for the time-averaged graphon $\widehat{M^n}$ on the whole time period $[0,T]$ given by Proposition~\ref{prop:cutLDP-avg-whole}. The treatment of the path space LDP for the window-averaged graphon process is  similar and is therefore omitted. We shall prove the following representation of the rate function $\widehat{I}_M$.

	\begin{prop}{\label{prop:LDP-whole-avg-time-hom}}
		Consider the special case when $\beta^{\pm}_s = \gamma^{\pm} \in \cls$ for all $s \in [0,T]$. Suppose Assumption~\ref{assmp:2} is satisfied. Then 
the rate function $\widehat I^{(\beta^+,\;\beta^-)}_M$ in Proposition \ref{prop:cutLDP-avg-whole} has the following form :
		$$ \widehat I^{(\beta^+,\;\beta^-)}_M \left(\widehat f\right) =T \inf_{f \in \widehat{f}} \int_{[0,1]^2} \cQ_1\left(f(x,y),\gamma^+(x,y),\gamma^-(x,y)\right)dx\,dy, \;\; \forall \; \widehat f\in \widehat{\cls_0}.$$
	\end{prop}
	
	The following lemmas will be useful while proving Proposition~\ref{prop:LDP-whole-avg-time-hom}. The first lemma shows that for time-homogeneous jump rates, the search for optimal trajectories can be restricted to constant paths, a consequence of the convexity of the function $\cQ_1$.
	
	\begin{lem}{\label{lem:special-rate-hom}}
			Consider the special case when $\beta^{\pm}_s = \gamma^{\pm} \in \cls$ for all $s \in [0,T]$. In this case, 
			$$ I_M^{(\beta^+, \, \beta^-)}(f) = T \int_{[0,1]^2} \cQ_1\left(f(x,y),\gamma^+(x,y),\gamma^-(x,y)\right)dx\,dy, \;\; \forall \; f\in \cls_0.$$
	\end{lem}
	
	\begin{proof}
	Note that the function $u \mapsto \cQ_1(u,v^+,v^-)$ is convex on $[0,1]$ for any fixed $v^+,v^- >0$. 	Therefore, for any $\phi \in \cW_0$ with $(\int_0^T \phi_s\,ds)/T =f$,
	\begin{align*}
		&	\int_{[0,1]^2_T} \cQ_1(\phi_s(x,y),\gamma^+(x,y),\gamma^-(x,y)) dx\,dy\,ds \\
		& \geq T \int_{[0,1]^2} \cQ_1 \left( \dfrac{1}{T} \int_0^T \phi_s(x,y)\,ds,\gamma^+(x,y),\gamma^-(x,y)\right) dx\,dy\,ds \\
		&=  T \int_{[0,1]^2} \cQ_1 \left( f(x,y),\gamma^+(x,y),\gamma^-(x,y)\right) dx\,dy\,ds,
	\end{align*}
	with the equality holding true if $\phi_s \equiv f$ for all $s \in [0,T]$. This completes the proof.
		\end{proof}
	
    The next elementary lemma establishes the uniform fluctuation of the rate function with respect to the change in jump parameters.

	\begin{lem}{\label{lem:uniform}}
		Let $\beta^{\pm}, \alpha^{\pm} \in \cW$. Then for any $\phi \in \cW_0$, we have 
		$$ 	\bigg \rvert J^{(\beta^+,\,\beta^-)}(\phi) - J^{(\alpha^+,\,\alpha^-)}(\phi) \bigg \rvert  \leq \|\beta^+-\alpha^+\|_1 + \|\beta^--\alpha^-\|_1 + 2\|  \sqrt{\beta^+\beta^-} - \sqrt{\alpha^+\alpha^-} \|_1.$$
		The above inequality is also true when $J^{(\beta^+,\,\beta^-)}(\phi)$ and $ J^{(\alpha^+,\,\alpha^-)}(\phi)$ are replaced by $I_M^{(\beta^+,\,\beta^-)}(f)$ and $I_M^{(\alpha^+,\,\alpha^-)}(f)$, respectively, for any $f \in \cls_0$. 
	\end{lem}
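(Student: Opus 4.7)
The proof is essentially a direct expansion. First, I would use the algebraic identity
$$(\sqrt{a(1-\phi)} - \sqrt{b\phi})^2 = a(1-\phi) + b\phi - 2\sqrt{ab}\sqrt{\phi(1-\phi)}$$
applied with $(a,b) = (\beta_s^+(x,y), \beta_s^-(x,y))$ and $(a,b) = (\alpha_s^+(x,y), \alpha_s^-(x,y))$. Integrating, this gives the representation
\begin{equation*}
J^{(\beta^+,\beta^-)}(\phi) = \int_{[0,1]^3}\beta^+(1-\phi) + \int_{[0,1]^3}\beta^-\phi - 2\int_{[0,1]^3}\sqrt{\beta^+\beta^-}\sqrt{\phi(1-\phi)},
\end{equation*}
and analogously for $J^{(\alpha^+,\alpha^-)}(\phi)$. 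Subtracting these two identities yields
\begin{equation*}
J^{(\beta^+,\beta^-)}(\phi) - J^{(\alpha^+,\alpha^-)}(\phi) = \int (\beta^+-\alpha^+)(1-\phi) + \int (\beta^--\alpha^-)\phi - 2\int \bigl(\sqrt{\beta^+\beta^-} - \sqrt{\alpha^+\alpha^-}\bigr)\sqrt{\phi(1-\phi)}.
\end{equation*}

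Since $\phi \in \cW_0$ takes values in $[0,1]$, each of $|1-\phi|$, $|\phi|$, and $\sqrt{\phi(1-\phi)}$ is bounded by $1$ almost everywhere. Applying the triangle inequality and these pointwise bounds, I obtain the claimed estimate for $J^{(\beta^+,\beta^-)}$ versus $J^{(\alpha^+,\alpha^-)}$ directly in terms of the three $L^1$ quantities. No nontrivial analysis is required beyond the elementary expansion; there is no main obstacle.

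For the statement about $I^{(\beta^+,\beta^-)}$, I would invoke Theorem~\ref{thm:weakLDP2} (or more precisely the definition via the contraction principle) to write
\begin{equation*}
I^{(\beta^+,\beta^-)}(f) = \inf_{\phi \in \Gamma(f)} J^{(\beta^+,\beta^-)}(\phi),
\end{equation*}
and similarly for $\alpha$. Since the bound for $J$ just established is uniform in $\phi \in \cW_0 \supseteq \Gamma(f)$, it transfers to the infimum: for any $\phi \in \Gamma(f)$,
\begin{equation*}
J^{(\beta^+,\beta^-)}(\phi) \le J^{(\alpha^+,\alpha^-)}(\phi) + C(\alpha,\beta),
\end{equation*}
where $C(\alpha,\beta)$ denotes the right-hand side of the inequality. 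Taking infimum over $\phi \in \Gamma(f)$ on both sides gives $I^{(\beta^+,\beta^-)}(f) \le I^{(\alpha^+,\alpha^-)}(f) + C(\alpha,\beta)$, and interchanging the roles of $\alpha$ and $\beta$ yields the reverse inequality, which completes the proof.
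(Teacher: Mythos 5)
Your proof is correct and follows essentially the same route as the paper's: expand both squares via $(\sqrt{a(1-\phi)}-\sqrt{b\phi})^2=a(1-\phi)+b\phi-2\sqrt{ab}\sqrt{\phi(1-\phi)}$, subtract, apply the triangle inequality together with the pointwise bounds $0\le\phi,\,1-\phi,\,\sqrt{\phi(1-\phi)}\le 1$, and for $I$ pass the uniform bound for $J$ through the infimum over $\Gamma(f)$. No discrepancies.
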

	
	\begin{proof}
		For any $\phi \in \cW_0$, we have 
		\begin{align*}
			\bigg \rvert J^{(\beta^+,\,\beta^-)}(\phi) - J^{(\alpha^+,\,\alpha^-)}(\phi) \bigg \rvert 
            &\leq \int_{[0,1]^2_T} \bigg \rvert \left( \sqrt{\beta^+(1-\phi)} - \sqrt{\beta^-\phi}\right)^2 - \left( \sqrt{\alpha^+(1-\phi)} - \sqrt{\alpha^-\phi}\right)^2 \bigg \rvert  \\
			& \leq \int_{[0,1]^2_T} \big \rvert \beta^+ - \alpha^+\big \rvert (1-\phi) +   \int_{[0,1]^2_T} \big \rvert \beta^- - \alpha^-\big \rvert \phi \\
			& \hspace{1 in} +2\int_{[0,1]^2_T} \big \rvert \sqrt{\beta^+\beta^-} - \sqrt{\alpha^+\alpha^-} \big \rvert \sqrt{\phi(1-\phi)} \\
			& \leq \|\beta^+-\alpha^+\|_1 + \|\beta^--\alpha^-\|_1 + 2\|  \sqrt{\beta^+\beta^-} - \sqrt{\alpha^+\alpha^-} \|_1.
			\end{align*}
            This proves the first statement in the lemma.
			The second assertion in the lemma is immediate on recalling the relationship between $I_M^{(\beta^+,\,\beta^-)}$ (resp. $I_M^{(\alpha^+,\,\alpha^-)}$) and
            $J^{(\beta^+,\,\beta^-)}$(resp. $ J^{(\alpha^+,\,\alpha^-)}$).
	\end{proof}

    The main ingredient in the proof of Proposition~\ref{prop:LDP-whole-avg-time-hom} is the following lemma, proof is given in Section \ref{add:proofvalid}.
    
	\begin{lem}{\label{lem:add}}
	Fix $M \in (0,\infty)$ and suppose that $\gamma^{\pm} \in \cls$ are continuous on $[0,1]^2$ taking values in $[0,M]$. Let $\Delta : [0,1]\times [0,M] \times [0,M] \to \mathbb{R}$ be a function satisfying the following conditions :
	\begin{enumerate}
		\item $\Delta$ is continuous.
		\item The map $u \mapsto \Delta (u,v^+,v^-)$ is convex on $[0,1]$ for any $v^+,v^- \in [0,M]$.
		\item For any $\delta \in (0,1/2)$, there exists $K_{\delta} \in (0,\infty)$ such that $\delta K_{\delta} \to 0$ as $\delta \to 0$ and 
		$$ \rvert \Delta\left(u_1,v^+,v^-\right) - \Delta\left(u_2,v^+,v^-\right) \rvert \leq K_{\delta}|u_1-u_2|, \; \mbox{ for all } \; v^+,v^- \in [0,M], \, u_1 \in [0,1],$$
		whenever  $u_2 \in [\delta,1-\delta]$.
	\end{enumerate}
	Let for $f \in \cls_0$ and $\widehat f \in \widehat{\cls_0}$
	$$ \mathscr{I}(f) := \int_{[0,1]^2} \Delta\left(f(x,y),\gamma^+(x,y),\gamma^-(x,y)\right)\,dx\,dy, \;\;  \widehat{\mathscr{I}} \left(\widehat f\right) := \inf_{f \in \widehat f} \mathscr{I}(f).$$
	Then $\widehat{\mathscr{I}}$ is lower semi-continuous on $\widehat{\cls_0}$ with respect to the metric $\delta_{\square}$.
\end{lem}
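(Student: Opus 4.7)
The plan is to reformulate $\widehat{\mathscr{I}}$ as an infimum over rearrangements of the fixed kernels $\gamma^\pm$ rather than of $f$, then pass to block approximations of $\gamma^\pm$ (possible by continuity), and finally exploit the convexity plus the Lipschitz control from condition~(3) to conclude lower semi-continuity.

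First, I would observe that for any $\sigma \in \mathscr{S}$ and any representative $f \in \widehat{f}$, the change of variables $(x',y') = (\sigma(x), \sigma(y))$ yields
\[
\mathscr{I}(f^\sigma) = \int_{[0,1]^2} \Delta\bigl(f(x,y), \gamma^{+}(\sigma^{-1}(x),\sigma^{-1}(y)), \gamma^{-}(\sigma^{-1}(x),\sigma^{-1}(y))\bigr)\, dx\, dy,
\]
so $\widehat{\mathscr{I}}(\widehat f) = \inf_{\tau \in \mathscr{S}} \Phi(f, \gamma^{+,\tau}, \gamma^{-,\tau})$, where $\Phi(f,a,b) := \int \Delta(f,a,b)$ and $\gamma^{\pm,\tau}(x,y) := \gamma^{\pm}(\tau(x),\tau(y))$. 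This recasts $\widehat{\mathscr{I}}$ as an optimal alignment problem in which one representative $f$ of the graphon is held fixed and the kernels $\gamma^\pm$ are rearranged.

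Next, I would use continuity of $\gamma^\pm$ on the compact $[0,1]^2$ to approximate them uniformly by block-constant kernels $\gamma_K^\pm \in \mathcal{S}^{(K)}$ with $\|\gamma^\pm - \gamma^\pm_K\|_\infty \to 0$ as $K \to \infty$. Since $\Delta$ is continuous on the compact $[0,1]\times[0,M]^2$ and therefore uniformly continuous, this produces an approximation $\widehat{\mathscr{I}}_K(\widehat f) := \inf_{\tau} \Phi(f, \gamma^{+,\tau}_K, \gamma^{-,\tau}_K)$ converging to $\widehat{\mathscr{I}}$ \emph{uniformly} in $\widehat f \in \widehat{\mathcal{S}_0}$. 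A uniform limit of lower semi-continuous functions is lower semi-continuous, so it suffices to establish that $\widehat{\mathscr{I}}_K$ is lower semi-continuous for each fixed $K$.

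For the block-kernel case, the invariance of $\gamma^\pm_K$ under any measure-preserving bijection that preserves the underlying $K$-equipartition reduces $\inf_{\tau \in \mathscr{S}}$ to an inf over the \emph{finite} group $\mathscr{S}_K$ of permutations of the $K$ blocks (composed with any internal rearrangement, which leaves the integrand unchanged up to a measure-preserving relabeling of $f$). Consequently
\[
\widehat{\mathscr{I}}_K(\widehat f) = \min_{\pi \in \mathscr{S}_K} \inf_{g \in \widehat f} \int_{[0,1]^2} \Delta\bigl(g(x,y), \gamma^{+,\pi}_K(x,y), \gamma^{-,\pi}_K(x,y)\bigr)\, dx\, dy,
\]
and lower semi-continuity of $\widehat{\mathscr{I}}_K$ reduces to showing, for each fixed block-constant pair $(\gamma^{+,\pi}_K, \gamma^{-,\pi}_K)$, lower semi-continuity of the map $\widehat f \mapsto \inf_{g \in \widehat f} \Phi(g, \gamma^{+,\pi}_K, \gamma^{-,\pi}_K)$. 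Here convexity of $u \mapsto \Delta(u,v^+,v^-)$ is crucial: because $\gamma^{\pm,\pi}_K$ are constant on each block of the $K$-equipartition, Jensen's inequality implies that block-averaging $g$ over that partition can only \emph{decrease} $\Phi(g,\gamma^{+,\pi}_K,\gamma^{-,\pi}_K)$, and hence the infimum over $\widehat f$ is achieved (or nearly so) by a $K$-block representative parametrized by a $K\times K$ matrix of block averages together with a choice of internal labeling. Cut-metric convergence $\delta_{\square}(\widehat{f_n}, \widehat f) \to 0$ implies convergence of the corresponding $K$-block average matrices (up to choice of labeling, which lives in the finite set $\mathscr{S}_K$), and the Lipschitz control of condition~(3) (after truncating each block average to $[\delta,1-\delta]$ at cost $\le \delta K_\delta \to 0$) converts block-average convergence into control of $\Phi$.

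The main obstacle is the last step: controlling the $\inf$ over $\widehat f$ of the block-kernel functional $\Phi(\cdot, \gamma^{+,\pi}_K, \gamma^{-,\pi}_K)$ under cut-metric perturbations of $\widehat f$. Cut-metric convergence by itself is too weak to force continuity of the nonlinear functional $g \mapsto \Phi(g, \cdot, \cdot)$, but in the block-kernel case the functional depends on $g$ only through its averages on $K^2$ specific rectangles (up to relabeling), and cut-metric convergence \emph{does} imply convergence of such averages once the labeling is fixed. Since $\mathscr{S}_K$ is finite, we may pass to a subsequence along which the optimal labeling stabilizes, making the infimum a continuous function of finitely many block averages; truncation via $f^{(\delta)} := (f \wedge (1-\delta)) \vee \delta$ and the $K_\delta$-Lipschitz bound then absorb the boundary error to give lower semi-continuity in the limit $\delta \to 0$.
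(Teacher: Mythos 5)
Your reductions are individually reasonable up to a point: the change-of-variables identity $\widehat{\mathscr{I}}(\widehat f) = \inf_{\tau \in \mathscr{S}} \Phi(f,\gamma^{+,\tau},\gamma^{-,\tau})$ is correct, the uniform approximation of the continuous $\gamma^\pm$ by block kernels $\gamma^\pm_K$ does give $\widehat{\mathscr{I}}_K \to \widehat{\mathscr{I}}$ uniformly (uniform continuity of $\Delta$ on a compact domain), a uniform limit of lower semi-continuous functions is lower semi-continuous, and the truncation device using $K_\delta$ is exactly the one the paper uses. The gap is the claim that, for block kernels, $\inf_{\tau\in\mathscr{S}}$ collapses to $\inf_{\pi\in\mathscr{S}_K}$. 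A general measure-preserving bijection does not factor as a block permutation composed with a block-preserving map. Concretely, $(\gamma^\pm_K)^{\tau^{-1}}$ is constant on $\tau(Q^K_u)\times\tau(Q^K_v)$, and as $\tau$ ranges over $\mathscr{S}$ these sets range over all measurable partitions of $[0,1]$ into $K$ pieces of measure $1/K$ -- not merely over relabellings of the standard equipartition. Indeed your own parenthetical, that the "internal rearrangement$\ldots$ leaves the integrand unchanged up to a measure-preserving relabeling of $f$," is precisely what shows the manipulation is circular: one has $\Phi(f,(\gamma^+_K)^{\tau^{-1}},(\gamma^-_K)^{\tau^{-1}}) = \Phi(f^\rho,\gamma^+_K,\gamma^-_K)$ for a suitable $\rho\in\mathscr{S}$, so $\inf_\tau\Phi(f,(\gamma^\pm_K)^{\tau}) = \inf_{g\in\widehat f}\mathscr{I}_K(g)$ and nothing has been gained. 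Your concluding step therefore has nothing to act on: the nearly-optimal alignment is a genuine element of $\mathscr{S}$ (an infinite group with no useful compactness), not of $\mathscr{S}_K$, and one cannot pass to a subsequence along which "the optimal labeling stabilizes." Likewise, cut-metric convergence yields convergence of block averages only after the aligning bijections have been fixed, and those bijections are again general elements of $\mathscr{S}$.

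The paper supplies the missing compactification probabilistically rather than combinatorially. After the same $L^2$-continuity of $\mathscr{I}$, the representation $\widehat{\mathscr{I}}(\widehat f) = \inf_{\sigma}\mathscr{I}(f^\sigma)$, and the $K_\delta$-truncation reduction to continuous $f$, it takes nearly-optimal $\sigma_n$ for $\widehat f_n$ and encodes the pair as the joint law of $(f_n(X^*,Y^*),X^*,\sigma_n^{-1}(X^*),Y^*,\sigma_n^{-1}(Y^*))$ on the compact space $[0,1]^5$, extracts a weak subsequential limit, and applies Skorokhod's theorem. Cut-norm convergence $d_\square(f_n,f)\to 0$ (together with continuity of $f$) identifies a conditional expectation in the limit; Jensen's inequality -- your condition (2) -- lowers the bound through that conditional expectation; and Lemma~\ref{lem:add2}, which says that every probability measure on $[0,1]^2$ with uniform marginals is a weak limit of $\nu_\sigma$'s, converts the limiting expression back into one of the form $\inf_\sigma\mathscr{I}(f^\sigma)$. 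Your block approximation of $\gamma^\pm$ is harmless, but by itself it cannot replace this coupling-and-weak-limit step.
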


	\begin{proof}[Proof of Proposition~\ref{prop:LDP-whole-avg-time-hom}]
	In light of Proposition~\ref{prop:cutLDP-avg-whole} and Lemma~\ref{lem:special-rate-hom}, it suffices to show that the map 
	$$ \widehat{\cls_0} \ni \widehat f \mapsto \inf_{f \in \widehat f} I_M^{(\text{hom},\gamma^+,\,\gamma^-)}(f)$$
	is lower semi-continuous, where 
	$$ I_M^{(\text{hom},\gamma^+,\,\gamma^-)}(f):=T \int_{[0,1]^2} \cQ_1\left(f(x,y),\gamma^+(x,y),\gamma^-(x,y)\right)dx\,dy, \;\; \forall \; f\in \cls_0.$$ By Assumption \ref{assmp:2}, we have $\gamma^{\pm} \in L^1\left([0,1]^2\right)$ and hence we can get a sequence $\left\{\gamma^{n,\pm} : n \geq 1\right\} \subseteq \cls$ such that $\gamma^{n,\pm}$ is continuous and $\|\gamma^{\pm} - \gamma^{n,\pm}\|_1 \longrightarrow 0.$ Note that
	\begin{align*}
		\varepsilon_k &:= \sup_{\widehat f \in \widehat{\cls_0}} \bigg \rvert \inf_{f \in \widehat f} I_M^{(\text{hom},\gamma^{k,+},\,\gamma^{k,-})}(f) - \inf_{f \in \widehat f} I_M^{(\text{hom},\gamma^+,\,\gamma^-)}(f)\bigg \rvert \\
		&\leq  \sup_{f \in  \cls_0} \bigg \rvert I_M^{(\text{hom},\gamma^{k,+},\,\gamma^{k,-})}(f) -  I_M^{(\text{hom},\gamma^+,\,\gamma^-)}(f)\bigg \rvert \\
		& \leq \|\gamma^{k,+}-\gamma^+\|_1 + \|\gamma^{k,-}-\gamma^-\|_1 + 2 \| \sqrt{\gamma^{k,+}\gamma^{k,-} }- \sqrt{\gamma^{+}\gamma^-}\|_1 \longrightarrow 0,
	\end{align*}
	as $k \to \infty$, where the second inequality is a consequence of Lemma~\ref{lem:uniform}. In view of the above uniform convergence it now suffices to show that for any continuous $\gamma^{\pm}$, the map $\widehat f \mapsto \inf_{f \in \widehat f} I_M^{(\text{hom},\gamma^{+},\,\gamma^{-})}(f)$ is lower semi-continuous.

    Since $\gamma^{\pm}$ are continuous, $\sup_{(x,y) \in [0,1]^2} \gamma^{\pm}(x,y) := M <\infty$. Note that,
	 $\cQ_1$ is  continuous. Moreover, $u \mapsto \cQ_1(u,v^+,v^-)$ is convex on $[0,1]$ for any $v^{\pm} \in [0,M]$. Finally, for any $\delta \in (0,1/2)$ and $u_1, u_2 \in [0,1]$ with $u_2 \in [\delta,1-\delta]$, we have 
	\begin{align*}
		\big \rvert \cQ_1(u_1,v^+,v^-) - \cQ_1(u_2,v^+,v^-)\big \rvert 
        &\leq |u_1-u_2||v^+-v^-| + 2\sqrt{v^+v^-} \,\Big \rvert \sqrt{u_1(1-u_1)}-\sqrt{u_2(1-u_2)} \Big \rvert \\
		& \leq M|u_1-u_2| +   2M \dfrac{\rvert u_1(1-u_1)-u_2(1-u_2)\rvert}{\sqrt{u_1(1-u_1)}+\sqrt{u_2(1-u_2)}} \\
		& \leq M|u_1-u_2| + 2M \dfrac{|u_1-u_2|}{\sqrt{\delta(1-\delta)}} \leq \left( M+ \dfrac{4M}{\sqrt{\delta}} \right) |u_1-u_2|,
	\end{align*}
	for any $v^+,v^- \in [0,M]$. Note that $\delta \left( M+ \dfrac{4M}{\sqrt{\delta}} \right) \to 0$ as $\delta \to 0$.  Now the proposition follows from an application of Lemma~\ref{lem:add}.
	\end{proof}

		\section{Proofs of rare event asymptotics results}{\label{sec:applications}}
        In this section we collect the proofs of Theorem \ref{thm:likely-path-cond-edge}, Proposition \ref{prop:edge-max-eq}, and Proposition \ref{prop:edge-max-eq2}.

	\begin{proof}[Proof of Theorem~\ref{thm:likely-path-cond-edge}]
		Recall that $F$ is a single directed edge. The constant $\phi \equiv p^*+\delta$ satisfies $\sup_{t \in \cT} \ft(\widehat{\text{Av}^{\epsilon}\left(\phi\right)_t},F) = p^*+\delta$. This shows, by Remark~\ref{rem:domain-J}, that $\widehat{\cH}^{\epsilon}_{\delta, \cT}$ is a subset of domain of the rate function $\widehat I^{(\beta^+,\, \beta^-)}_{\epsilon}$, which, as argued in Remark~\ref{rem:3.17}, is compact. This implies $\widehat{\cH}^{\epsilon}_{\delta, \cT}$ is non-empty and compact.
		
		Next, using Theorem~\ref{thm:LDP-avg-path},
		\begin{align*}
	&	-\limsup_{n \to \infty} \dfrac{1}{a(n)n^2}	\log \mathbb{P} \left( \inf_{\widehat{\phi_{\bcdot}} \in \widehat \cH^{\epsilon}_{\delta, \cT}} \sup_{t \in [0,T]} \delta_{\square} \left( \widehat{A^{n,\epsilon}_t},\, \widehat{\phi_t}\right) \geq \varepsilon ,\; \sup_{t \in \cT} E_t^{n,\epsilon} \geq p^*+\delta\right) \\
	& \geq  \inf \left\{ \widehat I^{(\beta^+,\, \beta^-)}_{\epsilon} \left( \widehat{\phi^*_{\bcdot}}\right) : \inf_{\widehat{\phi_{\bcdot}} \in \widehat \cH^{\epsilon}_{\delta, \cT}} \sup_{t \in [0,T]} \delta_{\square} \left( \widehat{\phi^*_t},\, \widehat{\phi_t}\right) \geq \varepsilon ,\; \sup_{t \in \cT} \ft(\widehat{\phi^*_t},F) \geq p^*+\delta \right\} =:  I_1(\delta),
		\end{align*}
		whereas 
		\begin{align*}
				-\liminf_{n \to \infty} \dfrac{1}{a(n)n^2}	\log \mathbb{P} \left( \sup_{t \in \cT} E_t^{n,\epsilon} \geq p^*+\delta\right)  &\leq  \inf \left\{ \widehat I^{(\beta^+,\, \beta^-)}_{\epsilon} \left( \widehat{\phi^*_{\bcdot}}\right) : \sup_{t \in \cT} \ft(\widehat{\phi^*_t},F) > p^*+\delta \right\} \\
				&=:  I_2(\delta).
		\end{align*}
		It is enough to show that $I_1 > I_2$. In the proof of Proposition~\ref{prop:edge-max-eq}, we shall show that the optimal value of the rate function $\widehat I^{(\beta^+,\, \beta^-)}_{\epsilon}$ over $\widehat{\cH}^{\epsilon}_{\delta, \cT}$ is continuous in $\delta$; therefore 
		$$I_2=  \inf \left\{ \widehat I^{(\beta^+,\, \beta^-)}_{\epsilon} \left( \widehat{\phi^*_{\bcdot}}\right) : \sup_{t \in \cT} \ft(\widehat{\phi^*_t},F) \geq p^*+\delta \right\} =  \inf \left\{ \widehat I^{(\beta^+,\, \beta^-)}_{\epsilon} \left( \widehat{\phi^*_{\bcdot}}\right) :  \widehat{\phi^*_{\bcdot}} \in \widehat{\cH}^{\epsilon}_{\delta, \cT}\right\}.$$
		By arguments presented earlier both the optimums in the definition of $I_1$ and (later) characterization of $I_2$ are attained. But all the optimal points for $I_2$ lie in $\widehat{\cH}^{\epsilon}_{\delta, \cT}$, whereas any optimal point for $I_1$ is at least $\varepsilon$-distance away from this set. This shows that $I_1 > I_2$.
	\end{proof}

	\begin{proof}[Proof of Proposition~\ref{prop:edge-max-eq}] 
		Consider any $\phi \in \cW_0$ which satisfies the constraint in~(\ref{eq:edge-var-prob}). We can strictly improve the objective function by changing $\phi_s$ to $w^*$ on $s \notin U_{\epsilon}(t)$ since $\cQ_1(w^*,\gamma^+,\gamma^-)=0$ on $[0,1]^2$. This change reduces the objective function to 
		$$ \int_{[0,1]^2 \times [0,T]}  \cQ_1(\phi, \gamma^+,\gamma^-) = \int_{[0,1]^2 \times U_{\epsilon}(t)} \cQ_1(\phi, \gamma^+,\gamma^-).$$
		Moreover, $x \mapsto \cQ_1(x,v^+,v^-)$ is strictly convex on $[0,1]$ for any $v^+,v^- >0$ and hence 
		$$ \int_{[0,1]^2} \int_{U_{\epsilon}(t)} \cQ_1(\phi_s, \gamma^+,\gamma^-) \geq |U_{\epsilon}(t)| \int_{[0,1]^2} \cQ_1 \left( \dfrac{1}{U_{\epsilon}(t)} \int_{U_{\epsilon}(t)} \phi_s\,ds, \gamma^+, \gamma^-\right), $$
		and equality holds true if and only if $\phi_s(x,y)$ is constant almost everywhere on $U_{\epsilon}(t)$ for a.e.$(x,y)$. Note that, replacing $\phi_s$ on $U_{\epsilon}(t)$ by its average over this interval does not affect the constraint. 
		We can conclude that any optimizer $\phi$ to~(\ref{eq:edge-var-prob}) takes the following form : 
		$$ \phi_s = \begin{cases}
			h_{\delta}, & \text{for } s \in U_{\epsilon}(t),\\
			w^*, & \text{for } s \notin U_{\epsilon}(t),
		\end{cases}$$
where $h_{\delta} \in \cS_0$ is a solution to the following optimization problem :
		\begin{equation*}{\label{eq:edge-var-prob2}}
			\text{Minimize } \int_{[0,1]^2} \cQ_1(h, \gamma^+,\gamma^-), \text{ subject to } \int_{[0,1]^2} h \geq p^* + \delta.
		\end{equation*}
		Since the unconstrained optimization problem is uniquely solved at $w^*$, strict convexity of $\cQ_1$ guarantees that $h_{\delta} \in \cS_0$ is a solution to 
		\begin{equation}{\label{eq:edge-var-prob3}}
			\text{Minimize } \int_{[0,1]^2} \cQ_1(h, \gamma^+,\gamma^-), \text{ subject to } \int_{[0,1]^2} h = p^* + \delta.
		\end{equation}
		
		To solve~(\ref{eq:edge-var-prob3}), we first assume that $\gamma^{\pm}$ has the block form, i.e., $\gamma^{\pm} = \sum_{i,j\in [n]} \gamma_{i,j}^{\pm}\mathbbm{1}_{Q_{i,j}^n}$ for some $n$. Using the strict convexity of $\cQ_1$ and linearity of the constraint , we can see that the optimum $h_{\delta}$ must be constant on each block $Q_{i,j}^n$. In other words, $h_{\delta} =   \sum_{i,j\in [n]} h_{i,j,\delta}\mathbbm{1}_{Q_{i,j}^n}$, where $\left(h_{i,j,\delta}\right)_{i,j \in [n]}$ is a solution to 
		\begin{equation}{\label{eq:edge-var-prob4}}
			\text{Minimize } \dfrac{1}{n^2} \sum_{i,j \in [n]} \cQ_1(h_{i,j}, \gamma_{i,j}^+,\gamma_{i,j}^-), \text{ subject to } \dfrac{1}{n^2} \sum_{i,j \in [n]} h_{i,j}= p^* + \delta.
		\end{equation}
		Linear constraint and strictly convex objective function in~(\ref{eq:edge-var-prob4}) guarantees the existence of an unique solution which can be obtained through \textit{Lagrange multiplier theorem} : The optimum $(h_{i,j})_{i,j \in [n]}$ solves 
		$$ \cQ_{1,1}(h_{i,j}, \gamma_{i,j}^+,\gamma_{i,j}^-) - \lambda =0, \; \forall \; i,j \in [n], \; \dfrac{1}{n^2} \sum_{i,j \in [n]} h_{i,j}= p^* + \delta,$$
		for some $\lambda \in \mathbb{R}$, where $\cQ_{1,1}$ is the derivative of $\cQ_1$ with respect to its first coordinate. We can simplify the above set of equations to 
	$$ \gamma_{i,j}^+-\gamma_{i,j}^- +\gamma_{i,j}^+\gamma_{i,j}^-\dfrac{1-2h_{i,j}}{\sqrt{h_{i,j}(1-h_{i,j})}} + \lambda =0, \; \forall \; i,j \in [n], \; \dfrac{1}{n^2} \sum_{i,j \in [n]} h_{i,j}= p^* + \delta.$$
	A few algebraic manipulations result in 
	$$ h_{i,j} = \dfrac{1}{2} + \dfrac{\gamma_{i,j}^+-\gamma_{i,j}^-+\lambda}{2\sqrt{\left(\gamma_{i,j}^+-\gamma_{i,j}^-+\lambda\right)^2+4\gamma_{i,j}^+\gamma_{i,j}^-}}, \forall \; i,j \in [n], \; \dfrac{1}{n^2} \sum_{i,j \in [n]} h_{i,j}= p^* + \delta.$$
Note that, the expressions for $h_{i,j}$'s above are strictly increasing in $\lambda$ with the values ranging from $0$ to $1$ as $\lambda$ varies from $-\infty$ to $\infty$. Hence, there is an unique $\lambda$ satisfying the above equation. This completes the proof for this special case of $\gamma^{\pm}$ with block structure.

To complete the proof for general $\gamma^{\pm}$, we define $\gamma^{n,\pm}$ to be the block approximation of $\gamma^{\pm}$ as defined in Example~\ref{ex:block}. Recall that $\gamma^{n,\pm}$ converges in $L^1$ and almost everywhere to $\gamma^{\pm}$ on $[0,1]^2$, therefore
$$ \kappa_n := \| \gamma^{n,+}-\gamma^+\|_1 +  \| \gamma^{n,-}-\gamma^-\|_1 +  2\| \sqrt{\gamma^{n,+}\gamma^{n,-}}-\sqrt{\gamma^+\gamma^-}\|_1  \longrightarrow 0.$$
 Let $h_n \in \cS_0$ be the unique minimizer  for~(\ref{eq:edge-var-prob3}) when $\gamma^{\pm}$ is replaced by $\gamma^{n,\pm}$. By out previous argument for block kernels, we know that $h_n$ takes the form 
$$ h_n= \dfrac{1}{2} + \dfrac{\gamma^{n,+}-\gamma^{n,-}+\lambda_n}{2\sqrt{\left(\gamma^{n,+}-\gamma^{n,-}+\lambda_n\right)^2 + 4 \gamma^{n,+}\gamma^{n,-}}}, \; \; \int_{[0,1]^2} h_n = p^*+\delta,$$
for some $\lambda_n \in \mathbb{R}$. We can get a subsequence along which $\lambda_n$ converges to $\lambda \in [-\infty,\infty]$ and without loss of generality we assume this happens along the entire subsequence. Then $h_n$ converges almost everywhere (and hence in $L^1$ as $0 \leq h_n \leq 1$) to $h^*$ which takes the form 
$$ h^*= \dfrac{1}{2} + \dfrac{\gamma^{+}-\gamma^{-}+\lambda}{2\sqrt{\left(\gamma^{+}-\gamma^{-}+\lambda\right)^2 + 4 \gamma^{+}\gamma^{-}}}, \; \; \int_{[0,1]^2} h^* = p^*+\delta.$$
Note that $\lambda=\infty$ or $-\infty$ would imply that $h^* \equiv 1$ or $0$, respectively, which does not satisfy the integral constraint, hence $\lambda \in \mathbb{R}$. For any $h \in \cS_0$, satisfying the integral constraint in	~(\ref{eq:edge-var-prob3}), we have 
\begin{align*}
	\int_{[0,1]^2} \cQ_1(h,\gamma^+,\gamma^-) &\geq 	\liminf_{n \to \infty} \left(\int_{[0,1]^2} \cQ_1(h,\gamma^{n,+},\gamma^{n,-}) - \kappa_n  \right) \\
	& \geq 	\liminf_{n \to \infty} \int_{[0,1]^2} \cQ_1(h_n,\gamma^{n,+},\gamma^{n,-}) \\
	& \geq 	\liminf_{n \to \infty} \left(\int_{[0,1]^2} \cQ_1(h_n,\gamma^{+},\gamma^{-}) - \kappa_n  \right) \geq \int_{[0,1]^2} \cQ_1(h^*,\gamma^{+},\gamma^{-}),
\end{align*}
where the first and third inequality follow from Lemma~\ref{lem:uniform} and the last from \textit{Fatou's Lemma}. This shows that $h^*$ is a solution to~(\ref{eq:edge-var-prob3}), whereas the uniqueness of the solution is immediate from linear constraint and strict convexity of the objective function. 

The monotonicity and continuity of  $\delta \mapsto \lambda_{\delta}$ is trivial from~(\ref{eq:def-lambda}). Finally, one can plug-in the optimal trajectory in the objective function to get the formula in~(\ref{eq:formula-obj}). Since, $\lambda_{\delta}$ is continuous in $\delta$, same is true for $f^{*}_{\delta}$ (with respect to almost everywhere convergence).  The continuity of the optimal value now follows from dominated convergence and the observation that 
$$ 0 \leq \cQ_1 (f^*_{\delta}, \gamma^+,\gamma^-) \leq \gamma^+ + \gamma^- \in L^1 \left([0,1]^2\right), \; \forall \; \delta >0.$$
\end{proof}

\begin{proof}[Proof of Proposition~\ref{prop:edge-max-eq2}]
%Our next goal is to prove the representation for $\widehat{\cH}^{\epsilon}_{\delta, \cT}$. 
Recall that $F$ is a single directed edge. Fix any arbitrary $\widehat{\phi_{\bcdot}}$ which satisfies $\ft\left(\widehat{\phi_t},F\right) \geq p^*+\delta$ for some $t \in \cT$ and has finite rate function value. 
	Using the formulas in Theorem~\ref{thm:LDP-avg-path} and Corollary~\ref{cor:weakLDP-path} we can get $\phi^{n,*} \in \cW_0$ such that 
	\begin{equation}{\label{eq:lim-opt1}}
		J^{(\beta^+,\, \beta^-)} (\phi^{n,*}) \leq \widehat I^{(\beta^+,\, \beta^-)}_{\epsilon}\left(\widehat{\phi_{\bcdot}}\right)  + 2/n, \; \; \sup_{s \in [0,T]} \delta_{\square} \left(\widehat{\phi_s}, \widehat{\text{Av}^{\epsilon}\left(\phi^{n,*}\right)_s}\right) \leq 1/n.
	\end{equation}
	Note that, 
		\begin{align*}
	 \dfrac{1}{|U_{\epsilon}(t)|} \int_{[0,1]^2 \times U_{\epsilon}(t)} \phi^{n,*}_s =  \ft(\widehat{\text{Av}^{\epsilon}(\phi^{n,*})_t},F)\geq \ft(\widehat{\phi_t},F) - 1/n \geq p^*+\delta-1/n. 
	\end{align*}
	Therefore, writing $\delta_n=\delta-1/n$, 
	\begin{align*}
		\widehat I^{(\beta^+,\, \beta^-)}_{\epsilon}\left(\widehat{\phi_{\bcdot}}\right) &\geq \inf \left\{ \int_{[0,1]^2_T} \cQ_1 \left( \phi^*, \gamma^+,\gamma^-\right)   : \dfrac{1}{|U_{\epsilon}(t)|} \int_{[0,1]^2 \times U_{\epsilon}(t)}  \phi^*_s \geq p^*+\delta_n, \phi^* \in \cW_0\right\} \\
		& \geq \inf_{s \in \cT} |U_{\epsilon}(s)| \int_{[0,1]^2} \cQ_1 \left( f^*_{\delta_n}, \gamma^+,\gamma^-\right). 
	\end{align*}
	By Proposition~\ref{prop:edge-max-eq}, $f^*_{\delta_n}$ converges almost everywhere to $f_{\delta}^*$ and hence we conclude that 
	\begin{equation}{\label{eq:charac-lb}}
	\inf \left\{	\widehat I^{(\beta^+,\, \beta^-)}_{\epsilon}\left(\widehat{\phi_{\bcdot}}\right) : \sup_{t \in \cT} \ft(\widehat{\phi_t},F) \geq p^*+\delta \right\} \geq \inf_{s \in \cT} |U_{\epsilon}(s)| \int_{[0,1]^2} \cQ_1 \left( f^*_{\delta}, \gamma^+,\gamma^-\right)=:I. 
	\end{equation}
It is easy to see that $\widehat{\phi_{\bcdot}} = \widehat{\text{Av}^{\epsilon}(\phi^{*,s,\delta})}$ attains the infimum $s \in \cT^*$. This is because 
$$ 	\widehat I^{(\beta^+,\, \beta^-)}_{\epsilon}\left(\widehat{\text{Av}^{\epsilon}(\phi^{*,s,\delta})}\right) \leq I^{(\beta^+,\, \beta^-)}_{\epsilon}\left(\text{Av}^{\epsilon}(\phi^{*,s,\delta})\right) \leq J^{(\beta^+,\, \beta^-)} (\phi^{*,s,\delta}) = I.$$
On the otherhand, if $\widehat{\phi_{\bcdot}}$ attains the infimum with $\ft\left(\widehat{\phi_t},F\right) \geq p^*+\delta$, then we have just established the existence of $\phi^{n,*}\in \cW_0$ which satisfies the conditions in the statement of Lemma~\ref{lem:lem-var-unique} and $\delta_{\square}\left(\widehat{\phi_s}, \widehat{\text{Av}^{\epsilon}\left(\phi^{n,*}\right)_s}\right) \to 0$,  for all $s \in [0,T]$. Lemma~\ref{lem:lem-var-unique} guarantees that $\| \phi^{n,*} - \phi^{*,t,\delta}\|_2 \to 0$. In particular, $\| \text{Av}^{\epsilon}(\phi^{n,*})_s - \text{Av}^{\epsilon}(\phi^{*,t,\delta})_s\|_2 \to 0$. Hence $ \widehat{\phi_s}= \widehat{\text{Av}^{\epsilon}\left(\phi^{n,*}\right)_s}$ for all $s \in [0,T]$. This completes the proof. 		
	\end{proof}

\section{LDP for the Dynamical System}{\label{appn}}

In Section \ref{prop:continter} we prove the well-posedness stated in Propositions \ref{prop:unique} for the stochastic dynamical system $u^n$ introduced in \eqref{model:interact4}, and  we  provide the proof of the LDP for $u^n$ stated in Theorem \ref{thm:interactldp}, in Section 
\ref{ref:ldpdynpr}.

\subsection{Proof of Proposition~\ref{prop:unique}}
Let $z, K$ and $(F,D)$ be as in the statement of the proposition. For any solution $v$ of \eqref{model:interact2}-\eqref{model:interact3}, we have, for $t \in [0,1]$, 
$$ \| v_t(\cdot)\|_2 = \left \lVert z(\cdot) + \int_0^t F(v_s,s,\cdot)\,ds   + \int_0^t \int_0^1 W(\cdot,y,s)D(v_s,s,\cdot,y)\,dy\,ds  \right \rVert_2 \leq K+2LT,$$
whereas, for $0\le t_1 \le t_2 \le T$ and $x \in [0,1]$,
\begin{align*}
	\Big \rvert v(t_2,x)-v(t_1,x) \Big \rvert&\leq  \int_{t_1}^{t_2} \Big \rvert F(v_s,s,x) \Big \rvert \, ds  + \int_{t_1}^{t_2} \int_0^1  W(x,y,s) \Big \rvert D(v_s,s,x,y) \Big \rvert \,dy\, ds \\
    & \leq 2L(t_2-t_1). 
\end{align*}
This shows that 
\begin{equation}{\label{Lipschitz}}
	\| v_{t_2}-v_{t_1}\|_{w,K+2LT} \leq \| v_{t_2}-v_{t_1}\|_2 \leq 2L(t_2-t_1), \; \; \forall \; 0 \leq t_1 \leq t_2 \leq T.
	\end{equation}
	Thus, $v \in C_{2L}\left([0,T]; L_w^2\left([0,1],K+2LT\right) \right)$.

We now argue uniqueness. Let $u,v \in C\left([0,T]; L_w^2\left([0,1],K+2LT\right) \right)$ be two solutions of \eqref{model:interact2}-\eqref{model:interact3}. Define, $\Delta(t):= \| u_t-v_t \|_{2}$ for all $t \in [0,T]$. Clearly, $t \mapsto \Delta(t)$ is Lipschitz continuous. Then, for any $(t,x) \in [0,T]\times [0,1]$, 
\begin{align*}
	\Big \rvert u(t,x) - v(t,x) \Big \rvert &\leq \int_0^t \Big \rvert F(u_s,s,x) - F(v_s,s,x)\Big \rvert\,ds \\
	& \hspace{0.5 in}+ \int_0^t \int_0^1  W(x,y,s)\,\Big \rvert D(u_s,s,x,y) - D(v_s,s,x,y)\Big \rvert\,dy\,ds \\
	& \leq L\int_0^t \|u_s - v_s\|_2\,ds + L\int_0^t \int_0^1  W(x,y,s) \|u_s - v_s \|_2\,dy\,ds \\
    & \leq 2L \int_0^t \Delta(s)\,ds.
\end{align*} 
By an application of Gr\"onwall's inequality, this shows that $u=v$ and uniqueness follows.

Existence of a solution follows by the standard \textit{Picard's iteration method}. It is enough to show that there exists $\varepsilon=\varepsilon(F,D) \in (0,1)$ such that, for any $z \in L^2\left([0,1]\right)$ and $W \in \cW_0$, the following integral equation 
\begin{equation}{\label{eq}}
	v(t,x) = z(x) + \int_0^t F(v_s,s,x)\,ds + \int_0^t \int_0^1 W(x,y,s)D(v_s,s,x,y)\,dy\,ds 
\end{equation}
on $(t,x) \in [0,\varepsilon] \times [0,1]$ has a solution  $v$ such that $v_t \in L^2\left([0,1]\right)$ for any $t \in [0,\varepsilon]$ and $t \mapsto v_t$ is continuous as a function from $[0,\varepsilon]$ to $L^2\left([0,1]\right)$ (equipped with the weak topology). It is immediate that if the solution $v$ indeed exists, it must satisfy $\| v_t \|_2 \leq \|z\|_2 + 2Lt$ and $|v(t,x)-v(s,x)| \leq 2L|t-s|$; therefore $\| v_t - v_s \|_2 \leq 2L|t-s|$ for all $t,s \in [0,\varepsilon].$ Thus $v$ clearly satisfies the conditions stated immediately after \eqref{eq}.

We shall establish the existence of the solution for $\varepsilon = \min(1,T,(1/4L))$. For $M \in (0,\infty)$, define $C_M\left([0,\varepsilon]; L^2\left([0,1]\right)\right)$ to be the set of functions $v$ with $[0,\varepsilon]\times [0,1] \ni (t,x) \mapsto v(t,x) \in \mathbb{R}$ such 
that $v_t \in L^2\left([0,1]\right)$ and $\|v_t-v_s\|_2 \leq M|t-s|$, for all $s,t \in [0,\varepsilon]$, and equipped with the norm 
$$ \big \lVert  v \big \rVert_{C\left([0,\varepsilon];L^2\left([0,1]\right)\right)} := \sup_{t \in [0,\varepsilon]} \| v_t \|_2.$$
Note that $C_M\left([0,\varepsilon]; L^2\left([0,1]\right)\right)$ is a complete metric space.
For $(t,x) \in [0,\varepsilon] \times [0,1]$, set $v^{(0)}(t,x) := z(x)$ and for $n\ge 1$,
$$  v^{(n)}(t,x) := z(x) + \int_0^t F\left(v^{(n-1)}_s,s,x\right)\,ds + \int_0^t \int_0^1 W(x,y,s)D\left(v^{(n-1)}_s,s,x,y\right)\,dy\,ds.$$
Clearly, $\| v^{(n)}_t - v^{(n)}_s \|_2 \leq 2L|t-s|$ for all $t,s \in [0,\varepsilon]$. Thus $v^{(n)} \in C_{2L}\left([0,\varepsilon]; L^2\left([0,1]\right)\right)$. Now set $\delta_n := \sup_{t \in [0,\varepsilon]} \| v^{(n)}_t - v^{(n-1)}_t \|_2$. For any $(t,x) \in [0,\varepsilon] \times [0,1]$ and $n \geq 1$, we have 
\begin{align*}
		 \Big \rvert v^{(n+1)}(t,x) - v^{(n)}(t,x) \Big \rvert 
        &\leq \int_0^t \Big \rvert F\left(v^{(n)}_s,s,x\right) - F\left(v^{(n-1)}_s,s,x\right)\Big \rvert\,ds \\
	& \hspace{0.5in}+ \int_0^t \int_0^1  W(x,y,s)\,\Big \rvert D\left(v^{(n)}_s,s,x,y\right) - D\left(v^{(n-1)}_s,s,x,y\right)\Big \rvert\,dy\,ds \\
	& \leq L\int_0^t \left \lVert v^{(n)}_s - v^{(n-1)}_s \right \rVert_2\,ds + L\int_0^t \int_0^1  W(x,y,s) \left \lVert v^{(n)}_s - v^{(n-1)}_s \right \rVert_2dy\,ds \\
	&\leq 2L\int_0^t \delta_n\,ds \leq 2L\varepsilon \delta_n \leq \delta_n/2,
\end{align*}
and thus $\delta_{n+1} \leq \delta_n/2$ for all $n \geq 1$. This shows that $\left\{v^{(n)} : n \geq 1\right\}$ is a Cauchy sequence in $C_{2L}\left([0,\varepsilon]; L^2\left([0,1]\right)\right)$ and hence converges to some $v \in C_{2L}\left([0,\varepsilon]; L^2\left([0,1]\right)\right)$.  For any $(t,x) \in [0,\varepsilon]\times [0,1]$, 
\begin{align*}
	\bigg \rvert \int_0^t F\left(v_s^{(n)},s,x\right)\,ds -  \int_0^t F\left(v_s,s,x\right)\,ds \bigg \rvert &\leq \int_0^t \Big \rvert F\left(v_s^{(n)},s,x\right)- F\left(v_s,s,x\right)\Big \rvert\,ds \\
	& \leq \int_0^t L \left \lVert v_s^{(n)}-v_s \right \rVert_2ds \leq  \sup_{s \in [0,\varepsilon]} \left \lVert v_s^{(n)}-v_s \right \rVert_2.
\end{align*}  
Similar arguments also show that 
\begin{align*}
&\bigg \rvert \int_0^t \int_0^1 W(x,y,s)D\left(v_s^{(n)},s,x,y\right)\,dy\,ds -  \int_0^t \int_0^1 W(x,y,s) D\left(v_s,s,x,y\right)\,dy\,ds \bigg \rvert \leq  \sup_{s \in [0,\varepsilon]}  \left \lVert v_s^{(n)}-v_s \right \rVert_2.
\end{align*}
Therefore, 
$$ v^{(n)}(t,x) \stackrel{n \to \infty}{\longrightarrow} z(x) +  \int_0^t F\left(v_s,s,x\right)\,ds + \int_0^t \int_0^1 W(x,y,s)D\left(v_s,s,x,y\right)\,dy\,ds=:v^*(t,x)$$
for all $(t,x) \in [0,\varepsilon] \times [0,1]$. It also establishes that 
$$ \big \rvert v^{(n)}(t,x) - v^*(t,x) \big \rvert \leq 2 \sup_{s \in [0,\varepsilon]} \left \lVert v_s^{(n)}-v_s \right \rVert_2,$$
and hence 
$$ \sup_{t \in [0,\varepsilon]} \left \lVert v_t^{(n)}-v^*_t \right \rVert_2 \leq 2 \sup_{s \in [0,\varepsilon]} \left \lVert v_s^{(n)}-v_s \right \rVert_2 \longrightarrow 0.$$
This shows that $v_t=v_t^*$ almost everywhere on $[0,1]$ for all $t \in [0,\varepsilon]$ and hence for any $(t,x) \in [0,\varepsilon]\times [0,1]$, 
\begin{align*}
	v^*(t,x) &= z(x) +  \int_0^t F\left(v_s,s,x\right)\,ds + \int_0^t \int_0^1 W(x,y,s)D\left(v_s,s,x,y\right)\,dy\,ds \\
	& = z(x) +  \int_0^t F\left(v^*_s,s,x\right)\,ds + \int_0^t \int_0^1 W(x,y,s)D\left(v^*_s,s,x,y\right)\,dy\,ds.
\end{align*}
This concludes the proof. \qed

\subsection{Proof of Theorem~\ref{thm:interactldp}}
\label{ref:ldpdynpr}
We begin with the following proposition which says that the solution of the equation in \eqref{model:interact2} is continuous with respect to the initial data and the model parameters.

\begin{prop}{\label{prop:continter}}
	Suppose that $(F^n,D^n)$, $(F,D)$ satisfy  Assumption~\ref{assump:FDconv}.
    Suppose $W^n, W \in \cW_0$ and $z^n, z \in L^2_w\left([0,1],K\right)$ are such that $W^n$ and $z^n$ converge to $W$ and $z$ with respect to the weak topology on $\cW_0$ and in $L^2_w\left([0,1],K\right)$ respectively. Let $v^n$ be the unique solution in $C_{2L}\left([0,1]; L_w^2\left([0,1],K+2LT\right)\right)$ of the following equation :
	\begin{align*}
		\dfrac{\partial}{\partial t} v^n(t,x) &= F^n\left(v^n_t,t,x \right) + \int_0^1 W^n\left(x,y,t\right) D^n(v^n_t,t,x,y)\;dy, \; \; (t,x) \in [0,T] \times [0,1],\\
		v^n(0,x) &= z^n(x), \; \;  x \in [0,1].
	\end{align*}
	Then $v^n$ converges in  $C_{2L}\left([0,1]; L_w^2\left([0,1],K+2LT\right)\right)$ to $v$ which solves the equation
		\begin{align}
		\dfrac{\partial}{\partial t} v(t,x) &= F\left(v_t,t,x \right) + \int_0^1 W\left(x,y,t\right) D(v_t,t,x,y)\;dy, \; \;  (t,x) \in [0,T] \times [0,1], \label{model:interact8}\\
		v(0,x) &= z(x), \; \;  x \in [0,1]. \label{model:interact9}
	\end{align}
\end{prop}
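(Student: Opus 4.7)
The plan is to exploit compactness together with uniqueness, via the standard sub-subsequence argument. By Proposition~\ref{prop:unique} each $v^n$ lies in the compact metric space $C_{2L}\left([0,1]; L_w^2\left([0,1], K+2L\right)\right)$ (Proposition~\ref{compact:lipschitz}), and the same proposition, applied with kernel $W$ and initial datum $z$, produces a unique solution $v$ of \eqref{model:interact8}--\eqref{model:interact9} in that space. It therefore suffices to show that every subsequential limit of $\{v^n\}$ equals $v$. Extract and relabel a subsequence so that $v^n \to v^*$ in $C_{2L}\left([0,1]; L_w^2\left([0,1], K+2L\right)\right)$; in particular $v^n_s \to v^*_s$ weakly in $L^2[0,1]$ for every $s\in[0,1]$, and the uniform $2L$-Lipschitz bound $\|v^n_s - v^n_{s'}\|_2 \le 2L|s-s'|$ carries over to $v^*$.

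To identify $v^*$ as a solution of \eqref{model:interact8}--\eqref{model:interact9}, I would pair the integral form of the equation satisfied by $v^n$ with an arbitrary $g \in L^2[0,1]$ in $x$ and pass to the limit term by term. The initial datum contributes $\int g\,z^n \to \int g\,z$ by assumption. For the drift term, split $F^n=(F^n-F)+F$: setting $q^n(x) := \int_0^t [F^n-F](v^n_s, s, x)\,ds$, one has the pointwise bound $|q^n|\le 2L$ together with $\int |q^n|\,dx \le d_1(F^n, F; K+2L)$, so $\|q^n\|_2^2 \le 2L\,d_1(F^n, F; K+2L) \to 0$ by Assumption~\ref{assump:FDconv}(\ref{assump:FDconv1}), and the pairing $\int g\,q^n$ disappears in the limit. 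The analogous splitting, together with $d_2(D^n, D; K+2L)\to 0$, disposes of the $D^n - D$ piece in the interaction term.

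What remains, and is the main obstacle, is to pass to the limit in $\int g(x)\int_0^t F(v^n_s, s, x)\,ds\,dx$ and in $\int g(x)\int_0^t\int_0^1 W^n(x,y,s)\,D(v^n_s, s, x, y)\,dy\,ds\,dx$. For the first I would show, via Assumption~\ref{assump:FDconv}(\ref{assump:FDconv2}) and dominated convergence with dominating function $L|g(x)|$, that the integrand converges a.e. to $g(x)F(v^*_s, s, x)$ on $[0,t]\times[0,1]$. For the second I would argue that $\Psi^n(x,y,s) := g(x) D(v^n_s, s, x, y)\mathbf{1}_{[0,t]}(s)$ converges \emph{strongly} in $L^2([0,1]^3)$ to its obvious $v^*$-counterpart, so that the weak--strong pairing against $W^n \to W$ in $L^2[0,1]^3$ yields the desired limit; again, by the uniform bound $|D|\le L$ and the $L^2$-integrable dominator $L|g(x)|$, this reduces to a.e. convergence of $D(v^n_s, s, x, y)$ to $D(v^*_s, s, x, y)$ on $[0,1]^3$. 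The delicate point in both cases is that Assumption~\ref{assump:FDconv}(\ref{assump:FDconv2}) only supplies a.e.-in-$(t,x)$ (respectively $(t,x,y)$) convergence for each \emph{fixed} weakly convergent sequence $\psi_n\to\psi$, whereas we must evaluate $F$ and $D$ along the \emph{diagonal} $\psi_n = v^n_s$ at time $s$ itself. To bridge this gap I would leverage the uniform Lipschitz regularity of $s\mapsto v^n_s$ noted above: the family $\{(s, v^n_s) : s\in[0,1],\, n\ge 1\}$ is equicontinuous and relatively compact in $[0,1]\times L_w^2([0,1], K+2L)$, which allows one to apply Assumption~\ref{assump:FDconv}(\ref{assump:FDconv2}) along suitably chosen approximating sequences and, via a Fubini/Egoroff argument, discard the offending null sets to obtain the diagonal a.e. convergence. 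Dominated convergence then upgrades this to the needed strong $L^2$ convergence, the weak--strong pairing closes the limit identification, and the uniqueness of solutions of \eqref{model:interact8}--\eqref{model:interact9} forces $v^* = v$, completing the proof.
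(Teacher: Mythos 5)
Your proposal follows the same overall route as the paper: compactness of $C_{2L}\left([0,1]; L_w^2\left([0,1],K+2L\right)\right)$ plus uniqueness from Proposition~\ref{prop:unique} reduces the problem to limit identification along subsequences; pair the integral equation with a test function of $x$; split $F^n = (F^n - F) + F$ and $D^n = (D^n - D) + D$ and dispose of the difference terms using $d_1, d_2 \to 0$; handle the remaining $F$- and $D$-terms via Assumption~\ref{assump:FDconv}(\ref{assump:FDconv2}); and treat the kernel via the weak--strong $L^2$ pairing of $W^n$ against a strongly convergent integrand. The one substantive observation you add, and which the paper does not spell out, is the diagonal issue: Assumption~\ref{assump:FDconv}(\ref{assump:FDconv2}) yields $F(\psi_n, t, x) \to F(\psi, t, x)$ for a.e.\ $(t,x)$ for each \emph{fixed} weakly convergent sequence $\psi_n$, with an exceptional null set that may depend on the sequence, yet what is needed is $F(v^n_s, s, x)\to F(u_s, s, x)$ for a.e.\ $(s,x)$, which involves evaluating at $t=s$ a family of sequences parametrized by $s$. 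Your fix is correct and can be made fully precise without Egoroff: since $s\mapsto v^n_s$ is $2L$-Lipschitz into $L^2[0,1]$ (strong norm, uniformly in $n$) and $F$ is $L$-Lipschitz in its first $L^2$-argument by Assumption~\ref{assump:FD}(2), freezing $s$ at the nearest point of a mesh-$\delta$ grid changes $F(v^n_s, s, x)$ by at most $2L^2\delta$; applying Assumption~\ref{assump:FDconv}(\ref{assump:FDconv2}) at each of the finitely many grid points and letting $\delta\downarrow 0$ gives the desired diagonal a.e.\ convergence (and similarly for $D$). This gap would be a non-issue if the notation ``a.e.\,$[t,x]$'' in Assumption~\ref{assump:FDconv}(\ref{assump:FDconv2}) were read as ``for each fixed $t$, a.e.\,$[x]$,'' which is consistent with the example given in the paper where the convergence is in fact pointwise everywhere.
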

\begin{proof}
Let $(F,D)$, $(F^n, D^n)$, $W^n$, $W$, $z^n$, $z$, $v^n$, $v$, be as in the statement of the proposition. Since, $C_{2L} \left( [0,1]; L^2_w\left([0,1],K+2LT\right)\right)$ is compact, any subsequence of $v^n$ has a further convergent subsequence. 
Relabel the sequence as $\{n\}$ and denote the limit as $u$. It suffices to show that $u=v$.

For $\varphi \in L_{\infty}\left([0,1]\right)$, we have,
$$ \bigg \rvert \int_0^1 \int_0^t \left(F^n\left(v_s^{n},s,x\right) - F\left(v_s^n,s,x\right)\right) \varphi(x)\, ds\,dx \bigg \rvert  \leq  \|\varphi\|_{\infty} d_1(F^n,F;K+2LT) \longrightarrow 0,$$
whereas, using Assumption \ref{assump:FDconv}\eqref{assump:FDconv2}, and since $F$ is bounded,
$$ \int_0^1 \int_0^t F\left(v_s^n,s,x\right)\varphi(x)\,ds\,dx \to \int_0^1 \int_0^t F\left(u_s,s,x\right)\varphi(x)\,ds\,dx.$$ 
This shows that 
$$ \int_0^1 \int_0^t F^n\left(v_s^{n},s,x\right)\varphi(x)\,ds\,dx \to   \int_0^1 \int_0^t F\left(u_s,s,x\right)\varphi(x)\,ds\,dx.$$
By similar arguments we have 
$$ \int_{[0,1]^2} \int_0^t  W^n(x,y,s)\left[D^n\left(v_s^{n},s,x,y\right)-D\left(u_s,s,x,y\right)\right]\varphi(x)\,ds\,dx\,dy \to 0.$$
Since, $W^n$ converges to $W$ weakly on $\cW_0$ and both $D$ and $\varphi$ are uniformly bounded, we have 
\begin{align*}
    & \lim_{n \to \infty} \int_{[0,1]^2} \int_0^t  W^n(x,y,s)D\left(u_s,s,x,y\right)\varphi(x)\,ds\,dx\,dy \\
    & \hspace{2 in} =\int_{[0,1]^2} \int_0^t  W(x,y,s)D\left(u_s,s,x,y\right)\varphi(x)\,ds\,dx\,dy.
\end{align*}
Moreover, $z^n$ converging to $z$ weakly on $L^2\left([0,1]\right)$ implies that 
$$ \int_0^1 z^n(x)\varphi(x)\,dx \stackrel{n \to \infty}{\to}  \int_0^1 z(x)\varphi(x)\,dx.$$
Finally, using the fact  that, for any $t \in [0,T]$ we have $\|v^n_t - u_t\|_{w,K+2LT} \to 0$, we have
\begin{align*}
	&\int_0^1 u(t,x)\varphi(x)\,dx \\
	&= \lim_{n \to \infty} 	\int_0^1 v^n(t,x)\varphi(x)\,dx \\
	&= \lim_{n \to \infty}  \int_0^1 \left[ z^n(x) + \int_0^t F^n\left(v_s^{n},s,x\right)\,ds +\int_0^t \int_0^1 W^n(x,y,s)D^n\left(v^n_s,s,x,y\right)\,dy\,ds \right] \varphi(x)\,dx \\
	&=  \int_0^1 \left[ z(x) + \int_0^t F\left(u_s,s,x\right)\,ds +\int_0^t \int_0^1 W(x,y,s)D\left(u_s,s,x,y\right)\,dy\,ds \right] \varphi(x)\,dx.
\end{align*}
This shows that both $u$ and $v$ solve \eqref{model:interact8}-\eqref{model:interact9}. From uniqueness shown in Proposition \ref{prop:unique} we conclude that $u=v$ and the result follows.
\end{proof}

We now complete the proof of Theorem~\ref{thm:interactldp}.
The main idea is to apply the contraction principle. Since we have a sequence of maps $(F^n,D^n)$ rather than a single continuous map, we proceed as follows.
Define $\mathcal{F}$ to be the space of all (measurable) functions from $L^2\left([0,1]\right) \times [0,T] \times [0,1]$ to $ [-L,L]$, equipped with the metric
$$ d_{\mathcal{F}} \left(F_1,F_2\right) := d_1(F_1, F_2, K+2LT), \; \;  F_1, F_2 \in \mathcal{F}.$$
Similarly,  define $\mathcal{D}$ to be the space of all (measurable) functions from $ L^2\left([0,1]\right) \times [0,T] \times [0,1]^2$ to $[-L,L]$, equipped with the metric
$$ d_{\mathcal{D}} \left(D_1,D_2\right) := d_2(D_1, D_2, K+2LT), \;   \; D_1, D_2 \in \mathcal{D}.$$
Then $d_{\mathcal{F}}(F^n,F) \to 0$ and thus the sequence $\left\{F^n : n \geq 1\right\}$ satisfies a (trivial) LDP in the space $\mathcal{F}$ with speed $a(n)n^2$ and rate function $I_{\mathcal{F}} = \infty \mathbbm{1}_{\{F\}^c}$ (with the convention that $\infty \times 0 :=0$). Similarly, the sequence $\left\{D^n : n \geq 1\right\}$ satisfies an LDP in the space $\mathcal{D}$ with speed $a(n)n^2$ and rate function $I_{\mathcal{D}} = \infty \mathbbm{1}_{\{D\}^c}$. The random variables $H^n$, $z^n$, are independent of each other. Moreover, by Proposition~\ref{prop:continter}, the map which takes $(H^n,z^n,F^n,D^n)$ to $v^n$ is continuous. Therefore, by the contraction principle, $\left\{v^n : n \geq 1\right\}$ satisfies an LDP on the space $C_{2L}\left([0,1];L_w^2 \left([0,1],K+2LT\right)\right)$, with speed $a(n)n^2$ and rate function $I_{\text{solution}}$ given by 
\begin{equation}{\label{rate}}
	I_{\text{solution}}(v) := \inf_{\left(\phi,z,F^*,D^*\right) \in \mathcal{C}^*(v)} \left[ J^{(\beta^+, \, \beta^-)}\left(\phi \right) + I_{\text{initial}}(z)+I_{\mathcal{F}}(F^*)+I_{\mathcal{D}}(D^*)\right], 
\end{equation}
where $\mathcal{C}^*(v)$ is the set of all $(\phi,z,F^*,D^*) \in \cW_0 \times L_w^2\left([0,1],K\right) \times \mathcal{F} \times \mathcal{D}$ such that $v$ solves the equations
\begin{align*}
	\dfrac{\partial}{\partial t} v(t,x) &= F^*\left(v_t,t,x \right) + \int_0^1 \phi\left(x,y,t\right) D^*(v_t,t,x,y)\;dy, \; \; \forall \; (t,x) \in [0,T] \times [0,1],\\
	v(0,x) &= z(x), \; \; \forall \; x \in [0,1].
\end{align*}
Since $(F^*,D^*)$ must equal $(F,D)$ at the infimum in \eqref{rate}, we conclude the proof.

	\section{Some Auxiliary Results}
\label{sec:aux}
In this section we provide proofs of some auxiliary statements that were deferred for later.

\subsection{Proof of Lemma~\ref{lem:unique-speed}} Let $I(x_0)=0$ and take any $x \neq x_0$. Consider an open ball $U$ containing $x$ such that $x_0 \notin \bar U$, where $\bar U$ is the closure of $U$. 

{We claim that $\inf_{y \in \bar U} I(y) >0$.
Indeed, suppose that $\inf_{y \in \bar U} I(y) =0$. Then there exists $\{y_n\}\subset \bar U$ such that $I(y_n) \le 1/n$. Since the set $\{y: I(y) \le 1\}$ is compact, we conclude that the sequence $\{y_n\}$ must converge along some subsequence, to some $y \in \bar U$. By lower semicontinuity of $I$ we mist have $I(y) =0$.
However, since $\bar U$ does not contain $x_0$, this contradicts the fact that $I^{-1}\left(\left\{0\right\}\right)$ is a singleton. This proves the claim.}

By (\ref{ldpdefined}), we now have 
$$ \liminf_{n \to \infty} \dfrac{\log \mathbb{P}(Y_n \in U)}{b(n)} \leq \limsup_{n \to \infty} \dfrac{\log \mathbb{P}(Y_n \in \bar U)}{b(n)} \leq - \inf_{y \in \bar U} I(y) <0.$$
This implies, 
$$-\inf_{y \in U} I^*(y) \leq  \liminf_{n \to \infty} \dfrac{\log \mathbb{P}(Y_n \in U)}{b^*(n)} = -\infty,$$
and hence   $I^*(x)=\infty$. This shows that $I^*$ is infinite on $S \setminus \left\{x_0\right\}$. This completes the proof.

\subsection{Proof of (\ref{tightbound})} 
\label{sec:sub7.4}
Recall we have that $\Lambda^n \to \Lambda$ a.s.
Fix $M, \varepsilon \in (0,\infty)$. Then $\beta^+ \wedge M \in L^1\left([0,1]^2_T\right)$ and thus we can find continuous $\beta^* :[0,1]^2 \times [0,T] \to [0,M]$ such that $\|\beta^+ \wedge M - \beta^*\|_1 \leq \varepsilon/M$. This yields,
\begin{align}
	\E	\bigg \rvert \int_{\clv} \left( \ell(v^+) \wedge M \right)\left(\beta_s^+(x,y) \wedge M - \beta^*_{s}(x,y)\right)\, \Lambda^n(d\theta)  \bigg \rvert \leq M \int_{[0,1]^2_T} \big \rvert \beta^+ \wedge M -\beta^* \big \rvert \leq \varepsilon. \label{appproof:1}
\end{align}
The above inequality also holds  when $\Lambda^n$ is replaced by $\Lambda$. From Assumption \ref{assmp:1}\eqref{assump:1Aweak}, we also have 
\begin{align*}
		&\E	\bigg \rvert \int_{\clv} \left( \ell(v^+) \wedge M \right)\left(\beta_s^{n,+}(x,y)  - \beta^+_{s}(x,y)\right)\, \Lambda^n(d\theta)  \bigg \rvert \\
        & \leq M \int_{[0,1]^2_T} \big \rvert \beta^{n,+}_s(x,y) - \beta^+_s(x,y)\big \rvert \, dx\,dy\,ds  \to 0, \;\; \text{as } n \to \infty.
\end{align*}
Again, the above convergence holds true even when $\Lambda^n$ is replaced by $\Lambda$. Combining the above estimates, we get
\begin{align*}
	\liminf_{n \to \infty} \E \int_{\clv}  \ell(v^+)\beta_s^{n,+}(x,y)\, \Lambda^n(d\theta) & \geq \liminf_{n \to \infty} \E \int_{\clv} \left( \ell(v^+) \wedge M \right)\beta_s^{n,+}(x,y) \, \Lambda^n(d\theta) \\
	& = \liminf_{n \to \infty} \E \int_{\clv} \left( \ell(v^+) \wedge M \right)\beta_s^{+}(x,y) \, \Lambda^n(d\theta) \\
	& \geq \liminf_{n \to \infty} \E \int_{\clv} \left( \ell(v^+) \wedge M \right)\left(\beta_s^+(x,y) \wedge M \right)\, \Lambda^n(d\theta) \\
	& \geq \liminf_{n \to \infty}\E \int_{\clv} \left( \ell(v^+) \wedge M \right)\beta^*_s(x,y)\, \Lambda^n(d\theta) - \varepsilon.
\end{align*}
Since the function $(v^+,v^-,u,x,y,s) \mapsto ( \ell(v^+) \wedge M)\beta_s^*(x,y)$ is bounded continuous on $\clv$ and $\Lambda^n \stackrel{a.s.}{\to} \Lambda$, we have 
\begin{align*}
		\liminf_{n \to \infty} \E \int_{\clv}  \ell(v^+)\beta_s^{n,+}(x,y)\, \Lambda^n(d\theta)& \geq \liminf_{n \to \infty}\E \int_{\clv} \left( \ell(v^+) \wedge M \right)\beta^*_s(x,y)\, \Lambda^n(d\theta) - \varepsilon \\
		 &= \E \int_{\clv} \left( \ell(v^+) \wedge M \right)\beta^*_s(x,y)\, \Lambda(d\theta) - \varepsilon \\
	& \geq \E \int_{\clv} \left( \ell(v^+) \wedge M \right)\left(\beta_s^+(x,y) \wedge M \right)\, \Lambda(d\theta) - 2\varepsilon.
\end{align*}
Taking $\varepsilon \downarrow 0$ and $M \uparrow \infty$, we conclude the proof. \qed

\subsection{Proof of (\ref{eq:conv2})}
\label{sec:sub7.5} 
We will argue  that for any continuous (and hence bounded) $f:[0,1]^2 \times [0,T] \to \mathbb{R}$, we have 
\begin{equation}{\label{toshow}}
	\int_{\clv} f(x,y,s)\beta_s^{n,+}(x,y)(1-u)v^+ \, \Lambda^n(d\theta) \to   \int_{\clv} f(x,y,s)\beta_s^{+}(x,y)(1-u)v^+ \, \Lambda(d\theta),
\end{equation} 
in probability,
under Assumption \ref{assmp:1}, Assumption \ref{assmp:2}\eqref{assump:1A}, and conditions 
 $\Lambda^n \stackrel{a.s.}{\to} \Lambda$ and 
\begin{equation}{\label{cond1}}
	\sup_{n \geq 1} \E \int_{\clv} \ell(v^+)\beta_s^{n,+}(x,y)\, \Lambda^n(d\theta) =: C < \infty.
\end{equation} 
Analogous statement involving $\beta^{n,-}$ can be shown similarly. Combining these two statements with \eqref{eq:conv} yields \eqref{eq:conv2}.

Without loss of generality, by using a subsequential argument, we also assume that the convergence in Assumption \ref{assmp:2}\eqref{assump:1A} occurs almost everywhere on $[0,1]^2_T$.  Note that condition~\eqref{cond1}, by virtue of \eqref{tightbound}, also implies that $\E \int_{\clv} \ell(v^+)\beta_s^{+}(x,y)\, \Lambda(d\theta) \leq C$. Thus 
$$ \E \int_{\clv} v^+\beta_s^{+}(x,y)\, \Lambda(d\theta) \leq \E \int_{\clv} (\ell(v^+)+2)\beta_s^{+}(x,y)\, \Lambda(d\theta) \leq C + 2\int_{[0,1]^2_T} \beta^+ < \infty,$$
	and hence the right hand side of \eqref{toshow} is a well-defined random variable. Here we have used that $x \leq \ell(x)+2$ for all $x \geq 0$. 

 We will make repeated use of the following basic inequality :
$$ ab \leq \frac{1}{r}\left(\ell(a) + \exp(r b) -1\right), \; \; \mbox{ for all } \; a,b \geq 0, r >0.$$
Fix $M, M^*, r, r^* \in (0,\infty)$ and $\varepsilon \in (0,1)$. We have
\begin{align*}
&	\bigg \rvert \int_{\clv} f(x,y,s)\left[\beta_s^{n,+}(x,y) - \left(\beta_s^{n,+}(x,y) \wedge M \right)\right](1-u)v^+ \, \Lambda^n(d\theta)\bigg \rvert \\
&  \leq \|f\|_{\infty}\int_{\clv} \beta_s^{n,+}(x,y)\mathbbm{1}_{(\beta_s^{n,+}(x,y)) \geq M)} v^+ \Lambda^n(d\theta) \\
&  \leq \|f\|_{\infty}\int_{\clv}\dfrac{\beta_s^{n,+}(x,y)}{r} \left[ \ell(v^+) + \exp \left(r \mathbbm{1}_{(\beta_s^{n,+}(x,y)) \geq M)}\right) -1 \right]  \Lambda^n(d\theta)\\
& \leq \dfrac{\|f\|_{\infty}}{r} \int_{\clv} \ell(v^+) \beta_s^{n,+}(x,y) \Lambda^n(d\theta) \\
& \hspace{1 in} + \dfrac{\|f\|_{\infty}\left(\exp(r)-1\right)}{r} \int_{[0,1]^2_T} \beta_s^{n,+}(x,y)\mathbbm{1}_{(\beta_s^{n,+}(x,y)) \geq M)}\, dx\,dy\,ds. 
\end{align*}
Taking expectations on both sides and using~\eqref{cond1}, we obtain
\begin{align*}
	&	\E \, \bigg \rvert \int_{\clv} f(x,y,s)\left[\beta_s^{n,+}(x,y) - \left(\beta_s^{n,+}(x,y) \wedge M \right)\right](1-u)v^+ \, \Lambda^n(d\theta)\bigg \rvert \\
	&\leq \dfrac{C\|f\|_{\infty}}{r}+ \dfrac{\|f\|_{\infty}\left(\exp(r)-1\right)}{r} \int_{[0,1]^2_T} \beta_s^{n,+}(x,y)\mathbbm{1}_{(\beta_s^{n,+}(x,y)) \geq M)}\, dx\,dy\,ds. 
	\end{align*}
Since $\beta^{n,+} {\to} \beta^+$ almost everywhere and in $L^1$, we use Fatou's Lemma to conclude 
\begin{align*}
	\limsup_{n \to \infty} \int_{[0,1]^2_T} \beta^{n,+}\mathbbm{1}_{(\beta^{n,+} \geq M)} &= \int_{[0,1]^2_T} \beta^+ - \liminf_{n \to \infty} \int_{[0,1]^2_T} \beta^{n,+}\mathbbm{1}_{(\beta^{n,+} < M)}\\
	& \leq \int_{[0,1]^2_T} \beta^+ - \int_{[0,1]^2_T} \beta^+\mathbbm{1}_{\left(\beta^+<M\right)} = \int_{[0,1]^2_T} \beta^+\mathbbm{1}_{\left(\beta^+\geq M\right)}.
\end{align*}
Therefore,
\begin{align}
	&	\limsup_{n \to \infty} \E \, \bigg \rvert \int_{\clv} f(x,y,s)\left[\beta_s^{n,+}(x,y) - \left(\beta_s^{n,+}(x,y) \wedge M \right)\right](1-u)v^+ \, \Lambda^n(d\theta)\bigg \rvert  \nonumber \\
	&\hspace{1.5 in}  \leq \dfrac{C\|f\|_{\infty}}{r}+ \dfrac{\|f\|_{\infty}\left(\exp(r)-1\right)}{r} \int_{[0,1]^2_T} \beta^{+}\mathbbm{1}_{(\beta^{+} \geq M)}.  \label{step1}
\end{align}
Similar estimates show 
\begin{align}
	&	 \E\,\bigg \rvert \int_{\clv} f(x,y,s)\left(\beta_s^{+}(x,y)-\beta_s^+(x,y)\wedge M\right)(1-u)v^+ \, \Lambda(d\theta) \bigg \rvert  \nonumber \\
	&\hspace{2 in}  \leq \dfrac{C\|f\|_{\infty}}{r}+ \dfrac{\|f\|_{\infty}\left(\exp(r)-1\right)}{r} \int_{[0,1]^2_T} \beta^{+}\mathbbm{1}_{(\beta^{+} \geq M)}. \label{step11} 
\end{align}
Also,
\begin{align*}
	&	\bigg \rvert \int_{\clv} f(x,y,s)\left(\beta_s^{+}(x,y) \wedge M - \beta_s^{n,+}(x,y) \wedge M \right)(1-u)v^+ \, \Lambda^n(d\theta)\bigg \rvert \\
	& \leq \dfrac{\|f\|_{\infty}}{r} \left[\int_{\clv} \ell(v^+) \Lambda^n(d\theta) +  \int_{[0,1]^2_T} \exp\left(r\,|\beta^+ \wedge M - \beta^{n,+} \wedge M |\right)-1\right]. 
\end{align*}
Taking expectations on both sides and recalling that $\beta^{n,+} \stackrel{a.e.}{\longrightarrow}\beta^{+}$ and 
$$ \E\, \int_{\clv} \ell(v^+) \Lambda^n(d\theta) \leq c_{\beta}^{-1} \E \int_{\clv} \ell(v^+) \beta_s^{n+}(x,y) \Lambda^n(d\theta) \leq c_{\beta}^{-1}C,$$
we conclude that
\begin{equation}{\label{step2}}
 \limsup_{n \to \infty} \E \,	\bigg \rvert \int_{\clv} f(x,y,s)\left(\beta_s^{+}(x,y) \wedge M - \beta_s^{n,+}(x,y) \wedge M\right)(1-u)v^+ \, \Lambda^n(d\theta) \bigg \rvert \leq \dfrac{C\|f\|_{\infty}}{r c_{\beta}}.
\end{equation}
Next, we can find continuous $\beta^* :[0,1]^2_T \to [0,M]$ such that $\|\beta^+ \wedge M - \beta^*\|_1 \leq \varepsilon \exp(-r M)$. This yields,
\begin{align}
	&\E	\, \bigg \rvert \int_{\clv} f(x,y,s)\left(\beta_s^{+}(x,y) \wedge M - \beta_s^*(x,y)\right)(1-u)v^+ \, \Lambda^n(d\theta) \bigg \rvert \nonumber \\
	&\hspace{ 0.5 in}  \leq \|f\|_{\infty}\E \, \bigg \rvert \int_{\clv} \left(\beta_s^{+}(x,y) \wedge M - \beta_s^*(x,y) \right)v^+ \, \Lambda^n(d\theta) \bigg \rvert \nonumber  \\
	& \hspace{0.5 in} \leq \dfrac{\|f\|_{\infty}}{r} \E \int_{\clv} \ell(v^+)\Lambda^n(d\theta) + \dfrac{\|f\|_{\infty}}{r} \int_{[0,1]^2_T} \left[ \exp\left( r\, |\beta^+ \wedge M - \beta^*|\right) -1 \right] \nonumber \\
	&  \hspace{0.5 in} \leq \dfrac{C\|f\|_{\infty}}{r c_{\beta}}  + \|f\|_{\infty}\exp(r M) \int_{[0,1]^2_T}  |\beta^+ \wedge M - \beta^*|  \leq \|f\|_{\infty}\left(\dfrac{C}{r c_{\beta}} + \varepsilon \right). \label{step3} 
\end{align}
The same estimate is true if $\Lambda^n$ is replaced by $\Lambda$. 
Similarly, for $r^*, M^*>0$, with $\bar \Lambda^n = \E\Lambda^n$,
and $\bar \Lambda = \E\Lambda$,
\begin{align}
&	\limsup_{n \to \infty} \E\,	\bigg \rvert \int_{\clv} f(x,y,s)\beta_s^*(x,y)(1-u)|v^+ - v^+ \wedge M^*| \, \Lambda^n(d\theta) \bigg \rvert  \nonumber\\
&\hspace{0.5 in}  \leq M\|f\|_{\infty} \limsup_{n \to \infty} \E \int_{\clv} v^+\mathbbm{1}_{(v^+ \geq M^*)}\, \Lambda^n(d\theta) \nonumber \\
	& \hspace{0.5 in} \leq \dfrac{M\|f\|_{\infty}}{r^*} \left[ \limsup_{n \to \infty} \E \int_{\clv}\left(\ell(v^+)+\exp(r^*\mathbbm{1}_{(v^+\geq M^*)})-1\right)\Lambda^n(d\theta) \right] \nonumber \\
	& \hspace{0.5 in} \leq \dfrac{M\|f\|_{\infty}}{r^*} \left[ \dfrac{C}{c_{\beta}}+ \left(\exp(r^*)-1\right) \limsup_{n \to \infty} \bar \Lambda^n\left(v^+ \geq M^* \right) \right] \nonumber \\
	& \hspace{0.5 in} \leq \dfrac{M\|f\|_{\infty}}{r^*} \left[ \dfrac{C}{c_{\beta}}+ \left(\exp(r^*)-1\right)  \bar \Lambda\left(v^+ \geq M^* \right) \right],  \label{step4}
\end{align}
where the last inequality uses the fact that $\bar \Lambda^n$ converges weakly to $\bar \Lambda$. Similar computations show that
\begin{align}
	& \E	\bigg \rvert \int_{\clv} f(x,y,s)\beta_s^*(x,y)(1-u)|v^+ - v^+ \wedge M^*| \, \Lambda(d\theta) \bigg \rvert \nonumber \\
    & \leq \dfrac{M\|f\|_{\infty}}{r^*} \left[ \dfrac{C}{c_{\beta}}+ \left(\exp(r^*)-1\right)  \bar \Lambda\left(v^+ \geq M^* \right) \right]. \label{step41}
\end{align} 
Finally, since $\Lambda^n \stackrel{a.s.}{\longrightarrow} \Lambda$ and $(v^+,v^-,u,x,y,s) \mapsto f(x,y,s)\beta_s^*(x,y)(1-u)(v^+ \wedge M^*)$ is bounded and continuous on $\clv$, we have 
\begin{equation}{\label{step5}}
	\lim_{n \to \infty}	\E	\, \bigg \rvert \int_{\clv} f(x,y,s)\beta_s^*(x,y)(1-u)(v^+ \wedge M^*)\, (\Lambda^n-\Lambda)(d\theta) \bigg \rvert =0.
\end{equation}
Combining \eqref{step1},\eqref{step11},\eqref{step2},\eqref{step3},\eqref{step4},\eqref{step41} and \eqref{step5}, we can write the following for any choice of $M,M^*,r,r^* >0$ and $\varepsilon \in (0,1)$ :
\begin{align}
	&\limsup_{n \to \infty} \E \, \bigg \rvert \int_{\clv} f(x,y,s)\beta_s^{n,+}(x,y)(1-u)v^+\, \Lambda^n(d\theta) -  \int_{\clv} f(x,y,s)\beta_s^{+}(x,y)(1-u)v^+\, \Lambda(d\theta) \bigg \rvert  \nonumber \\
& \leq \dfrac{2C\|f\|_{\infty}}{r}+ \dfrac{2\|f\|_{\infty}\left(\exp(r)-1\right)}{r} \int_{[0,1]^2_T} \beta^{+}\mathbbm{1}_{(\beta^{+} \geq M)} + \dfrac{C\|f\|_{\infty}}{r c_{\beta}} + 2\|f\|_{\infty}\left(\dfrac{C}{r c_{\beta}} + \varepsilon \right) \nonumber \\
& \hspace{2 in} + \dfrac{2M\|f\|_{\infty}}{r^*} \left[ \dfrac{C}{c_{\beta}}+ \left(\exp(r^*)-1\right) \bar \Lambda\left(v^+ \geq M^* \right) \right]. \label{step6}
\end{align}
Now take $M^* \uparrow \infty, r^* \uparrow \infty, M \uparrow \infty, r \uparrow \infty$ and $\varepsilon \downarrow 0$, in that specified order, in the right hand side of \eqref{step6} to conclude that 
$$ \int_{\clv} f(x,y,s)\beta_s^{n,+}(x,y)(1-u)v^+\, \Lambda^n(d\theta) \stackrel{L^1}{\longrightarrow}  \int_{\clv} f(x,y,s)\beta_s^{+}(x,y)(1-u)v^+\, \Lambda(d\theta),$$
which completes the proof. \qed

\subsection{Proof of Lemma~\ref{lem:add}} {\label{add:proofvalid}}
 The following lemma will be needed. The proof of this lemma can be found in~\cite[pp. 572--574]{Dupuis2022}.

\begin{lem}{\label{lem:add2}}
For any $\sigma \in \mathscr{S}$, let $\nu_{\sigma}$ be the probability measure on $[0,1]^2$ induced by $(X,\sigma(X))$, where $X$ is distributed uniformly on $[0,1]$. Let $\nu^*$ be a probability measure on $[0,1]^2$ such that both marginals of $\nu$ are uniform probability measures on $[0,1]$. Then, there exists a sequence $\left\{\sigma_n : n \geq 1\right\} \subseteq \mathscr{S}$ such that $\nu_{\sigma_n}$ converges weakly to $\nu^*$.
\end{lem}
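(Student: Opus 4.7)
The plan is to prove the lemma by discretization coupled with a constructive application of Birkhoff's theorem on doubly stochastic matrices. Concretely, for each $n \ge 1$, I would first discretize $\nu^*$ by considering the $n \times n$ matrix $M^n = (M_{ij}^n)_{i,j \in [n]}$ defined by $M_{ij}^n := n \cdot \nu^*(Q_i^n \times Q_j^n)$. Since both marginals of $\nu^*$ are the uniform measure on $[0,1]$, each row and column of $M^n$ sums to $1$, so $M^n$ is a doubly stochastic matrix. The measure $\nu_n$ obtained by placing mass $M_{ij}^n / n$ uniformly on $Q_i^n \times Q_j^n$ has the same marginals as $\nu^*$, agrees with $\nu^*$ on the blocks $Q_i^n \times Q_j^n$, and satisfies $\nu_n \to \nu^*$ weakly (by uniform continuity of continuous test functions on $[0,1]^2$, since all blocks have diameter $O(1/n)$).

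Next I would apply Birkhoff's theorem to write $M^n = \sum_{k=1}^{K_n} \lambda_k^n P_k^n$, where $\lambda_k^n \ge 0$ with $\sum_k \lambda_k^n = 1$ and each $P_k^n$ is a permutation matrix corresponding to some $\pi_k^n \in \mathscr{S}_n$. Using this decomposition, I construct $\sigma_n \in \mathscr{S}$ as follows. Partition each interval $Q_i^n$ into consecutive subintervals $Q_{i,1}^n, \ldots, Q_{i,K_n}^n$ of lengths $\lambda_1^n/n, \ldots, \lambda_{K_n}^n/n$, and similarly partition each $Q_j^n$ into subintervals of matching lengths (indexed so that the $k$-th subinterval of $Q_{\pi_k^n(i)}^n$ has length $\lambda_k^n/n$). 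On each $Q_{i,k}^n$, define $\sigma_n$ to be the affine increasing bijection onto the $k$-th subinterval of $Q_{\pi_k^n(i)}^n$. Piecing these together yields a measurable bijection $\sigma_n : [0,1] \to [0,1]$ that is Lebesgue-preserving, i.e., $\sigma_n \in \mathscr{S}$.

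By construction, for any $i, j \in [n]$,
\begin{equation*}
    \nu_{\sigma_n}(Q_i^n \times Q_j^n) = \operatorname{Leb}\{x \in Q_i^n : \sigma_n(x) \in Q_j^n\} = \sum_{k : \pi_k^n(i) = j} \frac{\lambda_k^n}{n} = \frac{M_{ij}^n}{n} = \nu_n(Q_i^n \times Q_j^n).
\end{equation*}
Therefore, for any continuous $f : [0,1]^2 \to \mathbb{R}$, writing $\omega_f(\delta) = \sup\{|f(p)-f(q)| : \|p-q\|_\infty \le \delta\}$ for its modulus of continuity, both $\int f \, d\nu_{\sigma_n}$ and $\int f \, d\nu_n$ differ from $\sum_{i,j} f(i/n, j/n) M_{ij}^n / n$ by at most $\omega_f(1/n)$. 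Combined with $\nu_n \to \nu^*$ weakly, this shows $\int f \, d\nu_{\sigma_n} \to \int f \, d\nu^*$, establishing the claimed weak convergence.

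The main technical obstacle is the bookkeeping for the explicit construction of $\sigma_n$ from the Birkhoff decomposition: one must verify that the simultaneous subdivision of the domain intervals $Q_i^n$ and codomain intervals $Q_j^n$ into pieces of matching lengths $\lambda_k^n/n$ can be performed consistently so that the resulting map is a genuine measure-preserving bijection (i.e., the pieces in the codomain tile $[0,1]$ without overlap). This is straightforward once one orders things properly, but requires care—one handy device is to first reduce to the case where all $\lambda_k^n$ are rational with common denominator $N$, subdivide each $Q_i^n$ and $Q_j^n$ into $N$ equal sub-intervals, and let $\sigma_n$ act as the induced permutation on these $nN$ atoms. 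The rest of the argument—uniform continuity, the marginal computation, and weak convergence of $\nu_n$ to $\nu^*$—is routine.
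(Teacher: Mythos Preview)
Your proof is correct. The paper does not supply its own argument for this lemma; it simply cites \cite{Dupuis2022}*{pp.~572--574}. The Birkhoff-decomposition approach you outline is precisely the standard route for this fact (approximating a coupling with uniform marginals by deterministic measure-preserving maps), and is in substance the argument given in the cited reference.

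One small remark on the ``technical obstacle'' you flag: the rational-approximation device is unnecessary. If you partition \emph{every} $Q_i^n$ (in both domain and codomain) into consecutive pieces of lengths $\lambda_1^n/n,\ldots,\lambda_{K_n}^n/n$ in the \emph{same fixed order}, and send the $k$-th piece of $Q_i^n$ to the $k$-th piece of $Q_{\pi_k^n(i)}^n$, then for each fixed $j$ and $k$ there is exactly one $i$ with $\pi_k^n(i)=j$, so the $k$-th piece of $Q_j^n$ is hit exactly once. This already gives a well-defined measure-preserving bijection without any further reduction.
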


We now proceed to the proof of Lemma~\ref{lem:add}. We start by observing that $f \mapsto \mathscr{I}(f)$ is continuous if $\cls_0$ is equipped with the $L^2$ norm induced from $L^2[0,1]^2$. This can easily be verified by employing a subsequence argument and using the fact that $\Delta$ is bounded and continuous. Moreover, for any $\delta \in (0,1/2)$ and $g \in \cls_0$ satisfying $\delta \leq g \leq 1-\delta$, we have
\begin{equation}
	\rvert \mathscr{I}(f) - \mathscr{I}(g) \rvert \leq \int_{[0,1]^2} \Big \rvert \Delta(f,\gamma^+,\gamma^-) - \Delta(g,\gamma^+,\gamma^-)\Big \rvert \leq K_{\delta} \int_{[0,1]^2} |f-g| \leq K_{\delta} \| f-g \|_2. 
\end{equation}

The continuity of $\mathscr{I}$ with respect to the $L^2$-norm gives us  the following equivalent expression for the function $\widehat{\mathscr{I}}$ :  
\begin{equation}{\label{add:step1}}
	\widehat{\mathscr{I}} \left(\widehat f\right) = \inf_{\sigma \in \mathscr{S}} \mathscr{I}(f^{\sigma}), \; \forall \; f \in \widehat{f}, \;\widehat f \in \widehat{\cls_0}.
\end{equation}	
Indeed, to establish \eqref{add:step1}, we take any $g \in \widehat f$. By~\cite[Corollary 8.14]{Lovasz}, there exists a sequence $\left\{\sigma_n : n \geq 1\right\} \subseteq \mathscr{S}$ such that $f^{\sigma_n} \stackrel{L^2}{\longrightarrow} g$. Therefore,
\begin{align*}
	\mathscr{I}(g) = \lim_{n \to \infty} \mathscr{I}\left(f^{\sigma_n}\right) \geq \inf_{\sigma \in \mathscr{S}} \mathscr{I}(f^{\sigma}).
	\end{align*}
Taking infimum over $g \in \widehat f$ on the first term in the above display, we can conclude that $ \widehat{\mathscr{I}} \left(\widehat f\right) \geq 	\inf_{\sigma \in \mathscr{S}} \mathscr{I}(f^{\sigma}).$ The reverse inequality is obvious since $f^{\sigma} \in \widehat f$ for any $\sigma \in \mathscr{S}$. The representation in \eqref{add:step1} also guarantees that for any $f \in \widehat f, g \in \widehat g$ with $\delta \leq g \leq 1-\delta$, we have 
\begin{align}
	\rvert \widehat{\mathscr{I}} \left( \widehat f \right) - 
    \widehat{\mathscr{I}}\left( \widehat g \right) \rvert = 	\bigg \rvert \inf_{\sigma \in \mathscr{S}} \mathscr{I} \left( f^{\sigma} \right) -  \inf_{\sigma \in \mathscr{S}}  \mathscr{I} \left( g^{\sigma} \right) \bigg \rvert &\leq \sup_{\sigma \in \mathscr{S}} \rvert \mathscr{I} \left(f^{\sigma}\right) - \mathscr{I} \left(g^{\sigma} \right) \rvert \label{addstep:lip} \\
    & \leq K_{\delta}  \sup_{\sigma \in \mathscr{S}} \| f^{\sigma} - g^{\sigma} \|_2 = K_{\delta} \| f - g \|_2. \nonumber
\end{align}

We now proceed to prove that $\widehat{\mathscr{I}}$ is lower semi-continuous. The following proof is an expanded form of the arguments in~\cite[Appendix]{Dupuis2022} with  details provided for reader's convenience. Take any $\widehat f_n \longrightarrow \widehat f$ with respect to the metric $\delta_{\square}$. We need to show
\begin{equation}{\label{add:toshow}}
	 \liminf_{n \to \infty} \widehat{\mathscr{I}} \left(\widehat f_n\right) \geq \widehat{\mathscr{I}} \left( \widehat f \right).
	\end{equation}
Note that, without loss of generality we can assume $d_{\square}(f_n,f) \longrightarrow 0$. Moreover, we claim that it suffices to show \eqref{add:toshow} for $f:[0,1]^2 \to [0,1]$ continuous. To see this claim, suppose that \eqref{add:toshow} holds for all continuous $f$ and now consider an arbitrary $f$. Fix  $\delta \in (0,1/2)$ and let $h \in \cls_0$ be such that $h$ is continuous, $\delta \leq h \leq 1-\delta$ and $\|f-h\|_2 \leq 2\delta$. 
Set, 
$$ h_n := \max \left( \delta, \min\left( 1- \delta, f_n - f +h \right) \right) \in \cls_0, \; \forall \; n \geq 1.$$ The following elementary inequality 
$$ \big \rvert \max (\delta, \min(1-\delta,x)) - \max (\delta, \min(1-\delta,y)) \big \rvert \leq |x-y|$$
yields that $|h_n-h| \leq |f_n-f|$ pointwise and hence $d_{\square}(h_n,h) \to 0$. Since by construction $h$ is continuous, we have 
\begin{equation}{\label{add:toshow2}}
	\liminf_{n \to \infty} \widehat{\mathscr{I}} \left(\widehat h_n\right) \geq \widehat{\mathscr{I}} \left( \widehat h \right).
\end{equation}
On the other hand,
$$ |f_n-h_n| \leq |f_n - \max(\delta,\min(1-\delta,f_n))| + |h_n - \max(\delta,\min(1-\delta,f_n))| \leq \delta + |f-h|,$$
and hence $\|f_n-h_n\|_2 \leq 3\delta$. Applying \eqref{addstep:lip} we conclude that 
\begin{align*}
		\liminf_{n \to \infty} \widehat{\mathscr{I}} \left(\widehat f_n\right) \geq  	\liminf_{n \to \infty} \widehat{\mathscr{I}} \left(\widehat h_n\right) - 3\delta K_{\delta}  \geq 
        \widehat{\mathscr{I}} \left( \widehat h \right) - 3\delta K_{\delta}  \geq \widehat{\mathscr{I}} \left( \widehat f \right) - 5\delta K_{\delta}.
		\end{align*}
Since, $\delta$ is arbitrary and $\delta K_{\delta} \to 0$ as $\delta \to 0$, we have that \eqref{add:toshow} holds, 
proving the claim.

It now remains to show that \eqref{add:toshow} holds for all continuous $f$. By \eqref{add:step1}, there exists $\sigma_n \in \mathscr{S}$ such that $\widehat{\mathscr{I}} \left( \widehat f_n \right) \geq \mathscr{I} \left(f_n^{\sigma_n}\right) - 1/n$, for all $n \geq 1$. Note that,
$$ \mathscr{I}\left(f_n^{\sigma_n}\right) = \int_{[0,1]^2} \Delta \left(f_n^{\sigma_n},\gamma^+,\gamma^-\right) = \int_{[0,1]^2} \Delta \left(f_n,\gamma^+ \circ \sigma_n^{-1},\gamma^- \circ \sigma_n^{-1}\right),$$
where $\gamma^{\pm} \circ \sigma^{-1}(x,y) := \gamma^{\pm}\left(\sigma^{-1}(x),\sigma^{-1}(y)\right)$, for all $(x,y) \in [0,1]^2$ and $\sigma \in \mathscr{S}$. 
It is enough to show the following : For any given subsequence of $\mathbb{N}$, there exists a further subsequence (say $\bar n$) such that  
\begin{equation}{\label{add:toshowmain}}
	\liminf_{\bar n \to \infty}  \int_{[0,1]^2} \Delta \left(f_{\bar n},\gamma^+ \circ \sigma_{\bar n}^{-1},\gamma^- \circ \sigma_{\bar n}^{-1}\right) \geq  \inf_{\sigma \in \sS} \int_{[0,1]^2} \Delta \left(f,\gamma^+ \circ \sigma^{-1},\gamma^- \circ \sigma^{-1}\right).
\end{equation} 
Now fix a subsequence and relabel it as $\{n\}$.
Let $X^*,Y^*$ be independent random variables uniformly distributed on $[0,1]$. Let $\mu_n$ be the law induced on $[0,1]^5$ by 
$$\left(f_n(X^*,Y^*),X^*,\sigma_n^{-1}(X^*),Y^*,\sigma_n^{-1}(Y^*)\right).$$ 
Since, $[0,1]^5$ is compact, the sequence of probability measures $\left\{\mu_n : n \geq 1\right\}$ is tight and hence converges weakly to some probability measure $\mu$ along some subsequence. This subsequence will be our required subsequence and from now on, without loss of generality, we shall assume that the convergence holds along the original subsequence. By \textit{Skorokhod's representation theorem}, we can find $(X_n,Y_n)$, independent random variables uniformly distributed on $[0,1]$, such that
$$ \left(f_n(X_n,Y_n),X_n,\sigma_n^{-1}(X_n),Y_n,\sigma_n^{-1}(Y_n)\right) \stackrel{a.s.}{\longrightarrow} \left(W,X,U,Y,V\right) \sim \mu.$$
Since, $\left\{(X_n,\sigma_n^{-1}(X_n))\right\}_{n \geq 1}$ and $\left\{(Y_n,\sigma_n^{-1}(Y_n))\right\}_{n \geq 1}$ are sequence of independent and identically distributed random variables on $[0,1]^2$ with uniform marginals on $[0,1]$, the same is true for $(X,U)$ and $(Y,V)$.

We now argue that $\mathbb{E} \left(W \mid X,U,Y,V\right) = f(X,Y)$ almost surely. Take any continuous $\varphi_1,\varphi_2 : [0,1]^2 \to [-1,1]$. It is enough to show that 
\begin{equation}{\label{add:step2}}
	\mathbb{E} \left( W \varphi_1(X,U) \varphi_2(Y,V) \right) =   \mathbb{E} \left( f(X,Y) \varphi_1(X,U) \varphi_2(Y,V) \right).
\end{equation}
From dominated convergence,
\begin{equation}{\label{add:step3}}
	\lim_{n \to \infty} \mathbb{E} \left( f_n(X_n,Y_n) \varphi_1 \left(X_n,\sigma_n^{-1}(X_n)\right) \varphi_2 \left(Y_n,\sigma_n^{-1}(Y_n)\right) \right) = \mathbb{E} \left( W \varphi_1(X,U) \varphi_2(Y,V) \right).
\end{equation}
On the other hand, if we define $\psi_{i,n} : [0,1] \to [-1,1]$ as $\psi_{i,n}(x) := \varphi_i\left(x,\sigma_n^{-1}(x)\right)$ for $i=1,2$, then we have 
\begin{equation}{\label{add:step4}}
	 \int_{[0,1]^2} f_n(x,y) \psi_{1,n}(x)\psi_{2,n}(y)\; dx\,dy - \int_{[0,1]^2} f(x,y) \psi_{1,n}(x)\psi_{2,n}(y)\; dx\,dy \longrightarrow 0,
\end{equation}
where we have used the fact that  $d_{\square}(f_n,f) \longrightarrow 0$. It is easy to see that 
\begin{align*}
&\int_{[0,1]^2} f_n(x,y) \psi_{1,n}(x)\psi_{2,n}(y)\; dx\,dy \\
& =  \mathbb{E} \left( f_n(X_n,Y_n) \varphi_1 \left(X_n,\sigma_n^{-1}(X_n)\right) \varphi_2 \left(Y_n,\sigma_n^{-1}(Y_n)\right) \right),
\end{align*}
whereas
\begin{align*}
	\int_{[0,1]^2} f(x,y) \psi_{1,n}(x)\psi_{2,n}(y)\; dx\,dy &=  \mathbb{E} \left( f(X_n,Y_n) \varphi_1 \left(X_n,\sigma_n^{-1}(X_n)\right) \varphi_2 \left(Y_n,\sigma_n^{-1}(Y_n)\right) \right) \\
	&\longrightarrow  \mathbb{E} \left( f(X,Y) \varphi_1 \left(X,U\right) \varphi_2 \left(Y,V\right) \right),
\end{align*}  
where the last convergence used the fact that $f$ is continuous. Combining these observations with \eqref{add:step3} and \eqref{add:step4} yields \eqref{add:step2}. Applying \textit{Jensen's inequality} in the fifth line below, we obtain
\begin{align*}
	&\liminf_{n \to \infty} \mathscr{I} \left(f_n^{\sigma_n}\right) \\
    &= 	\liminf_{ n \to \infty}  \int_{[0,1]^2} \Delta \left(f_{n},\gamma^+ \circ \sigma_{n}^{-1},\gamma^- \circ \sigma_{n}^{-1}\right) \\
	 &= \liminf_{n \to \infty} \E \left[ \Delta \left(f_n(X_n,Y_n),\gamma^+\left(\sigma_n^{-1}(X_n),\sigma_n^{-1}(Y_n)\right),  \gamma^-\left(\sigma_n^{-1}(X_n),\sigma_n^{-1}(Y_n)\right)\right)\right]\\
	 &= \E \left[ \Delta \left(W,\gamma^+\left(U,V\right),  \gamma^-\left(U,V\right)\right)\right] \\
	 & = \E \left[ \E \left(\Delta \left(W,\gamma^+\left(U,V\right),  \gamma^-\left(U,V\right)\right) \big \rvert X,U,Y,V\right)\right] \\
	 & \geq  \E \left[\Delta \left(\E \left(W  \mid  X,U,Y,V \right),\gamma^+\left(U,V\right),  \gamma^-\left(U,V\right)\right)\right] \\
	 &=  \E \left[ \Delta \left(f(X,Y),\gamma^+\left(U,V\right),  \gamma^-\left(U,V\right)\right)\right].
\end{align*}
Now recall that $(X,U),(Y,V) \stackrel{i.i.d.}{\sim} \nu^*$, where $\nu^*$ is a probability measure on $[0,1]^2$ with uniform marginals. Apply Lemma~\ref{lem:add2} to get $\left\{\tilde \sigma_k : k \geq 1\right\} \subseteq \mathscr{S}$ such that $\nu_{\tilde \sigma_k^{-1}}$ converges weakly to $\nu^*$. Therefore,
\begin{align*}
	& \liminf_{n \to \infty} \mathscr{I} \left(f_n^{\sigma_n}\right) \\
    &\geq   \E \left[ \Delta \left(f(X,Y),\gamma^+\left(U,V\right),  \gamma^-\left(U,V\right)\right)\right] \\
	&= \int_{[0,1]^2\times [0,1]^2} \Delta \left(f(x,y),\gamma^+\left(x^{\prime},y^{\prime}\right),\gamma^-\left(x^{\prime},y^{\prime}\right) \right)\, d\nu^*(x,x^{\prime})\, d\nu^*(y,y^{\prime})\\
	&= \lim_{k \to \infty} \int_{[0,1]^2 \times [0,1]^2} \Delta \left(f(x,y),\gamma^+\left(x^{\prime},y^{\prime}\right),\gamma^-\left(x^{\prime},y^{\prime}\right) \right)\, d \nu_{\tilde \sigma_k^{-1}}(x,x^{\prime})\, d \nu_{\tilde \sigma_k^{-1}}(y,y^{\prime})\\
	&= \lim_{k \to \infty} \int_{[0,1]^2} \Delta \left(f(x,y),\gamma^+\left(\tilde \sigma_k^{-1}(x), \tilde \sigma_k^{-1}(y)\right),\gamma^-\left(\tilde \sigma_k^{-1}(x), \tilde \sigma_k^{-1}(y)\right) \right)\, dx \,dy \\
	&= \lim_{k \to \infty} \int_{[0,1]^2} \Delta \left(f,\gamma^+\circ \tilde \sigma_k^{-1},\gamma^- \circ \tilde \sigma_k^{-1} \right)
    \ge \inf_{\sigma \in \sS}\int_{[0,1]^2} \Delta \left(f,\gamma^+\circ \sigma^{-1},\gamma^- \circ  \sigma^{-1} \right),
\end{align*}
where the second equality uses the fact that $\Delta, f$ and $ \gamma^{\pm}$ are all bounded and continuous. This proves \eqref{add:toshowmain} and the result follows. \qed

\subsection{Auxiliary result for variational problem}

\begin{lem}{\label{lem:lem-var-unique}}
	Consider the setting  of Section \ref{sec:lded}. Let $\phi^n \in \cW_0$ satisfies 
	\begin{equation}{\label{eq:cons1}}
		\dfrac{1}{|U_{\epsilon}(t)|} \int_{[0,1]^2 \times U_{\epsilon}(t)} \phi^n \geq p^*+\delta_n, \; \forall \; n \geq 1,
	\end{equation}
	for some $\delta_n >0$ satisfying $\delta_n \to \delta$ 
	and 
	$$ \lim_{n \to \infty} \int_{[0,1]^2_T} \cQ_1 \left( \phi^n, \gamma^+,\gamma^-\right) = \int_{[0,1]^2_T} \cQ_1 \left( \phi^{*,t,\delta}, \gamma^+,\gamma^-\right).$$
	Then $\phi^n$ converges to $\phi^{*,t,\delta}$ in $L^2\left([0,1]^2_T\right)$. 
\end{lem}

\begin{proof}
	Recall that $\cQ_1(u,v^+,v^-) = (\sqrt{v^+(1-u))}-\sqrt{v^-u})^2$ for all $u \in [0,1]$, $v^+,v^- >0$. Hence, 
	$$ \cQ_{1,1}(u,v^+,v^-) = -v^++v^- -\sqrt{v^+v^-}\dfrac{1-2u}{\sqrt{u(1-u)}},\;  \cQ_{1,11}(u,v^+,v^-) = \dfrac{\sqrt{v^+v^-}}{2(u(1-u))^{3/2}},$$
	where $\cQ_{1,1}$ and $\cQ_{1,11}$ are the first and second order derivatives of $\cQ_1$ with respect to its first argument. Note that $\cQ_{1,11} \geq 4c_{\beta}$, whenever $v^+,v^- \geq c_{\beta}$ and thus for any $u,u^* \in (0,1), v^+,v^-\geq c_{\beta}$, 
	\begin{equation}{\label{eq:cons2}}
		 \cQ_1(u,v^+,v^-) \geq \cQ_1(u^*,v^+,v^-) + (u-u^*)\cQ_{1,1}(u^*,v^+,v^-) + 4c_{\beta}(u-u^*)^2.
	\end{equation}
	Now recall $\gamma^{k,\pm}$, the block approximation for $\gamma^{\pm}$. Let $\phi^{*,t,\delta_n,k}$ be the optimal solution to~(\ref{eq:edge-var-prob}) with $\gamma^{\pm}$ replaced by $\gamma^{k,\pm}$ and $\delta$ replaced by $\delta_n$. Since, $\gamma^{k,\pm}$ converges to $\gamma^{\pm}$ in $L^1$, we can conclude from Proposition~\ref{prop:edge-max-eq} that $\phi^{*,t,\delta_n,k}$ converges in $L^2$ to $\phi^{*,t,\delta_n}$, as $k \to \infty$. Note that the constraint in~(\ref{eq:cons1}) is linear and hence, for any $\epsilon \in (0,1)$ and $n, k \geq 1$, 
	$$  \dfrac{1}{\epsilon}\int_{[0,1]^2_T} \left(\cQ_1(\epsilon\phi^n+(1-\epsilon)\phi^{*,t,\delta_n,k},\gamma^{k,+}, \gamma^{k,-}) -  \cQ_1(\phi^{*,t,\delta_n,k},\gamma^{k,+}, \gamma^{k,-}) \right) \geq 0. $$
	Since $\gamma^{k,\pm}$ only takes finitely many values, same is true for $\phi^{*,t,\delta_n,k}$ and it is bounded away from $0$ and $1$. This shows that $|\cQ_{1,1}(\epsilon\phi^n+(1-\epsilon)\phi^{*,t,\delta_n,k},\gamma^{k,+}, \gamma^{k,-})|$ is bounded above for small enough $\epsilon>0$ and thus taking $\epsilon \downarrow 0$ and using dominated convergence we see that the first-order optimality condition holds: 
	$$ \int_{[0,1]^2_T} \cQ_{1,1}(\phi^{*,t,\delta_n,k}, \gamma^{k,+}, \gamma^{k,-}) \left( \phi^n- \phi^{*,t,\delta_n,k}\right) \geq 0.$$
	Using~(\ref{eq:cons2}), we now get 
	\begin{equation*}
		\| \phi^n - \phi^{*,t,\delta_n,k}\|^2_2 \leq \dfrac{1}{4c_{\beta}}\left( \int_{[0,1]^2_T} \cQ_1\left( \phi^n, \gamma^{k,+},\gamma^{k,-}\right) - \int_{[0,1]^2_T} \cQ_1 \left( \phi^{*,t,\delta_n,k}, \gamma^{k,+},\gamma^{k,-}\right)   \right).
	\end{equation*}
	By Lemma~\ref{lem:uniform}, dominated convergence, and the fact that $\phi^{*,t, \delta_n,k}$ converges to $\phi^{*,t,\delta_n}$ as $k \to \infty$, we conclude that 
	\begin{equation*}
	\| \phi^n - \phi^{*,t,\delta_n}\|^2_2 \leq \dfrac{1}{4c_{\beta}}\left( \int_{[0,1]^2_T} \cQ_1\left( \phi^n, \gamma^{+},\gamma^{-}\right) - \int_{[0,1]^2_T} \cQ_1 \left( \phi^{*,t,\delta_n}, \gamma^{+},\gamma^{-}\right)   \right).
	\end{equation*}
	Proposition~\ref{prop:edge-max-eq} guarantees that $\delta \mapsto \phi^{*,t,\delta}$ is continuous with respect to the $L^2$-norm. This completes the proof.
\end{proof}

\section*{Acknowledgments}

Bhamidi was partially supported by NSF DMS-2113662, DMS-2413928, and DMS-2434559. Budhiraja
was partially supported by NSF DMS-2152577.  Bhamidi and Budhiraja were partially funded by NSF RTG grant DMS-2134107. Part of
this material is based upon work supported by the National Science Foundation under Grant
No. DMS-1928930, while Bhamidi was in residence at the Simons Laufer
Mathematical Sciences Institute in Berkeley, California, during the Spring 2025 semester.

\noindent{\bf Conflict of interest statement:} The authors declare that they have no conflict of interest. 

\bibliographystyle{plain}
	\bibliography{dynamical_ldp}

\end{document}